\newtheorem{mydef}{Definition}
\newtheorem{mylem}{Lemma}
\newtheorem{mythm}{Theorem}
\newtheorem{myass}{Assumption}
\DeclareMathOperator*{\argmin}{arg\,min}
\begin{document}
\pagenumbering{roman}
\title{Parallel Optimization of Polynomials for Large-scale Problems in \\ Stability and Control}
\author{Reza Kamyar}
\degreeName{Doctor of Philosophy}
\defensemonth{January}
\gradmonth{May}
\gradyear{2016}
\chair{Matthew Peet}
\memberOne{Daniel Rivera}
\memberTwo{Georgios Fainekos}
\memberThree{Panagiotis Artemiadis}
\memberFour{Spring Berman}	
\maketitle
\doublespace

\begin{abstract}

In today's world, optimal operation of ever-growing industries and markets often requires solving optimization problems with unprecedented sizes. Economic dispatch of generating units in power companies, frequency assignment in large mobile communication networks, profit maximization in competitive markets, and optimal operation of smart grids are few examples of many real-world problems which can be closely modeled as optimization over a large number (tens of thousands) of integer- and real-valued decision variables. Unfortunately, majority of the existing commercial off-the-shelf software are not designed to scale to optimization problems of this size. Moreover, in theory, these optimization problems often fall into the class NP-hard - meaning that despite the tremendous effort towards modernization of optimization algorithms, it is widely suspected that no algorithm can find exact solutions to these problems in a reasonable amount of time.


In this thesis, we focus on some of the NP-hard problems in control theory. Thanks to the converse Lyapunov theory, these problems can often be modeled as optimization over polynomials. To avoid the problem of intractability, we establish a trade off between accuracy and complexity. In particular, we develop a sequence of tractable optimization problems - in the form of Linear Programs (LPs) and/or Semi-Definite Programs (SDPs) - whose solutions converge to the exact solution of the NP-hard problem. However, the computational and memory complexity of these LPs and SDPs grow exponentially with the progress of the sequence - meaning that improving the accuracy of the solutions requires solving SDPs with
tens of thousands of decision variables and constraints. Setting up and solving such problems is a significant challenge. Unfortunately, the existing optimization algorithms and software are only designed to use desktop computers or small cluster computers - machines which do not have sufficient memory for solving such large SDPs. Moreover, the speed-up of these algorithms does not scale beyond dozens of processors. This in fact is the reason we seek parallel algorithms for setting-up and solving large SDPs on large cluster- and/or super-computers.


We propose parallel algorithms for stability analysis of two classes of systems: 1) Linear systems with a large number of uncertain parameters; 2) Nonlinear systems defined by polynomial vector fields. First, we develop a distributed parallel algorithm which applies Polya's and/or Handelman's theorems to some variants of parameter-dependent Lyapunov inequalities with parameters defined over the standard simplex. The result is a sequence of  SDPs which possess a block-diagonal structure. We then develop a parallel SDP solver which exploits this structure in order to map the computation, memory and communication to a distributed parallel environment. We produce a Message Passing Interface (MPI) implementation of our parallel algorithms and provide a comprehensive theoretical and experimental analysis on its complexity and scalability. Numerical tests on a supercomputer demonstrate the ability of the algorithm to efficiently utilize hundreds and potentially thousands of processors and analyze systems with 100+ dimensional state-space. We then apply our algorithms to two real-world problems: Stability of plasma in a Tokamak reactor, and optimal electricity pricing in a smart grid environment. Finally, we extend our algorithms to analyze robust stability over more complicated geometries such as hypercubes and arbitrary convex polytopes. Our algorithms can be readily extended to address a wide variety of problems in control; e.g., $\mathcal{H}_2/\mathcal{H}_{\infty}$ control synthesis for systems with parametric uncertainty, computing control Lyapunov functions for optimal control problems, and analysis and control of switched/hybrid systems.
\end{abstract}

\tableofcontents
\addtocontents{toc}{~\hfill Page\par}
\newpage

\addcontentsline{toc}{part}{LIST OF TABLES}
\renewcommand{\cftlabel}{Table}
\addtocontents{lot}{Table~\hfill Page \par}
\listoftables

\newpage

\addcontentsline{toc}{part}{LIST OF FIGURES}
\renewcommand{\cftlabel}{Figure}
\addtocontents{lof}{Figure~\hfill Page \par}
\listoffigures

\newpage

\addtocontents{toc}{CHAPTER \par}


\doublespace
\pagenumbering{arabic}

\chapter{INTRODUCTION}
\label{chp:introduction}

Consider problems such as portfolio optimization, path-planning, structural design, local stability of nonlinear ordinary differential equations, control of time-delay systems and control of systems with uncertainties. These problems can all be formulated as \textit{polynomial optimization} and/or \textit{optimization of polynomials}. In this dissertation, we show how computation can be applied in a variety of ways to solve these classes of problems.
A simple example of polynomial optimization is $\beta^* = \min\limits_{x \in Q} p(x)$, where $p: \mathbb{R}^n \rightarrow \mathbb{R}$ is a multi-variate polynomial and $Q \subset \mathbb{R}^n$. In general, since $p(x)$ and $Q$ are not convex, this is not a convex optimization problem. In fact, it has been proved that polynomial optimization is NP-hard~(\cite{blum}). Fortunately, algorithms such as branch-and-bound can find arbitrarily precise solutions to polynomial optimization problems by repeatedly partitioning $Q$ into subsets $Q_i$ and computing lower and upper bounds on $p(x)$ over each $Q_i$. To find an upper bound for $p(x)$ over each $Q_i$, one could use a local optimization algorithm such as sequential quadratic programming. To find a lower bound on $p(x)$ over each $Q_i$, one can solve the following optimization problem.
\begin{align}
& \quad \beta^* =\max_{y \in \mathbb{R}} \; y \nonumber \\
&\text{subject to } \;\; p(x) - y \geq 0 \text{ for all } x \in Q_i.
\label{optim_poly_exp}
\end{align}
This problem is in fact an instance of the problem of optimization of polynomials. Optimization of polynomials is convex, yet again NP-hard. We will discuss optimization of polynomials in more depth in Chapter~\ref{chp:background}. In the following, we discuss some of the state-of-the-art methods for solving optimization of polynomials - hence finding lower bounds on $\beta^*$.

\section{Sum of Squares Method} 
\label{sec:intro_SOS}

One approach to find lower bounds on the optimal objective $\beta^*$ is to apply \textit{Sum of Squares (SOS) programming}~(\cite{parillo_thesis}, \cite{sostools2013}). A polynomial $p$ is SOS if there exist polynomials $q_i$ such that $p(x)=\sum_{i=1}^r q_i(x)^2$.
The set $\{ q_i \in \mathbb{R}[x], i=1, \cdots,r \} $ is called an SOS \textit{decomposition} of $p(x)$, where $\mathbb{R}[x]$ is the ring of real polynomials. An SOS program is an optimization problem of the form
\begin{align}
& \quad \min_{x \in \mathbb{R}^m} \qquad c^T x \nonumber \\
& \text{subject to} \quad  A_{i,0}(y)+\sum_{j=1}^m x_j A_{i,j}(y) \text{ is SOS, } i=1, \cdots, k,
\label{eq:SOS_prog}
\end{align}
where $c \in \mathbb{R}^m$ and $A_{i,j} \in \mathbb{R}[y]$ are given. If $p(x)$ is SOS, then clearly $p(x) \geq 0$ on $\mathbb{R}^n$. While verifying $p(x) \geq 0$ on $\mathbb{R}^n$ is NP-hard, verifying whether $p(x)$ is SOS - hence non-negative - can be done in polynomial time (\cite{parillo_thesis}). It was first shown in~\cite{parillo_thesis} that verifying the existence of a SOS decomposition is a Semi-Definite Program (SDP). Fortunately, there exist several algorithms~(\cite{monteiro, helmberg, alizadeh}) and solvers~(\cite{sdpa,sedumi,sdpt3}) which solve SDPs to arbitrary precision in polynomial time. To find lower bounds on $\beta^* = \min_{x \in \mathbb{R}^n} p(x)$, consider the SOS program
\begin{align*}
& y^*=\max_{y \in \mathbb{R}} \quad  y \\
& \text{ subject to } \; p(x) - y \text{ is SOS}.
\end{align*}
Clearly $y^* \leq \beta^*$. One can compute $y^*$ by performing a bisection search on $y$ and using semi-definite programming to verify $p(x) - y$ is SOS. SOS programming can also be used to find lower bounds on global minimum of polynomials over a semi-algebraic set $S:=\{x \in \mathbb{R}^n: g_i(x) \geq 0, \, i = 1, \cdots, m \}$ generated by $g_i \in \mathbb{R}[x]$. Given Problem~\eqref{optim_poly_exp} with $x \in S$, \textit{Positivstellensatz} results~(\cite{stengle}, \cite{putinar}, \cite{schmudgen}) define a sequence of SOS programs whose objective values form a sequence of lower bounds on the global minimum $\beta^*$. For instance, Putinar's Positivstellensatz defines the optimization problem
\begin{align}
& 
y_d := \max_{y \in \mathbb{R}} \quad y \nonumber  \\ 
& \text{subject to } \quad p(x) - y = s_0(x) + \sum_{i=1}^m s_i(x) g_i(x), \; s_i \in \Sigma_{2d},
\label{eq:SOS_putinar_exp}
\end{align} 
where $\Sigma_{2d}$ denotes the cone of SOS polynomials of degree $2d$. \cite{putinar} has shown that under certain conditions (verifiable by semi-definite programming) on $S$ and for sufficiently large $d$, $y_d = \beta^*$. See~\cite{laurent_survey} for a comprehensive discussion on the Positivstellensatz results. 

\section{Moments Method}

As a dual to SOS program,~\cite{lasserre2001global} used the theory of \textit{moments} to define a sequence of lower bounds for global optima of polynomials. Let $\beta^*:= \min_{x \in S} \, p(x)$, where $S:=\{ x \in \mathbb{R}^n: g_i(x) \geq 0, i= 1, \cdots,m\}$ is compact and $p(x):= \sum\limits_{\alpha \in W_p} p_\alpha x^\alpha$ with the index set $W_p:=\{ \alpha \in \mathbb{N}^n: \Vert \alpha \Vert_1 \leq p \}$. Let us denote the degree of $g_i$ by $e_i$. Then,~\cite{lasserre2001global} showed that $z_d$ defined as
\begin{align}
& z_d := \min_{z} \;  \sum_{\alpha \in W_p} p_\alpha z_\alpha  \nonumber \\
& \text{subject to } \;\,\, M_d(z) \geq 0 \nonumber	 \\
& \qquad\qquad\quad\;  M_{d-e_i}(g_i \, z) \geq 0 \quad \text{ for } i = 1, \cdots, m,
\label{eq:moments}
\end{align}
is a lower bound on $\beta^*$. In Equation \eqref{eq:moments}, $z:= \left\lbrace z_{\alpha} \right\rbrace_{\alpha \in I_{2d}}$, where $ z_\alpha :=  \int_{S} x^\alpha \mu(dx)$ is called the \textit{moment of order} $\alpha$ and is represented by any probability measure\footnote{Let $X$ be a set and $M$ be a $\sigma-$algebra over $X$. Then $\mu: M \rightarrow [0,1]$ is a probability measure if
\begin{enumerate}
\item $\mu(\emptyset) = 0$ and $\mu(X) = 1$.
\item For all countable collections $\{S_i\}_{i \in N}$ of pairwise disjoint subsets of $M$, $\mu(\bigcup\limits_{i \in N} S_i) = \sum\limits_{i \in N} \mu(S_i)$.
\end{enumerate}
}  $\mu$ on $\mathbb{R}^n$ such that $\mu(\mathbb{R}\backslash S) = 0$. Moreover, $M_d(z)$ is called the \textit{moment matrix} associated with sequence $z$ and in two dimensions is defined as
\[
M_d(z) \hspace{-0.05in} = \hspace{-0.05in} \left( \hspace{-0.05in} \begin{array}{c:cc:ccc:c:cccccc} 
1 &  z_{[1,0]} & z_{[0,1]}  & z_{[2,0]} & z_{[1,1]} & z_{[0,2]} &  \cdots & z_{[d,0]} & \cdots & z_{[0,d]}  \\  \hdashline
z_{[1,0]} & z_{[2,0]} & z_{[1,1]} & z_{[3,0]} & z_{[2,1]} & z_{[1,2]} & \cdots & z_{[d+1,0]} & \cdots & z_{[1,d]}\\ 
z_{[0,1]} & z_{[1,1]} & z_{[0,2]} & z_{[2,1]} & z_{[1,2]} & z_{[0,3]} & \cdots & z_{[d,1]} & \cdots  & z_{[0,d+1]} \\  \hdashline
z_{[2,0]} & z_{[3,0]} & z_{[2,1]} & z_{[4,0]} & z_{[3,1]} & z_{[2,2]} & \cdots & z_{[d+2,0]} & \cdots & z_{[2,d]}\\ 
z_{[1,1]} & z_{[2,1]} & z_{[1,2]} & z_{[3,1]} & z_{[2,2]} & z_{[1,3]} & \cdots & z_{[d+1,1]} & \cdots & z_{[1,d+1]}\\ 
z_{[0,2]} & z_{[1,2]} & z_{[0,3]} & z_{[2,2]} & z_{[1,3]} & z_{[0,4]} & \cdots & z_{[d,2]} & \cdots & z_{[0,d+2]} \\ \hdashline
\vdots    & \vdots    & \vdots    & \vdots    & \vdots    &  \vdots   & \ddots & \cdots & \cdots & \cdots\\  \hdashline
z_{[d,0]} & z_{[d+1,0]} & z_{[d,1]}  & z_{[d+2,0]} & z_{[d+1,1]} & z_{[d,2]} & \cdots & z_{[2d,0]} & \cdots & z_{[d,d]} \\ 
\vdots    &  \vdots     &\vdots  & \vdots   & \vdots & \vdots & \vdots & \vdots & \ddots & \vdots\\ 
z_{[0,d]} &   z_{[1,d]} &  z_{[0,d+1]} & z_{[2,d]}  & z_{[1,d+1]} & z_{[0,d+2]} & \cdots & z_{[d,d]} & \cdots & z_{[0,2d]}\\ 
\end{array} \hspace{-0.05in} \right).
\]
It can be shown that the SDPs in~\eqref{eq:moments} are duals to the SDPs in~\eqref{eq:SOS_putinar_exp} - implying that $y_d \leq z_d$. Indeed, if $S$ has a non-empty interior, then for all sufficiently large $d$, the duality gap is zero, i.e., $y_d = z_d$. See~\cite{laurent_survey} and~\cite{lasserre_noncompact2014} for conditions on convergence of the lower bounds to global minima and extension of moments method to polynomial optimization over non-compact semi-algebraic sets.
 
In the sequel, we explore the merits of some of the alternatives to SOS programming and moments method. There exist several results in the literature that can be applied to polynomial optimization; e.g., Quantifier Elimination (QE) algorithms~(\cite{CAD}) for testing the feasibility of semi-algebraic sets, Reformulation Linear Techniques (RLTs)~(\cite{sherali_1992,sherali_1997}) for linearizing polynomial optimizations, Polya's theorem~(\cite{inequalities}) for positivity over the positive orthant, Bernstein's~(\cite{roy,leroy}) and Handelman's~(\cite{handelman_1988}) theorems for positivity over simplices and convex polytopes, and other results based on Groebner bases~(\cite{adams_groebner}) and Blossoming~(\cite{blossoming}) techniques. In particular, we will focus on Polya's, Bernstein's and Handelman's results in more depth and elaborate on the computational advantages of these results over the others. The discussion of the other results are beyond the scope of this dissertation, however the ideas behind these results can be summarized as follows.

\section{Quantifier Elimination}

QE algorithms apply to First-Order Logic formulae, e.g.,
\[
\forall x \, \exists y \, (f(x,y) \geq 0 \Rightarrow ((g(a) < x y) \wedge (a > 2)),
\]
to eliminate the \textit{quantified} variables $x$ and $y$ (preceded by quantifiers $\forall,\exists$) and construct an equivalent formula in terms of the \textit{unquantified} variable $a$. The key result underlying QE algorithms is Tarski-Seidenberg theorem~(\cite{tarski}). The theorem implies that for every formula of the form $\forall x \in \mathbb{R}^n \, \exists y \in \mathbb{R}^m ( f_i(x,y,a) \geq 0 )$, where $f_i \in \mathbb{R}[x,y,a]$, there exists an equivalent quantifier-free formula of the form $\wedge_i (g_i(a) \geq 0) \vee_j (h_j(a) \geq 0)$ with $g_i,h_j \in \mathbb{R}[a]$. QE implementations (e.g.,~\cite{QEPCAD} and~\cite{Redlog}) with a bisection search yields the exact solution to optimization of polynomials, however the complexity scales double exponentially in the dimension of variables $x,y$. 

\section{Reformulation Linear Techniques}

RLT was initially developed to find the convex hull of feasible solutions of zero-one linear programs~(\cite{sherali_1990}). It was later generalized by~\cite{sherali_1992} to address polynomial optimizations of the form $\min_x p(x)$ subject to $x \in [0,1]^n \cap S$. RLT constructs a $\delta-$hierarchy of linear programs by performing two steps. In the first step (reformulation), RLT introduces the new constraints $\prod_i x_i \prod_j (1-x_j) \geq 0$ for all $i,j: i+j=\delta$. In the second step (linearization), RTL defines a linear program by replacing every product of variables $x_i$ by a new variable. By increasing $\delta$ and repeating the two steps, one can construct a $\delta-$hierarchy of lower bounding linear programs. A combination of RLT and branch-and-bound partitioning of $[0,1]^n$ was developed by~\cite{sherali_1997} to achieve tighter lower bounds on the global minimum. For a survey of different extensions of RLT see~\cite{sherali_global_2007}. 

\section{Groebner Basis Technique}

Groebner bases can be used to reduce a polynomial optimization over a semi-algebraic set $S:=\{ x \in \mathbb{R}^n : g_i(x) \geq 0, \, h_j(x) = 0\}$ to the problem of finding the roots of univariate polynomials~(\cite{poly_groebner}). First, one needs to construct the system of polynomial equations
\begin{equation}
[\nabla_x L(x,\lambda,\mu), \nabla_\lambda L(x,\lambda,\mu), \nabla_\mu L(x,\lambda,\mu)] = 0,
\label{eq:nabla_sys}
\end{equation}
 where $L:= p(x)+ \sum_i \lambda_i g_i(x)+ \sum_j \mu_j h_j(x)$ is the Lagrangian function. It is well-known that the set of solutions to~\eqref{eq:nabla_sys} is the set of extrema of the polynomial optimization $\min_{x \in S} p(x)$.
Let
 \[
\left[ f_1(x,\lambda,\mu), \cdots,f_N(x,\lambda,\mu) \right] := \left[\nabla_x L(x,\lambda,\mu), \nabla_\lambda L(x,\lambda,\mu), \nabla_\mu L(x,\lambda,\mu) \right].
 \]
Using the elimination property~(\cite{adams_groebner}) of the Groebner bases, the minimal Groebner basis of the ideal of $f_1, \cdots, f_N$ defines a triangular-form system of polynomial equations. This system can be solved by calculating one variable at a time and back-substituting into other polynomials. The most computationally expensive part is the calculation of the Groebner basis, which in the worst case scales double-exponentially in the number of decision variables. 

\section{Blossoming Technique}

The blossoming technique involves a bijective map between the space of polynomials $p: \mathbb{R}^n \rightarrow \mathbb{R}$ and the space of multi-affine functions $q: \mathbb{R}^{d_1+d_2+ \cdots + d_n} \rightarrow \mathbb{R}$ (polynomials that are affine in each variable), where $d_i$ is the degree of $p$ in variable $x_i$. For instance, the blossom of a cubic polynomial $p(x) = ax^3 + b x^2 + cx + d$ is the multi-affine function
\[
q(z_1,z_2,z_3) = a z_1 z_2 z_3 + \dfrac{b}{3} (z_1z_2 + z_1 z_3 + z_2 z_3) + \dfrac{c}{3} (z_1 + z_2 + z_3) + d.
\]
It can be shown that the blossom, $q$, of any polynomial $p \in \mathbb{R}[x]$ with degree $d_i$ in variable $x_i$  satisfies the so-called \textit{diagonal} property~(\cite{blossoming}), i.e.,
\[
p(z_1,z_2, \cdots, z_n) = q( \underbrace{z_1, \cdots, z_1}_{d_1 \text{ times}}, \cdots, \underbrace{z_n, \cdots, z_n}_{d_n \text{ times}}) \quad \text{ for all } z \in \mathbb{R}. 
\]
By using this property, one can reformulate any polynomial optimization $\min_{x \in S} p(x)$ as 
\begin{align}
&\qquad \; \min_{z \in Q} \;\; \, q(z) \nonumber \\
& \text{subject to } \; z_{\phi(i)}=z_{\phi(i)-j}   \quad \text{ for } i=1, \cdots, n   \text{ and for } j = 1, \cdots,  d_i-1,
\label{eq:optim_blossom}
\end{align}
where $\phi(i) := \sum\limits_{k=1}^i d_i $ and $Q$ is the semi-algebraic set defined by the blossoms of the generating polynomials of $S$. In the special case, where $S$ is a hypercube,  \cite{girard_blossom} showed that the Lagrangian dual optimization problem to Problem~\eqref{eq:optim_blossom} is a linear program. Hence, the optimal objective value of this linear program is a lower bound on the minimum of $p(x)$ over the hypercube. Application of blossoming in estimation of reachability sets of discrete-time dynamical systems can be found in~\cite{sassi2012reachability}.

\section{Bernstein, Polya and Handelman Theorems}

While QE, RLT, Groebner bases and blossoming are all useful techniques with
advantages and disadvantages (such as exponential complexity), we focus on Polya's, Bernstein's and Handelman's theorems - results which yield polynomial-time tests for positivity of polynomials. Polya's theorem yields a basis to represent the cone of polynomials that are positive over the positive orthant. Bernstein's and Handelman's theorems yield bases which represent the cones of polynomials that are positive over simplices and convex polytopes, respectively. Similar to SOS programming, one can find certificates of positivity using Polya's, Bernstein's and Handelman's representations by solving a sequence of Linear Programs (LPs) and/or SDPs. However, unlike the SDPs associated with SOS programming, the SDPs associated with these theorems have a block-diagonal structure. In this dissertation, we exploit this structure to design parallel algorithms for optimization of polynomials of high degrees with several independent variables. See~\cite{kamyar_ACC2012},~\cite{kamyar_CDC2012},~\cite{kamyar_CDC2013} and~\cite{kamyar_TAC2013} for parallel implementations of variants of Polya's theorem applied to various Lyapunov inequalities.

Unfortunately, unlike the SOS methodology, the bases given by Polya's theorem, Bernstein's theorem and  Handelman's theorem cannot be used to represent the cone of non-negative polynomials which have zeros in the interior of simplices and polytopes. This is indeed a barrier against using these theorems to compute polynomial Lyapunov functions, since Lyapunov functions, by definition, have a zero at the origin. There do, however, exist some variants of Polya's theorem which consider zeros at the corners~\cite{polya_corner} and edges~\cite{polya_edge} by constructing local certificates of non-negativity over closed subsets, $C_i$, of the simplex such that $\cup C_i $ is the simplex. These results apply to non-negative polynomials whose zeros are on the corners and/or edges of the simplex. Moreover,~\cite{peres_multisimplex} and~\cite{kamyar_CDC2012} propose versions of Polya's theorem which prove positivity over hypercubes by: 1) Providing certificates of positivity on the Cartesian product of unit simplices; and 2) Introducing a one-to-one map between products of unit simplices (multi-simplex) and hypercubes. A generalization of Polya's theorem for proving positivity on the entire $\mathbb{R}^n$ was introduced by~\cite{polya_Rn}. This generalization first applies Polya's theorem to each orthant of $\mathbb{R}^n$ to compute a certificate of positivity over each orthant. Then, it uses the merging technique in~\cite{lombardi1991effective} to obtain a unified certificate - in the form of SOS of rationals - over $\mathbb{R}^n$. A recent extension of Polya's theorem by~\cite{polya_positivstellensatz_2014} can be used to prove positivity over an intersection of a semi-algebraic set with the positive orthant. Finally, positivity of polynomials with rational exponents can be verified by a weak version of Polya's theorem in~\cite{polya_rational}.

\section{Motivations and Summary of Contributions}

The novelty of our research centers on the areas of: computation and energy. In the realm of computation, we observed that processors speeds are not growing at the rate they once were. The entire controls community seems to have ignored this fact, since
everyone speaks of polynomial-time algorithms as the gold standard for what the solution to
a control problem should look like. But what good is a polynomial-time algorithm when the
degree of the polynomial is bounded by the current state-of-the-art computers. Our solution was to look at the only area where the computing world was getting faster (growing) - supercomputers. Surprisingly, there have been no studies on the use of parallel
computers for controls since the 1970's. The reason was that the mathematical machinery for
analysis and control is based on Semidefinite Programming, which is inherently sequential
(NC-hard). Our idea, however, was that if the SDP problem has special structure, then this
structure can be exploited to distribute computation among processors. With this in mind,
we decided to seek out alternatives to the classical Sum-of-Squares approach to nonlinear and robust stability analyses. We identified more than seven different alternatives to
the Sum-of-Squares approach. In the end, not all of these had usable structures for parallelization. However, we identified three which did: polynomial positivity results by Handelman, Polya and Bernstein. To demonstrate how well this approach works in practice, we developed a Message Passing Interface code for Polya's theorem. The result enabled stability analysis for systems three times larger (in terms of number of states) than
any other algorithm. As a real-world application, we further used our code to analyze robust stability of plasma in the Tore Supra Tokamak reactor.

In the realm of energy, we noticed that the two electrical utility companies of
Arizona (APS and SRP) have recently started charging their customers for their maximum rate of electricity usage. This intrigued us as a mathematical problem of how to optimize the thermostat settings of HVAC systems (the major sources of electricity consumption in Arizona) in order to minimize the electricity bill. This problem is interesting in that the time of peak electricity use is not usually at the hottest time of day, but rather a couple of hours after - a behaviour which is usually associated with a diffusion PDE. We used the heat equation to model the thermostat programming problem as an optimal control problem and it turned out to be unsolved. The mathematical reason being that the cost function is not separable in time - a property which is necessary for optimal control algorithms to converge to an optimal solution. We noticed that an arbitrarily precise approximation of the cost function however, satisfy certain properties which make it solvable on a Pareto-optimal front. The result is an optimal thermostat which can significantly reduce
the electricity bills and peak demand of both solar and nonsolar customers under the current pricing plans. Expanding this approach, we started thinking about related topics, such as how to set the demand price on order to influence customers' behavior in an optimal manner. Based on that, we proposed an optimal pricing algorithm which resulted in a moderate reduction in the cost of generating, transmission and distribution of electricity at SRP.

We highlight our contributions as follows. In Chapter~\ref{chp:linear}, we propose a parallel set-up algorithm which applies Polya's theorem to the parameter-dependent Lyapunov inequalities $P(\alpha) > 0$ and $A^T(\alpha) P (\alpha) + P(\alpha) A(\alpha) < 0$ with $\alpha$ belonging to the standard simplex. Feasibility of these inequalities implies robust stability of the system of linear Ordinary Differential Equations (ODEs) $\dot{x}(t) = A(\alpha)x(t)$ over the simplex. The output of our set-up algorithm is a sequence of SDPs of increasing size and precision. A solution to any of these SDPs yield a Lyapunov function which is quadratic in the states and depends polynomially on the uncertain parameters. An interesting property of these SDPs is that they possess a block-diagonal structure. We show how this structure can be exploited to design a parallel \textit{interior-point primal-dual} SDP solver which distributes the computation of search direction among a large number of processors. We then produce a Message Passing Interface (MPI) implementation of our set-up and solver algorithms. Through numerical experiments, we show that these algorithms achieve a near-linear theoretical and experimental speed-up (the increase in processing speed per additional processor). Moreover, our numerical experiments on cluster computers demonstrate the ability of our algorithms in utilizing hundreds and potentially thousands of processors to analyze systems with 100+ states. 

In Chapter~\ref{chp:multisim}, we generalize our methodology to perform robust stability analysis over hypercubes. We first propose an extended version of Polya's theorem. This theorem parameterizes every homogeneous polynomial which is positive over a hypercube. We then propose an extended set-up algorithm which maps the computation and memory - associated with applying the extended Polya's theorem to stability analysis problems - to parallel machines. This set-up algorithm has no centralized computation and its per-core communication complexity scales polynomially with the state-space dimension and the number of uncertain parameters. As the result, it demonstrates a near-linear speed-up. 

In Chapter~\ref{chp:Nonlinear}, we further extend our analysis to address stability of nonlinear ODEs defined by a polynomial vector field $f$. Our proposed solution to this problem is to reformulate the nonlinear stability problem using only strictly positive forms. Specifically, we use our extended version of Polya's theorem in Chapter~\ref{chp:multisim} to compute a matrix-valued homogeneous polynomial $P(x)$ such that $P(x) > 0$ and $\langle \nabla (x^TP(x)x), f(x) \rangle <0$ for all $x$ inside a hypercube containing the origin in its interior. This yields a Lyapunov function of the form $V(x)= x^TP(x)x$ for the system $\dot{x}(t) = f(x(t))$. To do this, we design a new parallel set-up algorithm which applies Polya's theorem to the inequalities $P(x) > 0$ and $\langle \nabla (x^TP(x)x), f(x) \rangle <0$. The result is a sequence of SDPs with coefficients of $P$ as decision variables. Again, we show that these SDPs have a block-diagonal structure - thus can be solved in parallel using our SDP solver in Chapter~\ref{chp:linear}. As an extension to stability analysis over arbitrary convex polytopes, we then propose an algorithm which applies Handelman's theorem to the aforementioned Lyapunov inequalities. Unfortunately, as in the case of Polya's theorem, Handelman's theorem is incapable of parameterizing polynomials which possess zeros in the interior of a polytope. However, we show that this is not the case if the zeros are on the vertices of the polytope. By using this property, we propose the following methodology: 1) Decompose the polytope into several convex sub-polytopes with a common vertex on the equilibrium; 2) Apply Handelman's theorem to Lyapunov inequalities defined on each sub-polytope. The result is a sequence of linear programs whose solutions define a piecewise polynomial Lyapunov function $V$- hence proving asymptotic stability over the sublevel-set of $V$ inscribed in the original polytope. We provide a comprehensive comparison between the computational complexities of SOS algorithm, our Polya's algorithms and our Handelman algorithm. Our analysis shows that by using a certain decomposition scheme, our algorithm (based on Handelman's theorem) has the lowest computational complexity compared to the SOS and Polya's algorithms.


\chapter{FUNDAMENTAL RESULTS FOR OPTIMIZATION OF POLYNOMIALS}
\label{chp:background}

In this chapter, we first provide an overview of fundamental theorems on positivity of polynomials over various sets. Then, we show how applying these theorems to optimization of polynomials problems of the Form~\eqref{optim_poly_exp} yields tractable convex optimization problems in the forms of LPs and/or SDPs. Any solution to these LPs and/or SDPs yields a lower-bound on the global minimum of the polynomial optimization problem $\min\limits_{x \in Q} \; p(x)$. \\

\section{Background on Positivity Results}
\label{sec:history}

In 1900, Hilbert published a list of mathematical problems, one of which is: For every non-negative $f \in \mathbb{R}[x]$, does there exist any non-zero $q \in \mathbb{R}[x]$ such that $q^2f$ is a sum of squares? In other words, is every non-negative polynomial a sum of squares of rational functions? This question was motivated by his earlier works~(\cite{hilbert1,hilbert2}), in which he proved: 1) Every non-negative bi-variate degree 4 \textit{homogeneous} polynomial (A polynomial whose monomials all have the same degree) is a SOS of three polynomials; 2) Every bi-variate non-negative polynomial is a SOS of four rational functions; 3) Not every non-negative homogeneous polynomial with more than two variables and degree greater than 5 is SOS of polynomials. While there exist systematic ways (e.g., semi-definite programming) to prove that a non-negative polynomial is SOS, proving that a non-negative polynomial is not a SOS of polynomials is not straightforward. Indeed, the first example of a non-negative non-SOS polynomial was published eighty years after Hilbert posed his 17$^{th}$ problem.~\cite{motzkin} constructed a PSD degree 6 polynomial with three variables which is not SOS:
\begin{equation}
M(x_1,x_2,x_3)=x_1^4x_2^2+x_1^2x_2^4-3x_1^2x_2^2x_3^2+x_3^6.
\label{eq:motzkin}
\end{equation}
Non-negativity of $M$ follows directly from the inequality of arithmetic and geometric means, i.e., $(a_1+\cdots+a_n)/n \geq \sqrt[n]{a_1 \cdots a_n}$, by letting $n=3, a_1=x_1^4 x_2^2, a_2=x_1^2 x_2^4$ and $a_3=x_3^6$. To show that $M$ is not SOS, first by contradiction suppose that there exist some $N \in \mathbb{N}$ and coefficients $b_{i,j} \in \mathbb{R}$ such that
\begin{align}
\hspace{-0.1in} M(x_1,x_2,x_3) =  \sum_{i=1}^N \Bigg(
\Big[
 b_{i,1} \;\; \cdots \;\; b_{i,20} 
\Big]
 \Big[\begin{matrix} x_1^3  & x_1^2 x_2 &  x_1^2 x_3 & x_1 x_2^2 & x_1 x_2 x_3 & x_1 x_3^2 & x_2^3  \end{matrix} \nonumber \\
\renewcommand\arraystretch{0.8}  
 \begin{matrix} x_2^2 x_3 & x_2 x_3^2 & x_3^3 & x_1^2 & x_1 x_2 
 & x_2 x_3 & x_2^2 & x_2 x_3 & x_3^2 & x_1 \;\; x_2 \;\;  x_3 \;\;  1 \end{matrix}  \Big]^T \Bigg)^2.
\label{eq:motzkin_expand}
\end{align}
By substituting~\eqref{eq:motzkin} in~\eqref{eq:motzkin_expand} and equating the coefficients of both sides of~\eqref{eq:motzkin_expand}, it follows that $\sum_{i=1}^N b^2_{i,5} = -3$. This is a contradiction, thus $M$ is not SOS of polynomials. A generalization of Motzkin's example is given by Robinson (\cite{Hil17_reznick}). Polynomials of the form $(\prod_{i=1}^n x_i^2) f(x_1,\cdots,x_n)+1$ are not SOS if polynomial $f$ of degree $<2n$ is not SOS. Hence, although the non-homogeneous Motzkin polynomial $M(x_1,x_2,1)=x_1^2 x_2^2(x_1^2+x_2^2-3)+1$ is non-negative it is not SOS.

\cite{artin} answered Hilbert's problem in the following theorem.
\begin{mythm} (Artin's theorem)
A polynomial $f\in \mathbb{R}[x]$ satisfies $f(x) \geq 0$ on $\mathbb{R}^n$ if and only if there exist SOS polynomials $N$ and $D \neq 0$ such that $f(x)= \frac{N(x)}{D(x)}$.
\end{mythm}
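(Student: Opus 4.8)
The plan is to treat the two implications separately; the ``if'' direction is elementary, while the ``only if'' direction rests on the Artin--Schreier theory of ordered fields together with the Tarski--Seidenberg transfer principle. For the ``if'' direction, suppose $f = N/D$ with $N,D$ sums of squares and $D \neq 0$. Then $f(x)\,D(x)^2 = N(x)\,D(x) \geq 0$ for every $x \in \mathbb{R}^n$. Since $D$ is a nonzero polynomial, the set $\{x : D(x)\neq 0\}$ is dense in $\mathbb{R}^n$, and on that set $D(x)^2 > 0$ forces $f(x) \geq 0$; continuity of $f$ then extends the inequality to all of $\mathbb{R}^n$.

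For the ``only if'' direction I would argue by contraposition: assuming $f$ is \emph{not} a sum of squares in the field $\mathbb{R}(x_1,\ldots,x_n)$, I would construct a point of $\mathbb{R}^n$ at which $f$ is negative. First observe that the set $\Sigma$ of sums of squares of rational functions is a preordering of $\mathbb{R}(x)$ not containing $-1$, since $\mathbb{R}(x)$ is formally real (it carries at least one ordering). Because $f \notin \Sigma$, a Zorn's lemma argument from Artin--Schreier theory yields an ordering $P$ of $\mathbb{R}(x)$ with $\Sigma \subseteq P$ and $-f \in P$, i.e. $f <_P 0$. Let $R$ be the real closure of the ordered field $(\mathbb{R}(x),P)$; then $R$ is a real closed field containing $\mathbb{R}$, and the tuple $\bar{x} = (x_1,\ldots,x_n) \in R^n$ satisfies $f(\bar{x}) < 0$ in $R$.

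Finally I would invoke the Tarski--Seidenberg transfer principle (model completeness of real closed fields): the first-order sentence $\exists\, \xi_1 \cdots \exists\, \xi_n \;\; f(\xi_1,\ldots,\xi_n) < 0$, whose parameters are the coefficients of $f$ lying in $\mathbb{R}$, holds in $R$ (witnessed by $\bar{x}$) and therefore holds in the real closed subfield $\mathbb{R}$. Hence there exists $a \in \mathbb{R}^n$ with $f(a) < 0$, contradicting $f \geq 0$ on $\mathbb{R}^n$. Once the representation $f = \sum_i (g_i/h_i)^2$ is secured, setting $h = \prod_i h_i$ gives $f = N/D$ with $N = \sum_i (g_i h / h_i)^2$ and $D = h^2$ both honest polynomial sums of squares and $D \neq 0$.

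The main obstacle is that the forward direction leans on two nontrivial external facts: the Artin--Schreier result that an element outside a proper preordering can be made negative by some compatible ordering, and the transfer principle that pulls a ``bad point'' in the abstract real closure $R$ back to a concrete bad point in $\mathbb{R}^n$. Making either self-contained would be a substantial detour, so I would cite them as black boxes; everything else is bookkeeping with preorderings and clearing denominators.
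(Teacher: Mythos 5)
The paper does not actually prove this statement: it is quoted as Artin's resolution of Hilbert's 17th problem and attributed to the cited reference, with the text explicitly remarking that Artin's proof was non-constructive. So there is no in-paper argument to compare against, and your proposal has to stand on its own. On its own it is correct and is essentially Artin's classical argument. The ``if'' direction is fine: from $fD = N$ you get $fD^2 = ND \geq 0$ everywhere, hence $f \geq 0$ on the dense set $\{D \neq 0\}$ and everywhere by continuity. The ``only if'' direction is the standard Artin--Schreier/Tarski route: since $\mathbb{R}(x_1,\ldots,x_n)$ is formally real, the sums of squares form a proper preordering, an element outside it is negative in some ordering $P$, and passing to the real closure $R$ of $(\mathbb{R}(x),P)$ produces a point of $R^n$ (namely the tuple of indeterminates) where $f<0$; model completeness of real closed fields then transfers the existential sentence back to $\mathbb{R}$. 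Two small points worth making explicit if you write this up: first, you need that $P$ restricts to the unique ordering of $\mathbb{R}$ (true because every positive real is a square), so that $\mathbb{R}$ sits inside $R$ as an ordered, hence elementary, subfield; second, the equivalence between ``$f$ is a sum of squares in $\mathbb{R}(x)$'' and ``$f = N/D$ with $N,D$ polynomial SOS, $D\neq 0$'' is exactly your denominator-clearing step $f=\sum_i(g_i/h_i)^2 = \bigl(\sum_i (g_i h/h_i)^2\bigr)/h^2$, which is worth stating as a separate lemma since the theorem is phrased in the latter form. The two black boxes you cite are genuinely unavoidable at this level of generality, so treating them as such is reasonable.
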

Although Artin settled Hilbert's problem, his proof was neither constructive nor gave a characterization of the numerator $N$ and denominator $D$. 
In 1939, Habicht provided some structure on $N$ and $D$ for a certain class of polynomials $f$. \cite{habicht} showed that if a homogeneous polynomial $f$ is positive definite and can be expressed as $f(x_1, \cdots, x_n)$ $=g(x_1^2,\cdots,x_n^2)$ for some polynomial $g$, then one can choose the denominator $D=\sum_{i=1}^n x_i^2$. Moreover, he showed that by using $D=\sum_{i=1}^n x_i^2$, the numerator $N$ can be expressed as a sum of squares of monomials. Habicht used Polya's theorem (\cite{polya_book}, Theorem 56) to obtain the above characterizations for $N$ and $D$.
\begin{mythm}(Polya's theorem)
Suppose a homogeneous polynomial $p$ satisfies $p(x) > 0$ for all $x \in  \{ x \in \mathbb{R}^n : x_i \geq 0, \sum_{i=1}^n {x_i} \neq 0 \}$. Then $p(x)$ can be expressed as
\[
p(x) = \dfrac{N(x)}{D(x)},
\]
where $N(x)$ and $D(x)$ are homogeneous polynomials with all positive coefficients. Furthermore, for every homogeneous $p(x)$ and some $e \geq 0$, the denominator $D(x)$ can be chosen as $(x_1+ \cdots +x_n)^e$.
\label{thm:polya}
\end{mythm}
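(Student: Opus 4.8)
The plan is to prove the stronger ``furthermore'' part directly: I will show that there exists an integer $e \geq 0$ such that the homogeneous polynomial $(x_1 + \cdots + x_n)^e\, p(x)$ has \emph{all} coefficients strictly positive. Taking $D(x) = (x_1 + \cdots + x_n)^e$ and $N(x) = D(x)\, p(x)$ then settles both assertions at once. The first reduction is to pass to the standard simplex. Since $p$ is homogeneous of degree $d$, positivity of $p$ on the punctured nonnegative orthant is equivalent to positivity of $p$ on the compact set $\Delta := \{x \in \mathbb{R}^n : x_i \geq 0,\ \sum_i x_i = 1\}$, and by continuity and compactness $p$ attains a minimum $m > 0$ there. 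Write $p(x) = \sum_{|\alpha| = d} c_\alpha x^\alpha$ in multi-index notation and expand $(x_1 + \cdots + x_n)^e$ by the multinomial theorem; then the coefficient of $x^\gamma$ in the product $(x_1 + \cdots + x_n)^e\, p(x)$, for a multi-index $\gamma$ with $|\gamma| = e + d$, is $\sum_{|\alpha| = d} c_\alpha \binom{e}{\gamma - \alpha}$, with the convention that $\binom{e}{\gamma - \alpha} = 0$ whenever $\gamma - \alpha$ fails to be a nonnegative multi-index.

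The core of the argument is a normalization-and-asymptotics step. I would divide the coefficient of $x^\gamma$ by the multinomial coefficient $\binom{e + d}{\gamma}$ and use the falling-factorial identity $\binom{e}{\gamma - \alpha}\big/\binom{e + d}{\gamma} = \prod_{i} (\gamma_i)^{\underline{\alpha_i}} \big/ (e+d)^{\underline{d}}$ to rewrite the normalized coefficient as
\[
\sum_{|\alpha| = d} c_\alpha\; \frac{\prod_{i=1}^n \gamma_i(\gamma_i - 1)\cdots(\gamma_i - \alpha_i + 1)}{(e+d)(e+d-1)\cdots(e+1)},
\]
where I note that the falling factorial $\gamma_i(\gamma_i - 1)\cdots(\gamma_i - \alpha_i + 1)$ automatically vanishes when $\alpha_i > \gamma_i$, so this formula is valid for every $\alpha$ with $|\alpha| = d$. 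Setting $y_i := \gamma_i/(e+d)$, so that $y \in \Delta$, each summand's ratio differs from $y^\alpha = \prod_i y_i^{\alpha_i}$ by an error of size $O(1/e)$, and since $0 \leq y_i \leq 1$ and $d$ is fixed this error is bounded uniformly in $\gamma$. Summing against the fixed, finitely many coefficients $c_\alpha$ gives that the normalized coefficient of $x^\gamma$ equals $p(y) + O(1/e) \geq m + O(1/e)$, again uniformly in $\gamma$. Hence for all $e$ large enough the normalized coefficient, and therefore the coefficient of $x^\gamma$ itself, is strictly positive; as $\gamma$ ranges over all multi-indices of weight $e + d$, every coefficient of $(x_1 + \cdots + x_n)^e\, p(x)$ is positive, as required.

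The step I expect to be the main obstacle is verifying that the $O(1/e)$ error in the asymptotic estimate is genuinely \emph{uniform} over all $\gamma$ with $|\gamma| = e + d$. A pointwise version of the estimate is essentially a Bernstein-polynomial (weak law of large numbers) fact, but pointwise positivity would only make one coefficient at a time positive and does not suffice. To obtain uniformity I would expand the falling factorials explicitly, writing $(\gamma_i)^{\underline{\alpha_i}} = \gamma_i^{\alpha_i} - \binom{\alpha_i}{2}\gamma_i^{\alpha_i - 1} + \cdots$ and $(e+d)^{\underline{d}} = (e+d)^d\bigl(1 + O(1/e)\bigr)$, and then bound every error term using $0 \leq \gamma_i \leq e + d$ together with $\max_\alpha |c_\alpha| < \infty$; all implied constants then depend only on $n$, $d$, and $\max_\alpha |c_\alpha|$, not on $\gamma$ or $e$, which is exactly what makes the single threshold for $e$ work for all coefficients simultaneously. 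A minor piece of bookkeeping to dispatch along the way is the degenerate case $d = 0$ (constant $p$), for which the claim holds trivially with $e = 0$.
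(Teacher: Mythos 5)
Your proof is correct, and it is essentially the classical argument of P\'olya: the paper itself gives no proof of this theorem, deferring instead to Theorem 56 (pp.~57--59) of Hardy--Littlewood--P\'olya, and the proof found there is exactly your route --- normalize the coefficient of $x^\gamma$ in $(x_1+\cdots+x_n)^e\,p(x)$ by the multinomial coefficient $\binom{e+d}{\gamma}$, recognize the result as $p$ evaluated at the simplex point $y=\gamma/(e+d)$ up to a uniform $O(1/e)$ error, and compare with the positive minimum $m$ of $p$ on the compact simplex. The uniformity of the error is indeed the only delicate point, and your resolution via $(\gamma_i)^{\underline{\alpha_i}}/(e+d)^{\alpha_i}=\prod_{j=0}^{\alpha_i-1}\bigl(y_i-j/(e+d)\bigr)$ together with $0\le y_i\le 1$ and the fixed degree $d$ is sound, so a single threshold for $e$ works for all $\gamma$ simultaneously.
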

To see Habicht's result, suppose $f$ is homogeneous and positive on the positive orthant and can be expressed as $f(x_1, \cdots, x_n)=g(x_1^2,\cdots,x_n^2)$ for some homogeneous polynomial $g$. By using Polya's theorem, $g(y)=\frac{N(y)}{D(y)}$, where $y:=(y_1,\cdots,y_n)$ and polynomials $N$ and $D$ have all positive coefficients. Furthermore, from Theorem~\ref{thm:polya} we may choose $D(y)=\left( \sum_{i=1}^n y_i \right)^e$. Then, $\left( \sum_{i=1}^n y_i \right)^e g(y)=N(y)$. Now let $x_i=\sqrt{y_i} $, then $\left( \sum_{i=1}^n x_i^2 \right)^e f(x_1,\cdots,x_n)=N(x_1^2,\cdots,x_n^2)$. Since $N$ has all positive coefficients, $N(x_1^2,\cdots,x_n^2)$ is a sum of squares of monomials.

Similar to the case of positive definite polynomials, ternary positive semi-definite polynomials of the form $g(x_1^2, x_2^2, x_3^2)$ can be parameterized using the denominator $D=(x_1^2+x_2^2+x_3^2)^N$~(\cite{scheiderer2006sums}). However, in any dimension higher than three, there exist positive semi-definite polynomials $f$ such that if $h^2 f$ is SOS, then $h$ has a zero other than the origin. Thus, for such polynomials $f$, $D f$ cannot be SOS.
Indeed, it has been shown by~\cite{reznick_no_denominator} that there exists no single SOS polynomial $D \neq 0$ which satisfies $f=\frac{N}{D}$ for every positive semi-definite $f$ and some SOS polynomial $N$.

As in the case of positivity on $\mathbb{R}^n$, there has been an extensive research regarding positivity of polynomials on bounded sets. A pioneering result on local positivity is Bernstein's theorem~(\cite{bernstein_1915}). Bernstein's theorem uses the polynomials $h_{i,j}=(1+x)^i(1-x)^j$ as a basis to parameterize univariate polynomials which are positive on $[-1,1]$.
\begin{mythm}(Bernstein's theorem)
If a polynomial $f(x) > 0$ on $[-1,1]$, then there exist $c_{i,j} > 0$ such that \[
f(x)= \sum_{ i,j \in \mathbb{N}: \, i+j=d} c_{i,j} (1+x)^i(1-x)^j
\]
for some $d > 0$.
\end{mythm}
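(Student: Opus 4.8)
The plan is to derive Bernstein's theorem from Polya's theorem (Theorem~\ref{thm:polya}) by an affine change of variables that simultaneously carries $[-1,1]$ to the two-dimensional positive orthant and carries the basis $\{(1+x)^i(1-x)^j\}$ to the monomial basis. First I would normalize the interval: set $t = \tfrac{1+x}{2}$, so that $x\in[-1,1]$ corresponds to $t\in[0,1]$, and define $g(t):=f(2t-1)=\sum_{k=0}^{n} a_k t^k$, where $n:=\deg f$. Then $g$ is a polynomial with $g(t)>0$ for all $t\in[0,1]$, and $1-t=\tfrac{1-x}{2}$.

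Second, I would homogenize in two variables $y_1,y_2$ by setting
\[
G(y_1,y_2):=(y_1+y_2)^n\, g\!\left(\frac{y_1}{y_1+y_2}\right)=\sum_{k=0}^{n} a_k\, y_1^{\,k}(y_1+y_2)^{n-k}.
\]
The right-hand side exhibits $G$ as a genuine homogeneous polynomial of degree $n$ in $(y_1,y_2)$ (not merely a rational function), since each $k$ satisfies $k\le n$. Moreover, for every $(y_1,y_2)$ in the closed orthant with $y_1+y_2\neq 0$ we have $\tfrac{y_1}{y_1+y_2}\in[0,1]$, hence $G(y_1,y_2)>0$; this is exactly where positivity of $g$ on the \emph{closed} interval $[0,1]$ — equivalently, positivity of $f$ at the endpoints $\pm 1$ — is used.

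Third, I would apply Theorem~\ref{thm:polya} to $G$: there is an integer $e\ge 0$ such that $(y_1+y_2)^e G(y_1,y_2)$ is a homogeneous polynomial of degree $d:=n+e$ with all coefficients strictly positive, i.e. $(y_1+y_2)^e G(y_1,y_2)=\sum_{i+j=d} c_{i,j}\, y_1^{\,i} y_2^{\,j}$ with $c_{i,j}>0$. Finally I would undo the substitution by putting $y_1=1+x$, $y_2=1-x$, so that $y_1+y_2=2$ and $y_1^{\,i}y_2^{\,j}=(1+x)^i(1-x)^j$. On one side, $(y_1+y_2)^e G(y_1,y_2)=2^e\cdot 2^n\, g\!\left(\tfrac{1+x}{2}\right)=2^d f(x)$; on the other side it equals $\sum_{i+j=d} c_{i,j}(1+x)^i(1-x)^j$. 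Dividing by $2^d$ and setting $c_{i,j}':=c_{i,j}/2^d>0$ yields the desired representation $f(x)=\sum_{i+j=d} c_{i,j}'(1+x)^i(1-x)^j$.

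The only genuinely delicate point is the homogenization step: one must verify that $G$ is a polynomial rather than a rational function, and that strict positivity of $G$ persists all the way to the boundary rays $y_1=0$ and $y_2=0$, since Polya's theorem demands strict positivity on all of $\{\,x\ge 0,\ \sum_i x_i\neq 0\,\}$. This is precisely why the hypothesis must be positivity on the closed interval $[-1,1]$ and not merely on the open interval. Everything else is bookkeeping of degrees and of the powers of $2$ introduced by the affine maps.
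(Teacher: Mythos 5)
Your proposal is correct. Note, however, that the paper offers no proof of this statement at all: Bernstein's theorem is stated as a classical result with a citation to the 1915 original, so there is nothing to compare against line by line. Your route --- substituting $t=\tfrac{1+x}{2}$, homogenizing $g(t)=f(2t-1)$ into $G(y_1,y_2)=\sum_k a_k y_1^k(y_1+y_2)^{n-k}$, invoking Theorem~\ref{thm:polya} on the closed two-dimensional orthant, and dehomogenizing along $y_1=1+x$, $y_2=1-x$ where $y_1+y_2=2$ --- is the standard modern derivation of Bernstein's theorem as the two-variable, dehomogenized instance of Polya's theorem, and every step checks out: $G$ is visibly a polynomial, strict positivity on $\{y\geq 0,\ y_1+y_2\neq 0\}$ follows from positivity of $f$ on the \emph{closed} interval exactly as you say, and the powers of $2$ cancel into the positive constants $c_{i,j}'=c_{i,j}/2^d$. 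Two small housekeeping points: the conclusion as stated requires $c_{i,j}>0$ for \emph{every} pair with $i+j=d$, which is indeed what Polya's theorem delivers (all coefficients of $(y_1+y_2)^eG$ are strictly positive, not merely the nonzero ones); and if $f$ is constant one may need to take $e\geq 1$ to satisfy the cosmetic requirement $d>0$, which costs nothing since multiplying by $(y_1+y_2)$ preserves positivity of coefficients. The only caveat worth recording is logical economy rather than correctness: Bernstein's 1915 result predates Polya's 1928 theorem, so your argument inverts the historical dependency; as a self-contained proof within this paper's framework, where Polya's theorem is taken as given, it is perfectly sound.
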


\cite{Reznick_powers_interval2000} used Goursat's transformation of $f$ to find an upper bound on $d$. Unfortunately, the bound itself is a function of the minimum of $f$ on $[-1,1]$. In order to reduce the computational complexity of testing positivity,~\cite{roy} proposed a decomposition scheme for breaking $[-1,1]$ into a collection of sub-intervals. Subsequently, Bernstein's theorem was applied to $f$ over each sub-interval to find a certificate of positivity over each sub-interval. An extension of this technique was proposed in~\cite{leroy} to verify positivity over simplices (a simplex is the convex hull of $n + 1$ vertices in $\mathbb{R}^n$). Moreover,~\cite{leroy} provided a degree bound as a function of the minimum of $f$ over the simplex, the number of variables in $f$, the degree of $f$ and the maximum of certain affine combinations of the coefficients $c_{i,j}$.

\cite{handelman1988} also used products of affine functions as a basis (the Handelman basis) to extend Bernstein's theorem to multi-variate polynomials which are positive on convex polytopes. 
\begin{mythm} (Handelman's Theorem)
\label{thm:Handelman} Given $w_i \in \mathbb{R}^n$ and $u_i \in \mathbb{R}$, define the polytope $\Gamma^K := \{ x\in \mathbb{R}^n : w_i^Tx + u_i\geq 0, i=1,\cdots,K \}$. If a polynomial $f(x) > 0$ on $\Gamma^K$, then there exist $b_\alpha \geq 0$, $\alpha \in \mathbb{N}^K$ such that for some $d \in \mathbb{N}$,
\begin{equation}
f(x) = \sum _{\substack{\alpha \in \mathbb{N}^K \\ \alpha_1+\cdots+\alpha_K \leq d}} b_\alpha (w_1^T x+u_1)^{\alpha_1} \cdots (w_K^T x+u_K)^{\alpha_K}.
\label{eq:handelman_representation}
\end{equation}
\end{mythm}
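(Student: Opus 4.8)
The plan is to deduce Handelman's theorem from Polya's theorem (Theorem~\ref{thm:polya}) by passing to the \emph{slack coordinates} $s_i:=\ell_i(x)$, where $\ell_i(x):=w_i^Tx+u_i$. I first treat the generic case in which $\Gamma^K$ has nonempty interior (the lower-dimensional case reduces to this by restricting to the affine hull of $\Gamma^K$). Since $\Gamma^K$ is bounded its facet normals positively span $\mathbb{R}^n$, so there exist scalars $\nu_i>0$ with $\sum_{i=1}^K\nu_i w_i=0$; replacing each inequality $\ell_i\ge 0$ by $\nu_i\ell_i\ge 0$ (which only rescales the coefficients $b_\alpha$ we are after) I may assume $\sum_i w_i=0$. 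Then $\sum_{i=1}^K\ell_i(x)$ is the constant $c:=\sum_i u_i$, and $c>0$ because this sum is strictly positive on the interior of $\Gamma^K$. The map $\sigma:\mathbb{R}^n\to\mathbb{R}^K$, $\sigma(x):=(\ell_1(x),\dots,\ell_K(x))$, is an affine injection (the $w_i$ span $\mathbb{R}^n$, as $\Gamma^K$ is bounded) whose image is an $n$-dimensional affine subspace $\mathcal{A}$ sitting inside the slice $H_c:=\{s:\sum_i s_i=c\}$, and one checks directly that $\sigma(\Gamma^K)=\mathcal{A}\cap\mathbb{R}^K_{\ge 0}$.

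Define $\tilde f$ on $\mathcal{A}$ by $\tilde f(\sigma(x)):=f(x)$; composing $f$ with any affine left inverse of $\sigma$ exhibits $\tilde f$ as the restriction to $\mathcal{A}$ of a polynomial $\tilde f_{\mathrm{ext}}$ on $\mathbb{R}^K$, and $\tilde f>0$ on $\mathcal{A}\cap\mathbb{R}^K_{\ge 0}=\sigma(\Gamma^K)$. The crux is to manufacture a polynomial that still equals $\tilde f$ on $\mathcal{A}$ but is strictly positive on the \emph{whole} simplex $S:=\mathbb{R}^K_{\ge 0}\cap H_c$. Let $d(s):=\mathrm{dist}(s,\mathcal{A})^2$, a non-negative quadratic vanishing exactly on $\mathcal{A}$, and set $G(s):=\tilde f_{\mathrm{ext}}(s)+\lambda\,d(s)$. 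The subset of $S$ on which $\tilde f_{\mathrm{ext}}\le 0$ is compact and disjoint from $\mathcal{A}$ (since $\tilde f_{\mathrm{ext}}=\tilde f>0$ on $\mathcal{A}\cap S=\sigma(\Gamma^K)$), so $d$ is bounded below by some $\varepsilon>0$ on that subset; taking $\lambda>\varepsilon^{-1}\max\!\big(0,\,-\min_S\tilde f_{\mathrm{ext}}\big)$ makes $G>0$ on all of $S$. Finally, let $F$ be the homogenization of $G$ with respect to the linear form $\tfrac1c\sum_i s_i$: a homogeneous polynomial that coincides with $G$ on $H_c$, hence is strictly positive on $S$ and, by homogeneity, on $\mathbb{R}^K_{\ge 0}\setminus\{0\}$, while still $F=G=\tilde f$ on $\mathcal{A}$.

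Now Polya's theorem applied to $F$ produces an integer $N\ge 0$ and coefficients $c_\beta\ge 0$ with $\big(\sum_{i=1}^K s_i\big)^N F(s)=\sum_{|\beta|=\deg F+N}c_\beta\,s^\beta$. Substituting $s=\sigma(x)=(\ell_1(x),\dots,\ell_K(x))$ for $x\in\Gamma^K$ gives $s\in\mathcal{A}$, so $F(s)=\tilde f(s)=f(x)$ and $\sum_i s_i=c$; hence $c^N f(x)=\sum_\beta c_\beta\prod_{i=1}^K\ell_i(x)^{\beta_i}$ for every $x\in\Gamma^K$, and since both sides are polynomials agreeing on the full-dimensional set $\Gamma^K$ this is a polynomial identity. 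Dividing by $c^N>0$ yields the representation~\eqref{eq:handelman_representation} with $b_\beta:=c_\beta/c^N\ge 0$ and $d:=\deg F+N$.

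The main obstacle is the construction of the auxiliary polynomial in the second step: it must at once (i) be an honest polynomial, (ii) restrict \emph{exactly} to $\tilde f$ on $\mathcal{A}$ — so that the slack substitution recovers $f$ itself and not merely an approximation — and (iii) be strictly positive on the \emph{entire} non-negative orthant, as Polya's theorem demands. The additive term $\lambda\,d(s)$ secures (i) and (ii) for free and buys (iii), but the positivity margin $\varepsilon>0$ uses the compactness of $\Gamma^K$ in an essential way; this is exactly the hypothesis whose failure defeats the theorem (on the unbounded polyhedron $[0,\infty)$ the everywhere-positive polynomial $x^2-x+1$ has no such representation). A secondary point demanding care is the preprocessing — extracting a strictly positive linear combination of the $\ell_i$ that is constant — which likewise rests on boundedness and also underwrites the injectivity of $\sigma$.
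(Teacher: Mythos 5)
The thesis itself offers no proof of this statement --- it is quoted from Handelman's 1988 paper and used as a black box --- so your argument cannot be ``the same as the paper's''; it is, however, a correct and essentially self-contained alternative: a reduction of Handelman's theorem to Polya's theorem via slack variables. The load-bearing steps all check out. Boundedness of a nonempty $\Gamma^K$ forces the recession cone $\{y: w_i^Ty\ge 0\ \forall i\}$ to be trivial, hence $\mathrm{cone}\{w_i\}=\mathbb{R}^n$, and writing $-\sum_i w_i$ as a nonnegative combination of the $w_i$ produces your strictly positive $\nu_i$; the normalization $\sum_i \ell_i\equiv c>0$ and the identification $\sigma(\Gamma^K)=\mathcal{A}\cap\mathbb{R}^K_{\ge 0}$ follow. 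The penalty $\lambda\,\mathrm{dist}(\cdot,\mathcal{A})^2$ is an honest quadratic polynomial vanishing exactly on $\mathcal{A}$, the compactness argument for choosing $\lambda$ is sound, and the homogenization and appeal to Polya's theorem are routine. One small simplification: since $\sigma(\mathbb{R}^n)=\mathcal{A}$ and $F|_{\mathcal{A}}=\tilde f$, the substitution $s=\sigma(x)$ gives $c^Nf(x)=\sum_\beta c_\beta\prod_i\ell_i(x)^{\beta_i}$ for \emph{all} $x\in\mathbb{R}^n$, so the closing density argument on $\Gamma^K$ is unnecessary. What your route buys is a short derivation of Handelman from the one certificate the thesis already builds its machinery on, and it isolates exactly where compactness is used (your $x^2-x+1$ example on $[0,\infty)$ is the right witness that the hypothesis cannot be dropped).

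Two caveats, neither fatal to the substantive case. First, the dispatch of the empty-interior case ``by restricting to the affine hull'' is not as immediate as claimed: a representation of $f|_{\mathrm{aff}(\Gamma^K)}$ in the restricted functionals only determines $f$ modulo the ideal of the affine hull, and you have no sign control on that correction term, so it does not obviously lift to an identity of the required form on $\mathbb{R}^n$. Since essentially every statement of Handelman's theorem --- and every use of it in this thesis --- concerns full-dimensional polytopes, the clean fix is to add the nonempty-interior hypothesis rather than assert the reduction. Second, the argument tacitly assumes $\Gamma^K\neq\emptyset$ (needed both for the recession-cone step and for $c>0$); this too should be stated, though it is implicit in the word ``polytope.''
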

Recently,~\cite{Handelman_Sankaranarayanan} combined the Handelman basis with positive basis functions 
\[x_1^{\alpha_1} \cdots x_n^{\alpha_n}- l_\alpha \quad \text{and} \quad  u_\alpha - x_1^{\alpha_1} \cdots x_n^{\alpha_n}
\] to compute Lyapunov functions over a hypercube $\Phi$, where $l_{\alpha}$ and $u_{\alpha}$ are the minimum and maximum of $x_1^{\alpha_1} \cdots x_n^{\alpha_n}$ over the hypercube $\Phi$. A generalization of Handelman's theorem was made by~\cite{schweighofer_nonnegativity} to verify non-negativity of polynomials over compact semi-algebraic sets. Schweighofer used the cone of polynomials\footnote{A set of polynomials $S \subset \mathbb{R}[x_1, \cdots, x_n]$ is a \textit{cone} if:
1) $f_1 \in S$ and $f_2 \in S$ imply $f_1f_2 \in S$ and $f_1+f_2 \in S$; and
2) $f \in \mathbb{R}[x_1, \cdots, x_n]$ implies $f^2 \in S$.
} defined in~\eqref{eq:cone}
to parameterize any polynomial $f$ which has the following properties:
\begin{enumerate}
\item $f$ is non-negative over the compact semi-algebraic set $S$ defined in~\eqref{eq:semialgebraic_set}
\item $f=q_1 p_1 + q_2 p_2 + \cdots$ for some $q_i$ in the Cone~\eqref{eq:cone} and for some $p_i > 0$ over 
\[
S \cap \{x \in \mathbb{R}^n:f(x)=0 \}
\]
\end{enumerate}

\begin{mythm}(Schweighofer's theorem)
Suppose 
\begin{equation}
S:=\{x \in \mathbb{R}^n: g_i(x) \geq 0, g_i \in \mathbb{R}[x] \text{ for } i=1, \cdots, K\}
\label{eq:semialgebraic_set}
\end{equation}
is compact. Define the following set of polynomials which are positive on $S$.
\begin{equation}
\Theta_d:= \left\lbrace  \sum_{\substack{\lambda \in \mathbb{N}^K: \lambda_1+\cdots+\lambda_K \leq d}} s_\lambda g_1^{\lambda_1} \cdots g_K^{\lambda_K}: s_\lambda \text{ are SOS }   \right\rbrace
\label{eq:cone}
\end{equation}
If $f \geq 0$ on $S$ and there exist $q_i \in \Theta_d$ and polynomials $p_i > 0$ on $S  \cap \{x \in \mathbb{R}^n:f(x)=0 \}$ such that $f= \sum_{i} q_i p_i$
for some $d$, then $f \in \Theta_d$.
\label{Schweighofer}
\end{mythm}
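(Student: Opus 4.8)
The plan is to deduce the claim from Schmüdgen's Positivstellensatz — available here because $S$ is compact — which guarantees that every polynomial strictly positive on $S$ lies in the preordering generated by $g_1, \dots, g_K$, and this preordering coincides with $\Theta := \bigcup_{d} \Theta_d$ (in fact with $\Theta_K$, since each $g_i^2$ is SOS). So it suffices to show $f \in \Theta$; keeping track of the exponents that occur then recovers the stated membership in a fixed $\Theta_d$. Throughout, write $f = \sum_{i=1}^r q_i p_i$ with $q_i \in \Theta$ and $p_i > 0$ on the compact set $Z := S \cap \{x : f(x) = 0\}$.

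The heart of the argument is a shift that turns each $p_i$ into a polynomial that is strictly positive on all of $S$. Since the finitely many $p_i$ are continuous and positive on the compact set $Z$, there is $\delta > 0$ with $p_i \geq \delta$ on $Z$ for every $i$; a routine compactness argument — any sequence in $S$ along which $f \to 0$ subconverges to a point of $Z$ — then produces $\epsilon > 0$ such that $p_i \geq \delta/2$ on $S \cap \{f \leq \epsilon\}$ for all $i$. Since also $p_i \geq -C$ on $S$ for some $C \geq 0$, choosing $t$ large enough (e.g.\ $t \geq 2(C+1)/\epsilon$) and using $f \geq 0$ on $S$ shows that $h_i := p_i + t f > 0$ on all of $S$: on $S \cap \{f \leq \epsilon\}$ the $p_i$ term already dominates, while on $S \cap \{f > \epsilon\}$ the $t f$ term does. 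By Schmüdgen's theorem each $h_i \in \Theta$. Substituting $p_i = h_i - t f$ into $f = \sum_i q_i p_i$ and collecting the terms in $f$ yields
\[
\Bigl(1 + t\sum_{i=1}^r q_i\Bigr) f \;=\; \sum_{i=1}^r q_i h_i .
\]
The right-hand side is a sum of products of elements of $\Theta$, hence lies in $\Theta$; and $w := 1 + t\sum_i q_i$ lies in $\Theta$ and satisfies $w \geq 1$ on $S$ (each $q_i \geq 0$ on $S$). Thus $wf \in \Theta$.

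It remains to cancel the factor $w$, i.e.\ to pass from $wf \in \Theta$ and $w \geq 1$ on $S$ to $f \in \Theta$, and this is the step I expect to be the real obstacle. The idea is that $w$ is bounded on the compact set $S$, say $1 \leq w < R$ there, so its reciprocal can be approximated by polynomials in $w$: evaluating the finite geometric identity $\tfrac{y}{R}\sum_{k=0}^{N}\bigl(1-\tfrac{y}{R}\bigr)^k = 1 - \bigl(1-\tfrac{y}{R}\bigr)^{N+1}$ at $y = w$ (with $N$ odd) produces a polynomial $a := \tfrac{1}{R}\sum_{k=0}^{N}\bigl(1-\tfrac{w}{R}\bigr)^k$ that is positive on $S$ — hence $a \in \Theta$ by Schmüdgen — together with $wa + \theta^2 = 1$, where $\theta^2 := \bigl(1-\tfrac{w}{R}\bigr)^{N+1}$ is a perfect square. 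Multiplying by $f$ gives $f = (wf)\,a + f\theta^2$, where $(wf)a \in \Theta$ and the residual $f\theta^2$ is uniformly small on $S$ (since $\theta^2 \leq (1-1/R)^{N+1}$ there) and, being equal to $\sum_i (q_i\theta^2) p_i$ with $q_i\theta^2 \in \Theta$ and the same $p_i$ positive on $S \cap \{f\theta^2 = 0\} = Z$ (because $\theta$ has no zero on $S$), again satisfies the hypotheses of the theorem. Turning this self-similar structure together with the quantitative smallness of the residual into an honest membership $f \in \Theta$ — equivalently, establishing the cancellation property of the archimedean preordering — is the delicate part; everything preceding it is the standard ``shift and apply Schmüdgen'' pattern.
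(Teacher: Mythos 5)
The dissertation does not actually prove this theorem --- it is quoted as background and attributed to Schweighofer --- so there is no in-house proof to compare against; I am judging your proposal on its own merits. Up to the point you yourself flag, it is correct and is indeed the standard opening of Schweighofer's argument: the compactness argument producing $\epsilon$ and $t$ so that $h_i := p_i + tf > 0$ on all of $S$, the resulting identity $\bigl(1 + t\sum_i q_i\bigr) f = \sum_i q_i h_i$, the observation that $\bigcup_d \Theta_d$ is the preordering $T$ generated by $g_1,\ldots,g_K$ (so that Schm\"udgen's Positivstellensatz applies to the $h_i$ and to $w := 1 + t\sum_i q_i$), and the remark that exponents $\lambda_i \geq 2$ can be absorbed into the SOS weights are all sound.

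The gap is the cancellation step, and the mechanism you sketch cannot close it. The decomposition $f = (wf)a + f\theta^2$, with $(wf)a \in T$ and with $f\theta^2$ uniformly small on $S$ and again satisfying the hypotheses of the theorem, sets up an iteration $f = A_1 + \cdots + A_N + r_N$ with $A_j \in T$ and $r_N \to 0$ uniformly on $S$. This shows only that $f$ lies in the closure of $T$ under uniform convergence on $S$; but by Schm\"udgen that closure is exactly the cone of polynomials nonnegative on $S$, so you have re-derived a hypothesis rather than the conclusion. The degrees of the partial sums are unbounded, so no fixed $\Theta_d$ is produced, and $T$ is not closed under such limits --- if it were, then since $f + \epsilon \in T$ for every $\epsilon > 0$ whenever $f \geq 0$ on $S$, every nonnegative polynomial on $S$ would lie in $T$, which is false in general. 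The real content of the theorem is a \emph{finite} algebraic cancellation lemma for archimedean preorderings (Scheiderer's so-called basic lemma, which is the engine of Schweighofer's proof); it exploits the boundedness of all polynomials modulo $T$ and the extra structure your construction does in fact provide --- for instance $w - 1 = t\sum_i q_i$ vanishes identically on $S \cap f^{-1}(0)$, since there $\sum_i q_i p_i = 0$ with $q_i \geq 0$ and $p_i \geq \delta > 0$ forces each $q_i = 0$ --- but it is not a consequence of density or approximation. Everything before that point stands; the theorem is not proved without it.
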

 On the assumption that $g_{i}$ are affine functions, $p_i=1$ and $s_\lambda$ are constant, Schweighofer's theorem gives the same parameterization of $f$ as in Handelman's theorem. Another special case of Schweighofer's theorem is when $\lambda \in \{ 0,1\}^K$. In this case, Schweighofer's theorem reduces to 
Schmudgen's Positivstellensatz~(\cite{schmudgen}). Schmudgen's Positivstellensatz states that the cone
\begin{equation}
\Lambda_g:=\left\lbrace \sum_{\lambda \in \{0,1 \}^K }s_{\lambda} g_1^{\lambda_1} \cdots g_K^{\lambda_K}: s_{\lambda} \text{ are } SOS \right\rbrace \subset \Theta_d
\label{schmudgen}
\end{equation}
is sufficient to parameterize every $f > 0$ over the semi-algebraic set $S$ generated by $\{g_1,\cdots,g_K\}$. Unfortunately, the cone $\Lambda_g$ contains $2^K$ products of $g_i$, thus finding a representation of Form~\eqref{schmudgen} for $f$ requires a search for at most $2^K$ SOS polynomials.
Putinar's Positivstellensatz~(\cite{putinar}) reduces the complexity of Schmudgen's parameterization in the case where the quadratic module $M_g$ (as defined in~\eqref{eq:putinar_cone}) of polynomials $g_i$ is \textit{Archimedean}, i.e., there exists $N \in \mathbb{N}$ such that
\begin{equation}
N - \sum_{i=1}^n x_i^2 \in M_g.
\label{eq:Archimedean}
\end{equation}
Equivalently, if there exists some $f \in M_g$ such that $\{ x \in \mathbb{R}^n: f(x) \geq 0 \}$ is compact, then $M_g$ is Archimedean.

\begin{mythm}(Putinars's Positivstellensatz)
Let $S:=\{x \in \mathbb{R}^n: g_i(x) \geq 0, g_i \in \mathbb{R}[x] \text{ for } i=1, \cdots, K\}$ and define 
\begin{equation}
M_g:=\left\lbrace s_0+\sum_{i=1}^K s_i g_i: s_i \text{ are SOS}  \right\rbrace.
\label{eq:putinar_cone}
\end{equation}
If there exist some $N > 0$ such that $N-\sum_{i=1}^n x_i^2 \in M_g$, then $M_g$ is Archimedean. If $M_g$ is Archimedean and $f > 0$ over $S$, then $f \in M_g$.
\label{putinar}
\end{mythm}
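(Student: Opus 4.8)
The plan is to recognize the statement as Putinar's classical Positivstellensatz and to prove the substantive half by a Gelfand--Naimark--Segal (GNS)-type construction together with the spectral theorem for commuting bounded self-adjoint operators. The first assertion is essentially the definition \eqref{eq:Archimedean}: if $N-\sum_{i=1}^n x_i^2\in M_g$ for a real $N>0$, then $\lceil N\rceil-\sum_{i=1}^n x_i^2=\big(N-\sum_{i=1}^n x_i^2\big)+(\lceil N\rceil-N)$ is again in $M_g$ (the nonnegative constant $\lceil N\rceil-N$ is a square, hence an admissible $s_0$), so the integer version holds and $M_g$ is Archimedean. The same inequality evaluated at a point of $S$ shows $S\subseteq\{x:\|x\|^2\le N\}$, so, being closed, $S$ is compact; this fact is used below.

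For the second assertion, suppose $f>0$ on $S$ but, for contradiction, $f\notin M_g$. A preliminary lemma is that $1$ is an order unit for $M_g$, i.e. for every $p\in\mathbb R[x]$ there is $m\in\mathbb N$ with $m\pm p\in M_g$; this follows from the Archimedean condition by the standard argument that the ``bounded'' polynomials form a subring containing each $x_i$ (the key identities being $(N{+}1)\pm 2x_i=(1\pm x_i)^2+(N-x_i^2)$ and, for closure under squaring, $(m{-}p)(m{+}p)^2+(m{+}p)(m{-}p)^2=2m(m^2-p^2)$ followed by division by the positive constant $2m$). Hence $1$ lies in the algebraic interior of the convex cone $M_g$, and the Eidelheit separation theorem produces a nonzero linear functional $L:\mathbb R[x]\to\mathbb R$ with $L\ge 0$ on $M_g$ and $L(f)\le 0$. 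The order-unit property forces $L(1)>0$ (else $0\le L(m\pm p)=\pm L(p)$ gives $L\equiv 0$), so normalize $L(1)=1$; then $L$ is a state, nonnegative on all sums of squares, so $(p,q)\mapsto L(pq)$ is a positive semidefinite symmetric bilinear form.

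The quantitative heart of the proof is the estimate that for each $p$ there is $C_p>0$ with $L(p^2q^2)\le C_p\,L(q^2)$ for all $q$, proved by induction on the structure of $p$ --- the base case $p=x_i$ using $L(x_i^2q^2)\le L\big((\textstyle\sum_j x_j^2)q^2\big)\le N\,L(q^2)$ (since $(N-\sum_j x_j^2)q^2\in M_g$), the inductive steps using $(p_1+p_2)^2\le 2p_1^2+2p_2^2$ and multiplicativity. It follows that $I:=\{p:L(p^2)=0\}$ is an ideal, that $(p,q)\mapsto L(pq)$ descends to an inner product on $\mathbb R[x]/I$ with completion a Hilbert space $\mathcal H$, and that multiplication by any $p$ descends to a bounded operator $\hat p$ with $\|\hat p\|^2\le C_p$; in particular the $\hat X_i:=\widehat{x_i}$ are pairwise commuting, bounded, self-adjoint. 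The spectral theorem then furnishes a projection-valued measure $E$ supported on the compact cube $[-\sqrt N,\sqrt N]^n$ with $\hat p=\int p\,dE$, and, writing $\xi$ for the class of $1$, the Borel measure $\mu(\cdot):=\langle E(\cdot)\xi,\xi\rangle$ is a probability measure with $L(p)=\int p\,d\mu$ for all $p$. Since $g_iq^2\in M_g$ gives $\int g_i q^2\,d\mu\ge 0$ for all polynomials $q$, a Stone--Weierstrass density argument shows $g_i\ge 0$ $\mu$-a.e., so $\operatorname{supp}\mu\subseteq\bigcap_i\{g_i\ge 0\}=S$. Therefore $L(f)=\int_S f\,d\mu\ge(\min_S f)\,\mu(S)=\min_S f>0$, contradicting $L(f)\le 0$; hence $f\in M_g$. (If $S=\varnothing$ the measure step is vacuously impossible, which forces $-1\in M_g$, so $M_g=\mathbb R[x]\ni f$ trivially.)

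I expect the main obstacle to be the functional-analytic core: checking rigorously that $I$ is an ideal, that the multiplication operators extend to bounded commuting self-adjoint operators on $\mathcal H$, and that the joint spectral theorem yields the representing measure with $\operatorname{supp}\mu\subseteq S$. The algebraic preliminaries (normalizing $N$, the order-unit lemma, the separation step) and the final compactness estimate are routine. A shortcut would be to invoke the abstract Archimedean Positivstellensatz (Kadison--Dubois representation theorem) as a black box and merely verify that its hypotheses hold for $M_g$ and that the ring homomorphisms $\mathbb R[x]\to\mathbb R$ nonnegative on $M_g$ are exactly the point evaluations at points of $S$ --- but that relocates the difficulty to the proof of that theorem.
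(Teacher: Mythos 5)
The paper does not actually prove this theorem: it is stated as a background result in Section~\ref{sec:history} and attributed to \cite{putinar}, so there is no in-paper argument to compare yours against. On its own terms, your outline is the standard (and correct) proof of Putinar's Positivstellensatz, essentially as in Putinar's original paper and in the expositions of Schm\"udgen, Marshall, and Lasserre: the easy half reduces to the definition of Archimedean-ness in~\eqref{eq:Archimedean} exactly as you say (your $\lceil N\rceil$ adjustment and the compactness of $S$ both check out), and the hard half goes through the order-unit lemma, Eidelheit separation to get a functional $L\ge 0$ on $M_g$ with $L(f)\le 0$ and $L(1)=1$, the boundedness estimate $L(p^2q^2)\le C_p L(q^2)$ driven by $(N-\sum_j x_j^2)q^2\in M_g$, the GNS construction with commuting bounded self-adjoint operators $\hat X_i$, a representing measure supported in $[-\sqrt N,\sqrt N]^n$, the localization $\operatorname{supp}\mu\subseteq S$ via $L(g_iq^2)\ge 0$ and Stone--Weierstrass, and the final contradiction $L(f)\ge\min_S f>0$. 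The algebraic identities you quote for the order-unit lemma are the right ones (note that $M_g$ is closed under multiplication by squares, which is what makes $(m-p)(m+p)^2\in M_g$ legitimate), and the $S=\varnothing$ case is handled correctly since a probability measure cannot be supported on the empty set. The only caveats are the ones you already flag: the functional-analytic core (that $I$ is an ideal, self-adjointness and commutativity of the $\hat X_i$, and the joint spectral theorem) must be written out in full for a complete proof, and the shortcut through the Kadison--Dubois theorem merely relocates that work. I see no gap in the plan.
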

Finding a representation of Form~\eqref{eq:putinar_cone} for $f$, only requires a search for $K+1$ SOS polynomials using SOS programming. Verifying the Archimedian Condition~\eqref{eq:Archimedean} is also an SOS program. Observe that if $M_g$ is not Archimedean, one can add a redundant constraint $r-\sum_{i=1}^n x_i^2 \geq 0$ (for sufficiently large $r \in \mathbb{R}$) to $S$ in order to make $M_g$ Archimedean. Archimedean condition clearly implies compactness of the semi-algebraic set $S$ because for any $f \in M_g$, $S \subset \{ x \in \mathbb{R}^n: f(x) \geq 0 \}$. The following theorem lifts the compactness requirement for the semi-algebraic set $S$.

\begin{mythm}(Stengle's Positivstellensatz)
Let $S:=\{x \in \mathbb{R}^n: g_i(x) \geq 0, g_i \in \mathbb{R}[x] \text{ for } i=1, \cdots, K\}$ and define the cone
\[
\Lambda_g:=\left\lbrace \sum_{\lambda \in \{0,1 \}^K }s_{\lambda} g_1^{\lambda_1} \cdots g_K^{\lambda_K}: s_{\lambda} \text{ are } SOS \right\rbrace.
\]
If $f>0$ on $S$, then there exist $p,g \in \Lambda_g$ such that $qf=p+1$.
\label{stengle}
\end{mythm}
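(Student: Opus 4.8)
The plan is to prove the stated (nontrivial) implication: if $f>0$ on $S$ then there exist $p,q\in\Lambda_g$ with $qf=p+1$. I will reduce this strict-positivity claim to an ``emptiness'' version of the Positivstellensatz and prove the latter by a model-theoretic argument; the converse implication, which the theorem does not assert, is a one-line evaluation, so I omit it. Note also that the displayed symbol $g$ in the statement is a typo for $q$: the goal is $p,q\in\Lambda_g$ with $qf=p+1$.

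\textbf{Step 1: reduction to emptiness.} Since $f>0$ on $S$, the basic closed semialgebraic set
\[
S' \;=\; \{\,x\in\mathbb{R}^n \,:\, g_1(x)\ge 0,\ \dots,\ g_K(x)\ge 0,\ -f(x)\ge 0\,\}
\]
is empty. It therefore suffices to prove: for any $h_1,\dots,h_m\in\mathbb{R}[x]$, the set $\{x:\,h_i(x)\ge 0\ \forall i\}$ is empty if and only if $-1$ lies in the preordering $\Lambda_h$ generated by $h_1,\dots,h_m$. Applying this with $(h_1,\dots,h_{K+1})=(g_1,\dots,g_K,-f)$ gives $-1\in\Lambda_{g,-f}$. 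Splitting a general element of $\Lambda_{g,-f}$ according to whether the generator $-f$ appears to the power $0$ or $1$ yields $\Lambda_{g,-f}=\Lambda_g+(-f)\,\Lambda_g$. Hence $-1=q+(-f)p$ for some $p,q\in\Lambda_g$, i.e.\ $fp=q+1$; after interchanging the names of $p$ and $q$ this is exactly $qf=p+1$.

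\textbf{Step 2: the emptiness statement.} The ``if'' direction is immediate (evaluate $-1=\sum_\lambda s_\lambda h^\lambda$ at a point of the set). For the ``only if'' direction, argue by contradiction: assume $-1\notin\Lambda_h$ and build a point of $\{h_i\ge 0\}$. By Zorn's lemma, enlarge $\Lambda_h$ to a preordering $P\subseteq\mathbb{R}[x]$ maximal with respect to $-1\notin P$. The core algebraic lemma --- and where I expect the real work to lie --- is that such a maximal proper preordering is a \emph{prime ordering}: $P\cup(-P)=\mathbb{R}[x]$ and $\mathfrak p:=P\cap(-P)$ is a prime ideal. Proving this uses maximality applied to the enlargements $P+aP$ and $P-aP$, the identity $4a=(1+a)^2-(1-a)^2$ (to see $\mathfrak p$ is an ideal), and a localization argument for primeness. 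Granting it, $\mathbb{R}[x]/\mathfrak p$ is an ordered integral domain; let $F$ be its fraction field with the induced order and let $R$ be a real closure of $(F,\le)$. The residues $\bar x_1,\dots,\bar x_n\in R$ of the coordinate functions satisfy $h_i(\bar x)\ge 0$ in $R$ for every $i$, because $h_i\in\Lambda_h\subseteq P$. Thus the polynomial system $\{h_i(y)\ge 0\}$ has a solution with coordinates in the real closed field $R\supseteq\mathbb{R}$.

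\textbf{Step 3: descent to $\mathbb{R}$.} Since the $h_i$ have real coefficients and $\mathbb{R}$ is a real closed subfield of $R$, the Tarski--Seidenberg transfer principle (equivalently, model completeness of the theory of real closed fields, or the Artin--Lang homomorphism theorem) transfers the $R$-solution to an $\mathbb{R}$-solution: there is $x^\ast\in\mathbb{R}^n$ with $h_i(x^\ast)\ge 0$ for all $i$. This contradicts $\{h_i\ge 0\}=\emptyset$, completing Step 2 and hence the theorem. In summary, the two nontrivial inputs are the prime-ordering lemma for maximal proper preorderings and the transfer principle; the reduction of Step 1 and the evaluations are routine.
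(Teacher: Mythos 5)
The paper does not prove this theorem: Stengle's Positivstellensatz appears in Section~\ref{sec:history} purely as a quoted background result, with the reader referred to the literature (e.g.\ \cite{laurent_survey,scheiderer_survey,delzell_book}) for proofs. So there is no in-paper argument to compare yours against, and I can only assess the proposal on its own terms.

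On its own terms the proposal is sound and is a faithful outline of the standard Krivine--Stengle argument. Your Step 1 is complete and correct: $S'=S\cap\{-f\ge 0\}=\emptyset$, the preordering generated by $g_1,\dots,g_K,-f$ splits as $\Lambda_{g,-f}=\Lambda_g+(-f)\Lambda_g$ because the extra exponent only takes the values $0$ and $1$, and $-1=q-fp$ rearranges to exactly the asserted identity (you are also right that the displayed ``$p,g\in\Lambda_g$'' is a typo for ``$p,q\in\Lambda_g$''). Your Steps 2 and 3 correctly isolate the two genuinely nontrivial inputs --- that a preordering maximal subject to $-1\notin P$ satisfies $P\cup(-P)=\mathbb{R}[x]$ with $\mathfrak p=P\cap(-P)$ a prime ideal, and the Tarski--Seidenberg/model-completeness transfer from the real closure $R$ back to $\mathbb{R}$ --- and the hints you give ($P\pm aP$ and maximality for totality, $4a=(1+a)^2-(1-a)^2$ for the ideal property, order-preserving embedding $\mathbb{R}\hookrightarrow R$ because positive reals are squares) are the right ones. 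The only caveat is that these two lemmas are stated rather than proved in full (primeness of $\mathfrak p$ in particular is only gestured at), so as written this is a correct proof skeleton resting on standard cited facts rather than a self-contained proof; that is an entirely reasonable level of detail for a result the paper itself imports without proof.
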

Notice that the Parameterziation~\eqref{eq:handelman_representation} in Handelman's theorem is affine in $f$ and the coefficients $b_\alpha$. Likewise, the parameterizations in Theorems~\ref{Schweighofer} and~\ref{putinar}, i.e., $f=\sum_{\lambda} s_\lambda g_1^{\lambda_1} \cdots g_K^{\lambda_K}$ and $f=s_0+\sum_i s_i g_i$ are affine in $f,s_\lambda$ and $s_i$. Thus, one can use convex optimization to find $b_\alpha$, $s_\lambda,s_i$ and $f$ efficiently. Unfortunately, since the parameterization $qf=p+1$ in Stengle's Positivstellensatz is non-convex (bilinear in $q$ and $f$), it is more difficult to verify $qf=p+1$ compared to Handelman's and Putinar's parameterizations.

For a comprehensive discussion on the Positivstellensatz and other results on polynomial positivity in algebraic geometry see~\cite{laurent_survey,scheiderer_survey}, and~\cite{delzell_book}.

\section{Polynomial Optimization and Optimization of Polynomials}
\label{sec:poly_optim2}

Given $f,g_i,$ $h_j \in \mathbb{R}[x]$ for $i=1, \cdots,m$ and $j=1, \cdots, r$, define a semi-algebraic set $S$ as
\begin{equation}
S:=\{ y \in \mathbb{R}^n: g_i(y) \geq 0, \, h_j(y) = 0 \text{ for }  i=1, \cdots,m \text{ and } j=1, \cdots, r \}.
\label{eq:SASdef}
\end{equation}
We then define \textit{polynomial optimization} problems as
\begin{equation}
\beta^* =  \min_{x \in S}  \; f(x).
\label{eq:polynomial_optimization2}
\end{equation}
 For example, the integer program
\begin{align}
\min_{x \in \mathbb{R}^n}  \quad &p(x) \nonumber  \\
\text{subject to} \quad &a_i^T x \geq b_i \text{ for } i=1, \cdots, m, \nonumber \\
&x \in \{-1,1\}^n,
\label{eq:polynomial_optimization_exp}
\end{align}
with given $a_i \in \mathbb{R}^n, b_i \in \mathbb{R}$ and $p \in \mathbb{R}[x]$, can be formulated as a polynomial optimization problem by setting $f=p$ in~\eqref{eq:polynomial_optimization2} and setting
\begin{align*}
& g_i(x)= a_i^Tx-b_i && \hspace{-1.1in} \text{ for } i=1,\cdots,m \\
& h_j(x)=x_j^2-1     && \hspace{-1.1in} \text{ for } j=1,\cdots,n.
\end{align*}
in the definition of $S$ in~\eqref{eq:SASdef}.

Given $c \in \mathbb{R}^n$ and $g_i,$ $h_j \in \mathbb{R}[x]$ for $i=1, \cdots,m$ and $j=1, \cdots, r$, we define \textit{Optimization of polynomials} problems as
\begin{align}
&\gamma^*=\max_{x \in \mathbb{R}^q} \quad c^Tx \nonumber \\
&\text{subject to } \;\;\, F(x,y):= F_0(y)+\sum_{i=1}^q x_i F_i(y) \geq 0 \text{ for all } y \in S,
\label{eq:optimization_of_polynomials2}
\end{align}
where $S$ is defined in~\eqref{eq:SASdef} 
and
\[
F_i(y) := \sum_{\alpha \in E_{d_i}} F_{i,\alpha} y_1^{\alpha_1} \cdots y_n^{\alpha_n}
\]
with $E_{d_i}:=\{ \alpha \in \mathbb{N}^n : \sum_{i=1}^n \alpha_i \leq d_i \}$, where coefficients $F_{i,\alpha} \in \mathbb{R}^{t \times t}, \, i=0, \cdots, q$ are given. If the goal is to optimize over a polynomial variable, $p(y)$, this may be achieved using a basis of monomials for $F_i(y)$ so that the polynomial variable becomes $p(y)=\sum_i x_i F_i(y)$. 
 Optimization of polynomials can be used to find $\beta^*$ in~\eqref{eq:polynomial_optimization2}. For example, we can compute the optimal objective value $\eta^*$ of the polynomial optimization problem
\begin{align*}
\eta^* = \min_{x \in \mathbb{R}^n} \quad &p(x)  \nonumber  \\
\text{subject to} \quad & a_i^Tx-b_i \geq 0 && \hspace{-0.9in} \text{for } i=1, \cdots, m, \nonumber \\
&x_j^2-1=0 && \hspace{-0.9in} \text{for } j=1,\cdots,n ,
\end{align*}
by solving the problem
\begin{align}
&\eta^*= \max_{\eta \in \mathbb{R}} \quad \eta \nonumber \\
&\text{subject to} \quad p(y) \geq \eta \qquad \text{ for } y \in \{ y \in \mathbb{R}^n: a_i^T y \geq b_i, \, y_j^2-1=0 \text{ for } i=1, \cdots, m \nonumber \\
& \hspace{4in} \text{ and } j=1,\cdots,n\},
\label{eq:optimization_of_polynomials_exp}
\end{align}
where Problem~\eqref{eq:optimization_of_polynomials_exp} can be expressed in the Form~\eqref{eq:optimization_of_polynomials2} by setting
\[
c=1, \qquad q=1, \qquad t=1, \qquad F_0 = p \qquad F_1=-1,
\]
\[
S:=\{y \in \mathbb{R}^n: a_i^T y \geq b_i, \, y_j^2-1=0 \text{ for } i=1, \cdots, m,  \text{ and } j=1,\cdots,n \}.
\]
Optimization of polynomials~\eqref{eq:optimization_of_polynomials2} can be reformulated as the feasibility problem
\begin{align}
& \hspace{-0.09in} \gamma^* = \min_\gamma \;\,  \gamma \nonumber \\
& \hspace{-0.13in} \text{ subject to }
S_\gamma:= \left\lbrace   x \in \mathbb{R}^q : c^T x > \gamma, \,  F(x,y) \geq 0 \text{ for all } y \in S\right\rbrace = \emptyset,
\label{eq:feasibility}
\end{align}
where $c$ and $F$ are given and
\[
S:=\{ y \in \mathbb{R}^n: g_i(y) \geq 0, \, h_j(y) = 0 \text{ for }  i=1, \cdots,m \text{ and } j=1, \cdots, r \},
\]
where polynomials $g_i$ and $h_j$ are given.
The question of feasibility of a semi-algebraic set is NP-hard~(\cite{blum}). However, if we have a test to verify $S_\gamma = \emptyset$, we can find $\gamma^*$ by performing a bisection on $\gamma$.
In the following section, we use the results of Section~\ref{sec:history} to provide sufficient conditions, in the form of Linear Matrix Inequalities (LMIs), for $S_\gamma = \emptyset$.

\section{Algorithms for Optimization of Polynomials}
\label{sec:poly_optim}

In this section, we discuss how to find lower bounds on $\beta^*$ for different classes of polynomial optimization problems. The results in this section are primarily expressed as methods for verifying $S_{\gamma}=\emptyset$ and can be used with bisection to solve polynomial optimization problems.

\subsection{Case 1: Optimization over the Standard Simplex $\Delta^n$}
\label{sec:optim_simplex}

Define the standard unit simplex as
\begin{equation}
\Delta^n : = \{ x \in \mathbb{R}^n: \sum_{i=1}^n x_i=1, x_i \geq 0\}.
\label{eq:simplex}
\end{equation}
Consider the polynomial optimization problem
\begin{equation*}
\gamma^* = \min_{x \in \Delta^n} \quad f(x),
\end{equation*}
where $f$ is a homogeneous polynomial of degree $d$. If $f$ is not homogeneous, we can homogenize it by multiplying each monomial $x_1^{\alpha_1} \cdots x_n^{\alpha_n}$ in $f$ by $(\sum_{i=1}^n x_i)^{d-\Vert \alpha \Vert_1}$. Notice that since $\sum_{i=1}^n x_i=1$ for all $x \in \Delta^n$, the homogenized $f$ is equal to $f$ for every $x \in \Delta^n$. To find $\gamma^*$, one can solve the following optimization of polynomials problem. 
\begin{align}
& \gamma^* = \max_{\gamma \in \mathbb{R}} \quad \gamma \nonumber  \\
&   \text{subject to } f(x) \geq \gamma \text{ for all } x \in \Delta^n
\label{eq:optimization_of_polynomials_simplex}
\end{align}
Clearly, Problem~\eqref{eq:optimization_of_polynomials_simplex} can be re-stated as the following feasibility problem 
\begin{align*}
&\gamma^*=\min_{\gamma \in \mathbb{R}} \quad \gamma \\
&\text{subject to } S_\gamma  := \{x \in \mathbb{R}^n: f(x) - \gamma < 0, \sum_{i=1}^n x_i=1,\, x_i \geq 0 \} = \emptyset.
\end{align*}
For a given $\gamma$, we can use the following version of Polya's theorem to verify $S_\gamma = \emptyset$.

\begin{mythm}(Polya's theorem, simplex version)
If a homogeneous matrix-valued polynomial $F$ satisfies $F(x) > 0$ for all $x \in \Delta^n := \{ x \in \mathbb{R}^n: \sum_{i=1}^n x_i = 1, x_i \geq 0 \}$, then there exists $e \geq 0$ such that all the coefficients of
\[
\left( \sum_{i=1}^n x_i  \right)^e F(x)
\]
are positive definite.
\label{thm:polya_simplex}
\end{mythm}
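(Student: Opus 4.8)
The plan is to reduce the matrix-valued statement to the scalar Polya theorem (Theorem~\ref{thm:polya}) applied entrywise, but with care taken so that the resulting certificate is a genuine \emph{positive definite} coefficient statement rather than merely an entrywise-positive one. First I would observe that $F(x) > 0$ on $\Delta^n$ is equivalent, by homogeneity and a scaling argument, to $F(x) > 0$ on the whole punctured nonnegative orthant $\{x \in \mathbb{R}^n : x_i \geq 0, \sum x_i \neq 0\}$: indeed any such $x$ can be rescaled by $1/\sum_j x_j$ to lie in $\Delta^n$, and $F$ being homogeneous of degree $d$ means $F(x) = (\sum_j x_j)^d\, F(x/\sum_j x_j)$, so positive definiteness transfers. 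This puts us in the setting of Polya's orthant theorem.

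Next I would pass from the matrix inequality to a scalar one by introducing a vector variable. Consider the scalar polynomial $g(x,v) := v^T F(x) v$ in the variables $(x,v) \in \mathbb{R}^n \times \mathbb{R}^t$. For $x$ in the punctured orthant and $v \neq 0$ we have $g(x,v) > 0$. The difficulty is that $g$ is positive only on a product of the orthant with $v \neq 0$, not on an orthant in all its variables, so Polya's theorem does not directly apply to $g$. The standard fix is to compactify: restrict $v$ to the sphere $\|v\| = 1$ and argue by a compactness/uniform-positivity estimate. Concretely, since $\Delta^n$ is compact and $F$ is continuous, $\lambda_{\min}(F(x))$ attains a positive minimum $\mu > 0$ on $\Delta^n$; hence $F(x) - \mu\left(\sum_i x_i\right)^{d-?}I$ type bounds can be made, and more to the point, for the scalar function one shows $v^T F(x) v \geq \mu \|v\|^2$ for $x \in \Delta^n$. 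The cleanest route, rather than handling $v$ at all, is to apply scalar Polya \emph{entrywise with a common exponent}: each diagonal entry $F_{ii}(x)$ is positive on $\Delta^n$ so by Theorem~\ref{thm:polya} there is $e_i$ with $(\sum x_j)^{e_i} F_{ii}(x)$ having all positive coefficients, but the off-diagonal entries need not be sign-definite, so entrywise positivity is the wrong target. So instead I would argue directly on $\lambda_{\min}$.

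The key step, then, is: let $\mu := \min_{x \in \Delta^n} \lambda_{\min}(F(x)) > 0$, and apply the scalar Polya theorem to the polynomial $p(x) := \lambda_{\min}$-type lower bound — more precisely, write $F(x) = G(x) + \mu\left(\sum_i x_i\right)^{d} I$ where actually we need $G(x) := F(x) - \mu (\sum x_i)^d I \succeq 0$ on $\Delta^n$ (true by choice of $\mu$, using $\sum x_i = 1$ there and homogeneity to extend). Now $G$ is positive \emph{semi}definite on $\Delta^n$, which is not enough for Polya, so I would perturb: for small $\epsilon>0$, $G_\epsilon(x) := F(x) - (\mu-\epsilon)(\sum x_i)^d I \succ 0$ on $\Delta^n$. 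The real engine is a matrix version of Polya that I would prove by a limiting/continuity argument: the map sending a homogeneous matrix polynomial $H$ to the coefficient matrices of $(\sum x_i)^e H(x)$ is linear, and as $e \to \infty$ these coefficient matrices converge (after normalization by multinomial coefficients) to evaluations of $H$ at points of $\Delta^n$; since $H = G_\epsilon$ is uniformly positive definite on $\Delta^n$, for $e$ large enough all normalized coefficient matrices are positive definite, hence so are the actual coefficient matrices. Adding back $(\mu-\epsilon)(\sum x_i)^{d+e}I$, whose coefficients are positive \emph{scalar} multiples of $I$ hence positive definite, preserves positive definiteness of all coefficients of $(\sum x_i)^e F(x)$.

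The main obstacle I anticipate is making the "coefficients converge to evaluations on the simplex" claim precise and rigorous: one must show that the coefficient of a given monomial $x^\beta$ (with $|\beta| = d+e$) in $(\sum_i x_i)^e F(x)$, divided by the multinomial coefficient $\binom{e}{\beta}$ or the appropriate normalization, tends to $F$ evaluated at the rational point $\beta/(d+e) \in \Delta^n$ as $e \to \infty$, uniformly over all such $\beta$. This is exactly the content of the classical proof of Polya's theorem (it is essentially a Bernstein-polynomial / law-of-large-numbers estimate), and the matrix case is obtained by applying that scalar estimate to each fixed entry $F_{kl}$ and then invoking uniform positive-definiteness on the compact set $\Delta^n$ together with the finiteness of the number of entries. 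I would therefore structure the proof as: (1) reduce to uniform positivity $F(x) \succeq \mu I$ on $\Delta^n$ with $\mu>0$; (2) cite or reprove the scalar Polya estimate giving uniform convergence of normalized coefficients to simplex evaluations; (3) apply it entrywise and use compactness to conclude all coefficient matrices of $(\sum x_i)^e F(x)$ are positive definite for $e$ sufficiently large.
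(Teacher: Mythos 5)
The paper offers no proof of its own for this theorem---it simply cites pages 57--59 of \cite{inequalities} for the scalar case---and your sketch reconstructs precisely that classical argument (normalized coefficients of $\left(\sum_{i} x_i\right)^e F(x)$ converging uniformly to evaluations of $F$ at the simplex lattice points $\beta/(d+e)$), extended entrywise to the matrix case via the compactness bound $F(x) \succeq \mu I$ on $\Delta^n$. This is correct and essentially the same approach; the false starts in your write-up (the $v^T F(x) v$ reduction and the $G_\epsilon$ perturbation) are unnecessary but harmless, since the uniform positive-definiteness bound together with the Bernstein-type coefficient estimate already closes the argument.
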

See pages 57-59 of~\cite{inequalities} for a proof. The converse of the theorem only implies $F \geq 0$ over the unit simplex. Given $\gamma \in \mathbb{R}$, it follows from the converse of Theorem~\ref{thm:polya_simplex} that $S_\gamma=\emptyset$ if there exists some $e \geq 0$ such that
\begin{equation}
\left( \sum_{i=1}^n x_i \right)^e \left( f(x)-\gamma \left(\sum_{i=1}^n x_i\right)^d \right)
\label{eq:polya_product}
\end{equation}
has all positive coefficients, where recall that $d$ is the degree of $f$. We can compute lower bounds on $\gamma^*$ by performing a bisection on $\gamma$. For each $\gamma$ of the bisection, if there exists some $e \geq 0$ such that all of the coefficients of~\eqref{eq:polya_product} are positive, then $\gamma \leq \gamma^*$. We have detailed this procedure in Algorithm~\ref{alg:Polya_simplex}.

\begin{algorithm}
\textbf{\textit{Input:}}\\
Polynomial $f$; maximum polya's exponent $e_{\max}$; lower-bound $\gamma_l$ and upper-bound $\gamma_u$ for bisection search; number of bisection iterations $b_{\max}$; \vspace{0.1in} \\ 

\textbf{\textit{Initialization:}}\\
Set Polya's exponent $e=0$.\\
Set $k=0$. \vspace{0.1in}\\

\textbf{\textit{Main Loop:}}\\
\While{$d \leq b_{\max}$}{
	Set $\gamma = \frac{\gamma_u + \gamma_l}{2}$.\\

	\While{Eq.~\eqref{eq:polya_product} has some negative coefficient or $e \leq e_{\max}$}{
		Set $e=e+1$.\\
		Calculate the Product~\eqref{eq:polya_product}.\\

	}
	\eIf{Eq.~\eqref{eq:polya_product} has all positive coefficients}{
		Set $\gamma_l = \gamma$.
	}{
		Set $\gamma_u = \gamma$.
	}
	
	Set $k=k+1$.\\
} \vspace{0.1in}

\textbf{\textit{Output:}}\\
 $\gamma$: a lower bound on the minimum of $f$ over the standard simplex.
\vspace{0.15in}\caption{Polya's algorithm for polynomial optimization over the simplex}
\label{alg:Polya_simplex}
\end{algorithm}
In Chapter~\ref{chp:linear}, we will propose a decentralized version of Algorithm~\ref{alg:Polya_simplex} to perform robust stability analysis over a simplex.

\subsection{Case 2: Optimization over The Hypercube $\Phi^n$}
\label{sec:optim_hypercube}

Given $r_i \in \mathbb{R}$, define the hypercube 
\begin{equation}
\Phi^n:=\{x \in \mathbb{R}^n:  \vert x_i \vert \leq r_i, i=1, \cdots, n  \}.\
\label{eq:hypercube}
\end{equation}
Define the set of $n$-variate \textit{multi-homogeneous} polynomials of degree vector $d \in \mathbb{N}^n$ as 
\begin{equation}
\left\lbrace
p \in \mathbb{R}[x,y]:
p(x,y) = \sum_{\substack{ h,g \in \mathbb{N}^n  \\ h+g=d  }} p_{h,g} x_1^{h_1}y_1^{g_1} \cdots x_n^{h_n}y_n^{g_n},\, p_{h,g} \in \mathbb{R}
\right\rbrace.
\label{eq:multi-homog_poly}
\end{equation}
In a more general case, if the coefficients $p_{h,g}$ are matrices, we call $p$ a \textit{matrix-valued multi-homogeneous} polynomial. 
Now consider the polynomial optimization problem
\[
\gamma^*=\min_{x \in \Phi^n} f(x).
\]
To find $\gamma^*$, one can solve the following feasibility problem.
\begin{align}
&\gamma^*=\min_{\gamma \in \mathbb{R}} \quad \gamma \nonumber \\
&\text{subject to } \;\, S_{\gamma,r}  := 
\{x \in \mathbb{R}^n: f(x) - \gamma < 0, \, \vert x_i \vert \leq r_i,\, i=1, \cdots,n \} = \emptyset
\label{eq:feasibility_hypercube}
\end{align}
For a given $\gamma$, we propose the following version of Polya's theorem~(\cite{kamyar_CDC2012}) to verify $S_{\gamma,r} = \emptyset$.

\begin{mythm}(Polya's theorem: multi-simplex version)
A matrix-valued multi-homogeneous polynomial $F$ satisfies $F(x,y) > 0$ for all $(x_i,y_i) \in \Delta^2, i=1, \cdots, n$, if there exist $e \geq 0$ such that all the coefficients of
\[
\left( \prod_{i=1}^n \left( x_i+y_i \right)^e \right) F(x,y)
\]
are positive definite.
\label{thm:polya_multi-simplex}
\end{mythm}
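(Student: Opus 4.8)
The plan is to prove the stated (sufficiency) implication directly, exploiting the fact that on the multi-simplex $\prod_{i=1}^n \Delta^2$ every binomial $x_i+y_i$ equals $1$. First I would record the structural point that, since $F$ is matrix-valued multi-homogeneous of degree vector $d$ in the sense of~\eqref{eq:multi-homog_poly}, the product
\begin{equation*}
G(x,y) := \Big( \prod_{i=1}^n (x_i+y_i)^e \Big) F(x,y)
\end{equation*}
is again matrix-valued multi-homogeneous, now of degree vector $d+e\mathbf{1}$ with $\mathbf{1}=(1,\dots,1)$, so that it admits an expansion $G(x,y)=\sum_{h+g=d+e\mathbf{1}} G_{h,g}\, x_1^{h_1}y_1^{g_1}\cdots x_n^{h_n}y_n^{g_n}$ in which, by hypothesis, every coefficient matrix $G_{h,g}$ is positive definite.

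Next I would evaluate $G$ at an arbitrary point of the multi-simplex, i.e.\ at $(x,y)$ with $x_i,y_i\ge 0$ and $x_i+y_i=1$ for every $i$. Two elementary facts do all the work: (i) each monomial $\prod_{i=1}^n x_i^{h_i}y_i^{g_i}$ is nonnegative at such a point; and (ii) these monomials sum to one, since $\sum_{h+g=d+e\mathbf{1}}\prod_i x_i^{h_i}y_i^{g_i}=\prod_{i=1}^n (x_i+y_i)^{d_i+e}=1$ by the multinomial theorem applied coordinate-pair by coordinate-pair. Hence $G(x,y)$ is a convex combination of the positive definite matrices $G_{h,g}$, and a convex combination $\sum_k \lambda_k A_k$ with $\lambda_k\ge 0$, $\sum_k \lambda_k=1$ and each $A_k\succ 0$ is itself positive definite: for any $v\ne 0$, $v^T\big(\sum_k\lambda_k A_k\big)v=\sum_k\lambda_k\,v^TA_kv\ge 0$, and the term for any index with $\lambda_k>0$ (at least one exists since the weights sum to $1$) is strictly positive. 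Therefore $G(x,y)>0$ everywhere on $\prod_{i=1}^n\Delta^2$; and since $\prod_{i=1}^n (x_i+y_i)^e=1$ there, we get $F(x,y)=G(x,y)>0$, which is the claim.

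In this direction there is essentially no obstacle: the only content is the bookkeeping in step (ii) — checking that multiplying by $\prod_i(x_i+y_i)^e$ shifts the degree vector to exactly $d+e\mathbf{1}$ and that the multinomial expansion of $\prod_i(x_i+y_i)^{d_i+e}$ accounts for precisely the monomials appearing in $G$ — together with the convexity remark on positive definite matrices. I would note in passing that the genuinely hard, non-elementary statement is the \emph{converse} (that positivity of $F$ on the multi-simplex forces some such $e$ to exist), which is the analogue of Polya's Theorem~\ref{thm:polya_simplex} over a product of simplices and is supplied by the extended Polya theorem of Chapter~\ref{chp:multisim}; it is not needed here. An alternative route to the present direction is to invoke the single-simplex converse of Theorem~\ref{thm:polya_simplex} one coordinate pair at a time, but the direct convex-combination argument above avoids the iterated substitutions and is cleaner.
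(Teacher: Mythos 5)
Your argument is internally valid for what it proves, but it proves the opposite implication from the one the paper actually establishes, and the implication it omits is the entire content of the theorem. The paper's own proof of this statement (deferred from Chapter~\ref{chp:background} to Theorem~\ref{thm:polya_multi-simplex2} in Section~\ref{sec:notation_multi-homog}) is an induction on the number $n$ of simplex factors proving the Polya-type direction: \emph{if} $F(x,y)>0$ for all $(x_i,y_i)\in\Delta^2$, \emph{then} there exists $e\geq 0$ making every coefficient of $\bigl(\prod_{i=1}^n(x_i+y_i)^e\bigr)F(x,y)$ positive definite. The base case is the single-simplex version (Theorem~\ref{thm:polya_simplex}); in the induction step one fixes the last coordinate pair $(\hat\alpha,\hat\beta)\in\Delta^2$, applies the induction hypothesis to get an exponent $e$ for which the matrix-valued coefficients $f_{h,g}(\hat\alpha,\hat\beta)$ of the partial product are positive definite for every $(\hat\alpha,\hat\beta)\in\Delta^2$, applies the single-simplex theorem to each of the finitely many homogeneous polynomials $f_{h,g}$, and takes $e^*$ to be the maximum of the resulting exponents and $e$. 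That this existence claim is the intended reading is confirmed by the surrounding text, which explicitly calls the direction you proved (``positive coefficients imply (non-)negativity'') \emph{the converse} of Theorem~\ref{thm:polya_multi-simplex}.

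The concrete gap is therefore your closing remark that the hard direction ``is supplied by the extended Polya theorem of Chapter~\ref{chp:multisim}; it is not needed here.'' The statement you were asked to prove \emph{is} the extended Polya theorem of Chapter~\ref{chp:multisim} — Theorems~\ref{thm:polya_multi-simplex} and~\ref{thm:polya_multi-simplex2} are the same result — so deferring to it is circular, and no proof of the existence of $e$ appears in your proposal. What you have written would serve as a proof of the (easy) converse, which the paper uses but never spells out; as such it is a useful complement but not a replacement for the paper's induction. A secondary quibble: your identity $\sum_{h+g=d+e\mathbf{1}}\prod_i x_i^{h_i}y_i^{g_i}=\prod_{i=1}^n(x_i+y_i)^{d_i+e}$ is missing the binomial coefficients, so the bare monomials do not sum to one; the convexity conclusion survives because the monomials are nonnegative on the multi-simplex and at least one is strictly positive at every point, but the weights should either carry the multinomial factors or the argument should be phrased without appeal to ``summing to one.''
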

We will prove this result in Section~\ref{sec:notation_multi-homog}. The converse of Theorem~\ref{thm:polya_multi-simplex} only implies non-negativity of $F$ over the hypercube. To find lower bounds on $\gamma$, we first obtain the multi-homogeneous form $p$ of the polynomial $f$ in~\eqref{eq:feasibility_hypercube}. In~\ref{sec:notation_multi-homog} we have provided a procedure to construct $p$. Given $\gamma \in \mathbb{R}$ and $r \in \mathbb{R}^n$, it follows from the converse of Theorem~\ref{thm:polya_multi-simplex} that $S_{\gamma,r}$ defined in~\eqref{eq:feasibility_hypercube} is empty if there exists some $e \geq 0$ such that 
\begin{equation}
\left( \prod_{i=1}^n \left( x_i+y_i \right)^e \right) \left( p(x,y) - \gamma \left( \prod_{i=1}^n \left( x_i+y_i \right)^{d_i} \right) \right)
\label{eq:polya_product_hypercube}
\end{equation}
has all positive coefficients, where $d_i$ is the degree of $x_i$ in $p(x,y)$. We can compute lower bounds on $\gamma^*$, as defined in~\eqref{eq:feasibility_hypercube}, by performing a bisection on $\gamma$. For each $\gamma$ of the bisection, if there exists some $e \geq 0$ such that all of the coefficients of~\eqref{eq:polya_product_hypercube} are positive, then $\gamma \leq \gamma^*$. By replacing~\eqref{eq:polya_product} with~\eqref{eq:polya_product_hypercube} in Algorithm~\ref{alg:Polya_simplex}, this algorithm computes $\gamma$. In Chapter~\ref{chp:multisim}, we will propose a parallel algorithm to perform robust stability analysis for systems with uncertain parameters inside a hypercube.

\subsection{Case 3: Optimization over The Convex Polytope $\Gamma^K$}
\label{sec:optim_polytope}

Given $w_i \in \mathbb{R}^n$ and $u_i \in \mathbb{R}$, define the convex polytope 
\begin{equation}
\Gamma^K := \{ x\in \mathbb{R}^n : w_i^Tx + u_i \geq 0, i=1,\cdots,K \}.
\label{eq:polytope}
\end{equation}
 Suppose $\Gamma^K$ is bounded. Consider the polynomial optimization problem
\[
\gamma^*=\min_{x \in \Gamma^K} f(x),
\]
where $f$ is a polynomial of degree $d_f$. To find $\gamma^*$, one can solve the feasibility problem
\begin{align*}
&\gamma^*=\min_{\gamma \in \mathbb{R}} \quad \gamma \nonumber \\
&\text{subject to } \;\, S_{\gamma,K}  := 
\{x \in \mathbb{R}^n: f(x) - \gamma <  0, \, w_i^Tx+u_i \geq 0, \, i = 1, \cdots, K \} = \emptyset.
\end{align*}
Given $\gamma$, one can use Handelman's theorem to verify $S_{\gamma,K}=\emptyset$.

\begin{mythm} (Handelman's Theorem)
\label{thm:Handelman_rep} Given $w_i \in \mathbb{R}^n$ and $u_i \in \mathbb{R}$, define the polytope $\Gamma^K := \{ x\in \mathbb{R}^n : w_i^Tx + u_i\geq 0, i=1,\cdots,K \}$. If a polynomial $f(x) > 0$ on $\Gamma^K$, then there exist $b_\alpha \geq 0$, $\alpha \in \mathbb{N}^K$ such that for some $d \in \mathbb{N}$,
\begin{equation}
f(x) = \sum _{\substack{\alpha \in \mathbb{N}^K \\ \alpha_1+\cdots+\alpha_K \leq d}} b_\alpha (w_1^T x+u_1)^{\alpha_1} \cdots (w_K^T x+u_K)^{\alpha_K}.
\label{eq:handelman_representation_rep}
\end{equation}
\end{mythm}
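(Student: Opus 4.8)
The plan is to derive Handelman's theorem from the simplex version of Polya's theorem (Theorem~\ref{thm:polya_simplex}), which is already available. Throughout I take $\Gamma^K$ bounded (as in the Case~3 setting) and full-dimensional — the degenerate case reduces to this by restricting to the affine hull of $\Gamma^K$ — and I write $\ell_i(x) := w_i^Tx + u_i$. \emph{Step 1 (normalization via boundedness).} Since $\Gamma^K$ is bounded and nonempty, its recession cone $\{y : w_i^Ty \ge 0,\ i=1,\dots,K\}$ equals $\{0\}$; equivalently $\operatorname{cone}\{w_1,\dots,w_K\}=\mathbb{R}^n$, so the $w_i$ span $\mathbb{R}^n$, and a standard separation/Gordan argument yields strictly positive scalars $\mu_1,\dots,\mu_K>0$ with $\sum_i\mu_i w_i=0$. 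Evaluating $\sum_i\mu_i\ell_i$ at an interior point of $\Gamma^K$ shows $c:=\sum_i\mu_i u_i>0$, and since $\sum_i\mu_i w_i=0$ we obtain the polynomial identity $\sum_{i=1}^K\frac{\mu_i}{c}\,\ell_i(x)\equiv 1$. Define the affine map $T:\mathbb{R}^n\to\mathbb{R}^K$ by $T(x)_i:=\frac{\mu_i}{c}\ell_i(x)$; it is injective (the $w_i$ span), its image $W:=T(\mathbb{R}^n)$ is an $n$-dimensional affine subspace of the hyperplane $\{t:\sum_i t_i=1\}$, and $T$ restricts to a bijection of $\Gamma^K$ onto the compact slice $W\cap\Delta^K$ of the standard simplex $\Delta^K\subset\mathbb{R}^K$.

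\emph{Step 2 (lifting positivity to the whole simplex).} Set $h:=f\circ T^{-1}:W\to\mathbb{R}$, a polynomial on $W$ with $h>0$ on the compact set $W\cap\Delta^K$. The difficulty is that $W\cap\Delta^K$ is in general a proper slice of $\Delta^K$ — this is exactly the case $K>n+1$, i.e.\ $\Gamma^K$ not a simplex — so Polya's theorem cannot be invoked directly. To repair this, extend $h$ to a polynomial $\hat h$ on all of $\mathbb{R}^K$ (for instance $\hat h:=h\circ\pi$ for an affine projection $\pi$ onto $W$), choose affine functions $q_1,\dots,q_m$ cutting out $W$ inside $\{\sum_i t_i=1\}$, and put $g:=\hat h+M\sum_j q_j^2$. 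A routine compactness argument — using that $\hat h\ge\varepsilon$ on $W\cap\Delta^K$ while $\sum_j q_j^2$ is bounded below by a positive constant on the portion of $\Delta^K$ bounded away from $W$ — furnishes a constant $M>0$ with $g>0$ on all of $\Delta^K$; moreover $g|_W=\hat h|_W=h$ because every $q_j$ vanishes on $W$.

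\emph{Step 3 (Polya and back-substitution).} Homogenize $g$ with respect to $\sigma(t):=\sum_i t_i$: if $D=\deg g$, set $G(t):=\sum c_\beta\,t^\beta\sigma(t)^{D-|\beta|}$ over the monomials $c_\beta t^\beta$ of $g$, so $G$ is homogeneous of degree $D$ with $G=g$ on $\Delta^K$, hence $G>0$ there. By Theorem~\ref{thm:polya_simplex} there is $e\ge 0$ such that $P(t):=\sigma(t)^e G(t)$ has all coefficients nonnegative; write $P(t)=\sum_{|\alpha|=d}P_\alpha t^\alpha$ with $d:=D+e$ and $P_\alpha\ge 0$. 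Since $\sigma\equiv 1$ on $W$ by Step 1, $P|_W=G|_W=g|_W=h$, so evaluating at $t=T(x)$ gives
$f(x)=P(T(x))=\sum_{|\alpha|=d}P_\alpha\prod_{i=1}^K\!\big(\tfrac{\mu_i}{c}\ell_i(x)\big)^{\alpha_i}=\sum_{|\alpha|=d}b_\alpha\prod_{i=1}^K(w_i^Tx+u_i)^{\alpha_i}$
with $b_\alpha:=P_\alpha\prod_i\mu_i^{\alpha_i}\,c^{-d}\ge 0$. Every multi-index occurring has $|\alpha|=d\le d$, so this is precisely the representation~\eqref{eq:handelman_representation_rep}.

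I expect the main obstacle to be Step~2: realizing that Polya's theorem requires strict positivity on the \emph{entire} simplex $\Delta^K$, not merely on the slice $W\cap\Delta^K$ onto which $\Gamma^K$ maps, and then constructing a polynomial extension $g$ that is simultaneously positive on all of $\Delta^K$ and unchanged on $W$. The normalization in Step~1 — securing $\mu_i>0$ with $\sum_i\mu_i w_i=0$, which is what makes $\sigma\equiv1$ on $W$ and thus lets the homogenization be undone cleanly — is the other step needing genuine care; converting the nonnegative-coefficient output of Polya's theorem into the Handelman form is then pure bookkeeping.
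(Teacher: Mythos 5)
Your derivation is correct, but there is nothing in the paper to compare it against line by line: the dissertation states Theorem~\ref{thm:Handelman_rep} without proof, citing Handelman's 1988 paper, whose original argument runs through order-theoretic machinery for archimedean preorderings rather than anything developed here. What you give instead is a self-contained reduction of Handelman's theorem to the one positivity result the paper does prove and exploit, Polya's theorem on the simplex (Theorem~\ref{thm:polya_simplex}), and the reduction holds up. The two load-bearing steps are exactly the ones you single out. First, boundedness kills the recession cone, so $-\sum_i w_i\in\operatorname{cone}\{w_1,\dots,w_K\}$ and you obtain strictly positive $\mu_i$ with $\sum_i\mu_i w_i=0$ and $\sum_i\mu_i\ell_i\equiv c>0$; this is what places $W=T(\mathbb{R}^n)$ inside $\{t:\sum_i t_i=1\}$ and lets the homogenization be undone after Polya. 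Second, for $K>n+1$ the polytope maps onto a proper slice $W\cap\Delta^K$, and your extension $g=\hat h+M\sum_j q_j^2$, with affine $q_j$ vanishing exactly on $W$ inside that hyperplane, does yield $g>0$ on all of $\Delta^K$ with $g|_W=h$ by the two-region compactness argument; the added squares do no damage in the final representation because the identity $f=P\circ T$ is verified on $W$, where they vanish, while nonnegativity of the $b_\alpha$ comes only from Polya's coefficients together with $\mu_i>0$ and $c>0$. Two caveats, both of which you already flag: the theorem as printed omits the compactness hypothesis, without which it is false, and the lower-dimensional case needs more than ``restrict to the affine hull,'' since Equation~\eqref{eq:handelman_representation_rep} is a polynomial identity on $\mathbb{R}^n$, not merely on $\operatorname{aff}(\Gamma^K)$. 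What your route buys over the citation is a proof living entirely inside the paper's own toolbox, and it makes visible why the Handelman degree $d$ inherits the minimum-dependent growth of the Polya exponent $e$ that the paper remarks on just before Algorithm~\ref{alg:Handelman_polyoptim}.
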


Consider the Handelman basis associated with polytope $\Gamma^K$ defined as
\begin{equation*}
H_s:=\left\lbrace \lambda_{\alpha} \in \mathbb{R}[x] : \lambda_{\alpha}(x)= \prod_{i=1}^K \left( w_i^Tx+u_i \right)^{\alpha_i}, \alpha \in \mathbb{N}^K, \, \sum_{i=1}^K  \alpha_i \leq s \right\rbrace.
\end{equation*} 
Basis $H_s$ spans the space of polynomials of degree $s$ or less, however it is not minimal. As a special case, if we take $\Gamma^K$ to be the standard unit simplex of $\mathbb{R}^K$, i.e.,
\[
\Gamma^K := \{ x \in \mathbb{R}^K : 1- \sum_{i=1}^K x_i \geq 0, x_i \geq 0 \text{ for } i=1,\cdots,K \},
\]
then the following set of polynomials is called the \textit{Bernstein basis} associated with $\Gamma^K$.
\begin{align*}
& B_s := \\ 
& \left\lbrace \lambda_{\alpha} \in \mathbb{R}[x]  \hspace{-0.03in} :  \hspace{-0.03in} \lambda_{\alpha}(x)  
  = \dfrac{s!}{\alpha_1! \cdots \alpha_{K+1}!} \left( \prod_{i=1}^K x_i^{\alpha_i} \right) \hspace{-0.08in} \left( 1-  \sum_{i=1}^{K} x_i \right)^{\alpha_{K+1}} \hspace{-0.28in} , \alpha \in \mathbb{N}^{K+1}, \sum_{i=1}^K  \alpha_i = s \right\rbrace.
\end{align*}
Unlike $H_s$, $B_s$ is a minimal basis\footnote{This follows from the fact that every polynomial can be uniquely represented in the canonical basis and every member of the canonical basis is a unique linear combination of $\lambda_\alpha \in B_s$. A derivation for these linear combinations can be found in~\cite{farin2002curves}} for the vector space of polynomials of degree $\leq s$.

Given $\gamma \in \mathbb{R}$, polynomial $f(x)$ of degree $d_f$ and $d_{\max} \in \mathbb{N}$, if there exist 
\begin{equation}
c_\alpha \geq 0 \text{ for all } \alpha\in I_d:=\{\alpha \in \mathbb{N}^K: \Vert \alpha \Vert_1 \leq d \}
\label{eq:handelman_representation1}
\end{equation}
such that
\begin{equation}
f(x) - \gamma = \sum_{  \alpha \in  I_d} c_\alpha \prod_{i=1}^K (w_i^Tx+u_i)^{\alpha_i}
\label{eq:handelman_representation2}
\end{equation}
for some $d \geq d_f$, then $f(x) - \gamma \geq 0$ for all $x \in \Gamma^K$. Thus $S_{\gamma,K} = \emptyset$. Since $H_s$ is not a minimal basis, if~\eqref{eq:handelman_representation1} is feasible, then $c_\alpha$ are not unique. Feasibility of Conditions~\eqref{eq:handelman_representation1} and~\eqref{eq:handelman_representation2} can be determined using linear programming. To set-up the linear program, we first represent the right and left hand side of~\eqref{eq:handelman_representation2} in the canonical basis as
\begin{equation}
f(x) - \gamma = \begin{bmatrix}
b_1 & b_2 & \cdots & b_M
\end{bmatrix} z_{n,d}(x)
\label{eq:dummy5}
\end{equation}
\begin{equation}
\sum_{\alpha \in  I_d} c_\alpha \prod_{i=1}^K (w_i^Tx+u_i)^{\alpha_i} = \begin{bmatrix}
l_1(c_\alpha,w,u) & l_2(c_\alpha,w,u) & \cdots & l_M(c_\alpha,w,u) 
\end{bmatrix} z_{n,d}(x),
\label{eq:dummy6}
\end{equation}
where it can be shown that $l_i: \mathbb{R}^N \times \mathbb{R}^{n \times K} \times \mathbb{R}^{K} \rightarrow \mathbb{R}$ are \textit{affine} in $c_\alpha$ and $N:=\dbinom{K+d}{d}$ is the cardinality of the index set $\{ \alpha \in \mathbb{N}^K: \Vert \alpha \Vert_1 \leq d \}$. In~\eqref{eq:dummy6},
\[
\renewcommand\arraystretch{0.8}  
w := \begin{bmatrix}
w_1 & w_2 & \cdots & w_K
\end{bmatrix}  \text{ and }
u := \begin{bmatrix}
u_1 & u_2 & \cdots & u_K
\end{bmatrix}^T, 
\]
where $w_i \in \mathbb{R}^n$ and $u_i \in \mathbb{R}$ define the polytope $\Gamma^K$ in~\eqref{eq:polytope}. Recall that in~\eqref{eq:dummy6}, $z_{n,d}(x)$ denotes the vector of all $n-$variate monomials of degree $d$ or less. By equating~\eqref{eq:dummy5} and~\eqref{eq:dummy6} and cancelling $z_{n,d}(x)$ from both sides, the problem of finding a lower bound $\gamma_d$ on $\gamma^*$ can be expressed as the following linear program.
\begin{align}
& \gamma_d := \max_{\gamma \in \mathbb{R}, c_\alpha \geq 0} \quad\; \gamma \nonumber \\
& \text{subject to } \;\; l_i(c_\alpha, w,u) = b_i \qquad \text{ for } \; i = 1,\cdots,M.
\label{eq:Handelman_LP1}
\end{align}
If Linear Program~\eqref{eq:Handelman_LP1} is infeasible for some $d$, then one can increase $d$ and repeat setting-up and solving Linear Program~\eqref{eq:Handelman_LP1}. From Handelman's theorem, if $f(x) - \gamma > 0$ for all $x \in \Gamma^K$, then for some $d \geq d_f$, Conditions~\eqref{eq:handelman_representation1} and~\eqref{eq:handelman_representation2} hold and Linear Program~\eqref{eq:Handelman_LP1} will have a solution. We have outlined this procedure in Algorithm~\ref{alg:Handelman_polyoptim}. Unfortunately, to this date all the proposed upper-bounds on $d$ (see e.g.,~\cite{reznick_powers_polyhedra} and~\cite{leroy}) are functions of the minimum of $f(x)-\gamma$ over the polytope $\Gamma^K$. In Chapter~\ref{chp:Nonlinear}, we will combine this algorithm with a polytope decomposition scheme to construct Lyapunov functions for nonlinear systems with polynomial vector fields.


\begin{algorithm}
\textbf{Input:}
Polynomial $f$ of degree $d_f$; maximum degree $d_{\max}$ for Handelman's basis; polytope data: $w_i \in \mathbb{R}^n$ and $u_i \in \mathbb{R}$ in~\eqref{eq:polytope}; tolerance $\epsilon > 0$ for stopping criterion.  \vspace{0.1in} \\ 

\textbf{Initialization:}\\
Set $d=d_f$.\\
Set $\gamma_{\text{old}} = -10^{100}$.\\
Set $\gamma_{\text{new}} = -10^{100} +2 \epsilon$.\vspace{0.1in}\\

\textbf{Main Loop:}\\
\While{$(d < d_{\max})$ and $(\gamma_{\text{new}} - \gamma_{\text{old}} < \epsilon)$}{
	Express $f(x)-\gamma_{\text{new}}$ in the canonical basis as in~\eqref{eq:dummy5}.\\
	Express $\sum_{ \alpha \in  I_d} c_\alpha \prod_{i=1}^K (w_i^Tx+u_i)^{\alpha_i} $ in the canonical basis as in~\eqref{eq:dummy6}.\\
	Set-up LP~\eqref{eq:Handelman_LP1}.\\
	
	\If{LP~\eqref{eq:Handelman_LP1} is feasible}{
		Set $\gamma_{\text{old}} = \gamma_{\text{new}}$.\\
		Set $\gamma_{\text{new}} = \gamma_d$. 
	}
	Set $d = d+1$.
} \vspace{0.1in}

\textbf{Output:} $\gamma_{\text{new}}$: a lower bound on the minimum of $f$ over polytope $\Gamma^K$.
\vspace{0.15in}\caption{Polynomial optimization over convex polytopes using Handelman's theorem}
\label{alg:Handelman_polyoptim}
\end{algorithm}

\subsection{Case 4: Optimization over Compact Semi-algebraic Sets}
\label{sec:optim_semialg} 

Recall that we defined a semi-algebraic set as
\begin{equation}
S:=\{ x \in \mathbb{R}^n : g_i(x) \geq 0,i=1,\cdots,m, \, h_j(x) = 0,j=1,\cdots,r \}.
\label{eq:semialgebraic_set2}
\end{equation}
Suppose $S$ is bounded. Consider the polynomial optimization problem
\begin{align*}
& \gamma^*= \min_{x \in \mathbb{R}^n}  \quad f(x) \nonumber \\
& \text{subject to } \;\;\, g_i(x) \geq 0 \text{ for } i=1,\cdots,m \nonumber \\
& \hspace{0.8in} h_j(x)=0 \text { for } j=1,\cdots,r.
\end{align*}
Define the following cone of polynomials which are positive over $S$.
\begin{equation}
M_{g,h} \hspace{-0.03in} := \hspace{-0.03in} \left\lbrace \hspace{-0.03in} m \hspace{-0.03in} \in \hspace{-0.03in} \mathbb{R}[x] :  m(x) \hspace{-0.03in} - \hspace{-0.04in} \sum_{i=1}^m s_i(x) g_i(x) \hspace{-0.03in} - \hspace{-0.04in} \sum_{i=1}^r t_i(x) h_i(x) \text{ is SOS}, s_i \in \Sigma_{2d} , t_i \in \mathbb{R}[x]  \right\rbrace,
\label{eq:putinar_cone2}
\end{equation}
where $\Sigma_{2d}$ denotes the cone of SOS polynomials of degree $2d$. 
From Putinar's Positivstellensatz (Theorem~\ref{putinar}) it follows that if the Cone~\eqref{eq:putinar_cone2} is Archimedean, then the solution to the following SOS program is a lower bound on $\gamma^*$. Given $d \in \mathbb{N}$, define
\begin{align}
& \gamma^d := \max_{\gamma \in \mathbb{R}, s_i, t_i} \;\; \gamma \nonumber \\
&\text{subject to } \; f(x)-\gamma - \sum_{i=1}^m s_i(x) g_i(x) -  \sum_{i=1}^r t_i(x) h_i(x) \text{ is SOS },\, t_i \in \mathbb{R}[x], s_i \in \Sigma_{2d}.
\label{eq:SOS_putinar}
\end{align}
On the other hand, every $F \in \Sigma_{2d}$ has a quadratic representation with a positive semi-definite matrix. To see this, suppose $F(x) = \sum_{i} q_i(x)^2$, where $q_i$ are polynomials of degree $d$. Each $q_i$ can be written in the canonical basis as $q_i(x) = c_i^T z_{n,d}(x)$, where $z_{n,d}(x)$ is the vector of all $n-$variate monomials of degree $d$ or less.
Hence, we can write $F$ as
\begin{align*}
F(x) & = \sum_{i} q_i(x)^2 = \sum_{i} z_{n,d}(x)^T c_i c_i^T z_{n,d}(x) \\
& = z_{n,d}(x)^T \left( \sum_{i} c_i c_i^T \right) z_{n,d}(x) = z_{n,d}(x)^T Q z_{n,d}(x),
\end{align*}
where clearly $Q \geq 0$. Therefore, for given $\gamma \in \mathbb{R}$ and $d \in \mathbb{N}$, Problem~\eqref{eq:SOS_putinar} can be formulated as the following linear matrix inequality.
\begin{align}
&\text{Find } \quad\, Q_i \geq 0 \text{ and } P_j \; \text{ for } i=0,\cdots,m \text{ and } j=1, \cdots, r \nonumber \\ 
& \hspace*{-0.15in} \text{such that } \; f(x)-\gamma = z_{n,d}^T(x)\left( Q_0+\sum_{i=1}^m Q_i g_i(x) + \sum_{j=1}^r P_j h_j(x) \right)z_{n,d}(x),
\label{eq:putinar_feasibility}
\end{align}
where $Q_i$ and $P_j \in \mathbb{S}^{N}$, where $\mathbb{S}^N$ is the subspace of symmetric matrices in $\mathbb{R}^{N \times N}$ and $N:=\dbinom{n+d}{d}$. See~\cite{SDP_SIAMbook_parrilo} for methods of solving SOS programs. Also~\cite{sostools2013} provide a MATLAB package called SOSTOOLs for solving SOS programs. 

%

If the Cone~\eqref{eq:putinar_cone2} is not Archimedean, then we can use Schmudgen's Positivstellensatz to obtain the following SOS program with solution $\gamma^d \leq \gamma^*$.
\begin{align}
& \gamma^d =  \max_{\gamma \in \mathbb{R}, s_i \in \Sigma_{2d}, t_i \in \mathbb{R}[x]} \quad \gamma \nonumber \\
&\text{subject to } \; f(x)-\gamma = 1 + \hspace*{-0.15in} \sum_{\lambda \in \{0,1\}^m} \hspace*{-0.085in} s_\lambda(x) g_1(x)^{\lambda_1} \cdots g_m(x)^{\lambda_m} + \sum_{i=1}^r t_i(x) h_i(x).
\label{eq:SOS_schmudgen}
\end{align}

The Positivstellensatz and SOS programming can also be applied to polynomial optimization over a more general form of semi-algebraic sets defined as
\[
T:= \{ x\in \mathbb{R}^n \hspace{-0.05in} : \hspace{-0.05in} g_i(x) \geq 0,\, i =1, \cdots,m, h_j(x) = 0, j=1, \cdots,r,  q_k(x) \neq 0, k=1, \cdots, l \}.
\]
It can be shown that $T = \emptyset$ if and only if 
\begin{align*}
 \hat{T}:= \{ (x,y) \in \mathbb{R}^{n+l}: g_i(x) \geq 0,\, i =1, \cdots,m, \,  h_j(x) & =  0, \, j=1, \cdots,r, \\
& y_k q_k(x) = 1, \, k=1, \cdots, l \} = \emptyset.
\end{align*}
Thus, for any $f \in \mathbb{R}[x]$, we have 
\[
\min\limits_{x\in T} f(x)= \min\limits_{(x,y) \in \hat{T}} f(x).
\]
Therefore, to find lower bounds on $\min_{x\in T} f(x)$, one can apply SOS programming and Putinar's Positivstellensatzs to $\min\limits_{(x,y) \in \hat{T}} f(x)$. \\

\subsection{Case 5: Tests for Non-negativity on $\mathbb{R}^n$:}

The following result from~\cite{habicht} defines a test for non-negativity of homogeneous polynomials over $\mathbb{R}^n$.
\begin{mythm}(Habicht theorem)
For every homogeneous polynomial $f$ that satisfies $f(x_1,\cdots,x_n) > 0$ for all $x \in \mathbb{R}^n \setminus \{0\}$, there exists some $e \geq 0$ such that all of the coefficients of
\begin{equation}
\left( \sum_{i=1}^n x_i^2 \right)^e f(x_1,\cdots,x_n)
\label{eq:habicht_product}
\end{equation}
are positive. In particular, the product is a sum of squares of monomials. 
\label{thm:habicht}
\end{mythm}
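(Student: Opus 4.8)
The plan is to derive Theorem~\ref{thm:habicht} as a consequence of Polya's theorem (Theorem~\ref{thm:polya}) by passing to the squared variables $y_i = x_i^2$. First I would record the elementary observation that a homogeneous positive definite form necessarily has even degree: if $f$ were homogeneous of odd degree $d$, then $f(-x) = (-1)^d f(x) = -f(x)$, contradicting $f(x) > 0$ and $f(-x) > 0$ for any nonzero $x$. Writing $\deg f = 2m$, the goal of the first stage is to produce a homogeneous polynomial $g$ of degree $m$ with $f(x) = g(x_1^2, \ldots, x_n^2)$, so that the problem is transported from $\mathbb{R}^n \setminus \{0\}$ to the nonnegative orthant, where Polya's theorem lives.

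Second, I would apply Polya's theorem to $g$ on the orthant. For any $y$ with $y_i \geq 0$ and $\sum_i y_i \neq 0$, setting $x_i = \sqrt{y_i}$ gives a nonzero $x$ with $g(y) = f(x) > 0$; hence $g$ is strictly positive on $\{ y : y_i \geq 0, \sum_i y_i \neq 0 \}$. Theorem~\ref{thm:polya} then furnishes an exponent $e \geq 0$ such that
\[
\left( \sum_{i=1}^n y_i \right)^e g(y)
\]
has all positive coefficients. Substituting $y_i = x_i^2$ and using $f(x) = g(x_1^2, \ldots, x_n^2)$ shows that $\left( \sum_{i=1}^n x_i^2 \right)^e f(x)$ has all positive coefficients, which is the assertion of Theorem~\ref{thm:habicht}. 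For the final clause, every monomial of $\left( \sum_i x_i^2 \right)^e f(x)$ is a polynomial in the $x_i^2$, hence of the form $c\, x_1^{2a_1} \cdots x_n^{2a_n}$; once $c > 0$ this equals $\left( \sqrt{c}\, x_1^{a_1} \cdots x_n^{a_n} \right)^2$, so the product is a sum of squares of monomials.

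I expect the \emph{main obstacle} to be the reduction step itself, namely securing the representation $f(x) = g(x_1^2, \ldots, x_n^2)$ that bridges to Polya's theorem. This is exactly where the structural content of Habicht's result resides: the substitution $y_i = x_i^2$ recovers $f$ only through its even-power data, so the delicate point is to control the interaction between this substitution and the positive-definiteness hypothesis (cf. the discussion following Theorem~\ref{thm:polya}, where Habicht's characterization is obtained for forms admitting precisely such a representation). Once this bridge is in place, the remaining steps — invoking Polya's theorem on the orthant and reading off the sum-of-squares-of-monomials conclusion from the resulting even exponents — are routine.
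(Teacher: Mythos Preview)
Your overall strategy—pass to $y_i = x_i^2$, apply Polya's theorem to the resulting form on the nonnegative orthant, then substitute back—is precisely the argument the paper sketches in the paragraph following Theorem~\ref{thm:polya}. Where your proposal and the paper's treatment part ways is in the status of the representation $f(x) = g(x_1^2,\ldots,x_n^2)$: the paper's discussion explicitly \emph{assumes} it as a hypothesis on $f$, whereas you propose to \emph{derive} it from positive definiteness.

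That derivation cannot succeed, and this is the genuine gap in your plan. The positive definite form $f(x_1,x_2) = x_1^2 - x_1 x_2 + x_2^2$ has a monomial with odd exponents and so is not of the shape $g(x_1^2,x_2^2)$. Worse, for this $f$ the conclusion of Theorem~\ref{thm:habicht} as literally stated fails: expanding $(x_1^2+x_2^2)^e = \sum_{k}\binom{e}{k}x_1^{2k}x_2^{2(e-k)}$ and multiplying by $f$, the only contribution to $x_1^{2e+1}x_2$ comes from $\binom{e}{e}x_1^{2e}\cdot(-x_1x_2)$, giving coefficient $-1$ for every $e\ge 0$. So the ``main obstacle'' you flag is not merely delicate—it is insurmountable without an extra hypothesis. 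The statement of Theorem~\ref{thm:habicht} is implicitly carrying over the assumption $f(x)=g(x_1^2,\ldots,x_n^2)$ from the paper's earlier discussion of Habicht's result; with that hypothesis restored, your Steps~2 onward constitute a complete and correct proof, identical in substance to the paper's own argument.
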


Using this theorem, one can verify non-negativity of any homogeneous polynomial $f$ over $\mathbb{R}^n$ by multiplying $F$ repeatedly by $\sum_{i=1}^n x_i^2$. If for some $e \in \mathbb{N}$, the Product~\eqref{eq:habicht_product} has all positive coefficients, then $f \geq 0$. We can define an alternative test for non-negativity over $\mathbb{R}^n$ using the following theorem~(\cite{polya_Rn}).

\begin{mythm}
Define $E_n:=\{-1,1\}^n$. Suppose a polynomial $f(x_1,\cdots,x_n)$ of degree $d$ satisfies $f(x_1,\cdots,x_n) > 0$ for all $x \in \mathbb{R}^n$ and its homogenization\footnote{Associated to every polynomial $f(x_1, \cdots, x_n), \, x \in \mathbb{R}^n$ of degree $d$, there exists a degree $e$ homogeneous polynomial $h(x_1,\cdots,x_n,y):= y^e f(\frac{x_1}{y}, \cdots, \frac{x_n}{y})$, where $e \geq d$.
}
is positive definite. Then
\begin{enumerate}
\item there exist $\lambda_e \geq 0$ and coefficients $c_\alpha \in \mathbb{R}$ such that
\begin{equation}
\left(1+ e^Tx \right)^{\lambda_e} f(x_1, \cdots,x_n) = \sum_{\alpha \in  I_e} c_\alpha x_1^{\alpha_1} \cdots x_n^{\alpha_n} \text{ for all } e \in E_n,
\label{eq:santos_product}
\end{equation}  
where $I_e := \{ \alpha \in \mathbb{N}^n: \Vert \alpha \Vert_1 \leq d+\lambda_e \}$ and $sgn(c_\alpha) = e_1^{\alpha_1} \cdots e_n^{\alpha_n}$.

\item there exist positive $N,D \in \mathbb{R}[x_1^2,\cdots,x_n^2,f^2]$ such that $f=\frac{N}{D}$.
\end{enumerate}
\label{thm:polya_satos}
\end{mythm}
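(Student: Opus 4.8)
The plan is to prove the two assertions separately: the first by reducing, orthant by orthant, to the homogeneous Polya theorem (Theorem~\ref{thm:polya}), and the second by gluing the $2^n$ resulting orthant certificates with the merging technique of~\cite{lombardi1991effective}. For the first assertion, fix a sign vector $e\in E_n$. The substitution $x_i\mapsto e_ix_i$ is a linear bijection of $\mathbb{R}^n$ fixing the origin, so $f_e(x):=f(e_1x_1,\dots,e_nx_n)$ is again strictly positive on $\mathbb{R}^n$, and its homogenization $\hat f_e(x,x_0)=\hat f(e_1x_1,\dots,e_nx_n,x_0)$ is again positive definite, being the composition of the positive definite form $\hat f$ with a linear isomorphism of $\mathbb{R}^{n+1}$. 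Hence $\hat f_e$ is homogeneous and strictly positive on $\{z\in\mathbb{R}^{n+1}:z_i\ge 0,\ \sum_iz_i\ne 0\}$, so Polya's theorem yields $\lambda_e\ge 0$ with $(x_0+x_1+\cdots+x_n)^{\lambda_e}\hat f_e(x,x_0)$ having all positive coefficients. Setting $x_0=1$, so that $\hat f_e(x,1)=f_e(x)$, shows $(1+x_1+\cdots+x_n)^{\lambda_e}f_e(x)$ has all positive coefficients; undoing the sign change $x_i\mapsto e_ix_i$ turns this into $(1+e^Tx)^{\lambda_e}f(x)=\sum_{\alpha\in I_e}c_\alpha x^\alpha$ with $I_e=\{\alpha:\|\alpha\|_1\le d+\lambda_e\}$, where each monomial $x^\alpha$ picks up the factor $e^\alpha=e_1^{\alpha_1}\cdots e_n^{\alpha_n}$ from a strictly positive coefficient, giving $\operatorname{sgn}(c_\alpha)=e_1^{\alpha_1}\cdots e_n^{\alpha_n}$. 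This is the first claim.

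For the second assertion, I would first normalize the $2^n$ identities above to a common even exponent $\lambda\ge\max_e\lambda_e$; this is harmless, since multiplying a representation $\sum_\alpha c_\alpha x^\alpha$ by an extra factor $1+e^Tx$ preserves the sign pattern $\operatorname{sgn}(c_\alpha)=e^\alpha$. Grouping the right-hand side of $(1+e^Tx)^{\lambda}f=\sum_\alpha c_\alpha^{(e)}x^\alpha$ by the set $S$ of variables that appear to odd degree, and using that $(1+e^Tx)^{\lambda}$ is a perfect square when $\lambda$ is even, exhibits $f$, on the orthant $O_e=\{x:e_ix_i\ge 0\}$, as a quotient $\tau_e/\sigma_e$ in which $\sigma_e=(1+e^Tx)^{\lambda}$ is a square and $\tau_e=\sum_S\bigl(\prod_{i\in S}e_i\bigr)\bigl(\prod_{i\in S}x_i\bigr)\,q_S$ with each $q_S$ a sum of squared monomials --- that is, a Positivstellensatz certificate for $f>0$ on $O_e$. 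I would then apply the construction of~\cite{lombardi1991effective}: two certificates valid on orthants differing in exactly one coordinate sign are merged into one valid on the union of those orthants --- in one variable this is the elementary identity $2ag=g^2+a^2-x^2b^2$ for the splitting $g=a+xb$ with $a,b\in\mathbb{R}[x^2]$, which clears the odd powers of that coordinate --- and iterating over the $n$ coordinates collapses all $2^n$ orthants into a single certificate on $\mathbb{R}^n$. A bookkeeping check on the construction shows that the numerator and denominator it produces are polynomials in $x_1^2,\dots,x_n^2$ and $f^2$, which gives $f=N/D$; when $f$ is itself (essentially) of the form $g(x_1^2,\dots,x_n^2)$ this degenerates to Habicht's theorem (Theorem~\ref{thm:habicht}), with $D$ a power of $\sum_ix_i^2$.

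The entire difficulty sits in the second assertion: each merge must be arranged so that $N$ and $D$, viewed as polynomials in $x_1^2,\dots,x_n^2,f^2$, retain nonnegative coefficients. This is exactly where the sharp sign data $\operatorname{sgn}(c_\alpha^{(e)})=e^\alpha$ from the first assertion --- and not merely the positivity of $f$ on each orthant --- is indispensable, since the cancellations that appear when combining adjacent orthant certificates could otherwise turn a coefficient negative; verifying that nonnegativity of all coefficients survives the $n$ successive merges is the main obstacle, and is where the detailed bookkeeping of Lombardi's construction does its work.
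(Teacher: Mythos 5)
Your proposal follows exactly the route this paper indicates for Theorem~\ref{thm:polya_satos}: the paper gives no proof of its own, quoting the result from~\cite{polya_Rn} and summarizing its argument in a single sentence (apply Polya's theorem orthant by orthant, then merge the $2^n$ certificates with the technique of~\cite{lombardi1991effective}), and explicitly referring the reader to~\cite{polya_Rn} for the construction of $N$ and $D$. Your treatment of the first assertion is complete and correct, and is the natural way to make the orthant step precise: the sign change $x_i\mapsto e_ix_i$ preserves strict positivity of $f$ and positive definiteness of its homogenization, Polya's theorem (Theorem~\ref{thm:polya}) applies to the homogenization over the nonnegative orthant of $\mathbb{R}^{n+1}$, and dehomogenizing at $x_0=1$ and undoing the sign change yields precisely $\operatorname{sgn}(c_\alpha)=e_1^{\alpha_1}\cdots e_n^{\alpha_n}$, since dehomogenization is coefficient-preserving for a homogeneous polynomial.

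For the second assertion you have correctly identified both the strategy and the one genuinely hard step --- arranging the $n$ successive merges so that the resulting $N$ and $D$ are not merely elements of the ring generated by $x_1^2,\dots,x_n^2,f^2$ but positive polynomials in those quantities --- and you have not carried that verification out; you defer it to the bookkeeping in Lombardi's construction. Since the paper likewise defers the entire construction to~\cite{polya_Rn}, there is nothing in the paper against which to check this part of your sketch, but as a standalone proof item 2 is incomplete: the one-variable identity $2ag=g^2+a^2-x^2b^2$ you invoke introduces a difference of squares, so survival of nonnegativity through the $n$ iterated merges is exactly the content of the cited merging lemma and must be written out if you intend a self-contained argument rather than a reduction to~\cite{lombardi1991effective}.
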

Based on the converse of Theorem~\ref{thm:polya_satos}, we propose the following test for non-negativity of polynomials over the cone $\Lambda_e:=\{x \in \mathbb{R}^n: sgn(x_i)=e_i, i=1,\cdots,n \}$ for some $e \in E_n$. Multiply a given polynomial $f$ repeatedly by $1+e^Tx$ for some $ e \in E_n$. If there exists some $\lambda_e \geq 0$ such that $sgn(c_\alpha) = e_1^{\alpha_1} \cdots e_n^{\alpha_n}$, then~\eqref{eq:santos_product} clearly implies that $f(x) \geq 0$ for all $x \in \Lambda_e$.
Since $\mathbb{R}^n = \cup_{e \in E_n} \Lambda_e$, we can repeat the test $2^n$ times to obtain a test for non-negativity of $f$ over $\mathbb{R}^n$.

The second part of Theorem~\ref{thm:polya_satos} gives a solution to Hilbert's $17^{th}$ problem  (see Section~\ref{sec:history}). For a construction of this solution (i.e., numerator $N$ and denominator $D$) see~\cite{polya_Rn}.


\chapter{SEMI-DEFINITE PROGRAMMING AND INTERIOR-POINT ALGORITHMS} \label{chp:convex_optim} 

As discussed in Chapter~\ref{chp:background}, Polya's theorem, Handelman's theorem and the Positivstellensatz results can be used to approximate the minimum of a polynomial over simplicies, hypercubes, polytopes and semi-algebraic sets. We showed that these theorems define sequences of Linear/Semi-Definite Programs (SDPs) whose solutions define lower bounds on the objective of the polynomial optimization problem. In this section, we focus on solving these SDPs. In particular, we discuss the primal and dual forms of semi-definite programming problems and introduce a state-of-the-art primal-dual interior-point algorithm for solving SDPs. In Section~\ref{sec:SDPSOLVER}, we will propose a new parallel version of this algorithm - an algorithm which is specifically designed to solve the SDPs defined by applying Polya's theorem to  optimization of polynomials arising in robust stability and control problems.\\

\section{Convex Optimization and Duality}

Let us define the constrained optimization problem
\begin{align}
& f^* := \min_{x \in \mathbb{R}^n} \;\; f_0(x) \nonumber \\
& \text{subject to } \;\; f_i(x) \leq 0, \quad i=1, \cdots, p \nonumber \\
& \hspace{0.8in} h_i(x) = 0, \quad i=1, \cdots, q,
\label{eq:constrnd_optim}
\end{align}
where $f_i: \mathbb{R}^n \rightarrow \mathbb{R}$ and $h_i: \mathbb{R}^n  \rightarrow \mathbb{R}$. For every problem of Form~\eqref{eq:constrnd_optim}, one can define the \textit{Lagrangian function} $L: \mathbb{R}^n \times \mathbb{R}^p \times \mathbb{R}^q$ as
\begin{equation}
L(x,\lambda, \nu) := f_0(x) + \sum_{i=1}^p \lambda_i f_i(x) + \sum_{i=1}^q \nu_i h_i(x),
\label{eq:Lagrangian}
\end{equation}
where $\lambda_i \in \mathbb{R}$ and $\nu_i \in \mathbb{R}$ are called the \textit{Lagrange multipliers} associated with the inequality constraints and the equality constraints in~\eqref{eq:constrnd_optim}, respectively. The vectors $\lambda=[\lambda_1, \cdots, \lambda_p]$ and $\nu=[\nu_1, \cdots, \nu_q]$ are called the \textit{dual variables} of Problem~\eqref{eq:constrnd_optim}. Let us define the \textit{Lagrange dual function} $g: \mathbb{R}^p \times \mathbb{R}^q \rightarrow \mathbb{R}$ as
\[
g(\lambda,\nu) := \inf_{x} \left( f_0(x) + \sum_{i=1}^p \lambda_i f_i(x) + \sum_{i=1}^q \nu_i h_i(x) \right).
\]
The Lagrange dual functions have some interesting properties. First, because the Lagrangian is affine in $\lambda_i$ and $\nu_i$ and the pointwise infimum of a family of affine functions is concave (\cite{boyd2004convex}), $g$ is a concave function. Second, it is easy to show that the dual functions yield lower bounds on $f^*$ as define in~\eqref{eq:constrnd_optim}, i.e.,
$g(\lambda,\nu) \leq p^*$. To find the best lower bound on $f^*$ using the Lagrange dual function, one can solve the \textit{Lagrange dual problem} defined as
\begin{align}
& d^* :=  \max_{\lambda, \nu} \;\; g(\lambda, \nu) \nonumber \\
& \text{subject to }\;\; \lambda \geq 0.
\label{eq:dual}
\end{align}
Every pair $(\lambda,\nu)$ which satisfies $\lambda \geq 0$ and $g(\lambda,\nu) > -\infty$ is called a \textit{dual feasible} point for Problem~\eqref{eq:dual}. Likewise, every $x \in \mathbb{R}^n$ satisfying $f_i(x) \leq 0$ for $i=1, \cdots, p$ and $h_i(x) \leq 0$ for  $i=1, \cdots, q$ is a \textit{primal feasible} point for Problem~\eqref{eq:constrnd_optim}. Dual feasible points can be used to bound sub-optimality of a primal feasible point. In particular, for every primal feasible point $x$ and dual feasible point $(\lambda,\nu)$, 
\[
f_0(x) - f^* \leq f_0(x) - g(\lambda,\nu),
\]
where $f_0(x) - g(\lambda,\nu)$ is called the \textit{duality gap} associated with $x$ and $(\lambda,\nu)$. For certain problems, the duality gap associated with primal optimal point $x^*$ and dual optimal point $(\lambda^*, \nu^*)$ is zero, i.e.,
\[
f_0(x^*) = f^* = d^* = g(\lambda^*, \nu^*).
\]
This property is often called \textit{strong duality}. One important class of problems which usually posses this property is convex optimization problems. A convex optimization problem is an optimization problem of Form~\eqref{eq:constrnd_optim}, where the functions $f_i, \, i=0, \cdots, p$ are convex\footnote{A function $f: \mathbb{R}^n \rightarrow \mathbb{R}$ is convex if $f(\alpha x + \beta y) \leq \alpha f(x) + \beta f(y)$ for all $x,y \in \mathbb{R}^n$ and for all $\alpha, \beta \in \mathbb{R}$ such that $\alpha+\beta=1$ and $\alpha,\beta \geq 0$.} and $h_i, \, i=1, \cdots,q$ are affine. For example, the Lagrange dual problem~\eqref{eq:dual} is by definition a convex problem (it is a maximization of a concave function) whether or not its primal (Eq. \eqref{eq:constrnd_optim}) is convex. It can be shown that~(\cite{slater2014lagrange}) if the primal problem~\eqref{eq:constrnd_optim} is convex and there exists some $x \in \mathbb{R}^n$ such that
\begin{align}
& f_i(x) < 0 \quad \text{ for } i=1, \cdots, p \;\; \text{ and } \nonumber \\
& h_i(x) = 0 \quad \text{ for } i=1, \cdots, q,
\label{eq:slater}
\end{align}
then strong duality holds. Strong duality can be exploited to solve the primal problem via its dual. This is useful specially when the dual is easier or computationally less expensive to solve. Suppose a dual optimal solution $(\lambda^*,\nu^*)$ is known and strong duality holds. If
\[
x^* := \argmin L(x, \lambda^*, \nu^*)
\] is unique and primal feasible, $x^*$ is the primal optimal solution.

\section{Descent Algorithms for Convex Optimization}

Suppose $f_0: \mathbb{R}^n \rightarrow \mathbb{R}$ is differentiable. For $\hat{x}$ to be a minimum of $f_0$, the necessary condition is that $\left[ \nabla_x \, f_0(x) \right]_{x=\hat{x}} = 0$. The Karush-Kuhn-Tucker (KKT) conditions~(\cite{KKT}) generalize this necessary condition for the constrained optimization problem~\eqref{eq:constrnd_optim}, under the assumption that the functions $f_i$ and $g_i$ are differentiable. The KKT conditions can be stated as follows: Suppose $x^* \in \mathbb{R}^n$ is a primal optimal point for~\eqref{eq:constrnd_optim} and $\lambda_i^*, \, i=1, \cdots, p$ and $\nu_i^*, \, i=1, \cdots,q$ are dual optimal points for~\eqref{eq:dual}. Moreover, suppose the strong duality holds, i.e., $f_0(x^*) = g(\lambda^*, \nu^*)$. Then, the optimal primal and dual points satisfy the following.
\begin{align}
 \left[ \nabla_x f_0(x)  +  \sum_{i=1}^p \lambda_i^* \nabla_x f_i(x) + \sum_{i=1}^q \nu_i^*  \nabla_x  h_i(x) \right]_{x=x^*} = & \; 0 \nonumber \\
 f_i(x^*) \leq \; 0 && \text{for } \; i=1, \cdots,p \nonumber \\
 h_i(x^*) = \; 0 && \text{for } \; i=1, \cdots,q \nonumber \\
 \lambda_i^* \geq \; 0 && \text{for } \; i=1, \cdots,p \nonumber \\
 \lambda_i^* f_i(x^*) = \; 0 && \text{for } \; i=1, \cdots,p.
 \label{eq:KKT}
\end{align}
The first line follows from the fact that $x^*$ is a minimizer of the Lagrangian $L(x,\lambda^*, \nu^*)$. The second, third and fourth lines indicate that $x^*$ and $(\lambda^*, \nu^*)$ are primal and dual feasible. The last line is called the \textit{complementary slackness} and follows from strong duality. This condition implies that for $i=1, \cdots, p$, either the $i^{th}$ primal constraint must be active at $x^*$ (i.e., $f_i(x^*) = 0$) or its corresponding optimal dual variable $\lambda_i^*$ must be zero.

In general, the KKT conditions are only necessary conditions for optimality.
Indeed, under certain regularity conditions, local minima of the primal Problem~\eqref{eq:constrnd_optim} satisfy the KKT conditions. However, when the primal problem is convex and there exists  $x \in \mathbb{R}^n$ which satisfies~\eqref{eq:slater}, the KKT conditions become necessary and sufficient. Motivated by this result, many of the existing convex optimization algorithms are in principle algorithms for solving the KKT conditions iteratively. These algorithms are often called \textit{descent} \textit{algorithms} because they generate a sequence $\{x^k\}_{k=1,2,  \cdots}$ of primal feasible solutions which satisfy
\begin{equation}
f_0(x^{k}) > f_0(x^{k+1}) \quad \text{ for } k=1,2,3, \cdots,
\label{eq:descent}
\end{equation}
unless $x^k$ is optimal. One example of descent algorithms is the Newton's algorithm. Given a primal feasible starting point $x^0$, Newton's algorithm finds a sequence of \textit{search directions} $\Delta x^k \in \mathbb{R}^n$ and \textit{step length} $t^k \in \mathbb{R}^+$ such that all the iterates
\[
x^{k+1} = x^k + t^k \Delta x^k \quad k = 1, 2, 3, \cdots  
\]
are feasible and satisfy~\eqref{eq:descent}. Given a primal feasible point $x^k$, Newton's algorithm calculates the search directions $\Delta x^k$ by first defining the convex optimization problem
\begin{align}
& \hat{f}_k := \min_{v \in \mathbb{R}} \quad  \left[ f_0(x) + \nabla_{x} f_0(x)^T v + \dfrac{1}{2} v^T \nabla_x^2 f_0(x) v \right]_{x=x^k} \nonumber \\
& \text{subject to } \;\; h_i(x^k+v) = 0 \qquad \text{ for } i=1, \cdots, m.
\label{eq:2nd_taylor}
\end{align}
The objective function of Problem~\eqref{eq:2nd_taylor} is the second-order Taylor's approximation of the objective function $f_0(x)$ at $x=x^k$. Then, the KKT optimality conditions for Problem~\eqref{eq:2nd_taylor} yield the following system of linear equations.
\begin{equation}
\begin{bmatrix} 
\left[ \nabla^2_x f_0(x) \right]_{x=x^k}  & D_x h(x)^T \\ 
D_x h(x)  & \mathbf{0}
\end{bmatrix} 
\begin{bmatrix}
\Delta x^k \\ 
\nu^k
\end{bmatrix} = 
\begin{bmatrix}
- \left[ \nabla_x f_0(x) \right]_{x=x^k} \\ 
\mathbf{0}
\end{bmatrix},
\label{eq:KKT_taylor}
\end{equation}
where $D_x h(x):= \left[ \nabla_x h_1(x), \cdots, \nabla_x h_m(x) \right]^T$. If the coefficient matrix in~\eqref{eq:KKT_taylor} is nonsingular, then there exist a unique Newton's search direction $\Delta x^k$ and optimal dual point $\nu^k$ for the dual to Problem~\eqref{eq:KKT_taylor}. Finally, Newton's algorithm calculates the new iterate as $x^{k+1} = x^k + t^k \Delta x^k$, where a step length $t^k$ can be obtained using 
a line search method such as \textit{backtracking}~(\cite{dennis1996numerical}) or bisection. 
A typical stopping criterion for Newton's algorithm is $f_0(x^k) - \hat{f}^k \leq \epsilon$ for some desired $\epsilon > 0$, where recall that $\hat{f}^k$ is the minimum of the second-order Taylor's approximation of $f_0$ at $x^k$, subject to the equality constraints in~\eqref{eq:2nd_taylor}. The difference between $f_0(x^k)$ and  $\hat{f}^k$ can also be interpreted as the size of Newton's search direction defined by the following weighted norm of $\Delta x^k$:
\begin{equation}
f_0(x^k)-\hat{f}^k = \Vert \Delta x^k \Vert_{\nabla^2_x f_0(x) \vert_{x=x^k}} := 2 (\Delta x{^k})^T \, \nabla_x^2 f_0(x) \vert_{x=x^k} \, \Delta x^k.
\label{eq:Newton_stop}
\end{equation}
For a comprehensive discussion on the complexity and convergence of Newton's algorithm, refer to~\cite{boyd2004convex}.

\section{Interior-point Algorithms for Convex Problems with Inequality Constraints}
\label{sec:barrier}

Suppose in Problem~\eqref{eq:constrnd_optim}, $f_i$ are convex and differentiable and $h_i$ are affine and differentiable. One of the most successful class of algorithms for solving this type of problems is \textit{interior-point} algorithms. Typically,
interior-point algorithms solve this problem in two steps: 1- Reducing the problem to a sequence of convex optimization programs with only linear equality constraints; and 2- Applying a descent algorithm, e.g., Newton's algorithm, to solve the equality constrained problem. One way to define this sequence of equality constrained problems is to incorporate the inequality constraints into the objective function using \textit{barrier functions}. For example, by using logarithmic barrier functions one can approximate Problem~\eqref{eq:constrnd_optim} as
\begin{align}
& \min_{x \in \mathbb{R}^n} \;\; f_0(x) - \sum_{i=1}^p \left( \dfrac{1}{b} \right) \log (-f_i(x)) \nonumber \\
& \text{subject to } \;\; h_i(x) = 0, \quad i=1, \cdots, q. 
\label{eq:barrier}
\end{align}
for some $b > 0$. Clearly, if any of the inequality constraints becomes active ($f_i(x) \rightarrow 0$), then the objective function blows up. Thus, any solution to Problem~\eqref{eq:barrier} lies in the interior (as the name `interior-point' suggests) of the feasible set 
\[
\{ x \in \mathbb{R}^n : f_i(x) \leq 0 , h_j(x)=0,\, \text{ for } i=1, \cdots,p \; \text{ and } \; j=1, \cdots, q \}.
\]
 Since Problem~\eqref{eq:barrier} is convex, one can use Newton's algorithm to find the optimal solution $x^*_b$ for any $b > 0$. In particular, given $b > 0$ and feasible $x^0$, Newton's algorithm finds a sequence $\{x^k\}_{k=1,2,\cdots} \rightarrow x^*_b$ by solving the modified KKT conditions
\begin{align}
\renewcommand\arraystretch{0.9} 
\begin{bmatrix}
\left[ b \nabla^2_x f_0(x) - \nabla_x^2  \sum\limits_{i=1}^p \log(-f_i(x)) \right]_{x=x^k}  & D_x h(x)^T \\ 
D_x h(x)  & \mathbf{0}
\end{bmatrix} &
\begin{bmatrix}
\Delta x^k \\ 
\nu^k
\end{bmatrix} = \nonumber \\
 & \hspace{-0.75in} \begin{bmatrix}
- \left[ \nabla_x f_0(x) + \nabla \sum\limits_{i=1}^p \log(-f_i(x)) \right]_{x=x^k} \\ 
\mathbf{0}
\end{bmatrix}
\label{eq:KKTmod_taylor}
\end{align}
for $\Delta x^k$ and $\nu^k$ and setting $x^{k+1} = x^k + t^k x^k$.
The set of optimal solutions $x^*_b$ for all $b > 0$ is called the \textit{central path}. Corresponding to any $x^*_b$ in the central path, one can verify that 
\[
\lambda_i^* =  -\dfrac{1}{b \, f_i(x^*_b)} \; \text{ for } i = 1, \cdots, p \quad \text{ and } \quad \nu^* = \dfrac{\nu^k}{b} \in \mathbb{R}^q, 
\]
are dual feasible and together with $x^*_b$ yield the duality gap $\frac{p}{b}$.
This indicates that as $b \rightarrow \infty$, $x^*_b$ converges to the optimal solution of Problem~\ref{eq:constrnd_optim} under the assumption that $f_i$ are convex and differentiable and $h_i$ are affine and differentiable. Based on this result, we can summarize the interior-point barrier algorithm for inequality constrained problems in Algorithm~\ref{alg:barrier}.

\begin{algorithm}
\textbf{Input:}
Convex functions $f_0, \cdots, f_p$; affine functions $h_1, \cdots, h_q$; a feasible starting point $x^0$; initial barrier parameter $b_0$;  tolerances $\epsilon_b > 0 $ and $\epsilon_N > 0$  for stopping criteria. \vspace{0.1in}

\textbf{Initialization:}\\
Set $b = b_0$.\\
Choose $\mu > 1$.\vspace{0.1in}\\

\textbf{Barrier Algorithm:}\\
\While{$\dfrac{p}{b} > \epsilon_b$}{\vspace{0.05in}
	Set $x=x^0$.\\	
	Set $\Delta x = 10^{100} \cdot \mathbf{1}_n$.\vspace{0.05in} \\
	\textbf{Newton's Algorithm:}\\
	\While{$\Vert \Delta x \Vert$ as defined in~\eqref{eq:Newton_stop} is greater than or equal to $\epsilon_N$}{ \vspace{0.05in}

	Calculate Newton's search direction $\Delta x$ by solving the system of linear equations in~\eqref{eq:KKTmod_taylor}. \vspace{0.05in}	\\	

	Choose step length $t$ using backtracking line search. \vspace{0.05in} \\
	Update Newton's iterate as $x := x + t \, \Delta x$. \vspace{0.05in} \\
	}
	Set $x^*_b = x$. \vspace{0.05in} \\
	Update the barrier parameter as $b := \mu b$.

} \vspace{0.1in}

\textbf{Output:} $x^*_b$: A $\frac{p}{b}$-suboptimal solution  to Problem~\eqref{eq:constrnd_optim}.
\vspace{0.15in}\caption{Barrier algorithm for inequality constrained convex optimization problems}
\label{alg:barrier}
\end{algorithm}

An alternative subclass of interior-point algorithms for solving inequality constrained problems is the \textit{primal-dual} algorithms. Similar to the barrier algorithm, primal-dual algorithms find their search direction by solving the KKT optimality conditions. However, instead of incorporating the inequality constraints into the objective function (equivalently, eliminating the dual variable $\lambda$ from the KKT condition~\eqref{eq:KKT}), primal-dual algorithms simultaneously solve the primal problem and its dual by computing independent Newton's search directions $\Delta x \in \mathbb{R}^n$, $\Delta \lambda \in \mathbb{R}^p$ and $\Delta \nu \in \mathbb{R}^q$ for primal and dual variables $x, \lambda$ and $\nu$. Given a feasible point $(x^k, \lambda^k, \nu^k) $ for Problem~\eqref{eq:constrnd_optim} and $b > 0$, the basic version of primal-dual algorithms computes the search directions $(\Delta x^k, \Delta \lambda^k, \Delta \nu^k)$ by approximating the modified KKT conditions
\[
R(x, \lambda, \nu, b) = 
\begin{bmatrix}
\nabla f_0(x) + D_x f(x)^T \lambda + D_x h(x)^T \nu\\ 
\begin{array}{c}
 \lambda_1 f_1(x) - \dfrac{1}{b} \\ 
 \vdots \\ 
 \lambda_p f_p(x) - \dfrac{1}{b}
 \end{array}  \\ 
h_1(x) \\ 
\vdots \\
h_q(x)
\end{bmatrix} = 0
\]
at the point $(x^k, \lambda^k, \nu^k)$ as
\begin{align}
 R(x^k + \Delta x^k, \lambda^k + \Delta \lambda^k,& \nu^k + \nabla \nu^k, b) \nonumber  \\
\approx &\; R(x,\lambda,\nu,b)
+
\begin{bmatrix}
\left[ \nabla R_1(x,\lambda,\nu,b)^T \right]_{\substack{x = x^k \\ \lambda = \lambda^k \\ \nu = \nu^k}} \\ 
\vdots \\ 
\left[ \nabla R_{n+p+q}(x,\lambda,\nu,b)^T \right]_{\substack{x = x^k \\ \lambda = \lambda^k \\ \nu = \nu^k}}
\end{bmatrix}
\begin{bmatrix}
\Delta x^k \\ 
\Delta \lambda^k \\ 
\Delta \nu^k
\end{bmatrix}
 = 0,
\label{eq:basic_primaldual}
\end{align}
and solving for $(\Delta x^k, \Delta \lambda^k, \Delta \nu^k)$.
The primal-dual iterates are then updated according to
\[
x^{k+1} = x^k + t^k \Delta x^k, \quad \lambda^{k+1} = \lambda^k + t^k \Delta \lambda^k, \quad \nu^{k+1} = \nu^k + t^k \Delta \nu^k.
\]
Similar to the barrier algorithm, the duality gap corresponding to any feasible primal-dual iterate $(x^k, \lambda^k, \nu^k)$ is $\dfrac{p}{b}$. Thus, as $b \rightarrow \infty$ in~\eqref{eq:basic_primaldual}, the resulting iterates converge to the optimal solution of Problem~\eqref{eq:constrnd_optim}, assuming that $f_i$ are convex and $h_i$ are affine. In the sequel, we describe a primal-dual algorithm for solving semi-definite programs - a class of convex optimization problems which has several applications in control~theory.

\section{Semi-definite Programming}
\label{sec:SDP_SDP}

Consider the delay-differential equation
\begin{equation}
\dot{x}(t) = A x(t) + \sum_{i=1}^N A_i x(t-\tau_i)
\label{eq:sys_delay}
\end{equation} 
where $x(t) \in \mathbb{R}^n$ and $\tau_i > 0, \, i=1, \cdots,N$. From~\cite{repin1965quadratic}, a sufficient condition for asymptotic stability of this system is existence of $P_0 > 0, \cdots, P_N > 0$ such that the quadratic functional 
\[
V(x,t) = x^T(t) P_0 x(t) + \sum_{i=1}^{N} \int_{0}^{\tau_i} x(t-s)^T P_i \, x(t-s) ds
\]
satisfies $\dot{V}(x,t) < 0$ for all $x(t) \in \mathbb{R}^n \setminus \{0\}$ and $t > 0$. The derivative $\dot{V}(x,t)$ can be expanded as
\begin{align*}
\dot{V}(x,t) = x(t)^T &\left(  A^T P_0 + P_0 A + \sum_{i=1}^N P_i  \right) x(t) + x(t)^T \left( \sum_{i=1}^N P_0 A_i x(t-\tau_i) \right)  \\
 +  &\left( \sum_{i=1}^N x(t-\tau_i)^T A_i^T P_0  \right) x(t) - \sum_{i=1}^N x(t-\tau_i)^T P_i x(t-\tau_i).
\end{align*}
Thus, $\dot{V}(x,t) = z(t)^T Q(P_0, \cdots,P_N) z(t)$, where $z(t):= \left[
x(t) \;\; x(t-\tau_1) \;\; \cdots \;\; x(t-\tau_N) \right]^T$ and
\[
Q(P_0, \cdots,P_N):=
\begin{bmatrix}
A^TP_0 + P_0 A + \sum_{i=1}^N  P_i& P_0 A_1 & \cdots & P_0 A_N \\ 
A_1^T P_0 & -P_1 & \cdots & 0 \\ 
\vdots & \vdots & \ddots & \vdots \\ 
A_N^T P_0 & 0 & \cdots & -P_N
\end{bmatrix}. \]
Thus, stability of System~\eqref{eq:sys_delay} can be verified by solving the following feasibility problem:
\begin{align}
 &  \quad\text{Find } \quad\; P_0 > 0, \cdots, P_N > 0 \nonumber \\
& \text{such that } \;Q(P_0, \cdots,P_N) < 0.
\label{eq:delay_feasibility}
\end{align}
Now let us parameterize each $P_i$ as
\[
P_i(y_{iL+1}, \cdots, y_{(i+1)L}) = 
\begin{bmatrix}
y_{iL+1} & y_{iL+2} & \cdots & y_{iL+n} \\ 
y_{iL+2} & y_{iL+n+1} & \cdots & y_{iL+2n-1} \\ 
\vdots &  & \ddots & \vdots \\ 
y_{iL+n} & y_{iL+2n-1} & \cdots & y_{(i+1)L}
\end{bmatrix}
\]
for $i=0, \cdots, N$, where $L:=n(n+1)/2$ and $y_j \in \mathbb{R}$ for $j=0, \cdots, (N+1)L$. Then, we can formulate the problem of stability of System~\eqref{eq:sys_delay} as the convex optimization problem
\begin{align}
& \min_{ \substack{ y \in \mathbb{R}^{(N+1)L} \\ Z \in \mathbb{S}^{(N+2)n} }} \quad \;\;\;   \langle \mathbf{1}_{(N+1)L},y \rangle \nonumber \\
& \text{subject to }\;\;  \sum_{i=1}^{(N+1)L} F_i y_i = Z \nonumber \\
& \hspace{0.91in} Z \geq 0,
\label{eq:SDP_exmp}
\end{align}
where the matrices $F_i \in \mathbb{S}^{(N+2)n}$ for $i=1, \cdots, (N+1)L$ are defined as
\begin{equation}
F_i = \text{diag}\{P_0(x_0, \cdots, x_L),Q(P_0(x_0, \cdots, x_L), \cdots, P_N(x_{NL+1}, \cdots, x_{(N+1)L}))\},
\label{eq:Fi}
\end{equation}
where 
\[
x_j = \begin{cases}
1 & j=i \\
0 & j \neq i 
\end{cases} \qquad \text{for } j=1, \cdots, (N+1)L.
\]
Problem~\eqref{eq:SDP_exmp} is an example of the dual form of the Semi-Definite Programming (SDP) problem. We define SDP as the optimization of a linear objective function over the cone of positive definite matrices subject to linear matrix equality and linear matrix inequality constraints. Given $C \in \mathbb{S}^n, B_i \in \mathbb{S}^n$ for $i=1, \cdots,k$, $G_i \in \mathbb{S}^n$ for $i=1, \cdots,l$, $a \in \mathbb{R}^k$ and $b \in \mathbb{R}^l$, the \textit{primal} SDP problem is
\begin{align}
& p^* := \max_{X \in \mathbb{S}^n} \;\;  \text{tr}(CX) \nonumber \\
& \text{subject to } \;\; B(X) = a \nonumber \\
& \hspace{0.8in} G(X) \leq b \nonumber \\
& \hspace{1.05in} X \geq 0,
\label{eq:SDP_primal}
\end{align}
where the linear maps $B: \mathbb{S}^n \rightarrow \mathbb{R}^k$ and $G: \mathbb{S}^n \rightarrow \mathbb{R}^l$ are defined as
\begin{equation}
B(X) = \begin{bmatrix}
\text{tr}(B_1X) \\ 
\text{tr}(B_2X) \\ 
\vdots \\ 
\text{tr}(B_kX)
\end{bmatrix}
\qquad \text{and} \qquad 
G(X) = \begin{bmatrix}
\text{tr}(G_1X) \\ 
\text{tr}(G_2X) \\ 
\vdots \\ 
\text{tr}(G_lX)
\end{bmatrix}.
\label{eq:B_G}
\end{equation}
To derive the dual SDP to Problem~\eqref{eq:SDP_primal}, we employ Lagrange multipliers $t \in \mathbb{R}^l_+$ and $y \in \mathbb{R}^k$ as follows.
\[
p^* = \max_{X \geq 0} \min_{y \in \mathbb{R}^k,t \in \mathbb{R}^l_+ } \quad \text{tr}(CX) + t^T(b-G(X)) + y^T(a-B(x))
\]
Then, from the \textit{min-max inequality}, i.e.,
\[
\max_{u \in U} \min_{v \in V} \; f(u,v) \leq \min_{v \in V} \max_{u \in U} \; f(u,v)
\]
it follows that
\begin{align*}
p^* &\leq \max_{y \in \mathbb{R}^k,t \in \mathbb{R}^l_+ } \min_{X \geq 0} \; \text{tr}(CX) + t^T(b-G(X)) + y^T(a-B(x)) \\
    &= \min_{y \in \mathbb{R}^k,t \in \mathbb{R}^l_+ } \max_{X \geq 0} \; \text{tr}(C- \sum_{i=1}^k B_i y_i - \sum_{i=1}^l G_i t_i)X + a^Ty + b^Tt.
\end{align*}
Note that 
\[
\max\limits_{X \geq 0} \; \text{tr}(C- \sum_{i=1}^k B_i y_i - \sum_{i=1}^l G_i t_i)X < \infty
\]
only if $C- \sum_{i=1}^k B_i y_i - \sum_{i=1}^l G_i t_i \leq 0$. In this case, clearly the maximum occurs when 
\[
C- \sum_{i=1}^k B_i y_i - \sum_{i=1}^l G_i t_i = 0.
\]
Therefore, we can the write \textit{dual} SDP problem as
\begin{align}
& \max_{y \in \mathbb{R}^k, t \in \mathbb{R}^l_+} \;\;\;  a^T y + b^T t  \nonumber \\
& \text{subject to } \;\; \sum_{i=1}^k B_i y_i + \sum_{i=1}^l G_i t_i - C = Z \nonumber \\
& \hspace{0.85in} Z \geq 0.
\label{eq:SDP_dual}
\end{align}
From~\eqref{eq:SDP_exmp} and~\eqref{eq:SDP_dual} it is clear that the problem of stability of the delay-differential Equation~\eqref{eq:sys_delay} can be formulated as the dual SDP defined by the elements
\[
a := \mathbf{1}_{(N+1)L}, \quad b := \mathbf{0}, \quad G_i = \mathbf{0}, \quad C= \mathbf{0}, \quad B_i = F_i,
\]
where we have defined $F_i$ in~\eqref{eq:Fi}.  

SDPs are popular among controls community because not only they can be solved efficiently using convex optimization algorithms, but also a wide variety of problems in controls can be formulated as SDPs; e.g., robust stability~(\cite{bliman2004convex,peres2007}) and robust performance~(\cite{robust_performanceTAC2001,scherer2006lmi}) of uncertain systems, $H_2/H_{\infty}$-optimal filter design~(\cite{li1997linear, H2_Hinf_filtering}), estimation of regions of attraction~(\cite{lall_west2005polynomial,packard_ROA, topcu2010robust}) and reachability sets~(\cite{wang2013polynomial}) of nonlinear systems, stability and control of hybrid systems~(\cite{boukas2006static,papchristodoulou2009robust}) and game theory~(\cite{parrilo2006polynomial}). In the next section, we describe a state-of-the-art primal-dual algorithm by~\cite{helmberg2005interior} for solving SDPs.

\section{A Primal-dual Interior-point Algorithm for Semi-definite Programming}
\label{sec:primal_dual}

Fortunately, there exists several interior-point algorithms in the literature for solving SDPs; e.g., dual scaling~(\cite{dual_scaling,primal}), primal-dual~(\cite{alizadeh,monteiro,helmberg}) and cutting-plane/spectral bundle~(\cite{helmberg2000spectral,krishnan,mahdu}) algorithms. In our study, we are particularly interested in a state-of-the-art primal-dual algorithm proposed by~\cite{helmberg2005interior} mainly because at each iteration, it preserves a certain \textit{property} (see~\eqref{eq:structure}) of the primal and dual search directions. In Section~\ref{sec:SDPSOLVER}, we will exploit this property to propose a distributed parallel version of this algorithm for solving large-scale SDPs in robust and/or nonlinear stability analysis. In the following, we briefly discuss the original version of this algorithm algorithm.

Similar to the barrier method described in Section~\ref{sec:barrier}, we can incorporate the inequality constraints in the dual SDP~\eqref{eq:SDP_dual} using logarithmic barrier functions and the barrier parameter $\mu > 0$ as
\begin{align}
& \max_{y \in \mathbb{R}^k, t \in \mathbb{R}^l} \;\;\;  a^T y + b^T t - \mu \left( \log \det Z + \sum_{i=1}^l \log t_i \right) \nonumber  \\
& \text{subject to } \;\; \sum_{i=1}^k B_i y_i + \sum_{i=1}^l G_i t_i - C = Z.
\label{eq:SDP_barrier}
\end{align}
The Lagrangian for Problem~\eqref{eq:SDP_barrier} is defined as
\begin{align*}
L(X,y,t,Z) :=  a^Ty + b^Tt - \mu & \left( \log \det Z + \sum_{i=1}^l \log t_i \right) \nonumber \\
& \qquad \qquad + \text{tr}\left( \left(Z+C - \sum_{i=1}^k B_i y_i - \sum_{i=1}^l G_i t_i \right)X \right).
\end{align*}
Then, the KKT optimality conditions for Problem~\eqref{eq:SDP_barrier} is $\nabla L(X,y,t,Z) = 0$, which can be expanded as
\begin{align}
&\nabla_X L(X,y,t,Z) = Z + C - \sum_{i=1}^k B_i y_i - \sum_{i=1}^l G_i t_i = 0 \label{eq:KKT_SDP_1}  \\
&\nabla_y L(X,y,t,Z) = a - B(X) = 0 \label{eq:KKT_SDP_2} \\
&\nabla_t L(X,y,t,Z) = b - G(X) -\mu \left[1/{t_1} , \cdots, 1/{t_l} \right]^T = 0 \label{eq:KKT_SDP_3} \\
&\nabla_Z L(X,y,t,Z) = X - \mu Z^{-1} = 0,
\label{eq:KKT_SDP_4}
\end{align}
where $B(X)$ and $G(x)$ are defined in~\eqref{eq:B_G}.

Given a barrier parameter $\mu >0$, at each iteration, the primal-dual algorithm finds a search direction $\Delta s:=[\Delta X, \Delta y, \Delta t, \Delta Z]$ such that the new iterate 
$
[X + \Delta X, y + \Delta y, t + \Delta t, Z + \Delta Z]
$
belongs to the central path, i.e.,
\[
\left\lbrace [X_\mu, y_\mu, t_\mu, Z_\mu] : \mu \in [0, \infty] \text{ and } X_\mu, y_\mu, t_\mu, Z_\mu \text{ satisfy Conditions~\eqref{eq:KKT_SDP_1}-\eqref{eq:KKT_SDP_4}}  \right\rbrace.
\]
Conversely, given a point $[X,y,t,Z]$, one can use~\eqref{eq:KKT_SDP_3} and~\eqref{eq:KKT_SDP_4} to find its corresponding barrier parameter as
\begin{equation}
\mu = \dfrac{\text{tr}(ZX) + \left[1/{t_1} , \cdots, 1/{t_l} \right] (b - G(X))}{n+l}.
\label{eq:mu_corrector}
\end{equation}
The search direction $\Delta s$ of the primal-dual algorithm is the sum of two steps: the \textit{predictor} step $ \Delta \hat{s} := [\Delta \hat{X}, \Delta \hat{y}, \Delta \hat{t}, \Delta \hat{Z}]$ and the \textit{corrector} or \textit{centering} step $ \Delta \bar{s} :=[\Delta \bar{X}, \Delta \bar{y}, \Delta \bar{t}, \Delta \bar{Z}]$. The predictor step is defined as the Newton's step for solving the optimality conditions~\eqref{eq:KKT_SDP_1}-\eqref{eq:KKT_SDP_4} with $\mu = 0$, starting at any point $(X,y,t,Z)$ which satisfies
\begin{equation}
X > 0, \quad Z > 0, \quad t >0, \quad G(X) < b.
\label{eq:starting_point}
\end{equation}
Similar to the Taylor's approximation in~\eqref{eq:basic_primaldual}, we find the Newton's step by solving 
\begin{equation}
\nabla L(X,y,t,Z) + \nabla^2 L(X,y,t,Z) \Delta \hat{s}^T = 0
\label{eq:Taylor_PD_SDP}
\end{equation}
for $\Delta \hat{s}$. Substituting for $\nabla L$ from~\eqref{eq:KKT_SDP_1}-\eqref{eq:KKT_SDP_4} into~\eqref{eq:Taylor_PD_SDP} yields the following system of equations for the predictor step.
\begin{equation}
\begin{bmatrix}
\Lambda_{11} & \Lambda_{12} \\
\Lambda_{21} & \Lambda_{22}
\end{bmatrix}
\begin{bmatrix}
\Delta \hat{y} \\
\Delta \hat{t}
\end{bmatrix} = 
\begin{bmatrix}
B\left(Z^{-1} T X \right) - a) \\
G(Z^{-1} T X) - b)
\end{bmatrix},
\label{eq:deltay_deltat}
\end{equation}
\begin{align}
& \Delta \hat{X} = Z^{-1} T X - Z^{-1} \left( \sum_{i=1}^k B_i \Delta \hat{y} + \sum_{i=1}^l G_i \Delta \hat{t} \right) X -X \label{eq:deltaX} \\
& \Delta \hat{Z} = - T + \sum_{i=1}^k B_i \Delta \hat{y} + \sum_{i=1}^l G_i \Delta \hat{t}
\label{eq:deltaZ}
\end{align}
where
\begin{align*}
& T = -\sum_{i=1}^k B_i y + \sum_{i=1}^l G_i t + C + Z \\
& \Lambda_{11} = \left[B(Z^{-1} B_1 X) \quad \cdots \quad B(Z^{-1} B_k X)\right] \\
& \Lambda_{12} = \left[B(Z^{-1} G_1 X) \quad \cdots \quad B(Z^{-1} G_l X)\right] \\
& \Lambda_{21} = \left[G(Z^{-1} B_1 X) \quad \cdots \quad G(Z^{-1} B_k X)\right] \\
& \Lambda_{22} = \left[G(Z^{-1} G_1 X) \quad \cdots \quad G(Z^{-1} G_l X)\right] + 
\text{diag} \left\lbrace \dfrac{b_1-\text{tr}(G_1X)}{t_1}, \cdots, \dfrac{b_l-\text{tr}(G_lX)}{t_l} \right\rbrace.
\end{align*}
The corrector step is defined as the Newton's step for solving the KKT conditions~\eqref{eq:KKT_SDP_1}-\eqref{eq:KKT_SDP_4}, using the barrier parameter $\mu$ as defined in~\eqref{eq:mu_corrector} and starting at 
\[
[X+\Delta \hat{X}, y + \Delta \hat{y}, t+ \Delta \hat{t}, Z+ \Delta \hat{Z}],
\] where $[X,y,t,Z]$ can be any point satisfying~\eqref{eq:starting_point} and $[\Delta \hat{X}, \Delta \hat{y},\Delta \hat{t}, \Delta \hat{Z}]$ can be calculated using~\eqref{eq:deltay_deltat}-\eqref{eq:deltaZ}. Thus, to derive the corrector step, we substitute for $\nabla L$ from KKT conditions~\eqref{eq:KKT_SDP_1}-\eqref{eq:KKT_SDP_4} into
\begin{equation*}
\left[ \nabla L(\bar{X},\bar{y},\bar{t},\bar{Z}) + \nabla^2 L(\bar{X},\bar{y},\bar{t},\bar{Z}) \right]_{\substack{\bar{X}=X+\Delta \hat{X}  \\ \bar{Z}= Z + \Delta \hat{Z} \\ \bar{y}= y + \Delta \hat{y} \\  \bar{t}=t+ \Delta \hat{t} }} \Delta \bar{s}^T = 0
\end{equation*}
This yields the following set of equations for the corrector step.
\begin{equation}
\begin{bmatrix}
\Lambda_{11} & \Lambda_{12} \\
\Lambda_{21} & \Lambda_{22}
\end{bmatrix}
\begin{bmatrix}
\Delta \bar{y} \\
\Delta \bar{t}
\end{bmatrix}
=
\begin{bmatrix}
\mu B(Z^{-1}) - B(Z^{-1} \Delta \hat{Z} \Delta \hat{X}) \\
\mu \left[\dfrac{1}{t_1} \;\; \cdots \;\; \dfrac{1}{t_l} \right] + G \left( X+\Delta \hat{X} + \mu Z^{-1} -Z^{-1} \Delta \hat{Z} \Delta \hat{X} \right)
\end{bmatrix}
\label{eq:deltay_deltat_bar}
\end{equation}
\begin{align}
& \Delta \bar{X} = Z^{-1} \left( -\Delta \hat{Z} \Delta \hat{X} + \mu I -\Delta \bar{Z}X \right) \label{eq:deltaXbar} \\
& \Delta \bar{Z} = \sum_{i=1}^k B_i \Delta \bar{y} + \sum_{i=1}^l G_i \Delta \bar{t}
\label{eq:deltaZbar}
\end{align}
By solving~\eqref{eq:deltay_deltat}-\eqref{eq:deltaZ} for the predictor step and solving~\eqref{eq:deltay_deltat_bar}-\eqref{eq:deltaZbar} for the corrector step, we can calculate the search direction as
\begin{equation}
\Delta s = \left[\text{Sym}(\Delta \hat{X} + \Delta \bar{X}), \Delta \hat{y} + \Delta \bar{y}, \Delta \hat{t} + \Delta \bar{t}, \Delta \hat{Z} + \Delta \bar{Z} \right],
\label{eq:primal-dual_step}
\end{equation}
where $\text{Sym}(W):=(W+W^T)/2$ is the symmetric part of matrix $W$.
We have provided an outline of the discussed primal-dual algorithm in Algorithm~\ref{alg:primal_dual_centeralized}.

\begin{algorithm}
\textbf{Input:}
SDP elements $C, a, b, B_i$ for $i=1, \cdots,k$ and $G_i$ for $i=1, \cdots, l$; starting point satisfying~\eqref{eq:starting_point}; tolerance $\epsilon > 0 $ the stopping criterion. 

\textbf{Initialization:}\\
Set the duality gap $\gamma = 2 \epsilon$.\vspace{0.1in} \\

\While{duality gap $\gamma > \epsilon$}{\vspace{0.05in}
	
	\textbf{Calculating the predictor step:}\\
	Solve $\Delta \hat{y}$ and $\Delta \hat{t}$ by solving system of the equations in~\eqref{eq:deltay_deltat}.\\	
    Calculate $\Delta \hat{y}$ and $\Delta \hat{t}$ using~\eqref{eq:deltaX} and~\eqref{eq:deltaZ}. \vspace{0.05in}\\

	\textbf{Calculating the corrector step:}\\	
	Calculate the barrier parameter $\mu$ using~\eqref{eq:mu_corrector}.\\	
	Solve $\Delta \bar{y}$ and $\Delta \bar{t}$ by solving system of the equations in~\eqref{eq:deltay_deltat_bar}.\\	
	Calculate $\Delta \bar{y}$ and $\Delta \bar{t}$ using~\eqref{eq:deltaXbar} and~\eqref{eq:deltaZbar}. \vspace{0.05in}\\

	\textbf{Updating the primal and dual variables:}\\
	Calculate the search direction as
	\[
	\Delta X := \text{Sym}(\Delta \hat{X} + \Delta \bar{X}), \; \Delta y := \Delta \hat{y} + \Delta \bar{y}, \; \Delta t:= \Delta \hat{t} + \Delta \bar{t}, \; \Delta Z := \Delta \hat{Z} + \Delta \bar{Z}.
	\]
	Calculate primal and dual step lengths $\alpha_p$ and $\alpha_d$ using an appropriate line-search algorithm. \\
	
	Set the primal and dual variables as
	\[
	X := X + \alpha_p \Delta X, \quad y := y + \Delta y, \quad t := t+ \Delta t, \quad Z:= Z + \Delta Z.
	\]
	
	Calculate the duality gap as $ \gamma = \text{tr}(CX) - (a^Ty + b^Tt)$.

}

\textbf{Output:} $\left[X^*, y^*, t^*, Z^*\right]$: A $\gamma$-suboptimal solution to Problems~\eqref{eq:SDP_primal} and~\eqref{eq:SDP_dual}.
\vspace{0.15in}\caption{An interior-point central-path primal-dual algorithm for SDP}
\label{alg:primal_dual_centeralized}
\end{algorithm}


\chapter{PARALLEL ALGORITHMS FOR ROBUST STABILITY ANALYSIS OVER SIMPLEX}
\label{chp:linear}

\section{Background and Motivations}

Control system theory when applied in practical situations often involves the use of large state-space models, typically due to inherent complexity of the system, the interconnection of subsystems, or the reduction of an infinite-dimensional or PDE model to a finite-dimensional approximation. One approach to dealing with such large scale models has been to use model reduction techniques such as \textit{balanced truncation}~(\cite{gugercin2004survey}). However, the use of model reduction techniques are not necessarily robust and can result in arbitrarily large errors. In addition to large state-space, practical problems often contain uncertainty in the model due to modeling errors, linearization, or fluctuation in the operating conditions. The problem of stability and control of systems with uncertainty has been widely studied. See, e.g. the  texts~\cite{ackermann_2001,bhattacharyya_1995,green_1994,zhou_1998,dullerud2000course}. Famous results such as the \textit{small-gain} theorem, \textit{Popov's} criterion, \textit{passivity} theorems and \textit{Kharitonov's} theorem have been widely used to find tractable solutions to certain robust stability problems of a single and/or interconnected uncertain systems. As an example, Kharitonov's theorem reduces the stability problem of an infinite family of differential equations
\begin{equation}
a_1 \dfrac{d^n}{dt^n} x(t) + a_2 \dfrac{d^{n-1}}{dt^{n-1}} x(t) + \cdots + a_{n-2} \dfrac{d}{dt} x(t) + a_{n+1} x(t) + a_{n+2} = 0, \quad a_i \in [\underline{u}_i, \bar{u}_i] \subset \mathbb{R}
\label{eq:ODE_family}
\end{equation}
to verifying whether the following four characteristic polynomials
\[
k_1(s) = \underline{u}_{n+2} + \underline{u}_{n+1}s + \bar{u}_{n}s^2 + \bar{u}_{n-1}s^3 + \underline{u}_{n-2}s^4 + \underline{u}_{n-3}s^5 + \cdots
\]
\[
k_2(s) = \bar{u}_{n+2} + \bar{u}_{n+1}s + \underline{u}_{n}s^2 + \underline{u}_{n-1}s^3 + \bar{u}_{n-2}s^4 + \bar{u}_{n-3}s^5 + \cdots
\]
\[
k_3(s) = \underline{u}_{n+2} + \bar{u}_{n+1}s + \bar{u}_{n}s^2 + \underline{u}_{n-1}s^3 + \underline{u}_{n-2}s^4 + \bar{u}_{n-3}s^5 + \cdots
\]
\[
k_4(s) = \bar{u}_{n+2} + \underline{u}_{n+1}s + \underline{u}_{n}s^2 + \bar{u}_{n-1}s^3 + \bar{u}_{n-2}s^4 + \underline{u}_{n-3}s^5 + \cdots
\]
have all their roots in the open left half-plane - a problem which can be tractably solved (in $\mathcal{O}(n^2)$ operations) using the Routh-Hurwitz criterion. Despite all the progress in robust control theory during the past few decades, a drawback of existing computational methods for analysis and control of systems with uncertainty is high computational complexity. This is a consequence of the fact that a wide range of problems in robust stability and control of systems with parametric uncertainty are known to be NP-hard. For example, even the classical problem of stability of $\dot{x}(t) = A(a) x(t)$ for all $a$ inside a hypercube (the matrix analog of System~\eqref{eq:ODE_family}) is NP-hard\footnote{\cite{nemirovskii1993several} proves that the $\{-1,+1\}$-integer linear programming problem (a well-known NP-complete problem) admits a polynomial-time reduction to the problem of verifying positive semi-definiteness of a family of symmetric matrices with entries belonging to an interval on $\mathbb{R}$.
}. Other examples are calculation of structured singular values for robust performance analysis and $\mu$-synthesis (\cite{zhou1996robust}), deciding null-controllability\footnote{A system $x(k+1) = f(x(k),u(k))$ is called null-controllable if for every initial state $x(0)$, there exist some $T > 1$ and controls $u(k), k=0, \cdots, T-1$ such that $x(T) = 0$} of $x(k+1) = f(x(k),u(k))$ for a given $f: \mathbb{R}^n \times \mathbb{R}^m \rightarrow \mathbb{R}^n$~(\cite{blondel1999complexity}), and computing arbitrarily precise bounds on the joint spectral radius of matrices for stability analysis of systems with time-varying uncertainty~(\cite{gripenberg1996computing}). See~\cite{blondel2000survey} for a comprehensive survey on NP-hard problems in control theory. The result of such complexity is that for systems with parametric uncertainty and with hundreds of states, existing algorithms fail with the primary point of failure usually being lack of unallocated memory.

In this dissertation, we seek to distribute the computation over an array of processors within the context of existing computational resources; specifically cluster-computers and supercomputers. When designing algorithms to run in a parallel computing environment, one must both synchronize computational tasks among the processors while minimizing communication overhead among the processors. This can be difficult, as each architecture has a specific memory hierarchy and communication graph (See Figure~\ref{fig:CPU-GPU}). Likewise, in a lower level, individual computing units may have different processing architectures and memory hierarchies; e.g., see a comparison of the memory hierarchy of a multi-core CPU and a GPU in Figure~\ref{fig:CPU-GPU}. We account for communication by explicitly modeling the required communication graph between processors. This communication graph is then mapped to the processor architecture using the Message-Passing Interface (MPI)~(\cite{walker1996mpi}). While there are many algorithms for robust stability analysis and control of linear systems, ours is the first which explicitly accounts for the processing architecture in the emerging multi-core computing environment.

\begin{figure}[h]
\centering
\includegraphics[scale=0.2]{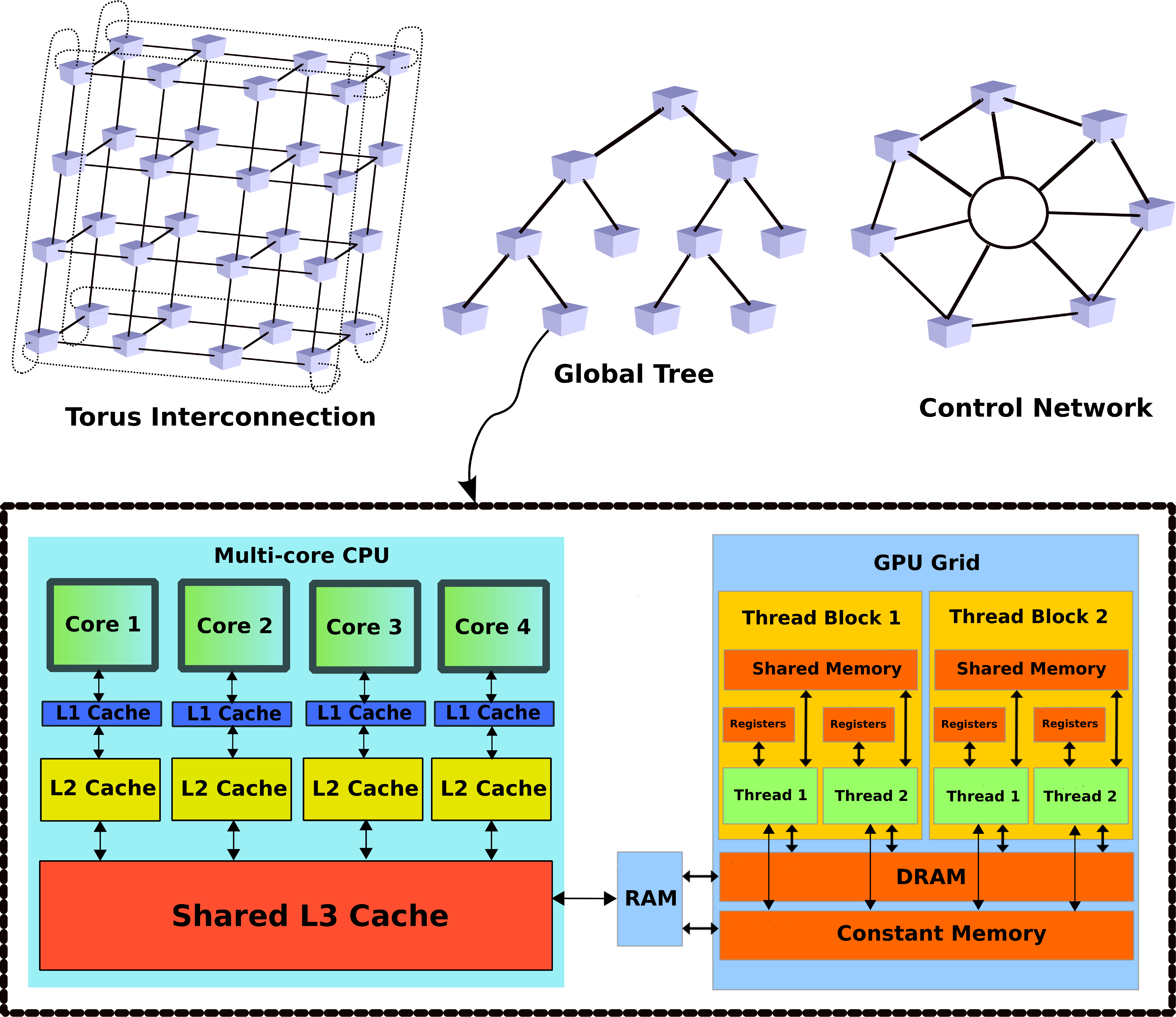}
\caption{Various Interconnections of Nodes in a Cluster Computer (Top), Typical Memory Hierarchies of a GPU and a Multi-core CPU (bottom)}
\label{fig:CPU-GPU}
\end{figure}


Our approach to robust stability is based on the well-established use of parameter-dependent Quadratic-In-The-State (QITS) Lyapunov functions. The use of parameter-dependent Lyapunov QITS functions eliminates the conservativity associated with e.g. quadratic stability~(\cite{quadratic1}), at the cost of requiring some restriction on the rate of parameter variation. Specifically, our QITS Lyapunov variables are polynomials in the vector of uncertain parameters. This is a generalization of the use of QITS Lyapunov functions with affine parameter dependence as in~\cite{affine_orig} and expanded in, e.g.~\cite{affine_dependent0,affine_dependent1,affine_dependent2} and~\cite{affine_dependent3}. The use of polynomial QITS Lyapunov variables can be motivated by~\cite{Bliman_existence}, wherein it is shown that any feasible parameter-dependent LMI with parameters inside a compact set has a polynomial solution or~\cite{peet2009exponentially} wherein it is shown that local stability of a nonlinear vector field implies the existence of a polynomial Lyapunov function.

There are several results which use polynomial QITS Lyapunov functions to prove robust stability. In most cases, the stability problem is reduced to the general problem of optimization of polynomial variables subject to LMI constraints - an NP-hard problem~(\cite{Np_hard}). To avoid NP-hardness, the optimization of polynomials problem is usually solved in an asymptotic manner by posing a sequence of sufficient conditions of increasing accuracy and decreasing conservatism. For example, building on the result in~\cite{Bliman_existence},~\cite{bliman2004convex} proposes a sequence of increasingly precise LMIs for robust stability analysis of linear systems with affine dependency on uncertain parameters on the complex unit ball. Necessary and sufficient stability conditions for linear systems with one uncertain parameter are derived in~\cite{bound1}, providing an explicit bound on the degree of the polynomial-type Lyapunov function. This result is extended to multi-parameter-dependent linear systems in~\cite{bound2}. Another important approach to optimization of polynomials is the SOS methodology  which replaces the polynomial positivity constraint with the constraint that the polynomial admits a representation as a sum of squares of polynomials. See Sections~\ref{sec:optim_semialg} and~\ref{sec:intro_SOS} for a review of this approach. Applications of the SOS methodology in robust stability analysis of linear and nonlinear systems can be found in~\cite{sos3,lavaei2008robust} and~\cite{packard_ROA}.
While the SOS methodology have been extensively utilized in the literature, we have not, as of yet, been able to adapt algorithms for solving the resulting LMI conditions to a parallel-computing environment. Finally, there have been multiple results in recent years on the use of Polya's theorem to solve optimization of polynomials problems~(\cite{peres2007}) on the simplex. An extension of Polya's theorem for uncertain parameters on the multisimplex or hypercube can be found in~\cite{peres_multisimplex}. In this section, we propose an extension of Polya's theorem and its use for solving optimization of polynomials problems in a parallel computing environment.

Our goal is to create algorithms which explicitly map computation, communication and storage to existing parallel processing architectures. This goal is motivated by the failure of existing general-purpose Semi-Definite Programming (SDP) solvers to efficiently utilize platforms for large-scale computation. Specifically, it is well-established that linear programming and semi-definite programming both belong to the complexity class P-Complete, also known as the class of inherently sequential problems.
Although there have been several attempts to map certain SDP solvers to a parallel computing environment~(\cite{csdp,sdpara}), certain critical steps cannot be distributed. The result is that as the number of processors increases, certain computational and communication bottlenecks dominate - leading to a saturation in the speed-up (the increase in processing speed per additional processor) of these solvers (Amdahl's law~(\cite{amdahl})). We avoid these bottlenecks by exploiting the particular structure of the LMI conditions associated with Polya's theorem. Note that, in principle, a perfectly designed general-purpose SDP algorithm could identify the structure of the SDP, as we have, and map the communication, computation and memory constraints to a parallel architecture. Indeed, there has been a great deal of research on creating programming languages which attempt to do just this~(\cite{kale1994charm,deitz2005high}). However, at present such languages are mostly theoretical and have certainly not been incorporated into existing SDP solvers.

In addition to parallel SDP solvers, there have been some efforts to exploit structure in certain polynomial optimization algorithms to reducing the size and complexity of the resulting LMI's. For example, for the case of finding SOS representations for symmetric polynomials\footnote{A symmetric polynomial is a polynomial which is invariant under all permutations of its variables, e.g., $f(x,y,z) = x^4+y^4+z^4-4xyz+x+y+z$.},~\cite{parrilo_sym} exploited symmetry to reduce the number of decision variables and constraints in the associated SDPs. Another example is the use of an specific sparsity structure in~\cite{parrilo_sructure,kim2005generalized} and~\cite{waki} to reduce the complexity of the linear algebra calculations associated with the SOS methodology. The use of generalized Lagrangian duals and Groebner basis techniques for reducing the complexity of the SDPs associated

\pagebreak

\hspace{-0.32in} with the SOS decompositions of sparse polynomial optimization problems can be found in~\cite{kim2005generalized} and~\cite{permenter2012selecting}.


\subsection{Our Contributions}

In this section, we focus on robust stability analysis of: 1- Systems with parametric uncertainty inside a simplex; and 2- Systems with parametric uncertainty inside a hypercube. We solve each problem in two phases by proposing the following algorithms: 1- A decentralized algorithm for Setting up the sequence of structured SDPs associated with Polya's theorem; and 2- A parallel SDP solver to solve the SDPs. Note that the problem of decentralizing the set-up algorithm is significant in that for large-scale systems, the instantiation of the problem may be beyond the memory and computational capacity of a single processing node. For the set-up problem, the algorithm that we propose has no centralized memory/computational requirements whatsoever. Furthermore, we show that for a sufficiently large number of available processors, the communication complexity is independent of the size of the state-space or the number of Polya's iterations.

In the second phase, we propose a variant of Helmberg's primal-dual algorithm (\cite{helmberg2005interior}) and map the computational, memory and communication requirements to a parallel computing environment. Unlike the set-up algorithm, the primal-dual algorithm does have a ``relatively small" centralized computation associated with the update of the dual variables. However, we have structured the algorithm so that the size of this centralized computation is solely a function of the degree of the polynomial Lyapunov function and does not depend on the number of Polya's iterations. 
 In addition, there is no point-to-point communication between the processors, which means that the algorithm is compatible with most of the existing parallel \\
 
 \pagebreak
 
\hspace{-0.3in} computing architectures. We will provide a graph representation of the communication architecture of both the set-up and SDP algorithms.

By linking the set-up and SDP algorithms and conducting tests on various cluster computers, we demonstrate the ability of our algorithms in performing robust stability analysis on systems with 100+ states and several uncertain parameters. Specifically, we ran a series of numerical experiments using the Linux-based cluster computer Karlin at Illinois Institute of Technology and the Blue Gene supercomputer (with 200 processor allocation). First, we applied the algorithm to a current problem in robust stability analysis of magnetic confinement fusion using a discretized PDE model. Next, we examine the accuracy of the algorithm as Polya's iterations progress and compare this accuracy with the SOS approach. We show that unlike the general-purpose parallel SDP solver SDPARA~\cite{sdpara}, the speed-up of our algorithm shows no evidence of saturation. Finally, we calculate the envelope of the algorithm on the cluster computer Karlin in terms of the maximum state-space dimension, number of processors and Polya's iterations.

\section{Notation and Preliminaries on Homogeneous Polynomials}
\label{sec:notation_simplex}

Let us denote an $l-$variate monomial as $\alpha^{{\gamma}} = \prod_{i=1}^l\alpha_i^{\gamma_i}$, where $\alpha \in \mathbb{R}^l $ is the vector of variables, $\gamma \in \mathbb{N}^l$ is the vector of exponents and $\sum\limits_{i=1}^l \gamma_i = d$ is the degree of the monomial. We define 
\begin{equation}
W_d:= \left\lbrace \gamma \in \mathbb{N}^l: \sum_{i=1}^l \gamma_i = d \right\rbrace
\label{eq:W_d}
\end{equation}
 as the totally ordered set of the exponents of $l-$variate monomials of degree $d$, where the ordering is lexicographic. Recall that in lexicographical ordering $\gamma \in W_d$ precedes $\eta \in W_d$, if the left most non-zero entry of $\gamma-\eta$ is positive. The lexicographical index of every $ \gamma \in W_d$ can be calculated using the map $\langle{{\cdot}}\rangle:\mathbb{N}^l \rightarrow \mathbb{N}$ defined as~(\cite{peet_acc}) 
\begin{equation*}
\langle{\gamma}\rangle = \sum_{j=1}^{l-1} \sum_{i=1}^{\gamma_i} f \left(l-j,d+1-\sum_{k=1}^{j-1}\gamma_k -i \right) + 1
\label{eq:lex} 
\end{equation*}
where
\begin{equation}
f(l,d) :=
\begin{cases}
\hspace*{0.2in}	0 &\text{for} \;\; l = 0 \\
\dbinom{l+d-1}{l-1}= \dfrac{(d+l-1)!}{d!(l-1)!} & \text{for} \;\; l > 0, 
\end{cases}
\label{eq:f_TAC}
\end{equation}
is the cardinality of $W_d$, i.e., the number of $l-$variate monomials of degree $d$. For convenience, we also denote the index of a monomial $\alpha^\gamma$ by $\langle \gamma \rangle$. We represent $l-$variate homogeneous polynomials of degree $d_p$ as 
\begin{equation*}
P(\alpha)=\sum_{\gamma \in W_{d_p}} P_{\langle{\gamma}\rangle} \alpha^{\gamma}, 
\end{equation*}
where $P_{\langle{\gamma}\rangle} \in \mathbb{R}^{n \times n}$ is the matrix coefficient of the monomial $\alpha^{\gamma}$.

Now consider the linear system
\begin{equation}
\dot{x}(t)= A(\alpha)x(t),  
\label{eq:system_TAC}
\end{equation}
where $ A(\alpha) \in \mathbb{R}^{n \times n} $ and $\alpha \in Q \subset \mathbb{R}^{l}$ is a vector of uncertain parameters.
We assume that $A(\alpha)$ is a homogeneous polynomial and
$Q=\Delta^l \subset \mathbb{R}^{l}$, where $\Delta^l$ is the unit simplex, i.e., 
\begin{equation*}
\Delta^l=\left\lbrace \alpha\in \mathbb{R}^l : \sum_{i=1}^{l} \alpha_i=1, \alpha_i\geqslant 0 \right\rbrace. 
\end{equation*}
If $A(\alpha)$ is not homogeneous, we can homogenize it in the following manner.
Suppose $A(\alpha)$ with $\alpha \in \Delta^l$ is a non-homogeneous polynomial of degree $d_a$ and has $N_a$ monomials with non-zero coefficients. Define $D = \left( d_{a_1}, \cdots, d_{a_{N_a}} \right)$, where $d_{a_i}$ is the degree of the $i^{th}$ monomial of $A(\alpha)$ according to the lexicographical ordering.
Now define the polynomial $B(\alpha)$ as per the following:
\begin{enumerate}
\item Let $B=A$.
\item For $i=1,\cdots, N_a$, multiply the $i^{th}$ monomial of $B(\alpha)$, according to lexicographical ordering, by $\left( \sum\limits_{j=1}^l \alpha_j \right)^{d_a-d_{a_i}}$.
\end{enumerate}
Then, since  $\sum\limits_{j=1}^l \alpha_j =1$,  $B(\alpha)=A(\alpha)$ for all $\alpha \in \Delta^l$ and hence all properties of $\dot x(t) = A(\alpha)x(t)$ for any $\alpha \in \Delta^l$ are retained by the homogeneous system $\dot x(t) = B(\alpha)x(t)$. To further clarify the homogenization procedure, we provide the following example. \vspace{0.1in}

\noindent \textit{Example: Construction of the homogeneous system $\dot x(t) = B(\alpha) x(t)$.}

Consider the non-homogeneous polynomial $
A(\alpha) = C \alpha_1^2  + D \alpha_2 + E \alpha_3 + F
$ of degree $d_a=2$, where $[\alpha_1,\alpha_2,\alpha_3] \in \Delta^3$. Using the above procedure, the homogeneous polynomial $B(\alpha)$ can be constructed as 
\begin{align}
B(\alpha) & = C \alpha_1^2  + D \alpha_2 (\alpha_1+\alpha_2+\alpha_3) + E  \alpha_3 (\alpha_1+\alpha_2+\alpha_3) + F (\alpha_1+\alpha_2+\alpha_3)^2  \nonumber \\
& = \underbrace{(C+F)}_{B_1}\alpha_1^2 + \underbrace{(D+2F)}_{B_2} \alpha_1 \alpha_2  
+ \underbrace{(E+2F)}_{B_3} \alpha_1 \alpha_3 + \underbrace{(D+F)}_{B_4} \alpha_2^2  \nonumber \\
& + \underbrace{(D+E+2F)}_{B_5} \alpha_2 \alpha_3 + \underbrace{(E+F)}_{B_6} \alpha_3^2 = \sum_{\gamma \in W_2} B_{\langle{\gamma}\rangle} \alpha^{\gamma}.
\end{align}
\vspace{0.05in}

\section{Setting-up the Problem of Robust Stability Analysis over a Simplex}

 In this section, we show that applying Polya's Theorem to the robust stability problem, i.e., the inequalities in Theorem~\ref{thm:thm1} yields a semi-definite program with a block-diagonal structure - hence can be an efficiently distributed among processing units.
We start by stating the following well-known Lyapunov result on stability of System~\eqref{eq:system_TAC}.
\begin{mythm}\label{thm:thm1}
System~\eqref{eq:system_TAC} is stable if and only if there exists a polynomial matrix $P(\alpha)$ such that $P(\alpha) \succ 0$ for all $\alpha \in \Delta^l$ and
\begin{equation}
A^T(\alpha)P(\alpha)+P(\alpha)A(\alpha) \prec 0  \quad \text{for all } \alpha \in \Delta^l.
 \label{eq:Lyap_LMI}  
\end{equation}
\end{mythm}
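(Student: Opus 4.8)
The plan is to prove the two implications of this classical converse Lyapunov statement separately. For the ``if'' direction I would fix $\alpha \in \Delta^l$ and use $V(x) = x^T P(\alpha) x$ as a Lyapunov function for $\dot x = A(\alpha)x$: positive definiteness of $P(\alpha)$ bounds $V$ between $c_1\|x\|^2$ and $c_2\|x\|^2$, and negative definiteness of $A^T(\alpha)P(\alpha)+P(\alpha)A(\alpha)$ bounds $\dot V$ above by $-c_3\|x\|^2$ along trajectories; the classical Lyapunov theorem then gives (exponential) asymptotic stability of the origin, i.e.\ $A(\alpha)$ is Hurwitz. Since $\alpha$ is arbitrary in $\Delta^l$, System~\eqref{eq:system_TAC} is robustly stable; by compactness of $\Delta^l$ and continuity, the constants $c_1, c_2, c_3$ can even be taken uniform in $\alpha$.

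For the ``only if'' direction, assuming $A(\alpha)$ is Hurwitz for all $\alpha \in \Delta^l$, I would first produce a \emph{continuous} parameter-dependent solution and then approximate it by a polynomial. Concretely, set
\[
\widehat P(\alpha) := \int_0^\infty e^{A^T(\alpha)t}\, e^{A(\alpha)t}\, dt,
\]
the unique solution of $A^T(\alpha)\widehat P(\alpha) + \widehat P(\alpha) A(\alpha) = -I$, which is positive definite. Because the Lyapunov operator $X \mapsto A^T(\alpha)X + X A(\alpha)$ is invertible exactly when $A(\alpha)$ is Hurwitz, $\widehat P(\alpha)$ is a rational --- hence continuous --- function of $\alpha$ on the compact set $\Delta^l$, so there exists $\varepsilon > 0$ with $\widehat P(\alpha) \succeq \varepsilon I$ and $A^T(\alpha)\widehat P(\alpha) + \widehat P(\alpha) A(\alpha) \preceq -\varepsilon I$ uniformly on $\Delta^l$. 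Applying the Stone--Weierstrass theorem entrywise, for any $\delta > 0$ there is a polynomial matrix $P(\alpha)$ with $\sup_{\alpha \in \Delta^l}\|P(\alpha) - \widehat P(\alpha)\| < \delta$; choosing $\delta$ small relative to $\varepsilon$ and to $\max_{\alpha \in \Delta^l}\|A(\alpha)\|$ keeps both strict inequalities intact, which gives the desired polynomial $P(\alpha)$. Alternatively I could invoke directly the result of~\cite{Bliman_existence} that a feasible parameter-dependent LMI over a compact set admits a polynomial solution.

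The one genuinely delicate step is this last approximation argument: one must ensure that replacing the continuous Lyapunov solution by a polynomial one does not violate the \emph{strict} matrix inequalities. This is exactly where compactness of $\Delta^l$ does the work --- it upgrades pointwise strictness into a uniform margin $\varepsilon$, after which a sufficiently accurate polynomial approximation is harmless. For the paper's subsequent development I would also homogenize the resulting $P(\alpha)$ (and, if needed, $A(\alpha)$) as in Section~\ref{sec:notation_simplex}, which is legitimate because $\sum_{i}\alpha_i = 1$ on $\Delta^l$ and which puts the data into the homogeneous form required to apply Polya's theorem (Theorem~\ref{thm:polya_simplex}).
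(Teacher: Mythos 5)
Your proposal is correct, but it is worth noting that the paper does not actually prove Theorem~\ref{thm:thm1}: it states it as a ``well-known Lyapunov result'' and, for the nontrivial direction (existence of a \emph{polynomial} $P(\alpha)$), leans on the citation to \cite{Bliman_existence}, which shows that any feasible parameter-dependent LMI over a compact parameter set admits a polynomial solution. Your argument supplies the missing self-contained proof along the standard constructive route: the ``if'' direction is the classical quadratic Lyapunov argument for each frozen $\alpha$; for the ``only if'' direction you build the observability-Gramian solution $\widehat P(\alpha)=\int_0^\infty e^{A^T(\alpha)t}e^{A(\alpha)t}\,dt$, observe that invertibility of the Lyapunov operator on the Hurwitz set makes $\widehat P$ rational (hence continuous) in $\alpha$, use compactness of $\Delta^l$ to extract a uniform margin $\varepsilon$ in both strict inequalities, and then approximate by a polynomial matrix via Stone--Weierstrass with an error small relative to $\varepsilon/\max_{\alpha\in\Delta^l}\|A(\alpha)\|$. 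This is exactly the delicate point you flag, and your handling of it is sound; the only cosmetic additions I would make are to symmetrize the polynomial approximant so that $P(\alpha)\in\mathbb{S}^n$, and to note explicitly that ``stable'' here means Hurwitz for every fixed $\alpha\in\Delta^l$ (constant parameters), since the integral construction and the frozen-parameter Lyapunov argument both rely on that reading. What your approach buys over the paper's citation is an elementary, checkable argument; what the citation buys is generality (Bliman's result applies to arbitrary feasible parameter-dependent LMIs, not just the Lyapunov pair). Your closing remark about homogenizing $P(\alpha)$ using $\sum_i\alpha_i=1$ correctly anticipates the form needed for Theorem~\ref{thm:polya_simplex_TAC}.
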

A similar condition also holds for discrete-time linear systems. The conditions associated with Theorem~\ref{thm:thm1} are infinite-dimensional LMIs, meaning they must hold at infinite number of points. Such problems are known to be NP-hard~(\cite{Np_hard}). Our goal is to derive a sequence of polynomial-time algorithms such that their outputs converge to a solution of the parameter-dependent LMI in~\eqref{eq:Lyap_LMI}. Key to this result is Polya's Theorem~(\cite{polya_book}). A variation of this theorem for matrices is given as follows.

\begin{mythm}(Polya's theorem, simplex version)
If a homogeneous matrix-valued polynomial $F$ satisfies $F(\alpha) > 0$ for all $\alpha \in \Delta^l$, then there exists $d \geq 0$ such that all the coefficients of
\begin{equation}
\left( \sum_{i=1}^l \alpha_i  \right)^d F(\alpha)
\label{eq:polya_product_simplex}
\end{equation}
are positive definite.
\label{thm:polya_simplex_TAC}
\end{mythm}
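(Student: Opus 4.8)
The plan is to prove the theorem directly, by inspecting the matrix coefficients of $\left(\sum_{i=1}^l \alpha_i\right)^d F(\alpha)$ and showing that each one becomes positive definite once $d$ is large. Write $F(\alpha)=\sum_{\gamma\in W_{d_F}}F_\gamma\,\alpha^\gamma$, where $d_F$ is the degree of $F$ and $W_{d_F}$ is the index set of \eqref{eq:W_d}. Since $\Delta^l$ is compact and the map $\alpha\mapsto\lambda_{\min}(F(\alpha))$ is continuous and strictly positive on $\Delta^l$, there is an $\epsilon>0$ with $F(\alpha)\succeq 2\epsilon I$ for all $\alpha\in\Delta^l$. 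Expanding the product via the multinomial theorem, the coefficient of $\alpha^\beta$ (for $\beta\in W_{d+d_F}$) is
\[
C_\beta \;=\; \sum_{\substack{\gamma\in W_{d_F}\\ \gamma\le\beta}}\binom{d}{\beta-\gamma}F_\gamma ,
\]
where $\binom{d}{\beta-\gamma}$ is a multinomial coefficient. Because $C_\beta=\binom{d+d_F}{\beta}\,b_\beta$ with $b_\beta:=\binom{d+d_F}{\beta}^{-1}C_\beta$ and $\binom{d+d_F}{\beta}>0$, it suffices to show $b_\beta\succ0$ for every $\beta\in W_{d+d_F}$.

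The key step is to recognize $b_\beta$ as a genuine convex combination of the \emph{normalized} coefficients $\widehat F_\gamma:=\binom{d_F}{\gamma}^{-1}F_\gamma$ of $F$. A short manipulation of factorials, together with the Chu--Vandermonde identity $\sum_{\gamma\in W_{d_F},\,\gamma\le\beta}\prod_i\binom{\beta_i}{\gamma_i}=\binom{d+d_F}{d_F}$, gives
\[
b_\beta=\sum_{\substack{\gamma\in W_{d_F}\\\gamma\le\beta}}w_\gamma(\beta)\,\widehat F_\gamma,\qquad
w_\gamma(\beta):=\binom{d+d_F}{d_F}^{-1}\prod_{i=1}^l\binom{\beta_i}{\gamma_i},
\]
where $w_\gamma(\beta)\ge0$ and $\sum_\gamma w_\gamma(\beta)=1$; these weights are exactly the multivariate hypergeometric probabilities for drawing $d_F$ items without replacement from a population of $d+d_F$ items split into blocks of sizes $\beta_1,\dots,\beta_l$. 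On the other hand, evaluating $F$ at the grid point $\beta/(d+d_F)\in\Delta^l$ yields $F\!\left(\tfrac{\beta}{d+d_F}\right)=\sum_{\gamma\in W_{d_F}}\mu_\gamma(\beta)\widehat F_\gamma$ with $\mu_\gamma(\beta):=\binom{d_F}{\gamma}\prod_i\left(\tfrac{\beta_i}{d+d_F}\right)^{\gamma_i}$, the multinomial probabilities of the same sampling problem \emph{with} replacement. Hence $b_\beta-F\!\left(\tfrac{\beta}{d+d_F}\right)=\sum_\gamma\bigl(w_\gamma(\beta)-\mu_\gamma(\beta)\bigr)\widehat F_\gamma$.

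The remaining work is a uniform estimate. Setting $M:=\max_{\gamma\in W_{d_F}}\|\widehat F_\gamma\|$, we have
\[
\left\|\,b_\beta-F\!\left(\tfrac{\beta}{d+d_F}\right)\right\|\;\le\;M\sum_\gamma\bigl|w_\gamma(\beta)-\mu_\gamma(\beta)\bigr|\;=\;2M\,d_{\mathrm{TV}}\bigl(\mathrm{Hyp}_\beta,\mathrm{Multinom}_\beta\bigr),
\]
and the total-variation distance between the hypergeometric and multinomial laws is at most $\binom{d_F}{2}/(d+d_F)$ \emph{uniformly in} $\beta$: couple the without-replacement sample with the with-replacement sample so that they agree unless two of the $d_F$ draws land on the same item, an event of probability at most $\binom{d_F}{2}/(d+d_F)$. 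Choosing $d$ large enough that $2M\binom{d_F}{2}/(d+d_F)<\epsilon$, and using $F\!\left(\tfrac{\beta}{d+d_F}\right)\succeq 2\epsilon I$ (legitimate since $\beta/(d+d_F)\in\Delta^l$), we obtain $b_\beta\succeq 2\epsilon I-\epsilon I=\epsilon I\succ0$ for every $\beta\in W_{d+d_F}$, hence $C_\beta\succ0$, which is the assertion. The main obstacle is making this convergence genuinely uniform over all $\beta\in W_{d+d_F}$, in particular on the faces and vertices of the simplex where some $\beta_i$ are small or zero: a Stirling-type expansion of the binomial ratios $w_\gamma(\beta)$ degrades near the boundary, whereas the coupling bound above is immune to this because its error does not involve $\min_i\beta_i$. (One could alternatively reduce to the scalar Pólya theorem—pages 57--59 of~\cite{inequalities}—by testing $F$ against unit vectors, but that route still requires a separate uniformity argument over the sphere, while the computation above is self-contained and also furnishes an explicit bound on $d$.)
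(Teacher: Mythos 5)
Your proof is correct. It is worth noting at the outset that the dissertation does not actually prove this statement anywhere: the identical theorem in Chapter 2 is stated with a pointer to pages 57--59 of \cite{inequalities} for the scalar case, and the one Pólya-type argument the thesis does carry out in full (the induction for the multi-simplex version, Theorem~\ref{thm:polya_multi-simplex2}) \emph{invokes} the simplex version as its base step. So there is no in-paper proof to compare against, and what you have written is genuinely new content relative to the text. Your route is essentially the quantitative Pólya argument of Powers and Reznick, recast in probabilistic language and carried out directly for matrix coefficients: after normalizing by the multinomial coefficient $\binom{d+d_F}{\beta}$, the coefficient $b_\beta$ is the expectation of $\widehat F_\gamma$ under the multivariate hypergeometric law on $W_{d_F}$ determined by the block sizes $\beta_i$, while $F(\beta/(d+d_F))$ is the same expectation under the corresponding multinomial law, and the birthday-problem coupling gives the total-variation bound $\binom{d_F}{2}/(d+d_F)$ \emph{uniformly in} $\beta$ --- which is exactly the uniformity needed near the faces of the simplex, where naive asymptotics of the binomial ratios break down. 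The individual steps all check: the factorial manipulation producing $w_\gamma(\beta)=\binom{d+d_F}{d_F}^{-1}\prod_i\binom{\beta_i}{\gamma_i}$ is right, the Vandermonde identity normalizes the weights, the count vectors are deterministic functions of the coupled samples so the coupling bound transfers to them, and since $b_\beta-F(\beta/(d+d_F))$ is symmetric with operator norm below $\epsilon$, the conclusion $b_\beta\succeq\epsilon I$ follows once $d>2M\binom{d_F}{2}/\epsilon-d_F$. Compared with simply citing the literature, your argument buys three things: it is self-contained; it treats the matrix-valued case directly rather than by testing against unit vectors (which would need its own uniformity argument over the sphere); and it yields an explicit, computable bound on the Pólya exponent in terms of $d_F$, $\max_\gamma\|\widehat F_\gamma\|$, and $\min_{\alpha\in\Delta^l}\lambda_{\min}(F(\alpha))$ --- a bound of exactly the kind the thesis relies on qualitatively when it asserts convergence of its hierarchy of SDPs.
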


See Chapter~\ref{chp:background} for a more detailed discussion on this result.

Consider the stability of the system described by Equation~\eqref{eq:system_TAC}. We are interested in finding a $P(\alpha)$ which satisfies the conditions of Theorem~\ref{thm:thm1}. According to Polya's theorem, the constraints of Theorem~\ref{thm:thm1} are satisfied if for some sufficiently large $d_1$ and $d_2$, the polynomials  
\begin{equation}
\left(\sum_{i=1}^l \alpha_i \right)^{d_1} P(\alpha ) \label{eq:LMI_1} \qquad \text{and} 
\end{equation}
\begin{equation}
-\left(\sum_{i=1}^l \alpha_i \right)^{d_2} \left(A^T(\alpha)P(\alpha)+P(\alpha)A(\alpha)\right) \label{eq:LMI_2} 
\end{equation}  
have all positive definite coefficients.

Let $P(\alpha)$ be a homogeneous polynomial of degree $d_p$ which can be represented as 
\begin{equation}
P(\alpha)=\sum_{\gamma \in W_{d_p} } P_{\langle{\gamma}\rangle} \alpha^{{\gamma}}, \label{eq:P_alpha} 
\end{equation}
where the coefficients $P_{\langle \gamma \rangle} \in \mathbb{S}^n$. Recall that $W_{d_p} := \left\lbrace \gamma \in \mathbb{N}^l: \sum_{i=1}^l \gamma_i = d_p \right\rbrace$ is the set of the exponents of all $l$-variate monomials of degree $d_p$. Since $A(\alpha)$ is a homogeneous polynomial of degree $d_a$, we can write it as 
\begin{equation}
A(\alpha)=\sum_{\gamma \in W_{d_a} } A_{\langle{\gamma}\rangle} \alpha^{{\gamma}}, \label{eq:A_alpha} 
\end{equation}
where the coefficients $A_{\langle \gamma \rangle} \in \mathbb{R}^{n \times n}$. By substituting~\eqref{eq:P_alpha} and~\eqref{eq:A_alpha} into~\eqref{eq:LMI_1} and~\eqref{eq:LMI_2} and defining $d_{pa}$ as the degree of $P(\alpha)A(\alpha)$, the conditions of Theorem~\ref{thm:polya_simplex_TAC} can be represented in the form
\begin{equation*}
\left( \sum_{i=1}^l \alpha_i \right)^{d_1} \left( \sum_{h \in W_{d_p}}  P_{\langle h \rangle} \alpha^{h}  \right) =   \sum_{g \in W_{d_p+d_1}}  \left( \sum_{h \in W_{d_p}} \beta_{\langle h \rangle,\langle \gamma \rangle } P_{ \langle h \rangle} \right) \alpha^{\gamma}
\end{equation*}
and
\begin{align*}
- &\left( \sum_{i=1}^l \alpha_i \right)^{d_2}  \left( \hspace{-0.035in}  \left( \sum_{h \in W_{d_a}}  A^T_{\langle h \rangle} \alpha^h \right) \hspace{-0.035in}  \left( \sum_{h \in W_{d_p}}  P_{\langle h \rangle} \alpha^h  \right) +  \left( \sum_{h \in W_{d_p}}  P_{\langle h \rangle} \alpha^h \right)  \hspace{-0.035in}  \left( \sum_{h \in W_{d_a}} A_{\langle h \rangle} \alpha^h \right) \hspace{-0.035in}  \right) \nonumber \\
&  \hspace{2.45in} =  \sum_{\gamma \in W_{d_{pa}+d_2}} \hspace{-0.05in}  \left( \sum_{h \in W_{d_p}} H_{\langle h \rangle, \langle \gamma \rangle}^T P_{\langle h \rangle} + P_{\langle h \rangle} H_{\langle h \rangle, \langle \gamma \rangle}  \right)\alpha^\gamma
\end{align*}
have all positive coefficients. This means that
\begin{align}
&\sum_{h \in W_{d_p}} \beta_{\langle{h}\rangle, \langle{\gamma}\rangle }P_{\langle{h}\rangle} > 0 \quad &&\text{ for all } \quad \gamma \in W_{d_p + d_1} \; \text{and} \label{eq:LMI_3} \\
&\sum_{h \in W_{d_p}} (H_{\langle{h}\rangle, \langle{\gamma}\rangle}^T P_{\langle{h}\rangle} + P_{\langle{h}\rangle} H_{\langle{h}\rangle, \langle{\gamma}\rangle}) < 0 \quad &&\text{ for all } \gamma \in W_{d_{pa} + d_2} \label{eq:LMI_4}.  
\end{align}
Here we have defined $\beta_{\langle{h}\rangle,\langle{\gamma}\rangle}$ to be the scalar coefficient which multiplies $P_{\langle{h}\rangle}$ in the $\langle{\gamma}\rangle$-th monomial of the homogeneous polynomial $\left( \sum_{i=1}^l \alpha_i \right)^{d_1} P(\alpha)$ using the lexicographical ordering. Likewise, $H_{\langle{h}\rangle, \langle{\gamma}\rangle} \in \mathbb{R}^{n \times n}$ is the term which left or right multiplies $P_{\langle{h}\rangle}$ in the $\langle{\gamma}\rangle$-th monomial of $\left( \sum_{i=1}^l \alpha_i \right)^{d_2} \left( A^T(\alpha)P(\alpha)+P(\alpha)A(\alpha) \right)$ using the lexicographical ordering. For an intuitive explanation as to how these $\beta$ and $H$ terms are calculated, we consider a simple example. Precise formulae for these terms will follow the example. \vspace{0.1in}

\noindent \textit{Example:} \textit{Calculating the $\beta$ and $H$ coefficients.}

Consider $
A(\alpha)=A_1 \alpha_1 + A_2 \alpha_2$ and $P(\alpha)=P_1 \alpha_1+P_2 \alpha_2$.
By expanding Equation~\eqref{eq:LMI_1} for $d_1=1$ we have $
(\alpha_1+\alpha_2)P(\alpha)= P_{1}\alpha_1^2+(P_{1}+P_{2})\alpha_1 \alpha_2 + P_{2} \alpha_2^2 
$. The coefficients $ \beta_{\langle{h}\rangle,\langle{\gamma}\rangle}$ are then extracted as 
\begin{equation*}
\beta_{1,1}=1,\; \beta_{2,1}=0,\; \beta_{1,2}=1,\;
\beta_{2,2}=1, \;\beta_{1,3}=0, \;\beta_{2,3}=1. 
\end{equation*}
Next, by expanding Equation~\eqref{eq:LMI_2} for $d_2=1$ we have 
\begin{align*}
& (\alpha_1+\alpha_2) \left(A^T(\alpha)P(\alpha)+P(\alpha)A(\alpha) \right)=  \nonumber\\ 
& \left( A^T_{1}P_{1}+P_{1}A_{1} \right)\alpha_1^3 + \left(  A^T_{1}P_{1}+P_{1} A_{1} +A^T_{2}P_{1} +P_{1}A_{2} +A^T_{1}P_{2}+P_{2}A_{1}\right) \alpha_1^2 \alpha_2   +\left( A^T_{2}P_{1} \right.  \\
 & \hspace{1in} \left. +P_{1}A_{2} +A^T_{1}P_{2} +P_{2}A_{1}+A^T_{2}P_{2}+P_{2}A_{2} \right)\alpha_1 \alpha_2^2 
  +\left(A^T_{2}P_{2}+P_{2}A_{2} \right)\alpha_2^3. 
\end{align*}
The coefficients $ H_{\langle{h}\rangle,\langle{\gamma}\rangle} $ are then extracted as
\begin{align*}
&\hspace*{-0.1in} H_{1,1}=A_{1}, && \hspace*{-0.07in} H_{2,1}=\textbf{0},  && \hspace*{-0.4in} H_{1,2}=A_{1}+A_{2},    && \hspace*{-0.07in}  H_{2,2}=A_{1},  \\ 
& \hspace*{-0.1in} H_{1,3}=A_{2}, &&  \hspace*{-0.07in} H_{2,3}=A_{1}+A_{2}, && \hspace*{-0.07in} H_{1,4}=\textbf{0},   && \hspace*{-0.07in} H_{2,4}=A_{2}.
\end{align*}

\subsection{General Formulae for Calculating Coefficients $\beta$ and H}
\label{sec:betaH_simplex}

The set $\{ \beta_{\langle{h}\rangle,\langle{\gamma}\rangle} \}$ of coefficients can be formally defined recursively as follows. Let the initial values for $ \beta_{\langle{h}\rangle,\langle{\gamma}\rangle}$ be defined as 
\begin{equation}
\beta^{(0)}_{\langle{h}\rangle,\langle{\gamma}\rangle} = \begin{cases}1& \text{if } h=\gamma\\ 0 & \text{otherwise} \end{cases}\qquad \text{for all } \; \gamma \in W_{d_p} \; \text{and} \; h \in W_{d_p}. \label{eq:beta_init} 
\end{equation}
Then, iterating for $i=1,\ldots d_1$, we let 
\begin{equation}
 \beta^{(i)}_{\langle{h}\rangle,\langle{\gamma}\rangle}=\sum_{\lambda \in W_1} \beta^{(i-1)}_{\langle{h}\rangle,\langle{\gamma-\lambda}\rangle} \qquad \text{for all } \; \gamma \in W_{d_p + i} \; \text{and} \; h \in W_{d_p}. \label{eq:beta}
\end{equation}
Finally, we set $\{\beta_{\langle{h}\rangle,\langle{\gamma}\rangle}\} =  \{\beta^{d_1}_{\langle{h}\rangle,\langle{\gamma}\rangle}\}$. 

To obtain the set $\lbrace{ H_{\langle{h}\rangle, \langle{\gamma}\rangle}} \rbrace$ of coefficients, set the initial values as 
\begin{equation}
H^{(0)}_{\langle{h}\rangle,\langle{\gamma}\rangle} = \sum_{\lambda \in W_{d_a}: \lambda + h = \gamma} A_{\langle{\lambda} \rangle}
 \;\; \text{for all } \; \gamma \in W_{d_p+d_a}\; \text{and} \; h \in W_{d_p}.  \label{eq:H_init} 
\end{equation}
Then, iterating for $i=1,\ldots d_2$, we let 
\begin{equation}
H^{(i)}_{\langle{h}\rangle,\langle{\gamma}\rangle}=\sum_{\lambda \in W_1 } H^{(i-1)}_{\langle{h}\rangle,\langle{\gamma-\lambda}\rangle}  \quad \text{for all } \; \gamma \in W_{d_{pa} + i}\; \text{and} \;h \in W_{d_p}.  \label{eq:H} 
\end{equation}
Finally, set $\{H_{\langle{h}\rangle,\langle{\gamma}\rangle}\} = \{H^{d_2}_{\langle{h}\rangle,\langle{\gamma}\rangle}\}$.

For the case of large-scale systems, computing and storing $\lbrace \beta_{\langle{h}\rangle,\langle{\gamma}\rangle} \rbrace$ and $ \lbrace {H_{\langle{h}\rangle,\langle{\gamma}\rangle}} \rbrace$ is a significant challenge due to the number of these coefficients. Specifically, the number of terms increases with $l$ (number of uncertain parameters in System~\eqref{eq:system_TAC}), $d_{p}$ (degree of $P(\alpha)$), $d_{pa}$ (degree of $P(\alpha)A(\alpha)$) and $d_1, d_2$ (Polya's exponents) as follows. \vspace*{0.1in}

\subsection{Number of Coefficients $ \beta_{\langle{h}\rangle,\langle{\gamma}\rangle}$ and $H_{\langle{h}\rangle,\langle{\gamma}\rangle}$}

Given $l,d_p$ and $d_1$, since $h \in W_{d_p}$ and $\gamma \in W_{d_p+d_1}$, the number of coefficients $ \beta_{\langle{h}\rangle,\langle{\gamma}\rangle}$  is the product of $L_0:= \text{card}(W_{d_p})$ and $L:=\text{card}(W_{d_p+d_1})$. Recall that card$(W_{d_p})$ is the number of all $l$-variate monomials of degree $d_p$ and can be calculated using~\eqref{eq:f_TAC} as follows. 
\begin{equation}
L_0 = f(l,d_p) =
\begin{cases}
\hspace*{0.2in}	0 &\text{for} \;\; l=0\\
\dbinom{d_p+l-1}{l-1}= \dfrac{(d_p+l-1)!}{d_p!(l-1)!} & \text{for} \;\; l > 0. \label{eq:L0} 
\end{cases}
\end{equation}
Likewise, card$(W_{d_p+d_1})$, i.e., the number of all $l-$variate monomials of degree $d_p+d_1$ is calculated using~\eqref{eq:f_TAC} as follows. 
\begin{align}
&  L = f(l,d_p+d_1) = 
 \begin{cases}
\hspace*{0.2in}	0 &\text{for} \;\; l=0\\
\dbinom{d_p+d_1+l-1}{l-1}= \dfrac{(d_p+d_1+l-1)!}{(d_p+d_1)!(l-1)!} & \text{for} \;\; l > 0. \label{eq:L}
\end{cases}
\end{align}
The number of coefficients $ \beta_{\langle{h}\rangle,\langle{\gamma}\rangle}$  is $L_0 \cdot L$. In Figure~\ref{fig:beta_TAC}, we have plotted the number of coefficients $ \beta_{\langle{h}\rangle,\langle{\gamma}\rangle}$ in terms of the number of uncertain parameters $l$ and for different polya's exponents.

\begin{figure}
\centering
\includegraphics[scale=0.4]{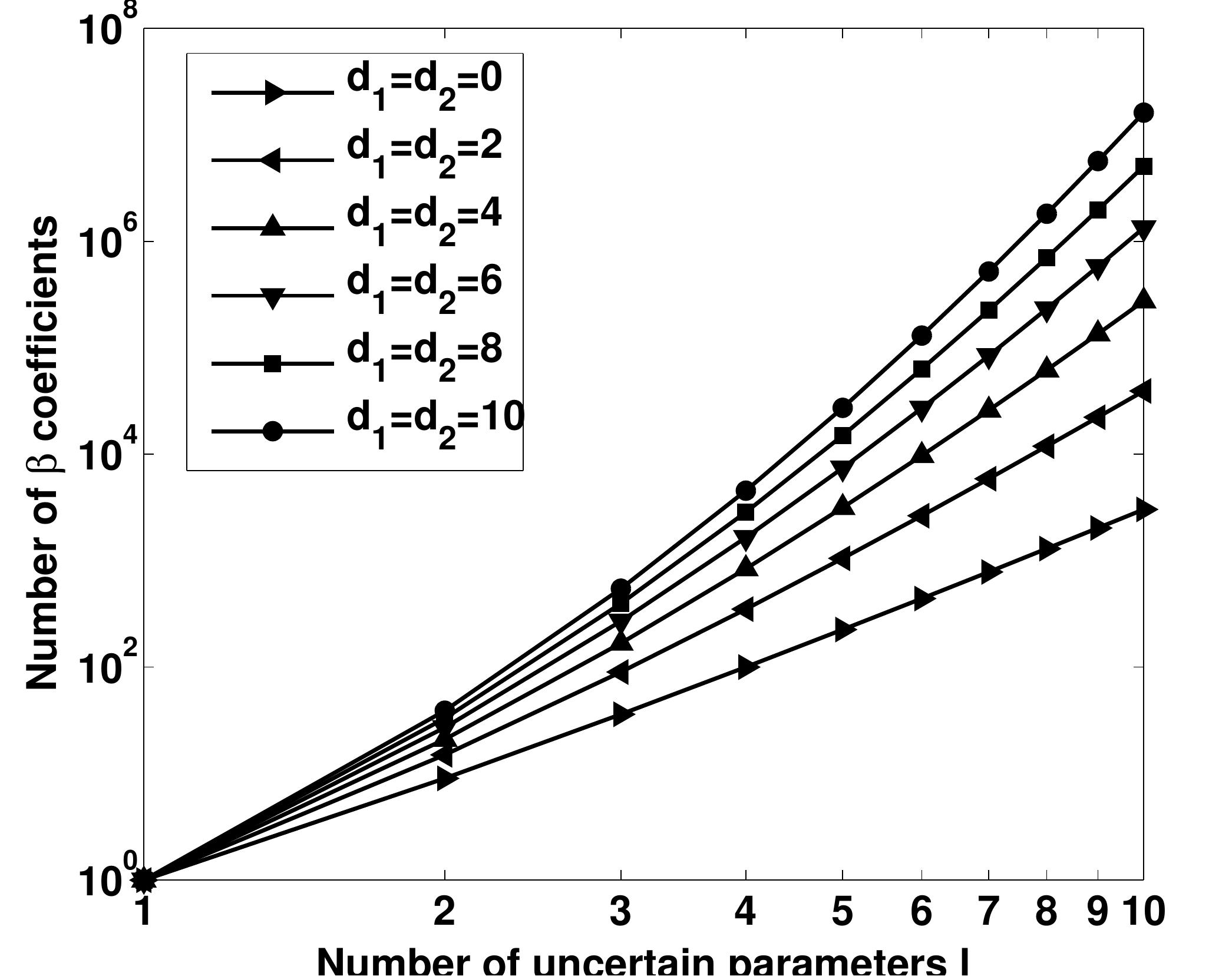}
\caption{Number of $ \beta_{\langle{h}\rangle,\langle{\gamma}\rangle}$ Coefficients vs. the Number of Uncertain Parameters for Different Polya's Exponents and for $d_p=2$}
\label{fig:beta_TAC} 
 \includegraphics[scale=0.4]{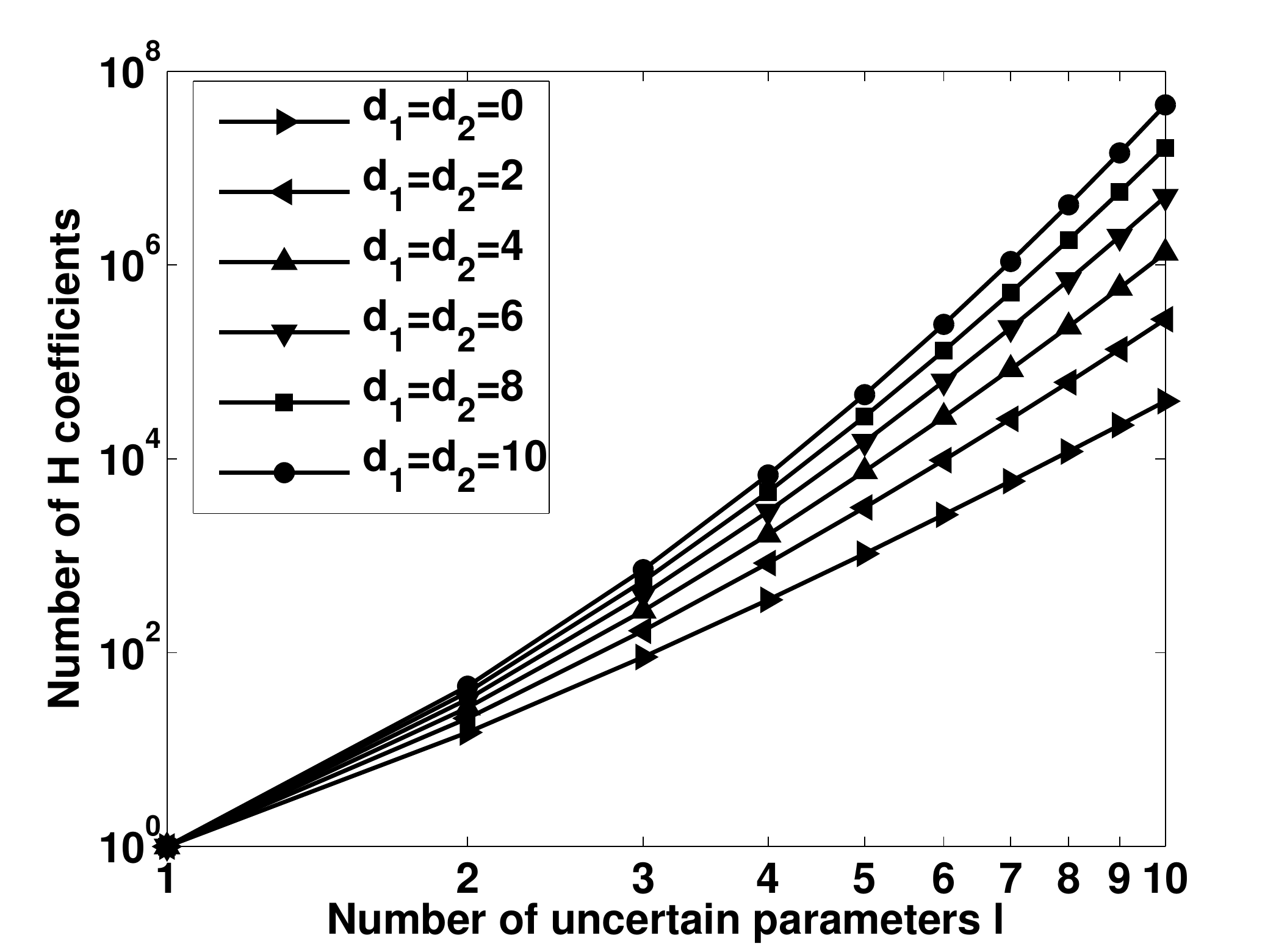}
\caption{Number of $H_{\langle{h}\rangle,\langle{\gamma}\rangle}$ Coefficients vs. the Number of Uncertain Parameters for Different Polya's Exponents and for $d_p=d_a=2$}
\label{fig:H_TAC} 
\end{figure}

Given $l,d_p, d_a$ and $d_2$, since $h\in W_{d_p}$ and $\gamma \in W_{d_{pa}+d_2}$, the number of coefficients $ H_{\langle{h}\rangle,\langle{\gamma}\rangle}$ is the product of $L_0:= \text{card}(W_{d_p})$ and $M:=\text{card}(W_{d_{pa}+d_2})$. By using~\eqref{eq:f_TAC}, we have 
\begin{align}
M = f(l,d_{pa}+d_2) = 
\begin{cases}
\hspace*{0.2in}	0 &\text{for} \;\; l=0\\
\dbinom{d_{pa}+d_2+l-1}{l-1}= \dfrac{(d_{pa}+d_2+l-1)!}{(d_{pa}+d_2)!(l-1)!} & \text{for} \;\; l > 0. 
\end{cases}
\label{eq:M}
\end{align}
The number of $ H_{\langle{h}\rangle,\langle{\gamma}\rangle}$ coefficients is $L_0 \cdot M$. In Figure~\ref{fig:H_TAC}, we have plotted the number of coefficients $ H_{\langle{h}\rangle,\langle{\gamma}\rangle} $ in terms of the number of uncertain parameters $l$ and for different polya's exponents.

We have shown the required memory to store the coefficients $\beta_{\langle{h}\rangle,\langle{\gamma}\rangle}$ and $H_{\langle{h}\rangle,\langle{\gamma}\rangle}$ in Figure~\ref{fig:memory_beta_h} in terms of the number of uncertain parameters $l$ and for different Polya's exponents. It is observed from Figure~\ref{fig:memory_beta_h} that even for small degree $d_p$ of $P(\alpha)$ and small degree $d_a$ of the system matrix $A(\alpha)$, the required memory is in the Terabyte range.
~\cite{peet_acc} proposed a decentralized computing approach to the
calculation of $\{ \beta_{\langle{h}\rangle,\langle{\gamma}\rangle} \}$ on a cluster computer. In the work, we extend this method to the calculation of $ \{ H_{\langle{h}\rangle,\langle{\gamma}\rangle} \}$
and the SDP elements which will be discussed in the following section. We express the LMIs associated with conditions~\eqref{eq:LMI_3} and~\eqref{eq:LMI_4} as an SDP in both primal and dual forms. We will also discuss the structure of the primal and dual SDP variables and the constraints.

\begin{figure}
\centering
\includegraphics[scale=0.4]{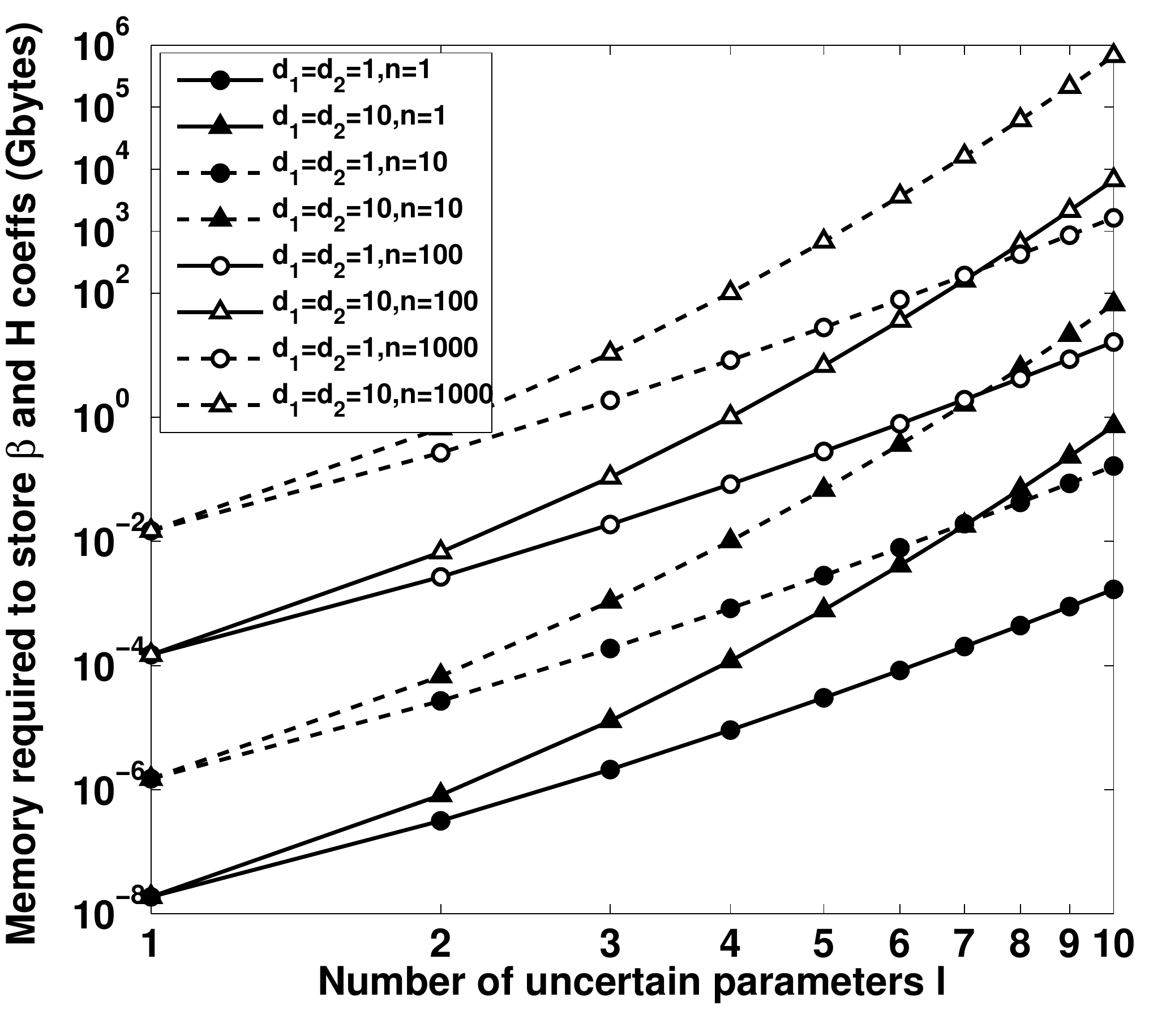}
\caption{Memory Required to Store the Coefficients $\beta_{\langle{h}\rangle,\langle{\gamma}\rangle}$ and $H_{\langle{h}\rangle,\langle{\gamma}\rangle}$ vs. Number of Uncertain Parameters, for Different $d_1,d_2$ and $d_p=d_a=2$}
\label{fig:memory_beta_h} 
\end{figure}

\subsection{The Elements of the SDP Problem Associated with Polya's Theorem}
\label{sec:SDPelements_TAC}

Recall from Section~\ref{sec:SDP_SDP} that a semi-definite program can be stated either in primal or in dual format. Given $C \in \mathbb{S}^{m} $,  $a \in \mathbb{R}^K$ and $B_i \in \mathbb{S}^m$, here we consider
\begin{equation}
\max_{X \in \mathbb{S}^m} \quad \text{tr}(CX) \nonumber 
\end{equation}
\begin{equation}
\hspace{-0.1in} \text{subject to } \;  B(X) = a \nonumber 
\end{equation}
\begin{equation}
\hspace{0.45in} X \geq 0,  \label{eq:primal_TAC} 
\end{equation}
as the primal SDP form, where the linear operator $B: \mathbb{S}^{m} \rightarrow \mathbb{R}^K$ is defined in~\eqref{eq:B_G}.  
The associated dual problem is 
\begin{equation}
\hspace*{-0.2in}\min_{y,Z} \quad a^Ty \nonumber 
\end{equation}
\begin{equation}
 \hspace*{0.2in}\text{subject to} \quad  \sum_{i=1}^K B_i y_i-C = Z \nonumber 
\end{equation}
\begin{equation}
 \hspace*{1in}Z \geq 0 \;,\; y \in \mathbb{R}^K.  \label{eq:dual_TAC} 
\end{equation}
The elements $C$, $B_i$ and $a$ of the SDP problem associated with the LMIs in~\eqref{eq:LMI_3} and~\eqref{eq:LMI_4} are defined as follows.
We define the element $C$ as 
\begin{equation}
C := \text{diag}(C_1, \cdots C_L, C_{L+1}, \cdots C_{L+M}), \label{eq:C}
\end{equation}
where 
\begin{equation}
C_i :=
\begin{cases}
\delta I_n \cdot \left( \sum_{h \in W_{d_p}} \beta_{\langle h \rangle,i} \, \frac{d_p!}{h_1! \, \cdots \, h_l!} \right), & \text{for } \;\; 1 \le i \le L\\
0_n, & \text{for } \;\;   L+1 \le  i  \le L+M,
\end{cases}  
\label{eq:Cj}
\end{equation}
where recall that $L= \text{card}(W_{d_p+d_1})$ is the number of monomials in $\left(\sum_{i=1}^l \alpha_i \right)^{d_1} P(\alpha)$, $M=\text{card}(W_{d_{pa}+d_2})$ is the number of monomials in $\left(\sum_{i=1}^l \alpha_i \right)^{d_2} P(\alpha)A(\alpha)$, $n$ is the dimension of System~\eqref{eq:system_TAC}, $l$ is the number of uncertain parameters and $\delta$ is a small positive parameter.

For $i=1, \cdots, K$, define $B_i$ elements as
\begin{equation}
B_i := \text{diag}(B_{i,1}, \cdots B_{i,L}, B_{i,L+1}, \cdots B_{i,L+M}), \label{eq:Ai} 
\end{equation}
where $K$ is the number of dual variables in~\eqref{eq:dual_TAC} and is equal to the product of  the number of upper-triangular elements in each $P_\gamma \in \mathbb{S}^n$ (the coefficients in $P(\alpha)$) and the number of monomials in $P(\alpha)$ (i.e. the cardinality of $W_{d_p}$). Since there are $f(l,d_p)=\dbinom{d_p+l-1}{l-1}$ coefficients in $P(\alpha)$ and each coefficient has $\tilde{N}:=\frac{1}{2}n(n+1)$ upper-triangular elements, we find $K$ as
\begin{equation}
K= \frac{(d_p + l-1)!}{d_p! (l-1)!}\tilde{N}. \label{eq:K} 
\end{equation}
To define the $B_{i,j}$ blocks, first we define the map $V_{\langle h \rangle}: \mathbb{Z}^K \rightarrow \mathbb{Z}^{n \times n}$, 
\begin{equation}
V_{\langle h \rangle}(x) := \sum_{j=1}^{\tilde{N}} E_j \; x_{j+\tilde{N}(\langle h \rangle-1)} \quad \text{for all} \quad h \in W_{d_p}, 
\end{equation}
which maps each variable to $E_j$, where $E_j, \, j=1, \cdots \tilde{N}$ define the canonical basis for $\mathbb{S}^n$ (subspace of symmetric matrices) as follows.
\begin{equation}
[E_j]_{i,k}:=\begin{cases}
1  &  \text{if } i=k=j \\
0  &  \text{otherwise}
\end{cases} , \quad \text{for} \;\, j \leq n \quad
\text{and} 
\label{eq:Ej_basis} 
\end{equation}
\begin{equation}
[E_j]_{i,k}:=[F_j]_{i,k}+[F_j]^T_{i,k}, \quad \text{for} \;\, j > n,  
\end{equation}
where 
\begin{equation}
[F_j]_{i,k}:=\begin{cases}
1  &  \text{if }  i=k-1=j-n  \\
0  &  \text{otherwise}.
\end{cases} 
\end{equation} 
Note that a different choice of basis would require a different function $V_{\langle h \rangle}$. Then, for $i=1, \cdots, K$, we define $B_{i,j}$ matrices as  
\begin{equation}
B_{i,j}:= 
\begin{cases}
\sum\limits_{h \in W_{d_p}} \beta_{\langle h \rangle,j} V_{\langle h \rangle}(e_i), & \text{for }  1 \le j \le L \hspace{0.8in}  (I)\\
-\sum\limits_{h \in W_{d_p}} \hspace{-0.05in} \Big( H_{{\langle h \rangle} ,j-L}^T V_{\langle h \rangle}(e_i)+  
 V_{\langle h \rangle}(e_i) H_{{\langle h \rangle},j-L} \Big), & \text{for }  L+1 \le j \le L+M, \; (II)
\end{cases} 
\label{eq:Aij} 
\end{equation}
where we have denoted the canonical basis for $\mathbb{R}^n$ by $e_i=[0 \: ... \: 0 \overbrace{1}^{i^{th}} 0 \: ... \: 0], \, i=1, \cdots,n$.
Finally, to complete the SDP problem associated with Polya's algorithm, we choose $a$ as
\begin{equation}
a=\vec{1} \in \mathbb{R}^K. \label{eq:a} 
\end{equation}

\subsection{A Parallel Algorithm for Setting-up the SDP}
In this section, we propose a decentralized, iterative algorithm for calculating the terms $\{ \beta_{\langle h \rangle, \langle \gamma \rangle} \}$, $\{H_{\langle h \rangle, \langle \gamma \rangle}\}$, $C$ and $B_{i}$ as defined in~\eqref{eq:beta},~\eqref{eq:H},~\eqref{eq:C} and~\eqref{eq:Ai}. We have provided an MPI implementation of this algorithm in C++. The source code is available at \url{https://www.sites.google.com/a/asu.edu/kamyar/Software}. In Algorithm~\ref{alg:setup}, we have presented a pseudo-code for this algorithm, wherein $N$ is the number of available processors. 

\begin{algorithm}
\textbf{\textit{Inputs}:} 
$d_p$: degree of $P(\alpha)$, $d_a$: degree of $A(\alpha)$, $n$: number of states, $l$: No. of uncertain parameters, $d_1, d_2$: number of Polya's iterations, Coefficients of $A(\alpha)$. 

\textbf{\textit{Initialization}}: Set $\hat{d}_1=\hat{d}_2=0$ and $d_{pa}=d_p+d_a$. \\ 
Calculate $L_0$ as the No. of monomials in $P(\alpha)$ using~\eqref{eq:L0}. Set $L=L_0$. \\
Calculate $M$ as the No. of monomials in $P(\alpha)A(\alpha)$ using \eqref{eq:M}.\\
Calculate $ L'=\mathtt{floor}(\frac{L}{N})$ as No. of monomials in $P$ assigned to each processor. \\
Calculate $M'=\mathtt{floor}(\frac{M}{N})$ as the No. of monomials in $P(\alpha)A(\alpha)$ assigned to each processor.\\
 \For{$i=1, \cdots,N$, processor $i$}{
 Initialize $\beta_{k,j}$ for $j=(i-1)L'+1, \cdots, iL'$ and $k=1, \cdots L_0$ using~\eqref{eq:beta_init}. \\
 Initialize $H_{k,m}$ for $m=(i-1)M'+1, \cdots, iM'$ \& $k=1, \cdots L_0$ using~\eqref{eq:H_init}. \vspace{-0.1in}
 } 

 \textbf{\textit{Calculating $\beta$ and $H$ coefficients:}} \\
 \While{$\hat{d}_1 \leq d_1 $ or $\hat{d}_2 \leq d_2$}{  
  \If{$\hat{d}_1 \leq d_1 $}{ 
  \For{$i=1, \cdots,N$, processor $i$}{
  Set $d_p=d_p+1$.
  Set $\hat{d}_1=\hat{d}_1+1$. \\
   Update $L$ using~\eqref{eq:L}.
    Update $L'$ as $L'=\mathtt{floor}(\frac{L}{N})$.\\
 Calculate $\beta_{k,j}$, $j=(i-1)L'+1, \cdots, iL'$ \& $k=1, \cdots L_0$ using~\eqref{eq:beta}. \vspace{-0.4in}} \vspace{-0.15in}
 } 
 \If{$\hat{d}_2 \leq d_2 $}{
  \For{$i=1, \cdots,N$, processor $i$}{
 \hspace*{-0.1in}  Set $d_{pa}=d_{pa}  + 1$ and $\hat{d}_2=\hat{d}_2  +  1$. \\
 Update $M$ using~\eqref{eq:M}. 
 Update $M'$ as $M'=\mathtt{floor}(\frac{M}{N})$.\\
Calculate $H_{k,m}$ for $m=(i-1)M'+1, \cdots, iM'$ and $k=1, \cdots L_0$. using~\eqref{eq:H}. \vspace{-0.1in}}  \vspace{-0.2in}
 }  \vspace{-0.2in}
 } 
\end{algorithm}

\begin{algorithm}
 \textbf{\textit{Calculating the SDP elements}:} \\
 \For{$i=1, \cdots,N$, processor $i$}{
Calculate the number of dual variables $K$ using~\eqref{eq:K}.\\
 Set $T' = \mathtt{floor}(\frac{L+M}{N})$.\\
 Calculate the blocks of the SDP element $C$ as 
\[
 \begin{cases}
 C_j \; \text{using~\eqref{eq:Cj}} & \text{for} \; j = (i-1)L'+1, \cdots, iL' \\
 C_j = 0_n & \hspace*{-0.2in} \text{for} \; j=L+(i-1)M'+1, \cdots, L+iM'.
 \end{cases} 
 \]
  Set the sub-blocks of the SDP element $C$ as 
  \begin{equation}
 \overline{\textbf{C}}_i = \text{diag}\left(C_{(i-1)T'+1}, \cdots, C_{iT'} \right). \label{eq:Cbar}  
  \end{equation}
 \For{$j=1, \cdots,K$}{
Calculate the blocks of the SDP elements $B_j$ as 
\[
 \begin{cases}
 B_{j,k}  \; \text{using~\eqref{eq:Aij}-\textit{I}} & \text{for} \; k=(i-1)L'+1, \cdots, iL' \\
 B_{j,k} \; \text{using~\eqref{eq:Aij}-\textit{II}} & \text{for} \; k=L+(i-1)M'+1, \cdots, L+iM'.
 \end{cases}
 \]
Set the sub-blocks of the SDP element $B_j$ as 
  \begin{equation}
\overline{\textbf{B}}_{j,i}= \text{diag} \left(  B_{j,(i-1)T'+1}, \cdots, B_{j,iT'}           \right). \label{eq:Bbar}   
  \end{equation}
 }
 }
\vspace{0.15in}
 \textbf{\textit{Outputs}:}\\  Sub-blocks  $\overline{\textbf{C}}_i$ and $\overline{\textbf{B}}_{j,i}$ of the SDP elements for $i=1,\cdots,N$ and $j=1,\cdots,K$.
\vspace{0.15in}\caption{A parallel set-up algorithm for robust stability analysis over the standard simplex} 
\label{alg:setup}
\end{algorithm}

\section{Complexity Analysis of the Set-up Algorithm}
\label{sec:comp_analysis_setupTAC}

Since verifying the positive definiteness of all representatives of a square matrix with entries on proper real intervals is intractable~(\cite{nemirovskii1993several}), the question of feasibility of~\eqref{eq:Lyap_LMI} is also intractable. To solve the problem of inherent intractability we establish a trade off between accuracy and complexity. In fact, we develop a sequence of decentralized polynomial-time algorithms whose solutions converge to the exact solution of the NP-hard problem. In other words, the translation of a polynomial optimization problem to an LMI problem is the main source of complexity. This high complexity is unavoidable and, in fact, is the reason we seek parallel algorithms.
Algorithm~\ref{alg:setup} distributes the computation and storage of coefficients $\{ \beta_{\langle h \rangle, \langle \gamma \rangle} \}$ and $\{H_{\langle h \rangle, \langle \gamma \rangle}\}$ among the processors and their dedicated memories, respectively. In an ideal case, where the number of available processors is sufficiently large (equal to the number of monomials in $P(\alpha) A(\alpha)$, i.e. $M$) only one monomial (that corresponds to $L_0$ of coefficients $\beta_{\langle h \rangle, \langle \gamma \rangle}$ and $L_0$ of coefficients $H_{\langle h \rangle, \langle \gamma \rangle}$) is assigned to each processor. \vspace*{0.05in}

\subsection{Computational Complexity Analysis}

The most computationally expensive part of the set-up algorithm is the calculation of the $B_{i,j}$ blocks in~\eqref{eq:Aij}. Considering that the cost of matrix-matrix multiplication is $\sim n^3$, the cost of calculating each $B_{i,j}$ block is
$
\sim \text{card}(W_{d_p}) \cdot n^3.
$
According to~\eqref{eq:Ai} and~\eqref{eq:Aij}, the total number of $B_{i,j}$ blocks is $K(L+M)$. Hence, as per Algorithm~\ref{alg:setup}, each processor processes $K \left( \mathtt{floor}(\frac{L}{N} ) + \mathtt{floor}(\frac{M}{N}) \right)$ of the $B_{i,j}$ blocks, where $N$ is the number of available processors. Therefore, the per processor computational cost of calculating the $B_{i,j}$ at each Polya's iteration is 
\begin{equation}
\sim \text{card}(W_{d_p}) \cdot n^3 \cdot  K \left( \mathtt{floor}\left(\frac{L}{N}\right) + \mathtt{floor}\left(\frac{M}{N} \right) \right). 
\end{equation}
By substituting for $K$ from~\eqref{eq:K}, card$(W_{d_p})$ from~\eqref{eq:L0}, $L$ from~\eqref{eq:L} and $M$ from~\eqref{eq:M}, the per processor computation cost at each iteration is
\begin{align*}
\sim \left( \frac{(d_p + l-1)!}{d_p! (l-1)!} \right)^2 \hspace*{-0.05in}  \frac{n^4(n+1)}{2} & \left( \mathtt{floor}  \left( \frac{ \dfrac{(d_p+d_1+l-1)!}{(d_p+d_1)!(l-1)!}}{N} \right) \right. \nonumber \\ 
& \hspace{0in} \left. + \mathtt{floor} \left( \frac {\dfrac{(d_{pa}+d_2+l-1)!}{(d_{pa}+d_2)!(l-1)!}} {N} \right) \right),
\end{align*}
assuming that $l > 0$ and $N \leq M$, i.e., the number of monomials in $\left( \sum_{i=1}^l \alpha_i \right)^{d_2}  \hspace{-0.1in} P(\alpha) A(\alpha)$ is at least as large as the number of available processors. Under the assumption that the dynamical systems has large numbers of states and uncertain parameters (large $n$  and $l$), Table~\eqref{tab:setup_complexity} presents the computational cost per processor of each Polya's iteration for three different numbers of available processors. For the case where $d_p \geq 3$, the number of operations grows more slowly in $n$ than in $l$.

\renewcommand{\arraystretch}{1}
\begin{table}\scalebox{0.85}{
 \begin{tabular}{|c||c|c|c|} 
\hline 
 {\small \pbox{60cm}{Number of processors}} & $L_0$ & $L$ & $M$ \\ 
\hline 
{\small \pbox{60cm}{Computational \vspace{-0.1in} \\ cost per processor}} & $\sim (l^{2d_p+d_1}+l^{2d_p+d_a+d_2}) n^5$ & $\sim (l^{2d_p+d_1}+l^{2d_p+d_a+d_2}) n^5$ & $\sim l^{2d_p+d_a+d_2-d_1}n^5$ \\ 
\hline 
\end{tabular}} \vspace{0.1in}
\caption{Per Processor, Per Iteration Computational Complexity of the Set-up Algorithm. $L_0$ is the Number of Monomials Is $P(\alpha)$; $L$ Is the Number of Monomials in $\left( \sum_{i=1}^l \alpha_i \right)^{d_1} P(\alpha)$; $M$ Is the Number of Monomials in $\left( \sum_{i=1}^l \alpha_i \right)^{d_2} P(\alpha) A(\alpha)$.}
\label{tab:setup_complexity}
\end{table}

\subsection{Communication Complexity Analysis}

Communication between processors can be modeled by a directed graph $G(V,E)$, where the set of nodes $V=\{ 1, \cdots, N\}$ is the set of indices of the available processors and the set of edges $E=\{ (i,j):i,j \in V \}$ is the set of all pairs of processors that communicate with each other. For every directed graph, we can define an adjacency matrix $T_G$ as follows. If processor $i$ communicates with processor $j$, then $[T_G]_{i,j} = 1$, otherwise $[T_G]_{i,j} = 0$.  Here we only define the adjacency matrix for the part of the algorithm that performs Polya's iterations on $P(\alpha)$. For Polya's iterations on $P(\alpha)A(\alpha)$, the adjacency matrix can be defined in a similar manner.
For simplicity, we assume that at each iteration, the number of available processors is equal to the number of monomials in $(\sum_{i=1}^l \alpha_i)^{d_1}P(\alpha)$. Using~\eqref{eq:L}, let us define $r_{d_1}$ and $r_{d_1+1}$ as the numbers of monomials in $(\sum_{i=1}^l \alpha_i)^{d_1}P(\alpha)$ and $(\sum_{i=1}^l \alpha_i)^{d_1+1}P(\alpha)$.
For $I=1, \cdots, r_{d_1}$, define
\begin{equation*}
\mathcal{E}_I := \{ \text{lexicographical indices of monomials in} \left( \sum_{i=1}^l \alpha_i \right) \alpha^{\gamma}: \gamma \in W_{d_p+d_1} \; \text{and} \; \langle \gamma \rangle = I \}. 
\end{equation*}
Then, for $i=1, \cdots, r_{d_1+1}$ and $j=1, \cdots, r_{d_1+1}$, the adjacency matrix of the communication graph is
\[
[T_G]_{i,j} :=
\begin{cases}
1 & \text{if} \quad i \leq r_{d_1} \; \text{and} \; j \in \mathcal{E}_i \; \text{and}   \; i \neq j \\
0 & \text{otherwise}.
\end{cases}
\]
Note that this definition implies that the communication graph of the set-up algorithm changes at every iteration.
To help visualize the graph, the adjacency matrix for the case where $\alpha \in \Delta^2$~is
\begin{small}
\[
T_G  :=  \hspace*{-0.05in} \left[ 
\begin{array}{cccccc|ccc}
0 & 1 & 0 & \cdots &  0 &                          0 & 0 & \cdots & 0 \\
0 & 0 & 1 & 0 & \cdots  &                          0 & \\
\vdots & \vdots  & \ddots & \ddots & \ddots & \vdots & \vdots & \ddots & \vdots \\
\vdots & \vdots &  & \ddots & \ddots &             0 &  \\
0 & 0 & \cdots & \cdots & 0 &                      1 & 0 & \cdots & 0 \\
\hline
0 &  &  & \cdots & &                               0 & 0 & \cdots & 0 \\
\vdots &  &  & \ddots & &   \vdots   &  \vdots & \ddots &  \vdots \\
0 &  &  & \cdots & &                               0 &  0 & \cdots & 0\\
\end{array} \hspace*{-0.025in}  \right] \hspace*{-0.025in} \in  \mathbb{R}^{r_{d_1+1} \times r_{d_1+1}},
\]
\end{small}
where the nonzero sub-block of $T_G$ lies in $\mathbb{R}^{r_{d_1} \times r_{d_1}}$.
We can also illustrate the communication graphs for the cases $\alpha \in \Delta^3$ and $\alpha \in \Delta^4$ with $d_p=2$ as seen in Figure~\ref{fig:graph1}.

\begin{figure}[t]
\centering
\subfigure[]{
  \includegraphics[scale=0.4]{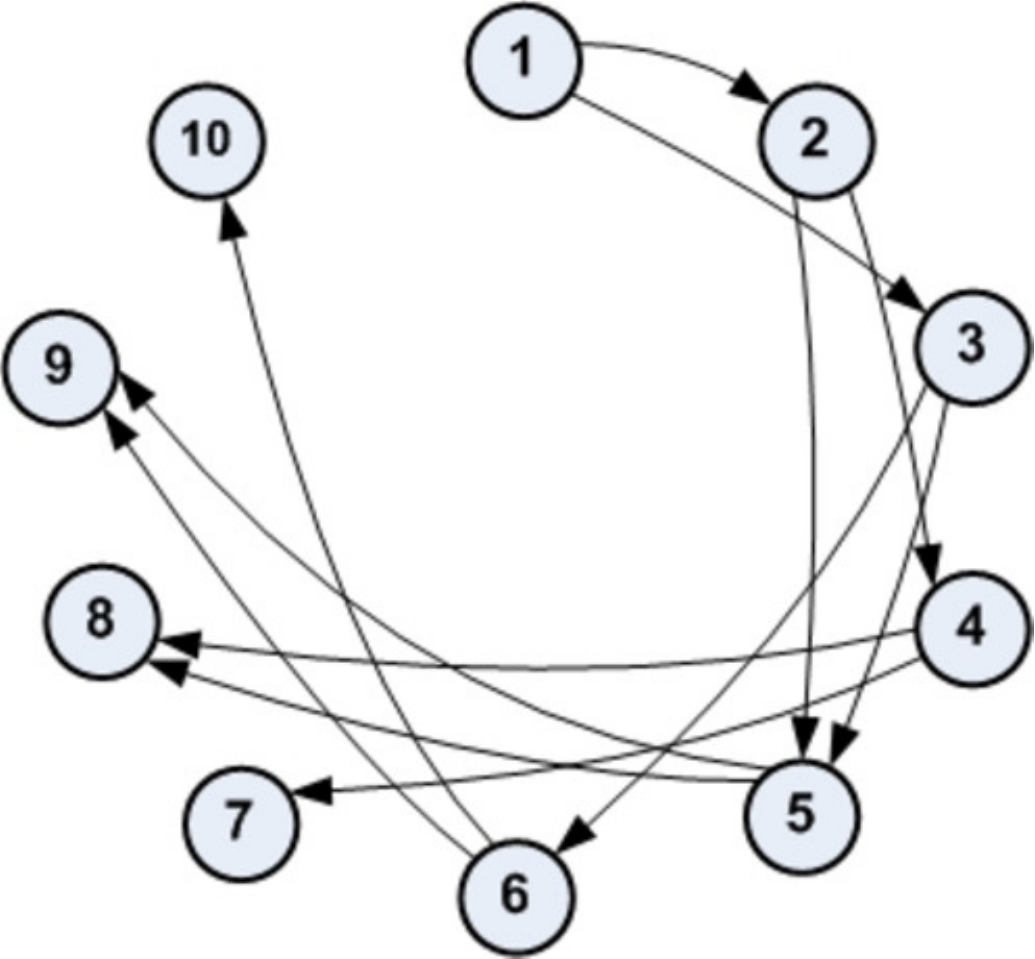}    
}  \hspace*{0.1in}
\subfigure[]{
\includegraphics[scale=0.4]{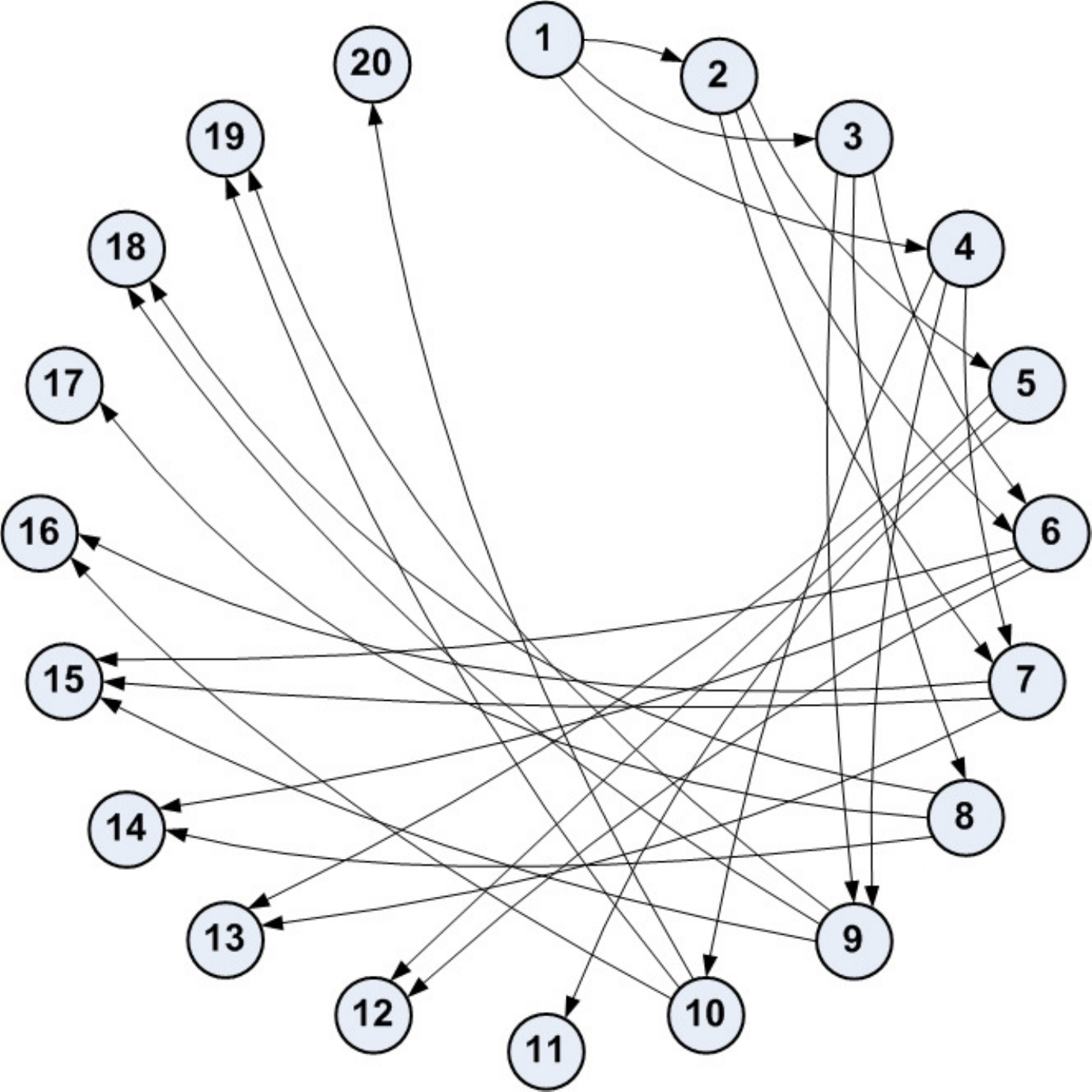}
}
\caption{Graph Representation of the Network Communication of the Set-up Algorithm. (a) Communication Directed Graph for the Case $\alpha \in \Delta^3,d_p=2$. (b) Communication Directed Graph for the Case $\alpha \in \Delta^4,d_p=2$.}
\label{fig:graph1} 
\end{figure}

For a given algorithm, the communication complexity is defined as the sum of the size of all communicated messages. For simplicity, let us consider the worst case scenario, where each processor is assigned more than one monomial and sends all of its assigned coefficients $\beta_{\langle h \rangle, \langle \gamma \rangle} $ and $H_{\langle h \rangle, \langle \gamma \rangle} $  to other processors. In this case, the algorithm assigns $\mathtt {floor}(\frac{L}{N})\cdot \text{card}(W_{d_p})$ of coefficients $\beta_{\langle h \rangle, \langle \gamma \rangle}$, each of size 1, and $\left( \mathtt {floor}(\frac{L}{N}) + \mathtt {floor}(\frac{M}{N}) \right) \cdot \text{card}(W_{d_p})$ of coefficients $H_{\langle h \rangle, \langle \gamma \rangle}$, each of size $n^2$, to each processor. Thus, the communication complexity of the algorithm per processor and per iteration becomes  
\begin{equation}
\text{card}(W_{d_p}) \left( \mathtt {floor}\left(\frac{L}{N}\right) + \mathtt {floor}\left(\frac{M}{N}\right) n^2 \right). 
\label{eq:setup_comm}
\end{equation}
This indicates that increasing the number of processors (up to $M$) actually leads to less communication overhead per processor and improves the scalability of the algorithm. By substituting for card$(W_{d_p})$ from~\eqref{eq:L0}, $L$ from~\eqref{eq:L} and $M$ from~\eqref{eq:M} and considering large $l$ and $n$, the communication complexity per processor of each Polya's iteration can be represented as in Table~\ref{tab:setup_comm_complexity}.

\renewcommand{\arraystretch}{1}
\begin{table} 
\begin{center}
\begin{tabular}{|c||c|c|c|}
\hline 
Number of processors & $L_0$ & $L$ & $M$ \\ 
\hline 
communication cost per processor & $\sim  l^{d_{pa}+d_2} n^2 $ & $\sim l^{d_{pa}+d_2-d_1} n^2$ & $\sim l^{d_p} n^2$ \\ 
\hline 
\end{tabular}
\end{center}
\caption{Per Processor, Per Iteration Communication Complexity of the Set-up Algorithm. $L_0$ is the Number of Monomials Is $P(\alpha)$; $L$ Is the Number of Monomials in $\left( \sum_{i=1}^l \alpha_i \right)^{d_1} P(\alpha)$; $M$ Is the Number of Monomials in $\left( \sum_{i=1}^l \alpha_i \right)^{d_2} P(\alpha) A(\alpha)$.}
\label{tab:setup_comm_complexity}
\end{table}

\section{A Parallel SDP Solver}
\label{sec:SDPSOLVER}

Current state-of-the-art interior-point algorithms for solving linear and semi-definite programs are: dual-scaling, primal-dual, cutting-plane/spectral bundle method. Although we found it possible to use dual-scaling algorithms, we chose to pursue a central-path following primal-dual algorithm. One reason that we prefer primal-dual algorithms is because in general, primal-dual algorithms converge faster than dual-scaling algorithms. This assertion is motivated by experience as well as bounds on the convergence rate, such as those found in the literature~(\cite{helmberg,benson2000solving}). More importantly, we prefer primal-dual algorithms because they have the property of preserving the structure (see~\eqref{eq:structure}) of the solution at each iteration. We will elaborate on this property in Theorem~\ref{thm:structure}. 

We prefer primal-dual algorithms over cutting plane/spectral bundle algorithms because, as we show in Section~\ref{sec:complexity_TAC}, the centralized part of our primal-dual algorithm consists of solving a symmetric system of linear equations (see~\eqref{eq:SAE1}), whereas for the cutting plane/spectral bundle algorithm, the centralized computation would consist of solving a constrained quadratic program (see~\cite{krishnan},~\cite{mahdu}) with the number of variables equal to the size of the system of linear equations. Because centralized computation is a limiting factor in a parallel algorithm (Amdahl's law), and because solving symmetric linear equations is simpler than solving a quadratic programming problem, we chose the primal-dual approach.

The choice of a central path-following primal-dual algorithm as in~\cite{helmberg} and~\cite{Alizadeh_method} was motivated by results in~\cite{alizadeh} which demonstrated better convergence, accuracy and robustness over the other types of primal-dual algorithms. More specifically, we chose the approach in~\cite{helmberg} over~\cite{Alizadeh_method} because unlike the \textit{Schur Complement Matrix} (SCM) approach of the algorithm in~\cite{Alizadeh_method}, the SCM of~\cite{helmberg} is symmetric and only the upper-triangular elements need to be sent/received by the processors. This leads to less communication overhead. The other reason for choosing~\cite{helmberg} is that the symmetric SCM of the algorithm in~\cite{helmberg} can be factorized using Cholesky factorization, whereas the non-symmetric SCM of~\cite{Alizadeh_method} must be factorized by LU factorization (LU factorization is roughly twice as expensive as Cholesky  factorization). Since factorization of SCM comprises the main portion of the centralized computation in our algorithm, it is crucial for us to use computationally-cheaper factorization methods to achieve a better scalability.

Recall from Section~\ref{sec:primal_dual} that in the primal-dual algorithm, both primal and dual problems are solved by iteratively calculating primal and dual search directions and step sizes, and applying these to the primal and dual variables. Let $X$ be the primal variable and $y$ and $Z$ be the dual variables. At each iteration, the variables are updated as
\begin{align}
X_{k+1} & = X_k + t_p \Delta X  \label{eq:X} \\
y_{k+1} & = y_k + t_d \Delta y  \label{eq:y} \\
Z_{k+1} & = Z_k + t_d \Delta Z  \label{eq:Z},
\end{align}
where $\Delta X$, $\Delta y$, and $\Delta Z$ are the search directions defined in~\eqref{eq:primal-dual_step} and $t_p$ and $t_d$ are primal and dual step sizes. For the SDPs associated with Polya's theorem (see~\eqref{eq:primal_TAC} and~\eqref{eq:dual_TAC}), because the map $G$ (defined in~\eqref{eq:B_G}) is zero, the predictor search directions defined in~\eqref{eq:deltay_deltat}-\eqref{eq:deltaZ} reduce to the following:
\begin{align}
&\Delta \widehat{y} = \Omega^{-1}\left(-a + B( Z^{-1} G X )\right)   \\
&\Delta \widehat{X} = -X + Z^{-1} G  \left( \sum_{i=1}^K B_i \Delta \widehat{y}_i \right) X      \label{eq:delX_hat_TAC} \\
&\Delta \widehat{Z} = \left( \sum_{i=1}^K B_i y_i \right) - Z - C + \left( \sum_{i=1}^K B_i \Delta \widehat{y}_i \right),   \label{eq:delZ_hat_TAC}
\end{align}
where
\begin{equation}
 G = -\sum_{i=1}^K B_i y_i  + Z + C,  
\label{eq:G} 
\end{equation}
 and $ \Omega = \left[ B( Z^{-1} B_1 X ) \; \cdots \; B( Z^{-1} B_K X ) \right]$.
Similarly, the corrector search directions defined in~\eqref{eq:deltay_deltat_bar}-\eqref{eq:deltaZbar} reduce to
\begin{align}
& \Delta \overline{y} = \Omega^{-1}\left(  B( \mu Z^{-1} ) - B( Z^{-1} \Delta \widehat{Z} \Delta \widehat{X} ) \right)  \\
& \Delta \overline{X}   =   \mu Z^{-1}-Z^{-1} \Delta \widehat{Z} \Delta \widehat{X} - Z^{-1} \Delta \overline{Z} X \label{eq:delx_bar_TAC}  \\
& \Delta \overline{Z}   =  \sum_{i=1}^K B_i \Delta \overline {y}_i. \label{eq:delz_bar_TAC}   
\end{align}
In the following section, we discuss the structure of the decision variables of the SDP defined by the Elements~\eqref{eq:C},~\eqref{eq:Ai} and~\eqref{eq:a}.

\subsection{Structure of the SDP Variables}

The key algorithmic insight of this study which allows us to use the primal-dual approach presented in Algorithm~\ref{alg:primal_dual_centeralized} is that by choosing an initial value for the primal variable with a certain block structure corresponding to the distributed structure of the processors, the algorithm will preserve this structure on the primal and dual variables at every iteration. Specifically, we define the following structured block-diagonal subspace, where each block corresponds to a single processor. 
\begin{equation}
S_{l,m,n} := \left\lbrace Y \subset \mathbb{R}^{(l+m)n \times (l+m)n} :  Y = \text{diag}(Y_1,\cdots Y_l, Y_{l+1}, \cdots Y_{l+m}) \; \text{for} \; Y_i \in \mathbb{R}^{n \times n} \right\rbrace
 \label{eq:structure}
\end{equation}
According to the following theorem, the subspace $S_{l,m,n}$ is invariant under the predictor and corrector iterations in the sense that when Algorithm~\ref{alg:primal_dual_centeralized} is applied to the SDP problem defined by the Elements~\eqref{eq:C},~\eqref{eq:Ai} and~\eqref{eq:a} with a primal starting point $X_0 \in S_{l,m,n}$, then the primal and dual variables remain in the subspace at every iteration.

\begin{mythm}\label{thm:structure}
Consider the SDP problem defined in~\eqref{eq:primal_TAC} and~\eqref{eq:dual_TAC} with elements given by~\eqref{eq:C},~\eqref{eq:Ai} and~\eqref{eq:a}. Suppose $L$ and $M$ are the cardinalities of $W_{d_p+d_1}$ and $W_{d_{pa} +d_2}$ as defined in~\eqref{eq:L} and~\eqref{eq:M}. If~\eqref{eq:X},~\eqref{eq:y} and~\eqref{eq:Z} are initialized by 
\[
X_0 \in S_{L,M,n}, \quad y_0 \in \mathbb{R}^{K}, \quad Z_0 \in S_{L,M,n}, 
\]
then for all $ k \in \mathbb{N} $, 
\[
 X_k \in S_{L,M,n}, \quad Z_k \in S_{L,M,n}. 
\]
\end{mythm}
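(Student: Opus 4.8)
The plan is to prove invariance of the block-diagonal subspace $S_{L,M,n}$ by induction on the iteration index $k$, showing that each of the five quantities appearing in the predictor and corrector steps --- namely $G$, $\Delta\widehat{y}$, $\Delta\widehat{X}$, $\Delta\widehat{Z}$, $\mu$, $\Delta\overline{y}$, $\Delta\overline{X}$, $\Delta\overline{Z}$, and finally the full search directions $\Delta X$ and $\Delta Z$ --- preserves the structure. The base case is the hypothesis $X_0,Z_0\in S_{L,M,n}$. For the inductive step I would assume $X_k,Z_k\in S_{L,M,n}$ and $y_k\in\mathbb{R}^K$, and walk through the update formulae~\eqref{eq:delX_hat_TAC}--\eqref{eq:delz_bar_TAC} together with~\eqref{eq:G}, verifying at each stage that the result lies in $S_{L,M,n}$ (or, for the $\Delta y$ pieces, simply in $\mathbb{R}^K$). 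Then~\eqref{eq:X}--\eqref{eq:Z} give $X_{k+1},Z_{k+1}\in S_{L,M,n}$ because $S_{L,M,n}$ is a linear subspace closed under scalar multiples and sums.

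The heart of the argument rests on two structural facts about the SDP data. First, each $B_i$ defined in~\eqref{eq:Ai} is itself block-diagonal with blocks in $\mathbb{R}^{n\times n}$ --- i.e.\ $B_i\in S_{L,M,n}$ --- and likewise $C\in S_{L,M,n}$ by~\eqref{eq:C}. Second, $S_{L,M,n}$ is closed under the operations that appear in the iteration: sums, products, and inverses of matrices in $S_{L,M,n}$ stay in $S_{L,M,n}$ (the inverse being block-wise inversion, valid whenever the iterates are positive definite, which the interior-point method maintains). Granting these, $G=-\sum_i B_iy_i+Z_k+C$ is a sum of elements of $S_{L,M,n}$, hence in $S_{L,M,n}$; $Z_k^{-1}GX_k\in S_{L,M,n}$; and since $B(\cdot)$ is a vector of traces, $\Delta\widehat{y}=\Omega^{-1}(-a+B(Z_k^{-1}GX_k))\in\mathbb{R}^K$ with no structural constraint needed. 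Then $\sum_i B_i\Delta\widehat{y}_i\in S_{L,M,n}$, so $\Delta\widehat{X}$ and $\Delta\widehat{Z}$ from~\eqref{eq:delX_hat_TAC}--\eqref{eq:delZ_hat_TAC} are in $S_{L,M,n}$. The same bookkeeping handles $\mu$ (a scalar, by~\eqref{eq:mu_corrector}), $\Delta\overline{y}$, $\Delta\overline{Z}=\sum_i B_i\Delta\overline{y}_i$, and $\Delta\overline{X}$. Finally $\Delta X=\mathrm{Sym}(\Delta\widehat{X}+\Delta\overline{X})$ is in $S_{L,M,n}$ because the subspace is closed under transposition, and $\Delta Z=\Delta\widehat{Z}+\Delta\overline{Z}\in S_{L,M,n}$.

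The one substantive point I would isolate as a lemma --- and what I expect to be the main obstacle, in the sense of requiring actual verification rather than routine chasing --- is establishing that $B_i\in S_{L,M,n}$ from the definition~\eqref{eq:Aij}. The first branch gives $B_{i,j}=\sum_{h\in W_{d_p}}\beta_{\langle h\rangle,j}V_{\langle h\rangle}(e_i)$, which is an $n\times n$ block since each $V_{\langle h\rangle}(e_i)\in\mathbb{R}^{n\times n}$; the second branch gives $B_{i,j}=-\sum_h(H_{\langle h\rangle,j-L}^T V_{\langle h\rangle}(e_i)+V_{\langle h\rangle}(e_i)H_{\langle h\rangle,j-L})$, again an $n\times n$ block because $H_{\langle h\rangle,\cdot}\in\mathbb{R}^{n\times n}$. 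Hence $B_i=\mathrm{diag}(B_{i,1},\dots,B_{i,L+M})$ with $L+M$ blocks each of size $n$, which is exactly membership in $S_{L,M,n}$; the same observation for $C$ from~\eqref{eq:Cj} is immediate. Once this is in hand, the induction is a mechanical pass through the eight update formulae, and I would present it compactly as a single displayed chain of ``$\in S_{L,M,n}$'' deductions rather than eight separate paragraphs.

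One auxiliary remark worth including: the closure of $S_{L,M,n}$ under inversion is used implicitly when forming $Z_k^{-1}$, and this is legitimate because the primal-dual algorithm, started from a strictly feasible point satisfying~\eqref{eq:starting_point}, keeps $X_k\succ0$ and $Z_k\succ0$ throughout, so each diagonal block of $Z_k$ is itself positive definite and invertible. I would state this as a standing assumption inherited from the convergence analysis of Algorithm~\ref{alg:primal_dual_centeralized} rather than reprove it here.
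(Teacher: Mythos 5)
Your proposal is correct and follows essentially the same route as the paper's proof: induction on $k$, using the facts that $C\in S_{L,M,n}$ and $\sum_i B_i y_i\in S_{L,M,n}$ together with closure of $S_{L,M,n}$ under sums, products, and inversion to push the block-diagonal structure through each predictor and corrector formula. The only additions beyond the paper's argument are your explicit isolation of $B_i\in S_{L,M,n}$ as a lemma and the remark on invertibility of the blocks of $Z_k$, both of which the paper treats as immediate from the definitions.
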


\begin{proof}
We proceed by induction. First, suppose for some $k \in \mathbb{N}, $ 
\begin{equation}
 X_k \in S_{L,M,n} \quad \text{and}\quad Z_k \in S_{L,M,n}. \label{eq:assume} 
\end{equation}
We would like to show that this implies $X_{k+1}, Z_{k+1} \in S_{L,M,n}$. To see this, observe that according to~\eqref{eq:X}, $X_{k+1} = X_k + t_p \Delta X_k$ for all $k \in \mathbb{N}$.
From~\eqref{eq:primal-dual_step}, $\Delta X_k$ can be written as  
\begin{equation}
\Delta X_k = \Delta \widehat{X}_k + \Delta \overline{X}_k \quad \text{for all} \;\, k \in \mathbb{N}.  
\label{eq:DelX_k}
\end{equation}
To find the structure of $\Delta X_k$, we focus on the structures of $\Delta \widehat{X}_k$ and $\Delta \overline{X}_k$ individually. Using~\eqref{eq:delX_hat_TAC}, $\Delta \widehat{X}_k$ is 
\begin{equation}
\Delta \widehat{X}_k = -X_k + Z_k^{-1} G_k  \left( \sum_{i=1}^K B_i \Delta \widehat{y}_k \right) X_k \quad \text{for all} \;\, k \in \mathbb{N},
\label{eq:Dxhat_k} 
\end{equation}
where according to~\eqref{eq:G}, $G_k$ is
\begin{equation}
G_k = C-\sum_{i=1}^K B_i y_i  + Z_k \quad \text{for all} \;\, k \in \mathbb{N}.  
\label{eq:G_k1}
\end{equation}
First, we examine the structure of $G_k$. According to the definition of $C$ and $B_i$ in~\eqref{eq:C} and~\eqref{eq:Ai}, we know that 
\begin{equation}
C \in S_{L,M,n} \quad \text{and} \quad \sum_{i=1}^K B_i y_i \in S_{L,M,n} \;\; \text{ for any } y \in \mathbb{R}^K.
\label{eq:CA} 
\end{equation}
Since all the terms on the right hand side of~\eqref{eq:G_k1} are in $S_{L,M,n}$ and $S_{L,M,n}$ is a subspace, we conclude 
\begin{equation}
G_k \in S_{L,M,n}. 
\label{eq:G_k} 
\end{equation}
Returning to~\eqref{eq:Dxhat_k}, using our assumption in~\eqref{eq:assume} and noting that the structure of the matrices in $S_{L,M,n}$ is also preserved through multiplication and inversion, we conclude 
\begin{equation}
\Delta \widehat{X}_k \in S_{L,M,n}.
\label{eq:delxhat}
\end{equation}
According to~\eqref{eq:delx_bar_TAC}, the second term in~\eqref{eq:DelX_k} is 
\begin{equation}
\Delta \overline{X}_k = \mu Z_k^{-1}-Z_k^{-1} \Delta \widehat{Z}_k \Delta \widehat{X}_k - Z_k^{-1} \Delta \overline{Z}_k X_k \quad \text{for all} \;\, k \in \mathbb{N}. 
\label{Delxbar} 
\end{equation}
To determine the structure of $\Delta \overline{X}_k$, first we investigate the structure of $\Delta \widehat{Z}_k$ and $\Delta \overline{Z}_k$. According to~\eqref{eq:delZ_hat_TAC} and~\eqref{eq:delz_bar_TAC} we have 
\begin{align}
&\Delta \widehat{Z}_k =  \sum_{i=1}^K B_i y_{k_i} - Z_k - C +  \sum_{i=1}^K B_i  \Delta \widehat{y}_{k_i}  &&  \text{for all} \;\, k \in \mathbb{N}\label{eq:delZ1}\\
&\Delta \overline{Z}_k = \sum_{i=1}^K B_i \Delta \overline{y}_{k_i} &&  \text{for all} \;\, k \in \mathbb{N}. 
\label{eq:delZ2}
\end{align}
Because all the terms in the right hand side of~\eqref{eq:delZ1} and~\eqref{eq:delZ2} are in $S_{L,M,n}$, it follows that  
\begin{equation}
\Delta \widehat{Z}_k \in S_{L,M,n}, \quad \Delta \overline{Z}_k \in S_{L,M,n}. \label{eq:delZ} 
\end{equation}
Recalling~\eqref{eq:delxhat},~\eqref{Delxbar} and our assumption in~\eqref{eq:assume}, we have 
\begin{equation}
\Delta \overline{X}_k \in S_{L,M,n}.
\label{eq:delxbar}
\end{equation}
According to~\eqref{eq:delxhat},~\eqref{eq:delZ} and~\eqref{eq:delxbar}, the total step directions are in $S_{L,M,n}$, 
\begin{align*}
&\Delta X_k = \Delta \widehat{X}_k + \Delta \overline{X}_k \in S_{L,M,n}\\
&\Delta Z_k = \Delta \widehat{Z}_k + \Delta \overline{Z}_k \in S_{L,M,n},
\end{align*}
and it follows that 
\begin{align*}
& X_{k+1} = X_k + t_p \Delta X_k \in S_{L,M,n}\\
& Z_{k+1} = Z_k + t_p \Delta Z_k \in S_{L,M,n}.
\end{align*}
Thus, for any $y \in \mathbb{R}^{K}$ and $k \in \mathbb{N} $, if $X_k,Z_k \in S_{L,M,n}$, we have $X_{k+1},Z_{k+1} \in S_{L,M,n}$. Since we have assumed that the initial values $X_0, Z_0 \in S_{L,M,n}$, we conclude by induction that $X_k \in S_{L,M,n}$ and $Z_k \in S_{L,M,n}$ for all $k \in \mathbb{N}$.
\end{proof}

\subsection{A Parallel Implementation for the SDP Solver}

In this section, we propose a parallel algorithm for solving the SDP problems associated with Polya's algorithm. In particular, we show how to map the block-diagonal structure of the primal variable and the primal-dual search directions described in Section~\ref{sec:SDPSOLVER} to a parallel computing structure consisting of a central root processor with $N$ slave processors. Note that processor steps are simultaneous and transitions between root and processor steps are synchronous. Processors are idle when root is active and vice-versa. A C++ implementation of this algorithm using MPI and numerical linear algebra libraries CBLAS and CLAPACK is provided at:
\url{www.sites.google.com/a/asu.edu/kamyar/Software}.
Let $N$ be the number of available processors and $J := \mathtt {floor}\left( \frac{L+M}{N} \right)$. As per Algorithm 6, we assume processor $i$ has access to the sub-blocks $ \mathbf{\overline{C}}_i $ and $ \mathbf{\overline{\textbf{B}}}_{j,i} $ defined in~\eqref{eq:Cbar} and~\eqref{eq:Bbar} for $j=1, \cdots, K$. Be aware that minor parts of Algorithm 6 have been abridged in order to simplify the presentation.

\begin{algorithm}
\textbf{\textit{Inputs}:} $\overline{\textbf{C}}_i,\overline{\textbf{B}}_{j,i}$ for $i=1,\cdots,N$ and $j=1,\cdots,K$: the sub-blocks of the SDP elements  provided to processor $i$ by the set-up algorithm; Stopping criterion $\epsilon$. 

\textbf{\textit{Processors Initialization step:}}\\
\For{$i = 1, \cdots, N$, processor $i$}{
Initialize primal and dual variables $\mathbf{X}^0_i$, $\mathbf{Z}^0_i$ and $y^0$ as 
\begin{equation*}
\mathbf{X}^0_i =
\begin{cases}
    I_{(J +1) n },  &   0 \le i < L+M-N J  \\
	I_{J n },  &   L+M-N J \le  i  < N,
\end{cases},
\end{equation*}
\begin{equation*}
  \mathbf{Z}^0_i = \mathbf{X}^0_i \quad \text{and} \quad y^0 = \vec{0} \in \mathbb{R}^K, \label{eq:init}
\end{equation*}
Calculate the complementary slackness as $S_{i} = tr( \mathbf{Z}_i^0 \mathbf{X}_i^0 )$.\\
 Send $S_{i}$ to the processor root.
}

\textbf{\textit{Root Initialization step:}}\\
Root processor \textbf{do}\\ 

\hspace*{0.05in} Calculate the barrier parameter $\mu = \frac{1}{3} \sum\limits_{i=1}^{N} S_{i}$. Set SDP element $a= \vec{1} \in \mathbb{R}^K.$ \\

\textbf{\textit{Processors step 1:}}\\ 
\For{$i=1, \cdots,N$, processor $i$}{ 
\For{$k=1, \cdots,K$}{
Calculate the elements of $\Omega_1$ (R-H-S of System~\eqref{eq:SAE1}) 
\begin{small}
\[
\omega_{i,k}  =  \text{tr} \left( \mathbf{\overline{\textbf{B}}}_{k,i} (\mathbf{{Z}}_i)^{-1} \left(-\sum_{j=1}^K y_j \mathbf{\overline{\textbf{B}}}_{j,i} + \mathbf {Z}_i + \mathbf{\overline{C}}_i \right) \mathbf{X}_i \right) \vspace{-0.15in}  
\]
\end{small}
\For{$l=1, \cdots,K$}{
Calculate the elements of the SCM as  \vspace{-0.15in}
\begin{equation}
 \hspace*{-0.3in} \lambda_{i,k,l} =  \text{tr} \left( \mathbf{\overline{\textbf{B}}}_{k,i} (\mathbf{{Z}}_i)^{-1} \mathbf{\overline{\textbf{B}}}_{l,i} \mathbf{X}_i \right) \label{eq:T2}   \vspace{-0.2in}
\end{equation}
} \vspace{-0.2in} }
Send $\omega_{i,k}$ and $\lambda_{i,k,l}$, $k=1, \cdots,K$ and $l=1, \cdots,K$ to the root processor.\\
}
\label{alg:primal_dual_decenteralized}
\end{algorithm}
\begin{algorithm}

\textbf{\textit{Root step 1:}} \vspace{-0.1in}\\ 
Root processor \textbf{do} \\ 
\hspace*{0.05in} Construct the R-H-S of System~\eqref{eq:SAE1} and the SCM as 
\begin{small}
\begin{equation*}
\hspace*{0.1in} \Omega_1 = \left( \begin{array}{ccc}
  \sum_{i=1}^{N} \omega_{i,1} \\
  \sum_{i=1}^{N} \omega_{i,2} \\
  \vdots\\
  \sum_{i=1}^{N} \omega_{i,K}
\end{array} \right)-a \;\,\, \text{and} \;
\Lambda= \left[ \left(\begin{array}{ccc}
  \sum_{i=1}^{N} \lambda_{i,1,1} \\
  \sum_{i=1}^{N} \lambda_{i,2,1} \\
  \vdots\\
  \sum_{i=1}^{N} \lambda_{i,K,1} \end{array} \right) , \cdots,
  \left( \begin{array}{ccc}
  \sum_{i=1}^{N} \lambda_{i,1,K} \\
  \sum_{i=1}^{N} \lambda_{i,2,K} \\
  \vdots \\
  \sum_{i=1}^{N} \lambda_{i,K,K} \end{array}
  \right) \right]
\end{equation*}
\end{small}
\hspace*{0.1in} Solve the following system of equations for the predictor dual step ${\Delta \widehat{y}}$. \vspace{-0.2in}
\begin{equation}
 \hspace*{0.05in} \Lambda {\Delta \widehat{y}} = \Omega_1   \label{eq:SAE1}
 \vspace{-0.2in}
\end{equation}
\hspace{0.1in} Send ${\Delta \widehat{y}}$ to all processors.

 \textbf{\textit{Processors step 2:}} \vspace{-0.1in}\\ 
\For{$i=1, \cdots,N$, processor $i$}{
Calculate the predictor step directions \vspace{-0.1in}
\begin{align*}
& \Delta \mathbf{\widehat{X}}_i = -\mathbf{X}_i 
 + (\mathbf{{Z}}_i)^{-1}  \left( -\sum_{j=1}^K y_j \mathbf{\overline{\textbf{B}}}_{j,i}  + \mathbf {Z}_i + \mathbf{\overline{C}}_i \right) \sum_{j=1}^K \Delta \widehat{y}_j \, \mathbf{\overline{\textbf{B}}}_{j,i} \; \mathbf{X}_i, \\
& \Delta \mathbf{\widehat{Z}}_i = \sum_{j=1}^K y_j \mathbf{\overline{\textbf{B}}}_{j,i} - \mathbf{Z}_i - \mathbf{\overline{C}}_i + \sum_{j=1}^K \Delta \widehat{y}_j \mathbf{\overline{\textbf{B}}}_{j,i}.
\end{align*}
\vskip-0.2in
\For{$k=1, \cdots,K$}{
Calculate the elements of $\Omega_2$ (R-H-S of~\eqref{eq:SAE2})  \vspace{-0.1in}
\[
 \hspace*{-0.1in}\delta_{i,k} = \text{tr}(\mathbf{\overline{\textbf{B}}}_{k,i} (\mathbf{{Z}}_i)^{-1})  ,
 \tau_{i,k} = tr(\mathbf{\overline{\textbf{B}}}_{k,i} (\mathbf{{Z}}_i)^{-1} \Delta \mathbf{\widehat{Z}}_i \Delta \mathbf{ \widehat{X}}_i)  \vspace{-0.2in}
\]
} \vskip-0.1in
Send $\delta_{i,k}$ and $\tau_{i,k}$, $k=1, \cdots,K$ to the root processor. \vspace{-0.1in}
}
 \textbf{\textbf{\textit{Root step 2:}}} \vspace{-0.1in}  \\ 
\hspace*{0.05in} Construct the R-H-S of~\eqref{eq:SAE2} as \vspace*{-0.05in}
\begin{align*}
 \Omega_2 =
 \mu \begin{bmatrix}
\sum\limits_{i=1}^{N} \delta_{i,1} & \sum\limits_{i=1}^{N} \delta_{i,2} & \cdots & \sum\limits_{i=1}^{N} \delta_{i,K}
\end{bmatrix}^T
- 
  \begin{bmatrix}
  \sum\limits_{i=1}^{N} \tau_{i,1} & \sum\limits_{i=1}^{N} \tau_{i,2} &
  \cdots &
  \sum\limits_{i=1}^{N} \tau_{i,K} \end{bmatrix}^T  
\end{align*}

\hspace*{0.05in} Solve the following system of equations for the corrector dual variable $\Delta \overline{y}$. \vspace{-0.2in}
\begin{equation}
\Lambda \Delta \overline{y} = \Omega_2 \vspace{-0.2in}
  \label{eq:SAE2} 
\end{equation}
\hspace*{0.05in} Send $\Delta \overline{y}$ to all processors.
\end{algorithm}

\begin{algorithm}
\textbf{\textit{Processors step 3:}}  \vspace{-0.1in} \\ 
\For{$i=1, \cdots,N$, processor $i$}{
Calculate the corrector step directions as follows. \vspace{-0.15in} 
\begin{equation*}
\Delta \mathbf{ \overline{Z}}_i = \sum_{j=1}^K \Delta \overline {y}_j \mathbf{\overline{\textbf{B}}}_{j,i}  \vspace*{-0.05in}
\end{equation*}
\begin{equation*}
\Delta \mathbf{ \overline{X}}_i = -(\mathbf{{Z}}_i)^{-1} ( \Delta \mathbf{ \overline{Z}}_i \mathbf{X}_i + \Delta \mathbf{ \widehat{Z}}_i \Delta \mathbf{ \widehat{X}}_i) + \mu (\mathbf{{Z}}_i)^{-1}  \vspace{-0.15in}
\end{equation*}
Calculate the primal and dual total search directions as   \vspace{-0.15in} 
\begin{equation*}
\Delta \mathbf{ X}_i  = \Delta \mathbf{\widehat{X}}_i + \Delta \mathbf{ \overline{X}}_i, \quad
\Delta \mathbf{ Z}_i = \Delta \mathbf{\widehat{Z}}_i + \Delta \mathbf{ \overline{Z}}_i, \quad
\Delta {y} = \Delta \widehat {y} + \Delta \overline {y}.   \vspace{-0.15in}  
\end{equation*}
Set the primal step size $t_p$ and dual step size $t_d$ using a line search method.\\ 
Update the primal and dual variables as   \vspace{-0.15in} 
\[
\mathbf{X}_i \equiv \mathbf{X}_i + t_p \Delta \mathbf{X}_i, \quad \mathbf{Z}_i \equiv \mathbf{Z}_i + t_d \Delta \mathbf{Z}_i , \quad y \equiv y  + t_d \Delta {y}   \vspace{-0.25in} 
\]
}  \vspace{-0.1in}

\textbf{\textit{Processors step 4:}} \vspace{-0.1in}\\ 
\For{$i=1, \cdots,N$, processor $i$}{
Calculate the contribution of $X_i$ to primal cost and complementary slackness as  \vspace{-0.1in} 
\[ \tilde{\phi}_{i}  = tr \left( \mathbf{\overline{C}}_i \mathbf{X}_i \right) \quad \text{and} \quad S_{i}  = tr \left( \mathbf{Z}_i \mathbf{X}_i \right).
 \vspace{-0.1in} 
\]
Send $S_{i}$ and $\tilde{\phi}_{i}$ to the root processor. \vspace{-0.1in}
} \vspace{-0.1in}

\textbf{\textit{Root step 4:}} \vspace{-0.1in}\\ 
\hspace*{0.05in} Update the barrier parameter as $\mu = \frac{1}{3} \sum_{i=1}^{N} S_{i}$.\\
\hspace*{0.05in} Calculate the primal and dual costs as  $\phi = \sum_{i=1}^{N} \tilde{\phi}_{i} \; \text{and} \; \psi= a^Ty $.\\
  \If{$ | \phi - \psi | > \epsilon$}{ 
  go to Processors step 1  \vspace{-0.1in} 
}  \vspace{-0.15in} 
\Else{
	Calculate the coefficients of $P(\alpha)$ as $P_i= \sum_{j=1}^{\tilde{N}} E_j y_{(j + \tilde{N}i-1))}$ for $i=1, \cdots, L_0$.  \vspace{-0.1in} 
	} \vspace{-0.1in}
	
\textbf{\textit{Output:}}
Coefficients $P_i$ of a polynomial $P(\alpha)$ such that $P(\alpha) > 0$ for all $\alpha \in \Delta^l$ and satisfies the Lyapunov inequalities in~\eqref{eq:Lyap_LMI}.
\vspace{0.15in}\caption{A parallel SDP algorithm}
\end{algorithm}

\section {Computational complexity analysis of the SDP algorithm}
\label{sec:complexity_TAC}

Complexity theory for parallel computation has been studied in some depth~(\cite{limits}). The class NC $\subset$ P is often considered to be the class of problems that can be parallelized efficiently. More precisely, a problem is in NC if there exist integers $c$ and $d$ such that the problem can be solved in $\mathcal{O}(\log(n)^c)$ steps using $\mathcal{O}(n^d)$ processors.
On the other hand, the class P-complete is a class of problems which are equivalent up to an NC reduction, but contains no problem in NC and is thought to be the simplest class of ``inherently sequential" problems. It has been proven that Linear Programming (LP) is P-complete~\cite{limits} and SDP is P-hard (at least as hard as any P-complete problem) and thus is unlikely to admit a general-purpose parallel solution. Given this fact and given the observation that the problem we are trying to solve is NP-hard, it is important to thoroughly understand the complexity of the algorithms we are proposing and how this complexity scales with various parameters which define the size of the stability analysis problem. To better understand these issues, we have broken our complexity analysis down into several cases which should be of interest to the control community. Note that the cases below do not discuss memory complexity. This is because in the cases when a sufficient number of processors are available, for a system with $n$ states, the memory requirements per block are simply proportional to $n^2$.

\subsection{Complexity Analysis for Systems with Large Number of States}
\label{sec:comp_analysis_SDP}

Suppose we are considering a problem with $n$ states. For this case, the most computationally expensive part of the algorithm is the calculation of the Schur complement matrix $\Lambda$ by the processors in Processors step 1 (and summed by the root in Root step 1, although we neglect this part). In particular, the computational complexity of the algorithm is determined by the number of operations required to calculate~\eqref{eq:T2}, restated here.
\begin{equation*}
\lambda_{i,k,l} =  tr \left( \mathbf{\overline{\textbf{B}}}_{k,i} (\mathbf{{Z}}_i)^{-1} \mathbf{\overline{\textbf{B}}}_{l,i} \mathbf{X}_i \right) 
\quad \text{for} \quad k= 1, \cdots, K \; \text{and} \quad l=1, \cdots, K. 
\end{equation*}
Since the cost of $n \times n$ matrix-matrix multiplication requires $\mathcal{O}(n^3)$ steps and each of $\mathbf{X}_i, \mathbf{{Z}}_i, \mathbf{\overline{\textbf{B}}}_{l,i}$ has $\mathtt{floor}(\frac{L+M}{N})$ number of blocks in $\mathbb{R}^{n \times n}$, the number of operations performed by the $i^{th}$ processor to calculate  $\lambda_{i,k,l}$ for $k= 1, \cdots, K$ and $l=1, \cdots, K$ is proportional to
\begin{equation}
\begin{cases}
 \mathtt{floor} \left( \dfrac{L+M}{N} \right)  K^2 n^3 \; & N < L+M \\
 K^2 n^3 \; & N \geq L+M
\end{cases} 
\label{eq:flop1}
\end{equation}
at each iteration, where $i=1, \cdots,N$. By substituting $K$ in~\eqref{eq:flop1} from~\eqref{eq:K}, for $N \geq L+M$, each processor performs 
\begin{equation}
\sim \dfrac{((d_p+l-1)!)^2}{(d_p!)^2((l-1)!)^2}n^7 
\label{eq:flop_proc} 
\end{equation}
operations per iteration. Therefore, for systems with large number $n$ of states and fixed degree $d_p$ of $P(\alpha)$ and number $l$ of uncertain parameters, the number of operations per processor required to solve the SDP associated with parameter-dependent feasibility problem
$
A(\alpha)^TP(\alpha)+P(\alpha)A(\alpha) < 0,
$
is proportional to $n^7$. Solving the LMI associated with the parameter-independent problem
$
\mathrm{A}^TP+P\mathrm{A} < 0
$
using our algorithm or most of the SDP solvers such as~\cite{sedumi,csdp,sdpara} also requires $ O(n^7)$ operations per processor. Therefore, if we have a sufficient number of available processors (at least $L+M$), the proposed algorithm solves both the stability and robust stability problems by performing $O(n^7)$ operations per processor.

\subsection{Complexity of Increasing Accuracy/Decreasing Conservativeness}
\label{sec:accuracy_TAC}

We now consider the effect of raising Polya's exponent. Consider the definition of simplex as follows.
\begin{equation*}
\tilde{\Delta}^l_r=\left\lbrace \alpha\in \mathbb{R}^l : \sum_{i=1}^{l} \alpha_i=r, \alpha_i\geqslant 0 \right\rbrace
\end{equation*}
Suppose we now define the accuracy of the algorithm as the largest value of $r$ found by the algorithm (if it exists) such that if the uncertain parameters lie inside the corresponding simplex, the stability of the system is verified. Typically, increasing Polya's exponent $d$ in~\eqref{eq:polya_product_simplex} improves the accuracy of the algorithm. If we again only consider Processor step 1, according to~\eqref{eq:flop_proc}, the number of processor operations is independent of the Polya's exponent $d_1$ and $d_2$. Because this part of the algorithm does not vary with Polya's exponent, we look at the root processing requirements associated with solving the systems of equations in~\eqref{eq:SAE1} and~\eqref{eq:SAE2} in Root step 1 using Cholesky factorization. Each of these systems consists of $K$ equations. The computational complexity of Cholesky factorization is $O(K^3)$. Thus, the number of operations performed by the root processor is proportional to 
\begin{equation} 
K^3 = \dfrac{((d_p+l-1)!)^3}{(d_p!)^3((l-1)!)^3}n^6. 
\label{eq:flop_root}
\end{equation}
In terms of communication complexity, the most significant operation between the root and other processors is sending and receiving  $\lambda_{i,k,l}$ for $i=1, \cdots, N$, $k=1, \cdots,K$ and $l=1, \cdots,K$ in Processors step 1 and Root step 1. Thus, the total communication cost for $N$ processors per iteration is 
\begin{equation}
\sim N \cdot K^2 = N \dfrac{((d_p+l-1)!)^2}{(d_p!)^2((l-1)!)^2}n^4.  
\label{eq:com_cost}
\end{equation}
From~\eqref{eq:flop_proc},~\eqref{eq:flop_root} and~\eqref{eq:com_cost} it is observed that the number of processors operations, root operations and communication operations are independent of Polya's exponent $d_1$ and $d_2$. Therefore, we conclude that for a fixed $d_p$ and sufficiently large number of processors $N$ ($N \geq  L+M$), improving the accuracy by increasing $d_1$ and $d_2$ does not add any computation per processor or communication overhead.

\subsection{Analysis of Scalability/Speed-up}
\label{sec:speedup_TAC}

The speed-up of a parallel algorithm is defined as $\textit{\text{SP}}_N=\dfrac{T_s}{T_N}, $
where $T_s$ is the execution time of the algorithm on a single processor and $T_N$ is the execution time of the parallel algorithm using $N$ processors. The speed-up is governed by
\begin{equation}
\textit{\text{SP}}_N=\dfrac{N}{D+NS}, 
\label{eq:speedup} 
\end{equation}
where $D$ is the \textit{decentralization ratio} and is defined as the ratio of the total operations performed by all processors except the root to total operations performed by all processors and root. $S$ is the \textit{centralization ratio} and is defined as the ratio of the operations performed by the root processor to total operations performed by all processors and the root. Suppose that the number of available processors is equal to the number of sub-blocks in $C$ defined in~\eqref{eq:C}, i.e, equal to $L+M$. Using the above definitions for $D$ and $S$, Equation~\eqref{eq:flop_proc} as the decentralized computation and~\eqref{eq:flop_root} as the centralized computation, $D$ and $S$ can be approximated as
\begin{equation}
D \simeq \dfrac{N \dfrac{((d_p+l-1)!)^2}{(d_p!)^2((l-1)!)^2}n^7}{N \dfrac{((d_p+l-1)!)^2}{(d_p!)^2((l-1)!)^2}n^7+\dfrac{((d_p+l-1)!)^3}{(d_p!)^3((l-1)!)^3}n^6} \;\;
\end{equation}
and
\begin{equation}
S \simeq \dfrac{\dfrac{((d_p+l-1)!)^3}{(d_p!)^3((l-1)!)^3}n^6}{N \dfrac{((d_p+l-1)!)^2}{(d_p!)^2((l-1)!)^2}n^7+\dfrac{((d_p+l-1)!)^3}{(d_p!)^3((l-1)!)^3}n^6}.
\end{equation}
According to~\eqref{eq:L} and~\eqref{eq:M} the number of processors $N=L+M$ is independent of $n$. Therefore,
\[
\lim_{n\to\infty} D = 1 \quad \text{and} \quad \lim_{n\to\infty} S = 0. 
\]
By substituting $D$ and $S$ in~\eqref{eq:speedup} with their limit values, we have $\lim_{n\to\infty} \textit{\text{SP}}_N = N$. Thus, for large $n$, by using $L+M$ processors, the presented decentralized algorithm solves large robust stability problems $L+M$ times faster than the sequential algorithms. For different values of the state-space dimension $n$, the theoretical speed-up of the algorithm versus the number of processors is illustrated in Figure~\ref{fig:theo_speedup}. As shown in Figure~\ref{fig:theo_speedup}, for problems with large $n$, by using $N \leq L+M$ processors the parallel algorithm solves the robust stability problems approximately $N$ times faster than the sequential algorithm. As $n$ increases, the trend of speed-up becomes increasingly linear. Therefore, for problems with a large number of states, our algorithm becomes increasingly efficient in terms of processor utilization.

\begin{figure}[t]
   \centering
   \includegraphics[scale=0.4]{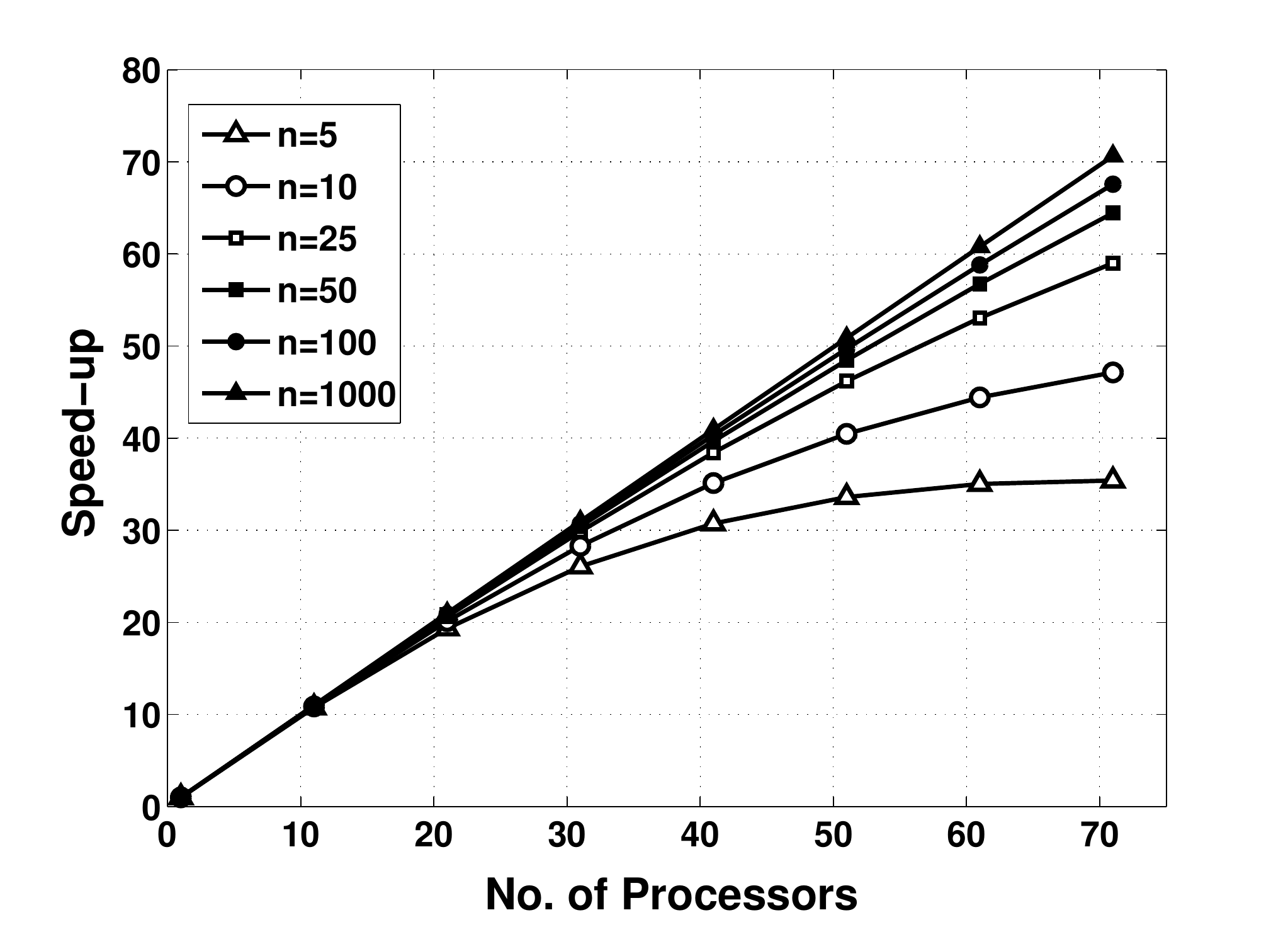}
   \caption{Theoretical Speed-up vs. No. of Processors for Different System Dimensions $n$ for $l=10$, $d_p=2$, $d_a=3$
and $d_1=d_2=4$, Where $L+M=53625$}
   \label{fig:theo_speedup} 
\end{figure}

\subsection{Synchronization and Load Balancing Analysis}

The proposed algorithm is synchronous in that all processors must return values before the centralized step can proceed. However, in the case where we have fewer processors than blocks, some processors may be assigned one block more than other processors. In this case, some processors may remain idle while waiting for the more heavily loaded blocks to complete. In the worst case, this can result in a 50\% decrease in the execution speed. We have addressed this issue in the following manner:
\begin{enumerate}
\item We allocate almost the same number ($\pm 1$) of blocks of the SDP elements $C$ and $B_{i}$ to all processors, i.e., $\mathtt{floor}(\frac{L+M}{N})+1$ blocks to $r$ processors and $\mathtt{floor}(\frac{L+M}{N})$ blocks to the other $N-r$ processors, where $r$ is the remainder of dividing $L+M$ by $N$.
\item We assign the same routine to all of the processors in the Processors steps of Algorithm 6.
\end{enumerate}
If $L+M$ is a multiple of $N$, then the algorithm assigns the same amount of data, i.e., $\frac{L+M}{N}$ blocks of $C$ and $B_{i}$ to each processor. In this case, the processors are perfectly synchronized. If $L+M$ is not a multiple of $N$, then according to~\eqref{eq:flop1}, $r$ of the $N$ processors perform $K^2n^3$ extra operations per iteration. This fraction is $\dfrac{1}{1+\mathtt{floor}(\frac{L+M}{N})} \leq 0.5$ of the operations per iteration performed by each of $r$ processors. Thus in the worst case, we have a 50\% reduction, although this situation is rare. As an example, the load balancing (distribution of data and calculation) for the case of solving an SDP of the size $L+M=24$ using different numbers of available processors $N$ is demonstrated in Figure~\ref{fig:load_balance}. This figure shows the number of blocks that are allocated to each processor. According to this figure, for $N=2,12$ and 24, the processors are perfectly balanced, whereas for the case where $N=18$, twelve processors perform 50$\%$ fewer calculations.

\begin{figure}[t]
   \centering
 \hspace*{-0.65in}  \includegraphics[scale=0.45]{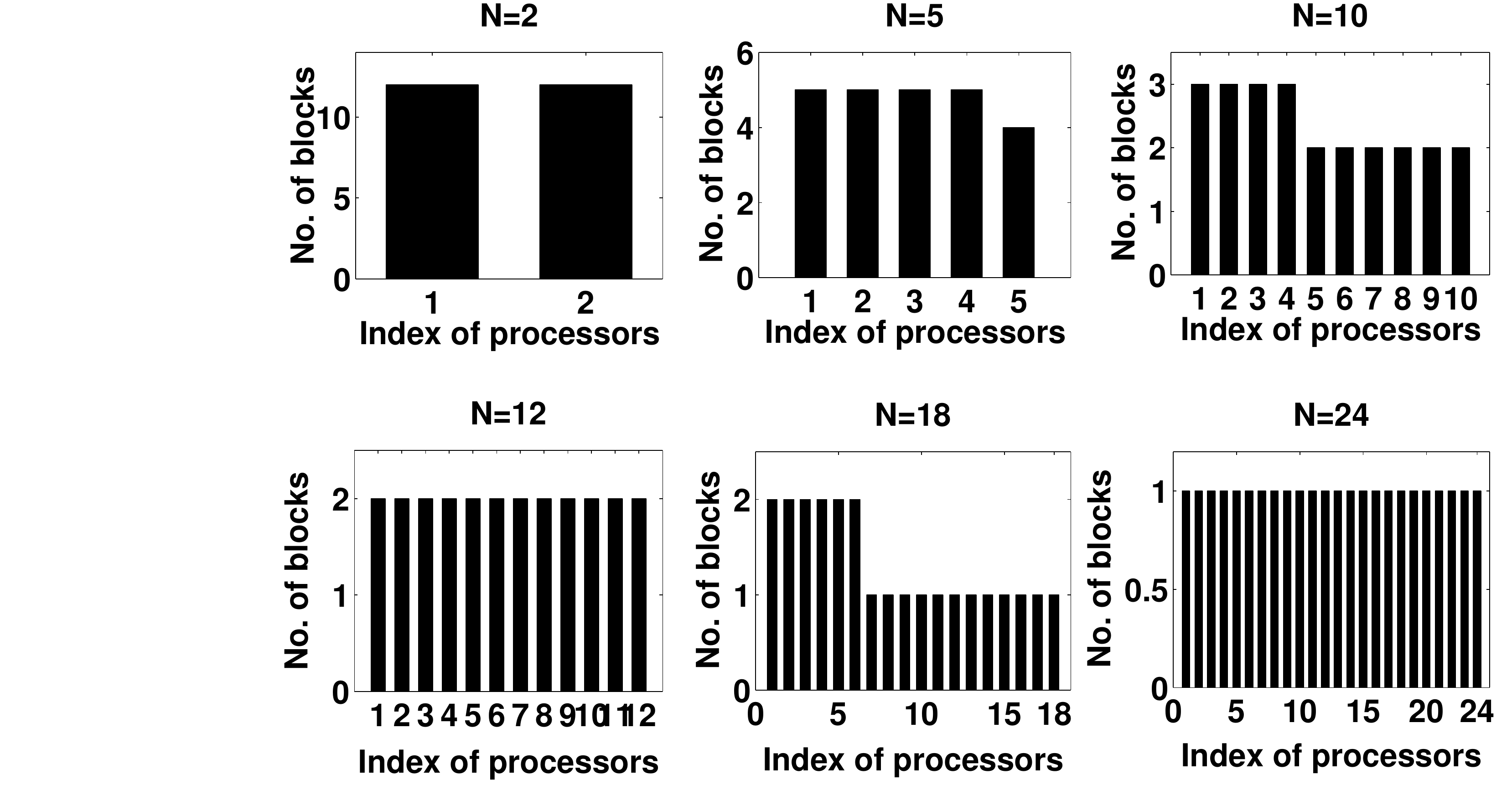} \vspace{0.1in}
   \caption{The Number of Blocks of the SDP Elements Assigned to Each Processor. An Illustration of Load Balancing.} 
   \label{fig:load_balance} 
\end{figure}

\subsection{Communication Graph of the Algorithm}

The communication directed graph of the SDP algorithm (see Figure~\ref{fig:SDP_graph}) is static (fixed for all iterations). At each iteration, root sends messages (dual predictor and corrector search directions $\Delta \widehat{y}$ and $\Delta \overline{y}$) to all of the processors and receives messages (elements $\lambda_{i,k,l}$ of the SCM defined in~\eqref{eq:T2}) from all of the processors. The adjacency matrix of the communication directed graph is defined as follows. For $i=1,\cdots,N$ and $j=1, \cdots, N$,
\[
[T_G]_{i,j} :=
\begin{cases}
1 \quad & \text{if} \; \big( i=1 \; \text{or} \; j=1 \big) \; \text{and} \; \big( i \neq j \big)     \\
0 \quad & \text{Otherwise}.
\end{cases} 
\]

\begin{figure}[t]
\vspace{0.2in}
\centering
\includegraphics[scale=0.55]{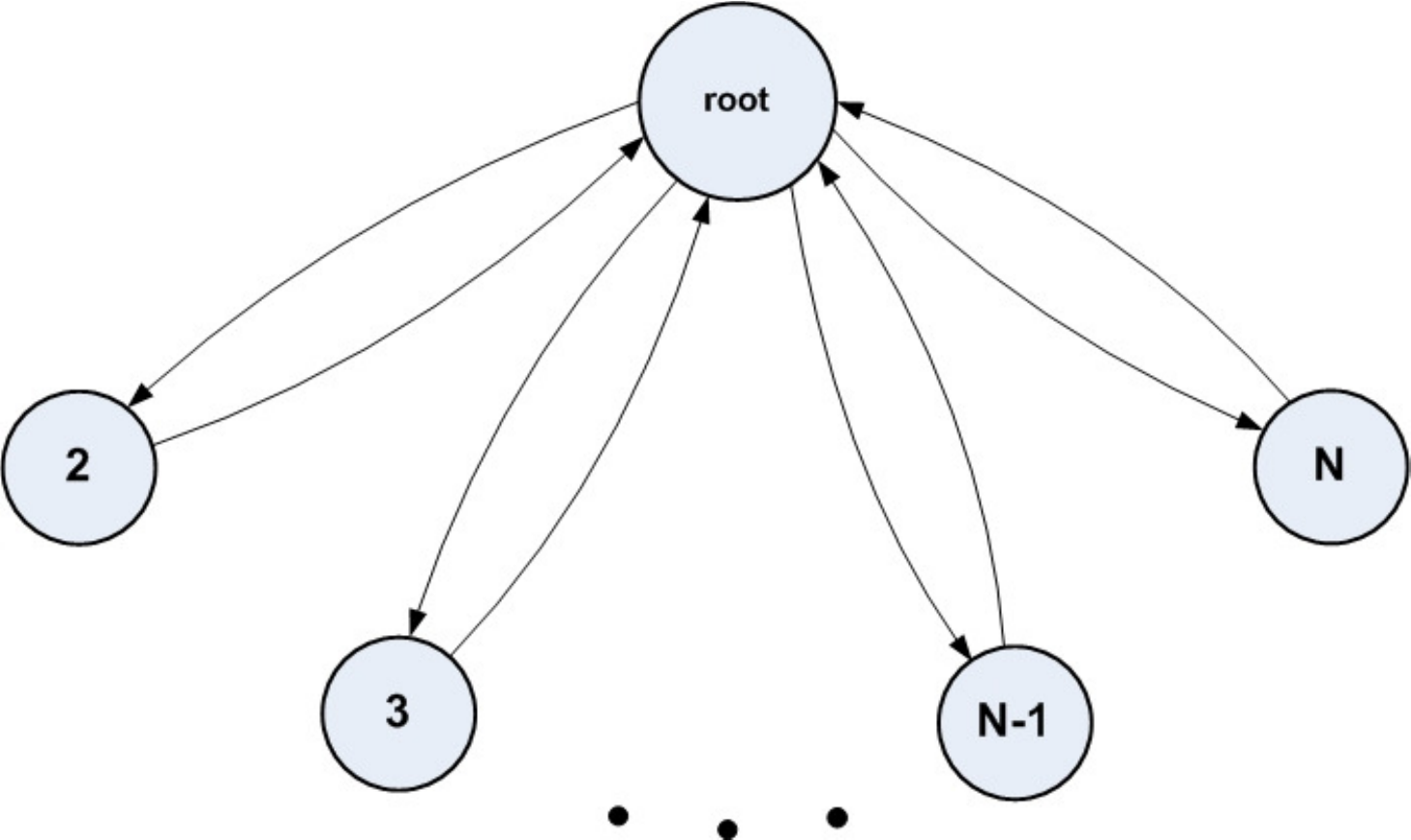} \vspace{0.15in}
\caption{The Communication Graph of the SDP Algorithm}
\label{fig:SDP_graph} 
\end{figure}

\section{Testing and Validation}
\label{sec:RESULTS_TAC}

In this section, we present validation data in 4 key areas. First, we present analysis results for a realistic large-scale model of Tokamak operation using a discretized PDE model. Next, we present accuracy and convergence data and compare our algorithm to the SOS approach. Next, we analyze scalability and speed-up of our algorithm as we increase the number of processors and compare our results to the state-of-the-art general-purpose parallel SDP solver SDPARA. Finally, we explore the limits of the algorithm in terms of the size of the problem, when implemented on a moderately powerful cluster computer and using a moderate processor allocation on the IBM Blue Gene supercomputer at Argonne National Laboratory.

\subsection{Example 1: Application to Control of a Discretized PDE Model in Fusion Research}

The goal of this example is to use the proposed algorithm to solve a real-world stability problem. A simplified model for the poloidal magnetic flux gradient in a Tokamak reactor~(\cite{Tokamak}) is
\begin{equation}
\dfrac{\partial \psi_x (x,t)}{\partial t} = \dfrac{1}{\mu_0 a^2} \dfrac{\partial}{\partial x} \left( \dfrac{\eta(x)}{x} \dfrac{\partial}{\partial x} \left( x \psi_x (x,t) \right) \right) 
\end{equation}
with Dirichlet boundary conditions $\psi_x(0,t)=0$ and $\psi_x(1,t)=0$ for all $t \in \mathbb{R}^+$, where $\psi_x$ is the deviation of the flux gradient from a reference flux gradient profile, $\mu_0$ is the permeability of free space, $\eta(x)$ is the plasma resistivity and $a$ is the radius of the Last Closed Magnetic Surface (LCMS). To obtain the finite-dimensional state-space representation of the PDE, we discretize the PDE in the spatial domain $[0,1]$ at $N=7$ points. The state-space model is then 
\begin{equation}
\dot{\psi}_x(t) = A(\eta(x)) \psi_x(t),  \label{eq:discrete_sys} 
\end{equation}
where $A(\eta(x)) \in \mathbb{R}^{N \times N}$ has the following non-zero entries.
\begin{align*}
& a_{11}= \dfrac{-4}{3  \mu_0 \Delta x^2  a^2} \left(  \dfrac{\eta(x_{\frac{3}{2}})}{x_{\frac{3}{2}}} + \dfrac{2 \eta(x_{\frac{3}{4}})}{x_{\frac{3}{4}}} \right), \\
&a_{12}= \dfrac{4}{3  \mu_0 \Delta x^2  a^2} \left(\dfrac{\eta(x_{\frac{3}{2}})x_2}{x_{\frac{3}{2}}} \right), \\
& a_{j,j-1}= \dfrac{1}{\Delta x^2 \mu_0 a^2} \left( \dfrac{\eta(x_{j-\frac{1}{2}})}{x_{j-\frac{1}{2}}}x_{j-1}  \right) && \text{for} \;  j=2, \cdots, N-1, \\
&  a_{j,j}=\dfrac{-1}{\Delta x^2 \mu_0 a^2}  \left(\dfrac{\eta(x_{j+\frac{1}{2}})}{x_{j+\frac{1}{2}}}+  \dfrac{\eta(x_{j-\frac{1}{2}})}{x_{j-\frac{1}{2}}}\right)x_j && \text{for} \;  j=2, \cdots, N-1, \\
& a_{j,j+1}=  \dfrac{1}{\Delta x^2 \mu_0 a^2} \left( \dfrac{\eta(x_{j+\frac{1}{2}})}{x_{j+\frac{1}{2}}}x_{j+1} \right)  && \text{for} \;  j=2, \cdots, N-1, \\
& a_{N,N-1}= \dfrac{4}{3 \Delta x \mu_0 a^2} \dfrac{\eta(x_{N-\frac{1}{2}})x_{N-1}}{x_{N-\frac{1}{2}} \Delta x}, \\
& a_{N,N}= \dfrac{-4}{3 \Delta x \mu_0 a^2} \left(\dfrac{2\eta(x_{N+\frac{1}{4}})x_N}{x_{N+\frac{1}{4}} \Delta x} + \dfrac{\eta(x_{N-\frac{1}{2}})x_N}{x_{N-\frac{1}{2}} \Delta x}\right),
\end{align*}
where $\Delta x=\dfrac{1}{N}$ and $x_j:=(j-\frac{1}{2}) \Delta x$. Typically $\eta(x_j)$ are not precisely known (they depend on other state variables), so we substitute for $\eta(x_j)$ in~\eqref{eq:discrete_sys} with $\widehat{\eta}(x_j)+\alpha_k$, where $\widehat{\eta}(x_j)$ are the nominal values of $\eta(x_j)$ and $\alpha_k$ for $k=1,\cdots,8$ are the uncertain parameters. The values for $x_1, \cdots, x_j$ and their corresponding values of $\widehat{\eta}(x_j)$ are presented in Table~\ref{tab:ToreSupra}. Note that we have used data from the Tore Supra reactor to estimate the nominal values $\widehat{\eta}(x_j)$.

\renewcommand{\tabcolsep}{2pt} 
\begin{small}
\begin{table}\scalebox{0.85}{
\begin{tabular}{|c|c|c|c|c|c|c|c|c|}
\hline 
$j$ & ${\frac{1}{2}}$ & ${\frac{3}{2}}$ & ${\frac{5}{2}}$ & ${\frac{7}{2}}$ & ${\frac{9}{2}}$ & ${\frac{11}{2}}$ & ${\frac{13}{2}}$ & ${\frac{15}{2}}$ \\
\hline 
$x_j$ & 0.036 & 0.143 & 0.286 & 0.429 & 0.571 & 0.714 & 0.857 & 0.964 \\ 
\hline 
$\widehat{\eta}(x_j)$ & $1.775e-8$ & $2.703e-8$ & $5.676 e-8$ & $1.182e-7$ & $2.058e-7$ & $3.655e-7$ & $1.076e-6$ & $8.419e-6$ \\
 \hline
\end{tabular}} \vspace{0.1in}
\caption{Data for Example 1: Nominal Values of the Plasma Resistivity}
\label{tab:ToreSupra}
\end{table}
\end{small}

The uncertain system is then written as   
\begin{equation}
\dot{\psi}_x(t) = A(\alpha) \psi_x(t), 
\label{eq:discrete_uncertain} 
\end{equation}
where $A$ is affine, $A(\alpha) = A_0 + \sum_{i=1}^8 A_i \alpha_i$, where 
\arraycolsep=1.4pt\def\arraystretch{2.2}
\begin{tiny}
\begin{align*}
&A_1=
\left[ \begin{array}{ccccccc}
\hspace{-0.1in}  -14.09 \hspace{-0.1in}&  6.47 \hspace{-0.1in}&     0    \hspace{-0.1in}&    0     \hspace{-0.1in}&    0      \hspace{-0.1in}&   0     \hspace{-0.1in}&   0  \\
\hspace{-0.1in}   0.41   \hspace{-0.1in}& -3.54 \hspace{-0.1in}&   3.83   \hspace{-0.1in}&    0     \hspace{-0.1in}&    0      \hspace{-0.1in}&   0     \hspace{-0.1in}&   0  \\
\hspace{-0.1in}    0     \hspace{-0.1in}&   2.30 \hspace{-0.1in}&  -10.24 \hspace{-0.1in}&   8.97 \hspace{-0.1in}&   0      \hspace{-0.1in}&   0     \hspace{-0.1in}&   0  \\
\hspace{-0.1in}    0     \hspace{-0.1in}&   0   \hspace{-0.1in}&   6.41   \hspace{-0.1in}&  -21.46  \hspace{-0.1in}&  16.06     \hspace{-0.1in}&   0     \hspace{-0.1in}&   0  \\
\hspace{-0.1in}    0     \hspace{-0.1in}&   0   \hspace{-0.1in}&     0    \hspace{-0.1in}&   12.49  \hspace{-0.1in}& -39.71    \hspace{-0.1in}&  28.91 \hspace{-0.1in}&   0  \\
\hspace{-0.1in}    0     \hspace{-0.1in}&   0   \hspace{-0.1in}&     0    \hspace{-0.1in}&    0     \hspace{-0.1in}&  23.66     \hspace{-0.1in}& -101.96 \hspace{-0.1in}&  86.33 \\
\hspace{-0.1in}    0     \hspace{-0.1in}&   0   \hspace{-0.1in}&     0    \hspace{-0.1in}&    0     \hspace{-0.1in}&    0    \hspace{-0.1in}& 97.40  \hspace{-0.1in}&  -1.74e3
\end{array} \right]
\quad \quad A_2=
\left[ \begin{array}{ccccccc}
\hspace{-0.1in} -2.86  \hspace{-0.1in}&  1.66 \hspace{-0.1in}&    0   \hspace{-0.1in}&   0     \hspace{-0.1in}&   0 \hspace{-0.1in}&    0    \hspace{-0.1in}&     0 \\
\hspace{-0.1in}  1.62  \hspace{-0.1in}& -7.15  \hspace{-0.1in}&  3.83  \hspace{-0.1in}&   0     \hspace{-0.1in}&   0 \hspace{-0.1in}&    0    \hspace{-0.1in}&     0 \\
\hspace{-0.1in}     0    \hspace{-0.1in}&   2.30 \hspace{-0.1in}& -10.24 \hspace{-0.1in}&  8.97 \hspace{-0.1in}&   0 \hspace{-0.1in}&    0    \hspace{-0.1in}&     0 \\
\hspace{-0.1in}     0    \hspace{-0.1in}&     0    \hspace{-0.1in}&  6.41  \hspace{-0.1in}& -21.46 \hspace{-0.1in}&  16.06  \hspace{-0.1in}&    0    \hspace{-0.1in}&     0 \\
\hspace{-0.1in}     0    \hspace{-0.1in}&     0    \hspace{-0.1in}&   0    \hspace{-0.1in}&  12.49 \hspace{-0.1in}& -39.72 \hspace{-0.1in}&  28.91  \hspace{-0.1in}&     0 \\
\hspace{-0.1in}     0    \hspace{-0.1in}&     0    \hspace{-0.1in}&   0    \hspace{-0.1in}&    0    \hspace{-0.1in}&  23.66 \hspace{-0.1in}& -101.96 \hspace{-0.1in}&   86.33 \\
\hspace{-0.1in}     0    \hspace{-0.1in}&     0    \hspace{-0.1in}&   0    \hspace{-0.1in}&    0    \hspace{-0.1in}&   0  \hspace{-0.1in}& 97.40 \hspace{-0.1in}& -1.74e3 \\
\end{array} \right]
\end{align*}
\begin{align*}
A_3=
\left[ \begin{array}{ccccccc}
\hspace{-0.1in}  -1.25  \hspace{-0.1in}&   6.47 \hspace{-0.1in}&    0   \hspace{-0.1in}&   0     \hspace{-0.1in}&   0 \hspace{-0.1in}&    0    \hspace{-0.1in}&     0 \\
\hspace{-0.1in}   0.41  \hspace{-0.1in}&   -5.35 \hspace{-0.1in}&  6.84  \hspace{-0.1in}&   0     \hspace{-0.1in}&   0 \hspace{-0.1in}&    0    \hspace{-0.1in}&     0 \\
\hspace{-0.1in}     0    \hspace{-0.1in}&    4.10 \hspace{-0.1in}&  -13.25 \hspace{-0.1in}&  8.97 \hspace{-0.1in}&   0 \hspace{-0.1in}&    0    \hspace{-0.1in}&    0 \\
\hspace{-0.1in}     0    \hspace{-0.1in}&     0    \hspace{-0.1in}&  6.41  \hspace{-0.1in}& -21.46 \hspace{-0.1in}&  16.06  \hspace{-0.1in}&    0    \hspace{-0.1in}&    0 \\
\hspace{-0.1in}     0    \hspace{-0.1in}&     0    \hspace{-0.1in}&   0    \hspace{-0.1in}&  12.49 \hspace{-0.1in}& -39.71 \hspace{-0.1in}&  28.91  \hspace{-0.1in}&     0 \\
\hspace{-0.1in}     0    \hspace{-0.1in}&     0    \hspace{-0.1in}&   0    \hspace{-0.1in}&    0    \hspace{-0.1in}&  23.66 \hspace{-0.1in}& -101.96 \hspace{-0.1in}&   86.33 \\
\hspace{-0.1in}     0    \hspace{-0.1in}&     0    \hspace{-0.1in}&   0    \hspace{-0.1in}&    0    \hspace{-0.1in}&   0  \hspace{-0.1in}& 97.40 \hspace{-0.1in}& -1.74e3 \\
\end{array} \right]
\quad \quad A_4=
\left[ \begin{array}{ccccccc}
\hspace{-0.1in}  -1.25  \hspace{-0.1in}&   6.47 \hspace{-0.1in}&    0   \hspace{-0.1in}&    0     \hspace{-0.1in}&   0 \hspace{-0.1in}&    0    \hspace{-0.1in}&     0 \\
\hspace{-0.1in}   0.41  \hspace{-0.1in}&   -3.54 \hspace{-0.1in}&  3.83  \hspace{-0.1in}&   0     \hspace{-0.1in}&   0 \hspace{-0.1in}&    0    \hspace{-0.1in}&     0 \\
\hspace{-0.1in}     0    \hspace{-0.1in}&   2.30   \hspace{-0.1in}& -12.25 \hspace{-0.1in}&  11.77 \hspace{-0.1in}&   0 \hspace{-0.1in}&    0    \hspace{-0.1in}&     0 \\
\hspace{-0.1in}     0    \hspace{-0.1in}&     0    \hspace{-0.1in}&  8.41  \hspace{-0.1in}& -24.27 \hspace{-0.1in}& 16.06  \hspace{-0.1in}&    0    \hspace{-0.1in}&     0 \\
\hspace{-0.1in}     0    \hspace{-0.1in}&     0    \hspace{-0.1in}&   0    \hspace{-0.1in}&  12.49 \hspace{-0.1in}& -39.72 \hspace{-0.1in}&  28.91  \hspace{-0.1in}&     0 \\
\hspace{-0.1in}     0    \hspace{-0.1in}&     0    \hspace{-0.1in}&   0    \hspace{-0.1in}&    0    \hspace{-0.1in}&  23.66 \hspace{-0.1in}& -101.96 \hspace{-0.1in}&   86.33 \\
\hspace{-0.1in}     0    \hspace{-0.1in}&     0    \hspace{-0.1in}&   0    \hspace{-0.1in}&    0    \hspace{-0.1in}&   0  \hspace{-0.1in}& 97.40 \hspace{-0.1in}& -1.74e3 \\
\end{array} \right]
\end{align*}
\begin{align*}
&A_5=
\left[ \begin{array}{ccccccc}
\hspace{-0.1in}  -1.25  \hspace{-0.1in}&   6.47 \hspace{-0.1in}&    0   \hspace{-0.1in}&    0     \hspace{-0.1in}&   0 \hspace{-0.1in}&    0    \hspace{-0.1in}&     0 \\
\hspace{-0.1in}   0.41  \hspace{-0.1in}&   -3.54 \hspace{-0.1in}&  3.83  \hspace{-0.1in}&   0     \hspace{-0.1in}&   0 \hspace{-0.1in}&    0    \hspace{-0.1in}&     0 \\
\hspace{-0.1in}     0    \hspace{-0.1in}&   2.30   \hspace{-0.1in}& -10.24 \hspace{-0.1in}&  8.97 \hspace{-0.1in}&   0 \hspace{-0.1in}&    0    \hspace{-0.1in}&     0 \\
\hspace{-0.1in}     0    \hspace{-0.1in}&     0    \hspace{-0.1in}&  6.41  \hspace{-0.1in}& -23.57 \hspace{-0.1in}& 18.76  \hspace{-0.1in}&    0    \hspace{-0.1in}&     0 \\
\hspace{-0.1in}     0    \hspace{-0.1in}&     0    \hspace{-0.1in}&   0    \hspace{-0.1in}&  14.59 \hspace{-0.1in}& -42.42 \hspace{-0.1in}&  28.91  \hspace{-0.1in}&     0 \\
\hspace{-0.1in}     0    \hspace{-0.1in}&     0    \hspace{-0.1in}&   0    \hspace{-0.1in}&    0    \hspace{-0.1in}&  23.66 \hspace{-0.1in}& -101.96 \hspace{-0.1in}&   86.33 \\
\hspace{-0.1in}     0    \hspace{-0.1in}&     0    \hspace{-0.1in}&   0    \hspace{-0.1in}&    0    \hspace{-0.1in}&   0  \hspace{-0.1in}& 97.40 \hspace{-0.1in}& -1.74e3 \\
\end{array} \right]
\quad \quad A_6=
\left[ \begin{array}{ccccccc}
\hspace{-0.1in}  -1.25  \hspace{-0.1in}&   6.47 \hspace{-0.1in}&    0   \hspace{-0.1in}&    0     \hspace{-0.1in}&   0 \hspace{-0.1in}&    0    \hspace{-0.1in}&     0 \\
\hspace{-0.1in}   0.41  \hspace{-0.1in}&   -3.54 \hspace{-0.1in}&  3.83  \hspace{-0.1in}&   0     \hspace{-0.1in}&   0 \hspace{-0.1in}&    0    \hspace{-0.1in}&     0 \\
\hspace{-0.1in}     0    \hspace{-0.1in}&   2.30   \hspace{-0.1in}& -10.24 \hspace{-0.1in}&  8.97 \hspace{-0.1in}&   0 \hspace{-0.1in}&    0    \hspace{-0.1in}&     0 \\
\hspace{-0.1in}     0    \hspace{-0.1in}&     0    \hspace{-0.1in}&  6.41  \hspace{-0.1in}& -21.46 \hspace{-0.1in}& 16.06  \hspace{-0.1in}&    0    \hspace{-0.1in}&     0 \\
\hspace{-0.1in}     0    \hspace{-0.1in}&     0    \hspace{-0.1in}&   0    \hspace{-0.1in}&  12.49 \hspace{-0.1in}& -41.88 \hspace{-0.1in}&  31.56  \hspace{-0.1in}&     0 \\
\hspace{-0.1in}     0    \hspace{-0.1in}&     0    \hspace{-0.1in}&   0    \hspace{-0.1in}&    0    \hspace{-0.1in}&  25.82 \hspace{-0.1in}& -104.61 \hspace{-0.1in}&   86.33 \\
\hspace{-0.1in}     0    \hspace{-0.1in}&     0    \hspace{-0.1in}&   0    \hspace{-0.1in}&    0    \hspace{-0.1in}&   0  \hspace{-0.1in}& 97.40 \hspace{-0.1in}& -1.74e3 \\
\end{array} \right]
\end{align*}
\begin{align*}
&A_7=
\left[ \begin{array}{ccccccc}
\hspace{-0.1in}  -1.25  \hspace{-0.1in}&   6.47 \hspace{-0.1in}&    0   \hspace{-0.1in}&    0     \hspace{-0.1in}&   0 \hspace{-0.1in}&    0    \hspace{-0.1in}&     0 \\
\hspace{-0.1in}   0.41  \hspace{-0.1in}&   -3.54 \hspace{-0.1in}&  3.83  \hspace{-0.1in}&   0     \hspace{-0.1in}&   0 \hspace{-0.1in}&    0    \hspace{-0.1in}&     0 \\
\hspace{-0.1in}     0    \hspace{-0.1in}&   2.30   \hspace{-0.1in}& -10.24 \hspace{-0.1in}&  8.97 \hspace{-0.1in}&   0 \hspace{-0.1in}&    0    \hspace{-0.1in}&     0 \\
\hspace{-0.1in}     0    \hspace{-0.1in}&     0    \hspace{-0.1in}&  6.41  \hspace{-0.1in}& -21.46 \hspace{-0.1in}& 16.06  \hspace{-0.1in}&    0    \hspace{-0.1in}&     0 \\
\hspace{-0.1in}     0    \hspace{-0.1in}&     0    \hspace{-0.1in}&   0    \hspace{-0.1in}&  12.49 \hspace{-0.1in}& -39.71 \hspace{-0.1in}&  28.91  \hspace{-0.1in}&     0 \\
\hspace{-0.1in}     0    \hspace{-0.1in}&     0    \hspace{-0.1in}&   0    \hspace{-0.1in}&    0    \hspace{-0.1in}&  23.66 \hspace{-0.1in}& -104.17 \hspace{-0.1in}&   88.94 \\
\hspace{-0.1in}     0    \hspace{-0.1in}&     0    \hspace{-0.1in}&   0    \hspace{-0.1in}&    0    \hspace{-0.1in}&   0  \hspace{-0.1in}& 100.34 \hspace{-0.1in}& -1.74e3 \\
\end{array} \right]
\quad \quad A_8=
\left[ \begin{array}{ccccccc}
\hspace{-0.1in}  -1.25  \hspace{-0.1in}&   6.47 \hspace{-0.1in}&    0   \hspace{-0.1in}&    0     \hspace{-0.1in}&   0 \hspace{-0.1in}&    0    \hspace{-0.1in}&     0 \\
\hspace{-0.1in}   0.41  \hspace{-0.1in}&   -3.54 \hspace{-0.1in}&  3.83  \hspace{-0.1in}&   0     \hspace{-0.1in}&   0 \hspace{-0.1in}&    0    \hspace{-0.1in}&     0 \\
\hspace{-0.1in}     0    \hspace{-0.1in}&   2.30   \hspace{-0.1in}& -10.24 \hspace{-0.1in}&  8.97 \hspace{-0.1in}&   0 \hspace{-0.1in}&    0    \hspace{-0.1in}&     0 \\
\hspace{-0.1in}     0    \hspace{-0.1in}&     0    \hspace{-0.1in}&  6.41  \hspace{-0.1in}& -21.46 \hspace{-0.1in}& 16.06  \hspace{-0.1in}&    0    \hspace{-0.1in}&     0 \\
\hspace{-0.1in}     0    \hspace{-0.1in}&     0    \hspace{-0.1in}&   0    \hspace{-0.1in}&  12.49 \hspace{-0.1in}& -39.71 \hspace{-0.1in}&  28.91  \hspace{-0.1in}&     0 \\
\hspace{-0.1in}     0    \hspace{-0.1in}&     0    \hspace{-0.1in}&   0    \hspace{-0.1in}&    0    \hspace{-0.1in}&  23.66 \hspace{-0.1in}& -101.96 \hspace{-0.1in}&   86.33 \\
\hspace{-0.1in}     0    \hspace{-0.1in}&     0    \hspace{-0.1in}&   0    \hspace{-0.1in}&    0    \hspace{-0.1in}&   0  \hspace{-0.1in}& 97.40 \hspace{-0.1in}& -1.74e3 \\
\end{array} \right].
\end{align*}
\end{tiny}

For a given $\rho$, we restrict the uncertain parameters $\alpha_k$ to $S_\rho$, defined as
\begin{equation*}
S_\rho := \{ \alpha \in \mathbb{R}^8 : \sum_{i=1}^8 \alpha_i = -6|\rho|, -|\rho| \leq \alpha_i \leq |\rho| \},
\end{equation*}
which is a simplex translated to the origin. We would like to determine the maximum value of $\rho$ such that the system is stable by solving the following optimization problem. 
\begin{equation*}
\hspace*{-2.05in} \rho^* := \max \quad \rho   \vspace*{-0.05in}
\end{equation*}
\begin{equation}
\hspace*{0.5in} \text{subject to } \quad \text{System~\eqref{eq:discrete_uncertain} is stable for all} \; \alpha \in S_\rho. 
\label{eq:optim22} 
\end{equation}
To represent $S_\rho$ using the standard unit simplex defined in~\eqref{eq:simplex}, we define the invertible map $g: \Delta^8 \rightarrow S_\rho$ as
\begin{equation}
g(\alpha)= \left[ g_1(\alpha) \; \cdots \; g_8(\alpha) \right], \; g_i(\alpha):= 2|\rho|(\alpha_i-0.5).
\end{equation}
Then, if we let $A'(\alpha) = A(g(\alpha))$, since $g$ is one-to-one, 
\[
\{ A(\alpha') :  \alpha' \in S_\rho\}  =  \{ A(g(\alpha)):  \alpha \in \Delta^8\}  = \{ A'(\alpha) :  \alpha \in \Delta^8\}. 
\] 
Thus, stability of $ \dot{\psi}_x(t) = A'(\alpha) \psi_x(t), \text{ for all }\alpha \in \Delta^l$ is equivalent to stability of Equation~\eqref{eq:discrete_uncertain} for all $\alpha \in S_\rho$.

We solve the optimization problem in~\eqref{eq:optim22} using bisection. For each trial value of $\rho$, we use the proposed parallel SDP solver in Algorithm 6 to solve the associated SDP obtained by our parallel set-up Algorithm~\ref{alg:setup}. The SDP problems have 224 constraints with the primal variable $X \in \mathbb{R}^{1092 \times 1092}$. The normalized value of $\rho^*$, i.e., $\frac{\rho^*}{\widehat{\eta}(x_{15/2})}$ is found to be $0.0019$, where $\widehat{\eta}(x_{15/2})=8.419 \cdot 10^{-6}$ from Table~\ref{tab:ToreSupra}. In this particular example, the optimal value of $\rho$ does not change with the degrees of $P(\alpha)$ and Polya's exponents~$d_1$ and $d_2$, primarily because the model is affine.
The SDPs are constructed and solved on a parallel Linux-based cluster Cosmea at Argonne National Laboratory. Figure~\ref{fig:Cosmea} shows the algorithm speed-up vs. the number of processors.
Note that solving this problem by SOSTOOLS~(\cite{sostools2013}) on the same machine is impossible due to the lack of unallocated memory. 

\begin{figure}[t]
   \centering
   \includegraphics[scale=0.45]{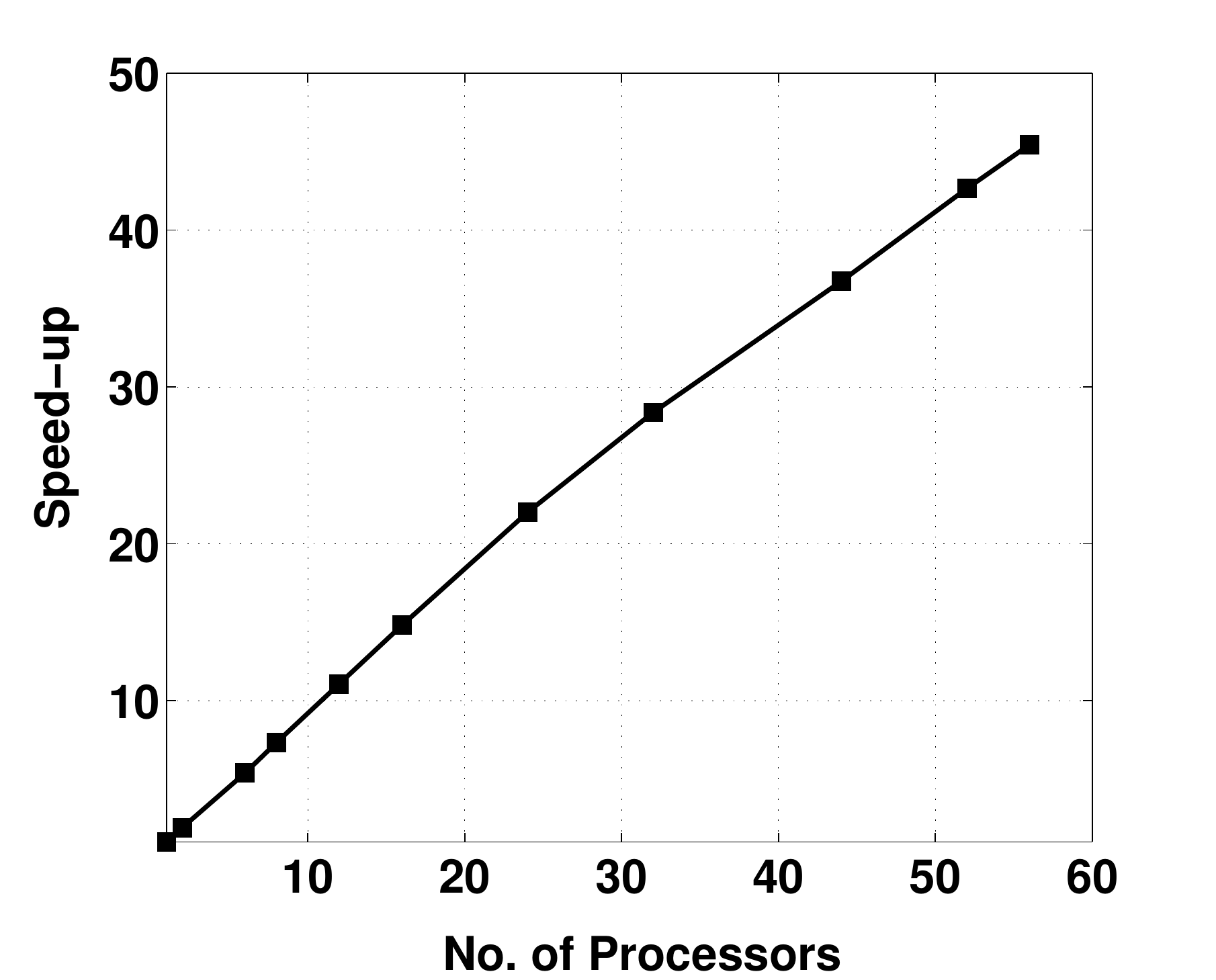} \vspace{0.15in}
   \caption{Speed-up of Set-up and SDP Algorithms vs. Number of Processors for a Discretized Model of Magnetic Flux in Tokamak}
   \label{fig:Cosmea}   
\end{figure}

\subsection{Example 2: Accuracy and Convergence}
The goal of this example is to investigate the effect of the degree $d_p$ of $P(\alpha)$ and the Polya's exponents, $d_1,d_2$ on the accuracy of our algorithms. Given a computer with a fixed amount of RAM, we compare the accuracy (as we defined in Section~\ref{sec:accuracy_TAC}) of the proposed algorithms to the SOS algorithm. Consider the system $\dot x(t)=A(\alpha)x(t)$ where $A$ is a polynomial degree 3 defined as
\begin{equation}
A(\alpha)=A_1\alpha_1^3+A_2\alpha_1^2\alpha_2+A_3\alpha_1\alpha_2\alpha_3+A_4\alpha_1\alpha_3^2+A_5\alpha_2^3+A_{6}\alpha_3^3
\vspace*{-0.1in} \end{equation}
with the constraint
\[
\alpha \in S_L := \left\lbrace \alpha \in \mathbb{R}^3: \sum_{i=1}^3 \alpha_i = 2L+1, L \leq \alpha_i \leq 1 \right\rbrace,
\]
where $A_i$ matrices are defined as
\arraycolsep=1.4pt\def\arraystretch{1}
\begin{footnotesize}
\begin{align*}
&A_1  = 
\begin{bmatrix}
-0.61 & -0.56 & 0.402 \\
-0.48 & -0.550 & 0.671 \\
-1.01 & -0.918 & 0.029
\end{bmatrix},
\quad A_2 =
\begin{bmatrix}
-0.484 & -0.86 & 1.5 \\
-0.732 & -0.841 & -0.126 \\
 0.685 &  0.305 &  0.106
\end{bmatrix}, \quad 
 A_3 = 
\begin{bmatrix}
-0.357 &  0.344 & -0.661\\
-0.210 & -0.505 &  0.588\\
0.268 &  0.487 & -0.846 \\
\end{bmatrix}, \\
& A_4 = 
\begin{bmatrix}
-0.881 & -0.436 & 0.228\\
0.503 & -0.812 & 0.249\\
-0.012 &  0.542 & -0.536
\end{bmatrix}, \;
 A_5= 
\begin{bmatrix}
-0.703 & -0.298 & -0.178\\
0.402 & -0.761 & -0.300\\
-0.010 &  0.461 & -0.588
\end{bmatrix}, \;\,
 A_6 = 
\begin{bmatrix}
-0.201 & -0.182 & -0.557\\
0.803 & -0.412 & -0.203\\
-0.440 &  0.011 & -0.881
\end{bmatrix}.
\end{align*}
\end{footnotesize}

\noindent Defining $g$ as in Example 1, the problem is
\begin{equation*}
\hspace*{-2.6in} \min \quad L 
\end{equation*}
\begin{equation}
\hspace*{0.2in} \text{s.t.} \quad \dot{x}(t)=A(g(\alpha))x(t) \; \text{is stable for all} \; \alpha \in \Delta^3. \label{eq:optim} 
\end{equation}
Using bisection in $L$, as in Example 1, we varied the parameters $d_p$, $d_1$ and $d_2$. The cluster computer Karlin at Illinois Institute of Technology with 24 Gbytes/node of RAM (216 Gbytes total memory) was used to run our algorithm. The upper bounds on the optimal $L$ are shown in Figure~\ref{fig:conserve1} in terms of $d_1$ and $d_2$ and for different $d_p$. Considering the optimal value of $L$ to be $L_{\text{opt}}=-0.111$, Figure~\ref{fig:conserve1} shows how increasing $d_p$ and/or $d_1,d_2$ - when they are still relatively small - improves the accuracy of the algorithm. Figure~\ref{fig:conserve2} demonstrates how the error in our upper bound for $L_{\text{opt}}$ decreases by increasing $d_p$ and/or $d_1,d_2$.

For comparison, we solved the same stability problem using the SOS algorithm~(\cite{sostools2013}) using only a single node of the same cluster computer and 24 Gbytes of RAM. We used Putinar's Positivstellensatz (see Section~\ref{sec:optim_semialg}) to impose the constraints $\sum_{i=1}^3 \alpha_i = 2L+1$ and $ L \leq \alpha_i \leq 1$. Table~\ref{tab:SOS_TAC} shows the upper bounds on $L$ given by the SOS algorithm using different degrees for $x$ and $\alpha$. By considering a Lyapunov function of degree two in $x$ and degree one in $\alpha$, the SOS algorithm gives $-0.102$ as an upper bound on $L_{opt}$ as compared with our value of $-0.111$. Increasing the degree of $\alpha$ in the Lyapunov function beyond two resulted in a failure due to lack of memory.

\begin{footnotesize}
\renewcommand{\arraystretch}{0.8}
\begin{table}[t]
\caption{Upper Bounds Found for $L_{opt}$ by the SOS Algorithm Using Different Degrees for $x$ and $\alpha$ (inf: Infeasible, O.M.: Out of Memory)} 
\label{tab:SOS_TAC}
\begin{center}
\begin{tabular}{|c|c|c|c|}
\hline
\backslashbox{  Degree in $x$}{Degree in $\alpha$}
  & 0 & 1 & 2 \\
\hline
1 & Infeasible & Infeasible & Infeasible \\
\hline
2 & Infeasible & -0.102 & Out of Memory \\
\hline
3 & Infeasible & Out of Memory &  Out of Memory \\
\hline
\end{tabular} 
\end{center}
\end{table} 
\end{footnotesize}

\begin{figure}[htbp]
\centering
  \includegraphics[scale=0.45]{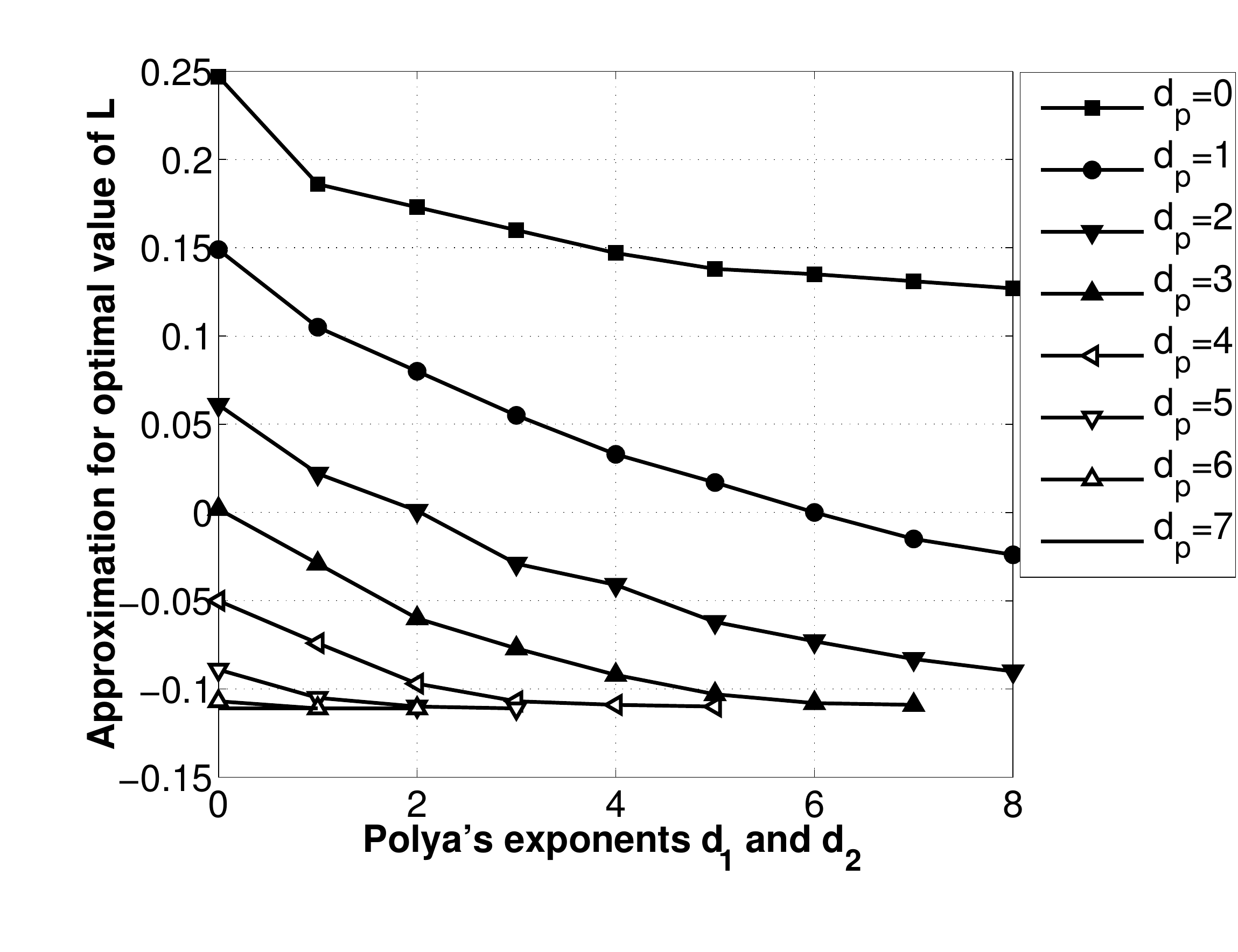} 
   \caption{Upper Bound on Optimal $L$ vs. Polya's Exponents $d_1$ and $d_2$, for Different Degrees of $P(\alpha)$. ($d_1=d_2$).}
   \label{fig:conserve1}
\centering 
  \hspace*{-0.35in} \includegraphics[scale=0.45]{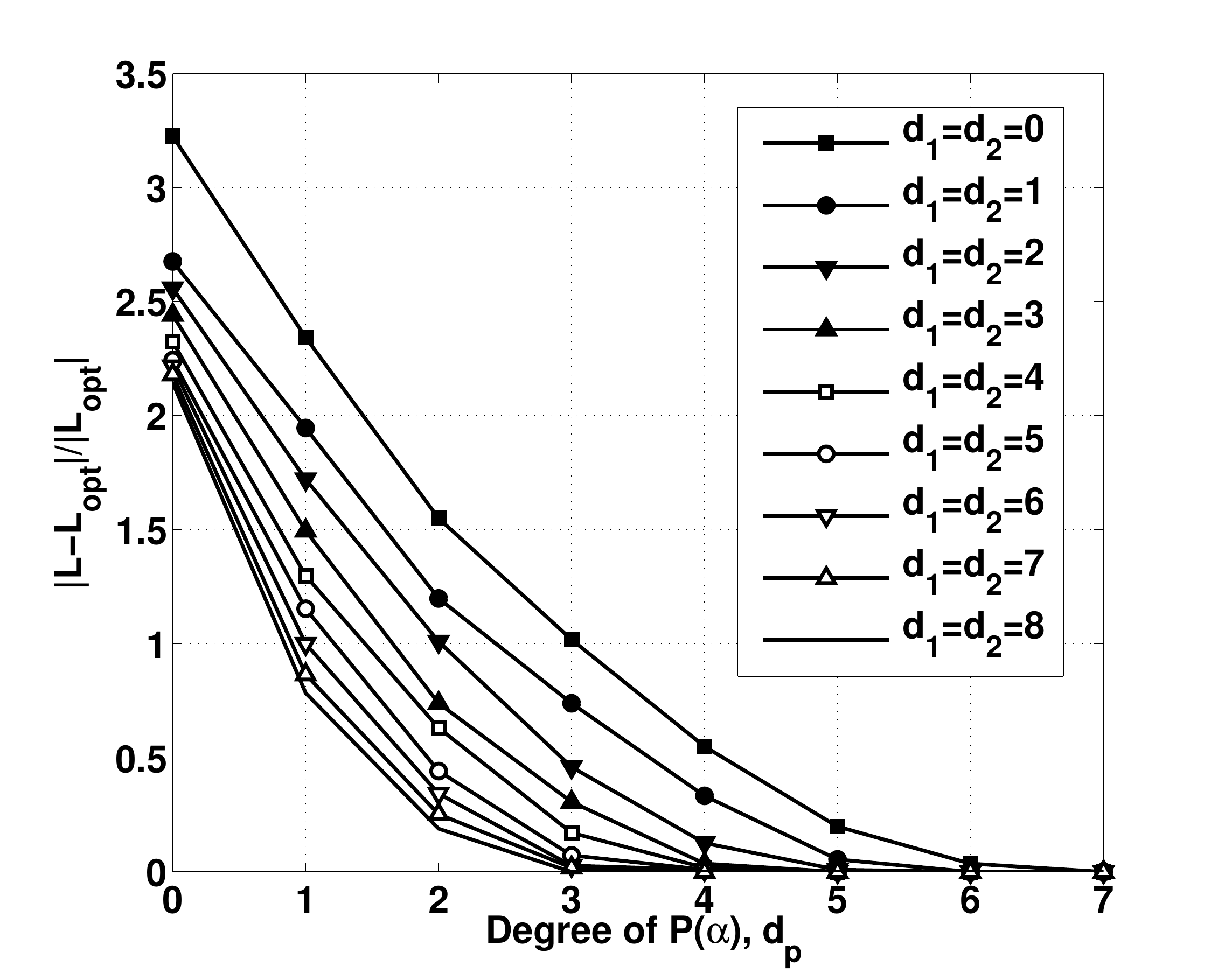} \vspace{0.1in}
   \caption{Error of the Approximation for the Optimal Value of $L$ vs. Degrees of $P(\alpha)$, for Different Polya's Exponents}
   \label{fig:conserve2}   
\end{figure}

\subsection{Example 3: Evaluating Speed-up}

In this example, we evaluate the efficiency of the algorithm in using additional processors to decrease computation time. As mentioned in Section~\ref{sec:complexity_TAC} on computational complexity, the measure of this efficiency is termed speed-up and in Section~\ref{sec:speedup_TAC}, we gave a formula for this number. To evaluate the true speed-up, we first ran the set-up algorithm on the Blue Gene supercomputer at Argonne National Laboratory using three random linear systems with different state-space dimensions and numbers of uncertain parameters. Figure~\ref{fig:speedup1} shows a log-log plot of the computation time of the set-up algorithm vs. the number of processors. One can be observed that the scalability of the algorithm is practically ideal for several different state-space dimensions and numbers of uncertain parameters.

To evaluate the speed-up of the SDP portion of the algorithm, we solved three random SDP problems with different dimensions using the Karlin cluster computer. Figure~\ref{fig:speedup2} gives a log-log plot of the computation time of the SDP algorithm vs. the number of processors for three different dimensions of the primal variable $X$ and the dual variable $y$. As indicated in the figure, the three dimensions of the primal variable $X$ are $ 200,\; 385$ and 1092,
 and the dimensions of the dual variable $y$ are $K=50, \; 90$ and 224, respectively.
In all cases, $d_p=2$ and $d_1=d_2=1$. The linearity of the Time vs. Number of Processors curves in all three cases demonstrates the scalability of the SDP algorithm.

For comparison, we plot the speed-up of our algorithm vs. that of the general-purpose parallel SDP solver SDPARA 7.3.1 as illustrated in Figure~\ref{fig:speedup_sdpara}. Although similar for a small number of processors, for a larger number of processors, SDPARA saturates, while our algorithm remains approximately linear.

\begin{figure}[htbp]
\centering  
\includegraphics[scale=0.45]{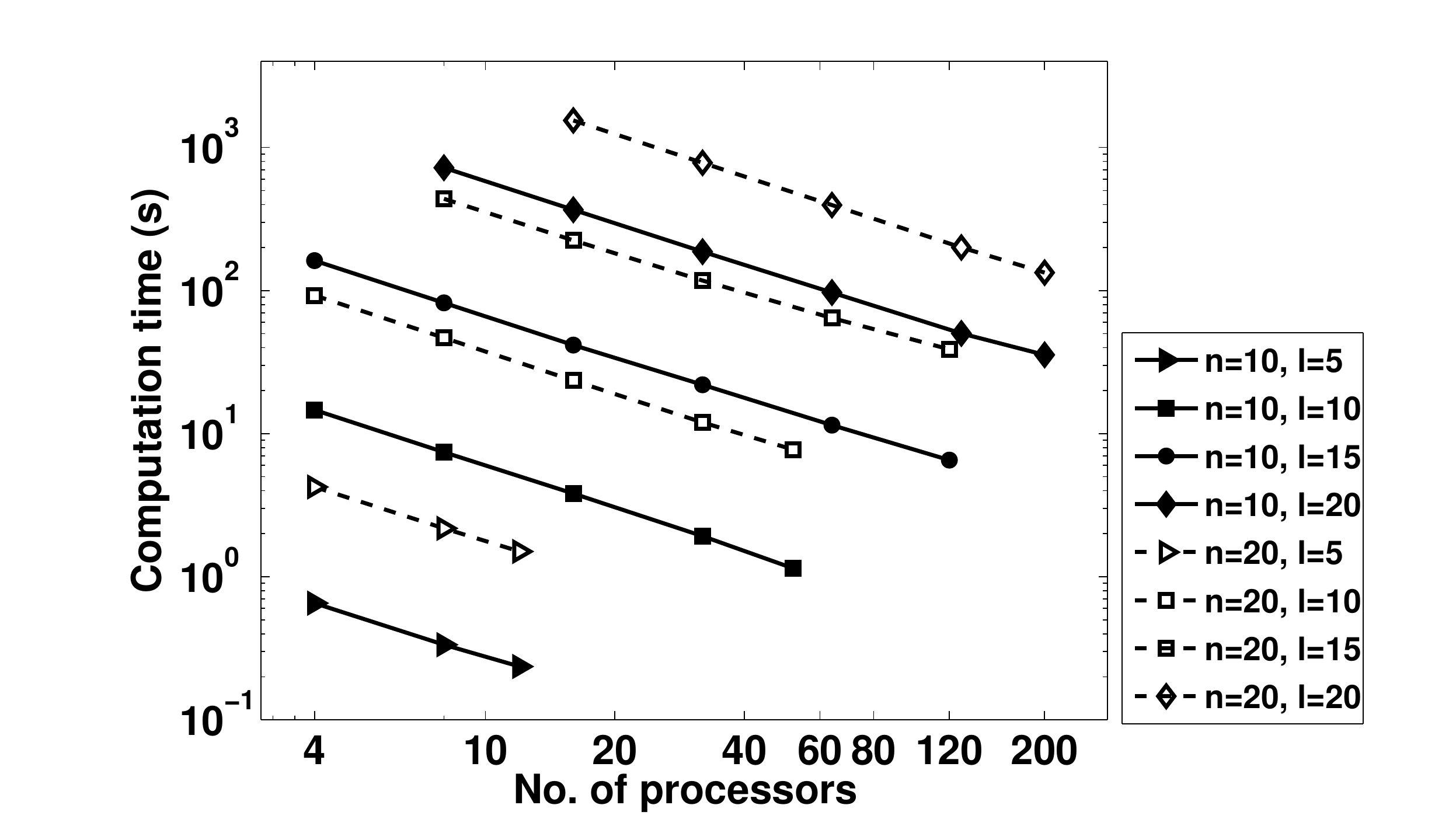}  
 \caption{Computation Time of the Parallel Set-up Algorithm vs. Number of Processors for Different Dimensions of Linear System $n$ and Numbers of Uncertain Parameters $l$- Executed on Blue Gene Supercomputer of Argonne National Labratory}
\label{fig:speedup1}
\end{figure}

\begin{figure}[h]
\centering
\includegraphics[scale=0.45]{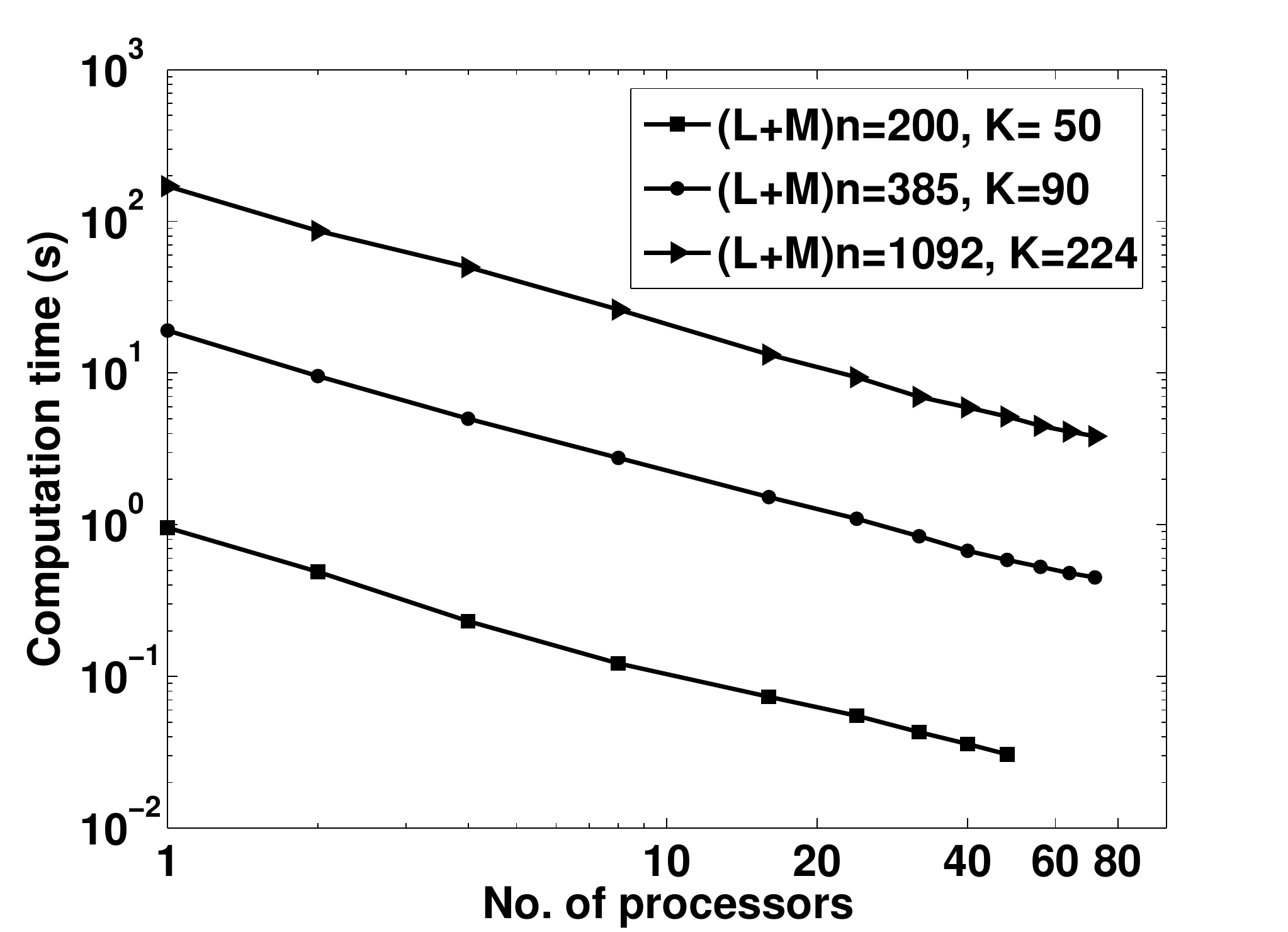}
 \caption{Computation Time of the Parallel SDP Algorithm vs. Number of Processors for Different Dimensions of Primal Variable $(L+M)n$ and of Dual Variable $K$- Executed on Karlin Cluster Computer}
\label{fig:speedup2} 
\end{figure}

\begin{figure}[h]
 \centering
 \includegraphics[scale=0.5]{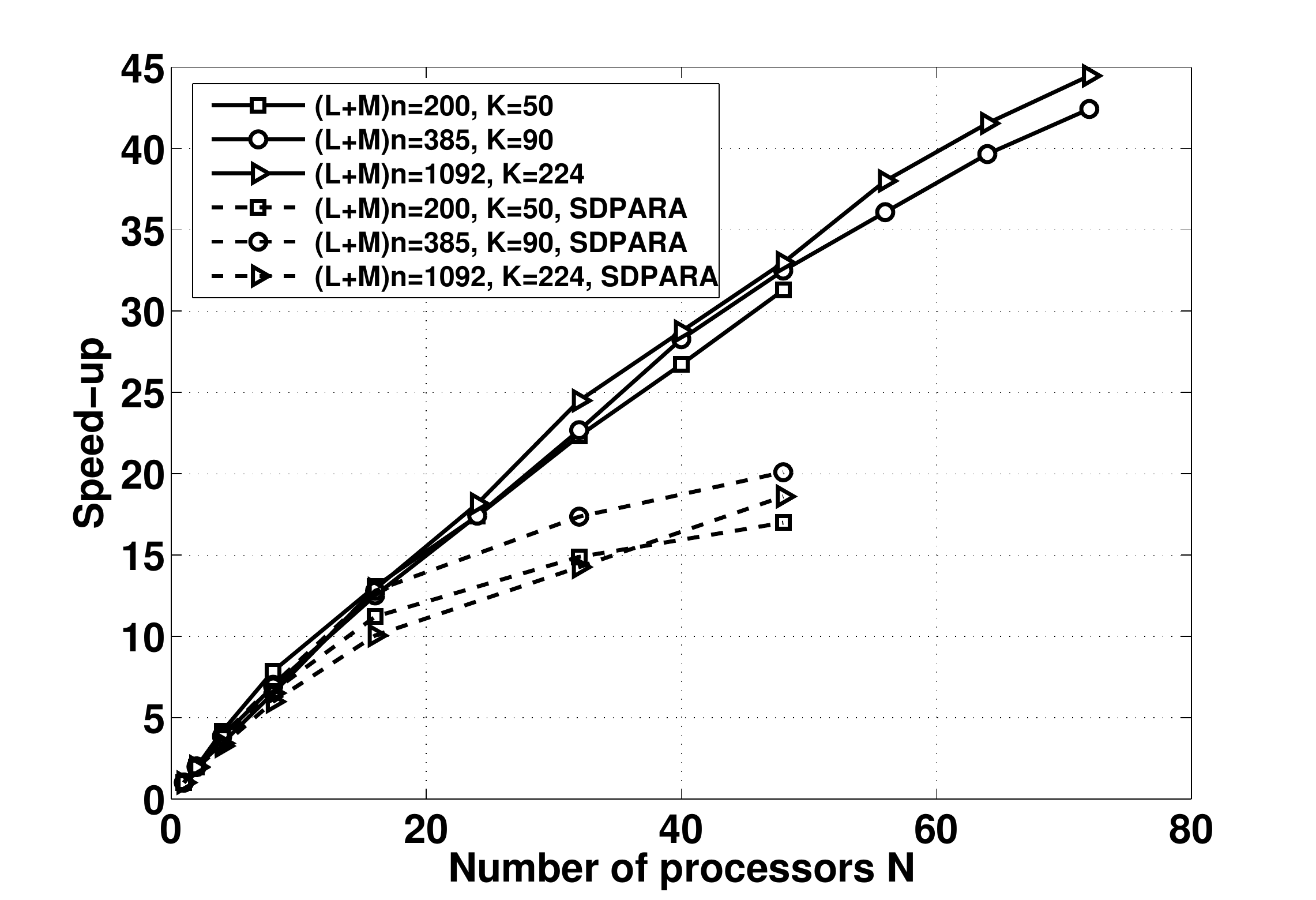} \vspace{0.1in}
 \caption{Comparison Between the Speed-up of the Present SDP Solver and SDPARA 7.3.1, Executed on Karlin Cluster Computer} 
 \label{fig:speedup_sdpara} 
 \end{figure}

\pagebreak
\newpage

\subsection{Example 4: Maximum State-space and Parameter Dimensions for a 9-Node Linux-based Cluster Computer}

The goal of this example is to show that given moderate computational resources, the proposed decentralized algorithms can solve robust stability problems for systems with 100+ states. We used the Karlin cluster computer with 24 Gbytes/node of RAM and nine nodes. We ran the set-up and the SDP algorithms to solve the robust stability problem with dimension $n$ and $l$ uncertain parameters on one and nine nodes of Karlin cluster computer. Thus, the total accessible memory was 24 Gbytes and 216 Gbytes, respectively.
Using trial and error, for different $n$ and $d_1,d_2$ we found the largest $l$ for which the algorithms do not terminate due to insufficient memory (Figure~\ref{fig:size_setup}). In all of the runs $d_a=d_p=1$. Figure~\ref{fig:size_setup} shows that by using 216 Gbytes of RAM, the algorithms can solve the stability problem of size $n=100$ with 4 uncertain parameters in $d_1=d_2=1$ Polya's iteration and with 3 uncertain parameters in $d_1=d_2=4$ Polya's iterations.

\begin{figure}[bt] 
\centering
 \includegraphics[scale=0.37]{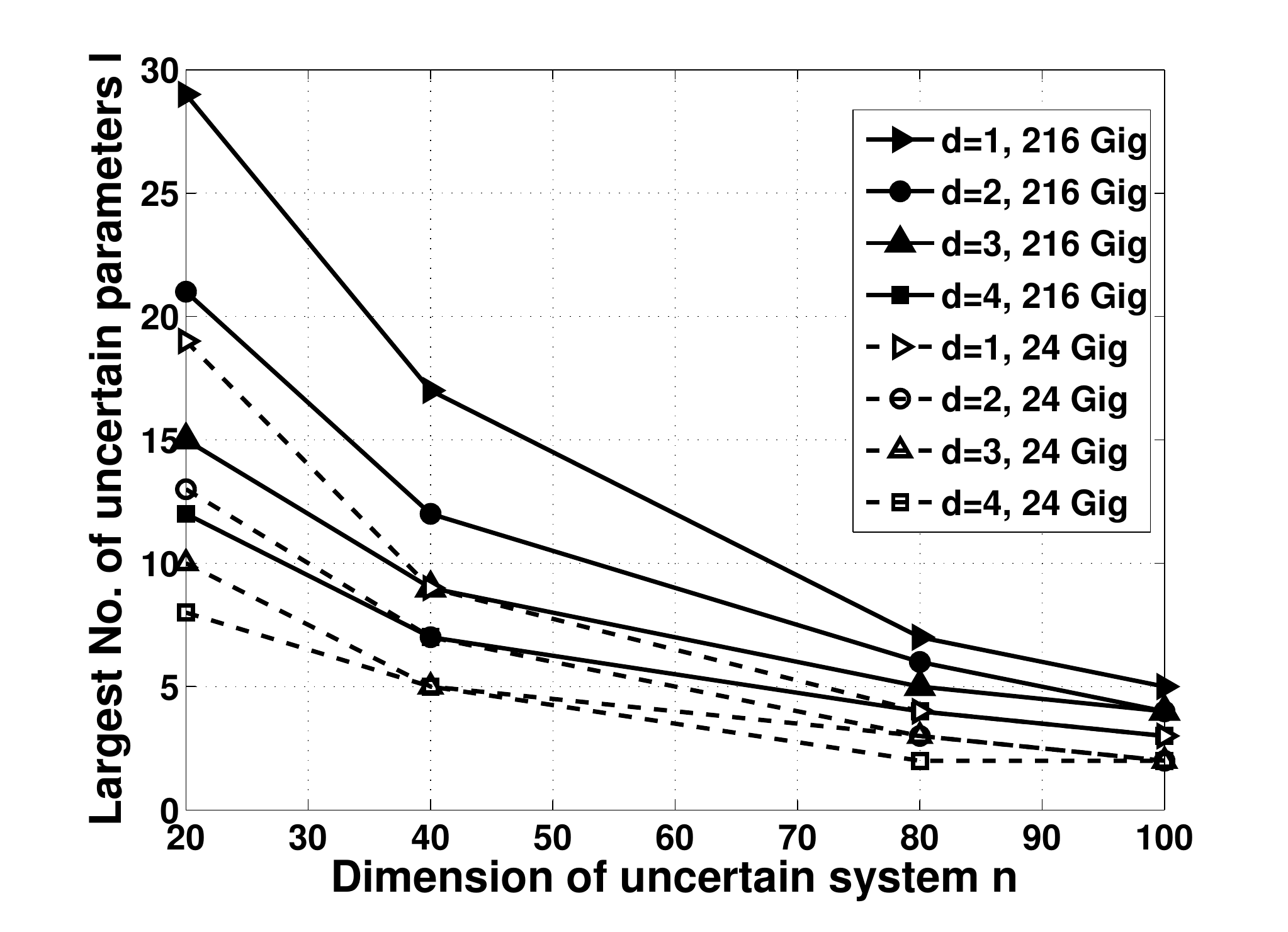} \hspace{-0.2in}
 \includegraphics[scale=0.37]{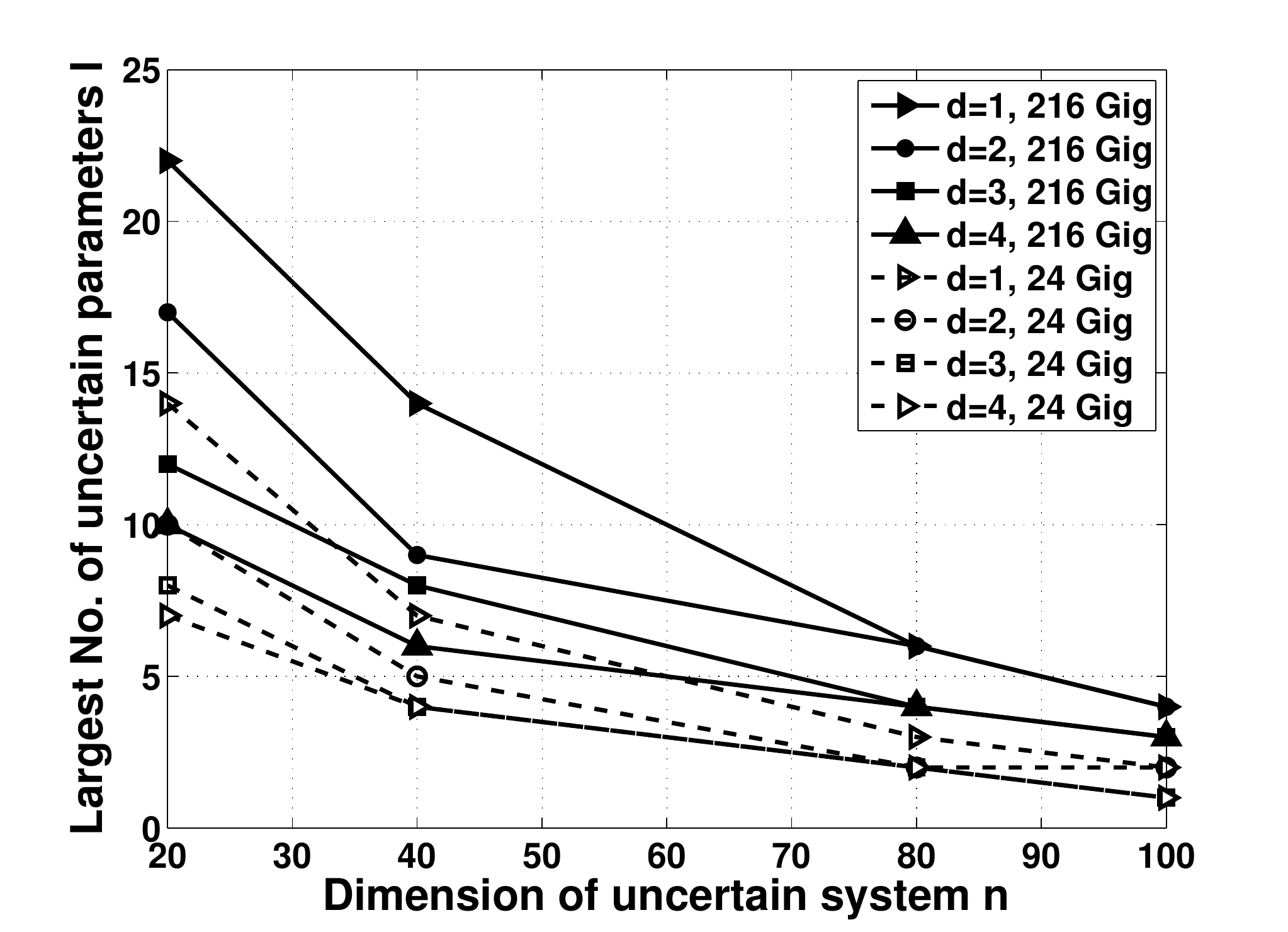} \vspace{0.15in}
\caption{Largest Number of Uncertain Parameters of $n$-Dimensional Systems for Which the Set-up Algorithm (Left) and SDP Solver (Right) Can Solve the Robust Stability Problem of the System Using 24 and 216 GB of RAM}
\label{fig:size_setup} 
\end{figure}


\chapter{PARALLEL ALGORITHMS FOR ROBUST STABILITY ANALYSIS OVER HYPERCUBES}
\label{chp:multisim}

\section{Background and Motivation}

In Chapter~\ref{chp:linear}, we proposed a distributed parallel algorithm for stability analysis over a simplex. Unfortunately, simplices are rather restrictive forms of uncertainty set in that they do not allow for parameters which take values on intervals or polytopes. Additionally, we hope to eventually extend our algorithms to the problem of nonlinear stability, which requires search over positive polynomials defined over a set which
contains the origin. Since simplicies do not include the origin, our algorithms cannot be readily applied to such problems.

In this chapter, our goal is to generalize our algorithms in Chapter~\ref{chp:linear} in order to perform robust stability analysis on linear systems with uncertain parameters defined over hypercubes. Several recent papers such as~\cite{chesi2005polynomially} and~\cite{bliman2004convex}, have proposed LMI-based techniques to construct parameter-dependent quadratic-in-state Lyapunov functions for this class of systems. In particular, researchers~(\cite{chesi_hypercube_2005}) have recently turned to SOS methods and the Positivstellensatz results (see Section~\ref{sec:history}) to construct increasingly accurate and increasingly complex LMI-based tests for stability over hypercubes. Unfortunately, due to the inherent intractability of the problem of polynomial optimization, SOS based algorithms typically
run out of memory for even relatively small-sized problems; see \textit{e.g.,} Table~\ref{tab:SOS_TAC} of Section~\ref{sec:RESULTS_TAC}. This makes it difficult to solve SOS-based algorithms on desktop computers. In this chapter, we seek for a parallel methodology to distribute the required memory and computation among hundreds of processors - each processor possessing a dedicated memory.

\subsection{Our Contributions}
We start by proposing an extension to Polya's theorem. This new result parameterizes every multi-homogeneous polynomial which is positive over a given multi-simplex/hypercube. Based on this result, we propose a parallel algorithm to set-up a sequence of block-structured LMIs (similar to the case of a single simplex). Solutions to these LMIs define parameter-dependent Lyapunov functions for the system. Finally, we use our parallel SDP solver in Section~\ref{sec:SDPSOLVER} to efficiently solve these structured LMIs.
Similar to Algorithm 7, the proposed set-up algorithm in this chapter has no centralized computation, memory or communication, hence resulting in a near-ideal speed-up. Specifically, we show that the communication operations per processor is proportional to $\frac{1}{N_c}$, where $N_c$ is the number of processors used by the algorithm. This implies that by increasing the number of processors, we actually decrease the communication overhead per processor and improve the speed-up. Naturally, there exists an upper-bound for the number of processors which can be used by the algorithm, beyond which, no speed-up is gained. This upper-bound is proportional to the number of uncertain parameters in the system and for practical problems will be far larger than the number of available processors.

\section{Notation and Preliminaries on Multi-homogeneous Polynomials}
\label{sec:notation_multi-homog}

Recall from Section~\ref{sec:notation_simplex} that we denote a monomial by $\alpha^\gamma = \prod_{i=1}^l \alpha_i^{\gamma_i}$, where $\alpha \in \mathbb{R}^l$ is the vector of variables and $\gamma \in W_d$ is the vector of exponents, were $W_d$ is the set of exponents defined in~\eqref{eq:W_d}. Now consider the case where $\alpha = [ \alpha_1, \cdots, \alpha_n ]$ with $\alpha_i \in \mathbb{R}^{l_i}$, and $h=[ h_1, \cdots , h_n ]$, where $h_i \in W_{d_{p_i}}$. Then, we define the set of $n$-variate
\textit{multi-homogeneous} polynomials of degree vector $D = \left[ d_1, \cdots, d_n \right] \in \mathbb{N}^n$ as (a generalization of~\eqref{eq:multi-homog_poly})
\begin{equation}
\left\lbrace P \in \mathbb{R}[\alpha_1, \cdots, \alpha_n] : P(\alpha) = \sum_{h_1 \in W_{d_1}} \cdots \sum_{h \in W_{d_n}} P_{\{ h_1, \cdots, h_n \}} \alpha_1^{h_1} \cdots \alpha_n^{h_n}   \right\rbrace.
\label{eq:multisim_notation}
\end{equation}
Note that for any $i \in \{1, \cdots,n \}$, the element $d_i$ of the degree vector $D$ is the degree of $\alpha_i^{h_i}$ in $P$. For brevity, we denote the index set $\{h_1, \cdots , h_n \}$ by $\mathcal{H}_n$ and $\{h_{1_j}, \cdots , h_{n_j} \}$ by $\mathcal{H}_{n,j}$, where $h_{i_j}$ is defined as  the $j^{th}$ element of $h_i \in W_{d_i}$ using lexicographical ordering. 
We define the unit multi-simplex $\tilde{\Delta}^{\{l_1, \cdots, l_N \}}$ as the Cartesian product of $N$ unit simplexes; i.e., $\tilde{\Delta}^{ \{l_1, \cdots, l_N \}} := \Delta^{l_1} \times \cdots \times \Delta^{l_N}$. Given $r_i \in \mathbb{R}$, let us define the hypercube $\Phi^n \subset \mathbb{R}^n$ as
\[
\Phi^n := \{ \alpha \in \mathbb{R}^n :  \vert \alpha_i \vert \leq r_i, \, i = 1, \cdots, n \}.
\]

 \noindent \textbf{Claim 1:} \textit{For every non-homogeneous polynomial $F(\alpha)$ with $\alpha \in \tilde{\Delta}^{\{ l_1, \cdots, l_n \}}$, there exists a multi-homogeneous polynomial $P$ such that}
\[
\left\lbrace F(\alpha) \in \mathbb{R}: \alpha \in \tilde{\Delta}^{\{ l_1, \cdots, l_n \}} \right\rbrace 
 = 
\left\lbrace P(\beta) \in \mathbb{R} : \beta \in \tilde{\Delta}^{\{ l_1, \cdots, l_n \}} \right\rbrace. \vspace{0.1in}
\] 
To construct $P$, first let $N_F$ be the number of monomials in $F$. Define $t^{(k)} := \left[ t_1^{(k)}, \cdots, t_n^{(k)} \right]$ for $k=1, \cdots, N_F$, where $t_i^{(k)}$ is the sum of the exponents of the variables inside $\Delta^{l_i}$, in the $k^{th}$ monomial of $F$. Then, one can construct $P$ by multiplying the $k^{th}$ monomial of $F$ (according to lexicographical ordering) for $k=1, \cdots, N_F$ by 
\[
\prod_{i=1}^n \left( \sum_{j=1}^{l_i} \alpha_{i_j} \right)^{ T_i - t_i^{(k)}}, \quad T_i := \max_{k \in \{ 1, \cdots, N_F \}} t_i^{(k)}.
\] 
For more clarification, we provide the following example of constructing the multi-homogeneous polynomial $P$.

\noindent \textit{Example:} Consider the non-homogeneous polynomial
\[
F(\alpha) = F_1 (\alpha_{1,1} + \alpha_{12}) \alpha_{2,1} + F_2 \alpha_{1,2}^2 + F_3 \alpha_{2,2},
\]
where $(\alpha_{1_1},\alpha_{1_2}), (\alpha_{2_1},\alpha_{2_2})  \in \Delta^2$,  $t^{(1)}= t^{(2)} =[1,1], t^{(3)} = [2,0]$ and $t^{(4)} = [0,1]$. Then, the multi-homogeneous polynomial $P(\alpha)$ is
\begin{align*}
P(\alpha) & = F_1 (\alpha_{1_1} + \alpha_{1_2})^2 \alpha_{2_1} + F_2 \alpha_{1_2}^2 (\alpha_{2_1}+\alpha_{2_2}) + F_3(\alpha_{1_1}+\alpha_{1_2})^2 \alpha_{2_2} \\
& = F_{\{(2,0),(1,0)\}} \alpha_{1_1}^2 \alpha_{2_1}+ F_{\{(2,0),(0,1) \}}\alpha_{1_1}^2 \alpha_{2_2} + F_{\{(1,1),(1,0) \}} \alpha_{1_1} \alpha_{1_2} \alpha_{2_1}  \\
& + F_{\{(1,1),(0,1) \}} \alpha_{1_1} \alpha_{1_2} \alpha_{2_2} + F_{\{(0,2),(1,0) \}} \alpha_{1_2}^2 \alpha_{2_1} + F_{\{(0,2),(0,1) \}} \alpha_{1_2}^2 \alpha_{2_2}.
\end{align*}
Thus, the coefficients of the multi-homogeneous polynomial $P$ are
\begin{align*}
&P_{\{(2,0),(1,0)\}} = F_1, && P_{\{(2,0),(0,1) \}} = F_3, && P_{\{(1,1),(1,0) \}} = 2F_1 \\
&P_{\{(1,1),(0,1) \}} = 2F_3, && P_{\{(0,2),(1,0) \}} = F_1+F_2, && P_{\{(0,2),(0,1) \}} = F_2+F_3. 
\end{align*}

 \noindent \textbf{Claim 2:} \textit{For every polynomial $F(x)$ with $x \in \Phi^n$, there exists a multi-homogeneous polynomial $P$ such that}
\begin{equation}
\left\lbrace F(x) \in \mathbb{R} : x \in \Phi^n \right\rbrace = \left\lbrace P(\alpha, \beta) \in \mathbb{R} : \, \alpha, \beta \in \mathbb{R}^n \text{ and } (\alpha_i, \beta_i) \in \Delta^2 \text{ for } i=1, \cdots,n \right\rbrace. 
\label{eq:fz_multi}
\end{equation}
To construct $P$, we propose the following steps.
\begin{enumerate}
\item Define new variables $\alpha_i := \frac{x_i+r_i}{2\,r_i} \in [0,1]$ for $i=1, \cdots,n$. 
\item Define $Q(\alpha_1, \cdots, \alpha_n) := F(2\, r_1 \alpha_1-r_1, \cdots, 2\, r_n \alpha_n-r_n)$.
\item Define a new set of variables $\beta_i := 1-\alpha_i$ for $i=1, \cdots,n$.
\item Let $N_Q$ be the number of monomials in $Q$. Define $t^{(k)} := \left[ t_1^{(k)}, \cdots, t_n^{(k)} \right]$ for $k=1, \cdots, N_Q$, where $t_i^{(k)}$ is the sum of the exponents of the variables inside $\Delta^{2}$, in the $k^{th}$ monomial of $Q$. Then, for $k=1, \cdots, N_Q$,  multiply the $k^{th}$ monomial of $Q$ (according to lexicographical ordering) by 
\[
\prod_{i=1}^n \left( \alpha_i + \beta_i \right)^{ T_i - t_i^{(k)}}, \quad T_i := \max_{k \in \{ 1, \cdots, N_Q \}} t_i^{(k)}.
\] 
\end{enumerate}
We provide the following example to further clarify this procedure.

\noindent \textit{Example:} Suppose $F(x_1,x_2)=x_1^2+x_2$, with $x_1 \in [-2,2]$ and $x_2 \in [-1,1]$. Define $\alpha_1:=\frac{x_1+2}{4} \in [0,1]$ and $\alpha_2:=\frac{x_2+1}{2}\in [0,1]$. Then, define
\[
Q(\alpha_1,\alpha_2) := f(4\alpha_1-2,2\alpha_2-1) =  16\alpha_1^2-16\alpha_1+2\alpha_2+3
\]
By homogenizing $Q$ we obtain the multi-homogeneous polynomial
\begin{align*}
P(\alpha,\beta)= & 16\alpha_1^2(\alpha_2+\beta_2)-16\alpha_1(\alpha_1+\beta_1)(\alpha_2+\beta_2)+2 \alpha_2(\alpha_1+\beta_1)^2 \\
&+3(\alpha_1+\beta_1)^2(\alpha_2+\beta_2),\;\; (\alpha_1,\beta_1), (\alpha_2,\beta_2) \in \Delta^2
\end{align*}
with the degree vector $D=[2,1]$, where $d_1=2$ is the sum of exponents of $\alpha_1$ and $\beta_1$ in every monomial of $P$, and $d_2=1$ is the sum of exponents of $\alpha_2$ and $\beta_2$ in every monomial of $P$.

In the following theorem~(\cite{kamyar_CDC2012}), we parameterize all of the multi-homogeneous polynomials which are positive over a multi-simplex.

\begin{mythm}(Polya's theorem, multi-simplex version)
A matrix-valued multi-homogeneous polynomial $F$ satisfies $F(\alpha,\beta) > 0$ for all $(\alpha_i,\beta_i) \in \Delta^2, i=1, \cdots, n$, if there exist $e \geq 0$ such that all the coefficients of
\[
\left( \prod_{i=1}^n \left( \alpha_i+\beta_i \right)^e \right) F(\alpha,\beta)
\]
are positive definite.
\label{thm:polya_multi-simplex2}
\end{mythm}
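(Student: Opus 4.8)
The plan is to derive the multi-simplex version of Polya's theorem directly from the single-simplex version (Theorem~\ref{thm:polya_simplex_TAC}) by an iterative argument that peels off one simplex at a time. The key observation is that the product $\prod_{i=1}^n(\alpha_i+\beta_i)^e$ factors as a product of terms, each depending only on the pair of variables $(\alpha_i,\beta_i)$ associated with a single copy of $\Delta^2$ in the Cartesian product $\tilde\Delta^{\{2,\dots,2\}} = \Delta^2\times\cdots\times\Delta^2$. So the strategy is to apply the ordinary Polya theorem once for each factor, treating the remaining variables as ``frozen'' parameters ranging over a compact set.

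First I would fix all variables except $(\alpha_1,\beta_1)$; that is, regard $F$ as a matrix-valued homogeneous polynomial $G_{\hat\alpha}(\alpha_1,\beta_1)$ of degree $d_1$ in the single pair $(\alpha_1,\beta_1)\in\Delta^2$, whose coefficients are themselves multi-homogeneous polynomials in the remaining variables $\hat\alpha := (\alpha_2,\beta_2,\dots,\alpha_n,\beta_n)$. Since $F>0$ on the whole multi-simplex and the remaining variables live on a compact set, the single-simplex Polya theorem applied uniformly (using compactness to get a single exponent $e_1$ that works for every value of $\hat\alpha$ — here one needs that the Polya exponent can be bounded in terms of $\min F$ and the coefficient sizes, which vary continuously over a compact domain) yields that $(\alpha_1+\beta_1)^{e_1}F$ has all coefficients (as polynomials in $\alpha_1,\beta_1$) positive definite. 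But each such coefficient is again a multi-homogeneous polynomial in $\hat\alpha$ that is positive definite for all $\hat\alpha\in\Delta^2\times\cdots\times\Delta^2$ ($n-1$ copies). Then I would induct on $n$: apply the inductive hypothesis to each of these coefficient polynomials to find an exponent $e_2'$ making $\prod_{i=2}^n(\alpha_i+\beta_i)^{e_2'}$ times each coefficient have positive definite coefficients, take $e$ to be the maximum of the exponents produced along the way (using that multiplying an all-positive-definite-coefficient polynomial by a further power of $(\alpha_i+\beta_i)$ preserves positive definiteness of coefficients, since the new coefficients are sums of old ones), and conclude that $\left(\prod_{i=1}^n(\alpha_i+\beta_i)^e\right)F$ has all coefficients positive definite.

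The main obstacle I anticipate is the uniformity step: the classical Polya theorem gives, for each frozen value of the remaining parameters, \emph{some} exponent, but we need one exponent that works simultaneously for all parameter values in the compact set. This requires either an explicit bound on the Polya exponent (as in the quantitative versions of Polya's theorem, where the exponent is controlled by the ratio of $\max|$coefficients$|$ to $\min F$ over the simplex) combined with a continuity/compactness argument, or a direct Dini-type / finite-subcover argument. A secondary subtlety is bookkeeping: one must check that the operation ``multiply by $(\alpha_i+\beta_i)$'' genuinely preserves the property that all monomial coefficients are positive definite, which is immediate since each new coefficient is a sum of previous ones (cf. the recursions~\eqref{eq:beta_init}--\eqref{eq:beta}), and that the matrix-valued (rather than scalar) setting causes no difficulty because ``positive definite'' is preserved under addition. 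Alternatively, if a self-contained proof is preferred, one could cite the single-simplex matrix Polya theorem as a black box and handle only the passage from one simplex to the Cartesian product, which is essentially the argument just sketched. Note the theorem as stated is only the (easy) converse direction — it asserts that existence of such $e$ is \emph{sufficient} for positivity — so in fact only the trivial direction needs proof: if $\left(\prod_{i=1}^n(\alpha_i+\beta_i)^e\right)F(\alpha,\beta)$ has all positive definite coefficients, then on the multi-simplex where every $\alpha_i+\beta_i=1$ and all variables are nonnegative, the product equals $F(\alpha,\beta)$ and is manifestly a nonnegative combination (in fact positive-definite-coefficient combination) of nonnegative monomials, hence $F(\alpha,\beta)>0$; I would present this short direct verification and remark that the deeper converse follows from iterating Theorem~\ref{thm:polya_simplex_TAC}.
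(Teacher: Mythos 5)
Your proposal is essentially the paper's own argument: the paper proves Theorem~\ref{thm:polya_multi-simplex2} by induction on the number of two-dimensional simplices, with base case the single-simplex result, freezing the last pair $(\hat{\alpha},\hat{\beta})\in\Delta^2$, applying the induction hypothesis in the remaining variables, and then applying the single-simplex Polya theorem to each coefficient polynomial $f_{h,g}(\hat{\alpha},\hat{\beta})$ before taking $e^*$ to be the maximum of all exponents encountered (using, as you do, that multiplying by further factors of $(\alpha_i+\beta_i)$ preserves positive definiteness of coefficients because each new coefficient is a sum of old ones). Two points of comparison are worth recording. First, you are right that the theorem as literally worded asserts only the easy implication (existence of $e$ implies positivity on the multi-simplex), whereas the proof the paper supplies is of the deeper converse; your one-line verification of the stated direction is correct, provided you note that at each point of the multi-simplex at least one monomial of the full multi-degree is strictly positive, so that positive definiteness (not mere nonnegativity) of $F$ follows. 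Second, the uniformity obstacle you flag is genuine and is exactly the step the paper's proof passes over in silence: the induction hypothesis produces an exponent $e$ that a priori depends on the frozen pair $(\hat{\alpha},\hat{\beta})$, yet the paper immediately treats the $f_{h,g}(\hat{\alpha},\hat{\beta})$ as single well-defined homogeneous polynomials positive on all of $\Delta^2$. Your proposed repair --- monotonicity of the certificate in $e$ together with a quantitative bound on the Polya exponent (in terms of $\min F$ and the coefficient magnitudes, both controlled by continuity over the compact parameter set) --- is the right way to close that gap, and in that one respect your sketch is more careful than the proof in the text.
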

\begin{proof}
We use induction as follows. 

\underline{\textit{Basis step:}} Suppose $n=1$. Then, from the simplex version of Polya's theorem (Theorem~\ref{thm:polya}) it follows that for every $F(\alpha,\beta) > 0$ with $(\alpha,\beta) \in \Delta^2$, there exists some $e \geq 0$ such that all of the coefficients of 
$
(\alpha_1+\beta_1)^e F(\alpha,\beta)
$
are positive definite.

\underline{\textit{Induction hypothesis:}} Suppose for every $F(\alpha,\beta) > 0 $ with $(\alpha_i,\beta_i) \in \Delta^2, \, i=1, \cdots,k$ there exists some $e \geq 0$ such that all of the coefficients of
\[
\left( \prod_{i=1}^k \left( \alpha_i+ \beta_i \right)^e \right) F(\alpha,\beta)
\]
are positive definite. \\
We need to prove that for every $F(\alpha,\beta) > 0 $ with $(\alpha_i,\beta_i) \in \Delta^2, \, i=1, \cdots,k+1$ there exists some $e^* \geq 0$ such that all of the coefficients of
\[
\left( \prod_{i=1}^{k+1} \left( \alpha_i+\beta_i \right)^{e^*} \right) F(\alpha,\beta)
\]
are positive definite. From the induction hypothesis it follows that for any fixed $(\hat{\alpha},\hat{\beta}) \in \Delta^2$, if $F(\alpha_1, \cdots, \alpha_k, \hat{\alpha}, \beta_1, \cdots, \beta_k, \hat{\beta}) > 0$ for all $(\alpha_i,\beta_i) \in \Delta^2, \, i=1, \cdots,k$, then there exists some $e \geq 0$ such that all of the coefficients of
\begin{equation}
\left( \prod_{i=1}^k \left( \alpha_i+\beta_i \right)^e \right) F(\alpha_1, \cdots, \alpha_k, \hat{\alpha}, \beta_1, \cdots, \beta_k, \hat{\beta})
\label{eq:dummy1}
\end{equation}
are positive definite. Using our notation in~\eqref{eq:multi-homog_poly}, we can expand~\eqref{eq:dummy1} as
\begin{equation}
\left( \prod_{i=1}^k \left( \alpha_i+ \beta_i \right)^e \right) F(\alpha_1, \cdots, \alpha_k, \hat{\alpha}, \beta_1, \cdots, \beta_k, \hat{\beta}) = \hspace{-0.075in} \sum_{\substack{h,g \in \mathbb{N}^k \\ h+g=d+e \cdot \mathbf{1}_k }} \hspace{-0.075in} f_{h,g}(\hat{\alpha},\hat{\beta}) \alpha_1^{h_1} \beta_1^{g_1} \cdots \alpha_k^{h_k} \beta_{k}^{g_k},
\label{eq:dummy2}
\end{equation}
in which we have denoted the coefficients of Product~\eqref{eq:dummy1} by $f_{h,g}(\hat{\alpha},\hat{\beta})$ and we have denoted the degree vector of $F$ by $d$. Also $\mathbf{1}_k \in \mathbb{N}^k$ denotes the vector of ones. Because $F$ is a homogeneous polynomial, $f_{h,g}$ are also homogeneous polynomials. Since $f_{h,g}(\hat{\alpha},\hat{\beta}) > 0$ for all $(h,g) \in M_{d,e}:= \{(h,g) \in \mathbb{N}^k \times \mathbb{N}^k: \, h+g = d+e \cdot \mathbf{1} \}$, Polya's theorem implies that there exist $l_{g,h} \geq 0$ for any $h,g \in M_{d,e}$ such that all of the coefficients of 
$(\hat{\alpha}+\hat{\beta})^{l_{g,h}} f_{g,h}(\hat{\alpha},\hat{\beta})$
are positive definite. Let us define
\[
e^* := \max \left\lbrace \max_{h,g \in M_{d,e}} \{l_{g,h} \},e \right\rbrace.
\] 
Then, clearly all of the coefficients in $(\hat{\alpha}+\hat{\beta})^{e^*} f_{g,h}(\hat{\alpha},\hat{\beta})$ are also positive definite. By multiplying both sides of~\eqref{eq:dummy2} by $(\hat{\alpha}+\hat{\beta})^{e^*}$ we have
\begin{align}
&  \hspace{-0.45in} \left( \prod_{i=1}^k \left( \alpha_i+\beta_i \right)^e \right) (\hat{\alpha}+\hat{\beta})^{e^*} F(\alpha_1, \cdots, \alpha_k, \hat{\alpha}, \beta_1, \cdots,\beta_k, \hat{\beta}) \nonumber \\
& \hspace{1.5in} =  \sum_{\substack{h,g \in \mathbb{N}^k \\ h+g=d+e \cdot \mathbf{1}_k }}(\hat{\alpha}+\hat{\beta})^{e^*} f_{h,g}(\hat{\alpha},\hat{\beta}) \alpha_1^{h_1}\beta_1^{g_1} \cdots \alpha_k^{h_k} \beta_{k}^{g_k}.
\label{eq:dummy4}
\end{align}
Since all of the coefficients of $(\hat{\alpha}+\hat{\beta})^{e^*}f_{h,g}(\hat{\alpha},\hat{\beta})$ are positive definite, all of the coefficients of the monomials on the right hand side of~\eqref{eq:dummy4} are positive definite. Moreover, because $e^* \geq e$
\begin{equation}
\left( \prod_{i=1}^k \left( \alpha_i+ \beta_i \right)^{e^*} \right) (\hat{\alpha}+\hat{\beta})^{e^*}  F(\alpha_1, \cdots, \alpha_k, \hat{\alpha}, \beta_1, \cdots,\beta_k, \hat{\beta}) 
\label{eq:dummy3}
\end{equation}
will also have all positive definite coefficients. Since we chose $(\hat{\alpha},\hat{\beta})$ arbitrarily from the simplex $\Delta^2$, by replacing $\hat{\alpha}$ and $\hat{\beta}$ with $\alpha_{k+1}$ and $\beta_{k+1}$ in~\eqref{eq:dummy3}, 
\[
\left( \prod_{i=1}^{k+1} \left( \alpha_i+ \beta_i \right)^{e^*} \right) F(\alpha,\beta) \; \text{ with } \; (\alpha_i,\beta_i) \in \Delta^2,\, i=1,\cdots, k+1
\]
will have all positive definite coefficients.
\end{proof}

\section{Setting-up the Problem of Robust Stability Analysis over Multi-simplex}
\label{sec:setup_multi}

In this section, we focus on the problem of robust stability of a system the form
\begin{equation}
\dot{x}(t) = A(\alpha) x(t),
\label{eq:sys_multi}
\end{equation} 
where $A(\alpha) \in \mathbb{R}^{n \times n}$ is a multi-homogeneous polynomial of degree vector $D_a$ and $\alpha \in \tilde{\Delta}^{\{l_1, \cdots,l_N \}}$ denotes the parametric uncertainty in the system. 
Note that if $A$ is not homogeneous, one can use Claim 1 to find a multi-homogeneous representation for $A$ over the multi-simplex. Furthermore, if $\alpha \in \Phi^N$, then one can use Claim 2 to find an equivalent representation for $A$ over the multi-simplex $\tilde{\Delta}^{\{l_1, \cdots,l_N \}}$.

The following theorem gives necessary and sufficient conditions for asymptotic stability of System~\eqref{eq:sys_multi}.
\begin{mythm}
\label{thm:Lyap_multisim}
The linear system~\eqref{eq:sys_multi} is stable if and only if there exists a polynomial matrix $P(\alpha)$ such that $P(\alpha)>0$ and 
\begin{equation}
\mathrm{A}^T(\alpha)P(\alpha)+P(\alpha)\mathrm{A}(\alpha) < 0 \quad \text{for all } \; \alpha \in \tilde{\Delta}^{\{l_1, \cdots, l_N \}}.  
\label{eq:Lyap_LMI}
\end{equation}
\end{mythm}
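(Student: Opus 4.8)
The plan is to prove Theorem~\ref{thm:Lyap_multisim} as a direct analogue of Theorem~\ref{thm:thm1}, since the multi-simplex $\tilde{\Delta}^{\{l_1, \cdots, l_N \}}$ is a compact set and the parametrization $A(\alpha)$ is continuous on it. The essential content is the standard converse Lyapunov theorem for linear systems, applied parameter-by-parameter, together with a compactness/continuity argument to get a single polynomial $P(\alpha)$ that works uniformly.

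First I would prove sufficiency, which is the easy direction. Suppose $P(\alpha)$ is a polynomial matrix with $P(\alpha) > 0$ and $A^T(\alpha)P(\alpha) + P(\alpha)A(\alpha) < 0$ for all $\alpha \in \tilde{\Delta}^{\{l_1, \cdots, l_N \}}$. Fix any $\alpha$ in the multi-simplex and consider $V(x) = x^T P(\alpha) x$. Then $V$ is positive definite, and along trajectories of $\dot{x}(t) = A(\alpha)x(t)$ we have $\dot{V}(x(t)) = x(t)^T\left(A^T(\alpha)P(\alpha) + P(\alpha)A(\alpha)\right)x(t) < 0$ for $x(t) \neq 0$. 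By the classical Lyapunov stability theorem this implies $A(\alpha)$ is Hurwitz, hence the system is (robustly, i.e.\ for every admissible $\alpha$) asymptotically stable.

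For necessity, suppose System~\eqref{eq:sys_multi} is stable, i.e.\ $A(\alpha)$ is Hurwitz for every $\alpha \in \tilde{\Delta}^{\{l_1, \cdots, l_N \}}$. For each fixed $\alpha$, the Lyapunov equation $A^T(\alpha)X + XA(\alpha) = -I$ has a unique solution $X = P_0(\alpha) := \int_0^\infty e^{A^T(\alpha)t} e^{A(\alpha)t}\,dt > 0$, which depends continuously (indeed rationally) on $\alpha$. This $P_0(\alpha)$ is not polynomial, so the second step is to invoke an approximation/existence result to replace it by a polynomial matrix. The cleanest route is to cite~\cite{Bliman_existence}: since the LMIs $P(\alpha) > 0$ and $A^T(\alpha)P(\alpha) + P(\alpha)A(\alpha) < 0$ are feasible at every point of the compact set $\tilde{\Delta}^{\{l_1, \cdots, l_N \}}$ (witnessed pointwise by $P_0(\alpha)$), there exists a polynomial solution $P(\alpha)$ — this is exactly the Bliman-type result already cited in the surrounding text for the single-simplex case. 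Alternatively one argues directly: by continuity and compactness there is a uniform bound, so a sufficiently fine polynomial (e.g.\ Bernstein) approximation $P(\alpha)$ of $P_0(\alpha)$ stays within the open cone where both matrix inequalities hold; a standard uniform-continuity estimate on $\tilde{\Delta}^{\{l_1, \cdots, l_N \}}$ (which is compact) closes this.

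The main obstacle is the necessity direction's passage from the pointwise-feasible, merely continuous $P_0(\alpha)$ to a genuine polynomial $P(\alpha)$ valid uniformly over the whole multi-simplex; everything else is routine. Since the paper has already introduced and relied on~\cite{Bliman_existence} for precisely this purpose in Chapter~\ref{chp:linear}, I expect the proof to simply cite that result (the multi-simplex being a particular compact set), after noting that a multi-homogeneous representation is available via Claim~1 and Claim~2 when needed. A one-line remark that the discrete-time analogue follows from the Stein equation in place of the Lyapunov equation would round things out, mirroring the comment after Theorem~\ref{thm:thm1}.
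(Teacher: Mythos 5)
Your proposal is correct, and it matches the justification the paper itself relies on: the paper states Theorem~\ref{thm:Lyap_multisim} without proof, treating it (like Theorem~\ref{thm:thm1}) as a known consequence of converse Lyapunov theory together with the result of~\cite{Bliman_existence} that pointwise-feasible parameter-dependent LMIs over compact sets admit polynomial solutions. Your sufficiency argument and your two routes for necessity (citing Bliman, or a direct uniform polynomial approximation of $P_0(\alpha)=\int_0^\infty e^{A^T(\alpha)t}e^{A(\alpha)t}\,dt$ using compactness of the multi-simplex) are exactly the standard argument being invoked, so there is nothing to add.
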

Unfortunately, the question of feasibility of the inequalities in Theorem~\ref{thm:Lyap_multisim} is NP-hard. In this section, we show that applying Theorem~\ref{thm:polya_multi-simplex2} yields a sequence of SDPs of increasing size (and precision) whose solutions converge to a solution of the inequalities in Theorem~\ref{thm:Lyap_multisim}. Motivated by the result in~\cite{bliman2006existence}, we consider $P(\alpha)$ to be homogeneous. In particular, let $P$ be a multi-homogeneous matrix-valued polynomial of form
\begin{equation}
P(\alpha) = \sum_{h_N \in W_{d_{p_N}}} \cdots \sum_{h_1 \in W_{d_{p_1}}} P_{ \mathcal{H}_N} \alpha_1^{h_1} \cdots \alpha_N^{h_N}, 
\label{eq:P_multi} 
\end{equation} 
with degree $d_p=\sum_{i=1}^{N} d_{p_i}$ and unknown coefficients $P_{ \mathcal{H}_N} \in \mathbb{S}^n$. Moreover, let $A(\alpha)$ be of the form
\begin{equation} 
A(\alpha) = \sum_{h_1 \in W_{d_{a_1}}} \cdots \sum_{h_N \in W_{d_{a_N}}} A_{\mathcal{H}_N} \alpha_1^{h_1} \cdots \alpha_N^{h_N},  
\label{eq:A_multi} 
\end{equation} 
with degree $d_a = \sum_{i=1}^{N} d_{a_i}$. It follows from Theorem~\ref{thm:polya_multi-simplex2} that the Lyapunov inequalities in Theorem~\ref{thm:Lyap_multisim} hold for all $\alpha \in \tilde{\Delta}^{\{ l_1, \cdots, l_N \}}$ if there exist some $d_1 \geq 0$ and $d_2 \geq 0$ such that
\begin{equation}
\prod_{i=1}^N \left(\sum_{j=1}^{l_i} \alpha_{i_j} \right)^{d_1}  P(\alpha) \quad \text{and} 
\label{eq:dummy_multi1}
\end{equation}
\begin{equation}
-\prod_{i=1}^N \left(\sum_{j=1}^{l_i} \alpha_{i_j} \right)^{d_2} \left( \mathrm{A}^T(\alpha)P(\alpha)+P(\alpha)\mathrm{A}(\alpha) \right)
\label{eq:dummy_multi2}
\end{equation}
have all positive definite coefficients.
By substituting for $A(\alpha)$ and $P(\alpha)$ in~\eqref{eq:dummy_multi1} and~\eqref{eq:dummy_multi2} from~\eqref{eq:A_multi} and~\eqref{eq:P_multi}, we find that the inequalities of Theorem~\ref{thm:Lyap_multisim} hold if there exists $d_1,d_2 \geq 0$ such that 
\begin{equation}
\sum_{h_1 \in W_{d_1}} \cdots \sum_{h_N \in W_{d_N}} \beta_{\{ \mathcal{H}_N, \Gamma_N \}} P_{ \mathcal{H}_N} > 0  \label{eq:LMI1_multisim} 
\end{equation}
for all $\gamma_1 \in W_{d_{p_1} + d_1}, \cdots, \gamma_N \in W_{d_{p_N} + d_1}$,
and 
\begin{equation}
\sum_{h_1 \in W_{d_1}} \cdots \sum_{h_N \in W_{d_N}} \left( H_{\{ \mathcal{H}_N,\Gamma_N \}}^T P_{ \mathcal{H}_N } + P_{\mathcal{H}_N} H^{\hspace{0.1mm}}_{ \{ \mathcal{H}_N,\Gamma_N \}} \right) < 0 \label{eq:LMI2_multisim} 
\end{equation}
for all $\gamma_1 \in W_{d_{pa_1} + d_2}, \cdots, \gamma_N \in W_{d_{pa_N} + d_2}$, where recall that $\mathcal{H}_N$ denotes $\{h_1, \cdots, h_N \}$, $\Gamma_N$ denotes $\{ \gamma_1, \cdots, \gamma_N \}$ and $d_{pa_i}= d_{p_i}+d_{a_i}$ for $i=1, \cdots,N$.

\subsection{General Formulae for Calculating Coefficients $\beta$ and $H$}
\label{sec:betaH_multisim}

 To calculate the $ \left\lbrace \beta_{ \{ \mathcal{H}_N ,  \Gamma_N \} } \right\rbrace$ coefficients and $\left\lbrace H_{\{ \mathcal{H}_N ,  \Gamma_N \}} \right\rbrace$ we provide the following recursive formulae. These formulae are generalization of the recursive formulae in~\ref{sec:betaH_simplex} for the case of a single simplex. First, for all $\gamma_1 \in W_{d_{p_1}}, \cdots, \gamma_N \in W_{d_{p_N}},$ and for all $ h_1 \in W_{d_{p_1}}, \cdots, h_N \in W_{d_{p_N}}$ set
\begin{equation}
\beta^{(0)}_{ \{ \mathcal{H}_N ,  \Gamma_N \}} = \begin{cases}1 & \text{if } h_1=\gamma_1, \cdots, h_N=\gamma_N \\ 0& \text{otherwise}. \end{cases} 
\label{eq:beta_init_multi}
\end{equation}
Then, for $i=1, \cdots, d_1$, for all $\gamma_1 \in W_{d_{p_1}} + i, \cdots, \gamma_N \in W_{d_{p_N}}+i$ and for all $h_1 \in W_{d_{p_1}}, \cdots, h_N \in W_{d_{p_N}}$, $\beta^{(i)}_{ \{ \mathcal{H}_N ,  \Gamma_N \} }$ can be calculated using
\begin{equation}
\beta^{(i)}_{ \{ \mathcal{H}_N ,  \Gamma_N \} } =  \sum_{\lambda_N \in W_{1} } \cdots  \sum_{\lambda_1 \in W_{1}}   \beta^{(i-1)}_{ \{ \mathcal{H}_N , \{ \gamma_1-\lambda_1, \cdots, \gamma_N-\lambda_N \}\} }.
\label{eq:beta_i_multi} 
\end{equation}
Finally, set $\beta_{ \{ \mathcal{H}_N ,  \Gamma_N \} } = \beta^{(d_1)}_{ \{ \mathcal{H}_N ,  \Gamma_N \} }$, where $ \gamma \in W_{d_p+d_1} $.

To calculate $ \left\lbrace H_{ \{ \mathcal{H}_N ,  \Gamma_N \}} \right\rbrace$, first let 
\begin{equation}
H^{(0)}_{\{ \mathcal{H}_N ,  \Gamma_N \} } = \sum_{ \substack{\lambda_N \in W_{d_{a_N}}: \\ \lambda_N + h_N = \gamma_N}} \cdots \sum_{ \substack{ \lambda_1 \in W_{d_{a_1}}: \\ \lambda_1 + h_1 = \gamma_1}} A_{ \{ \lambda_1, \cdots, \lambda_N \}}. 
\label{eq:H_init_multi}
\end{equation}
for $\gamma_1 \in W_{d_{pa_1}}, \cdots, \gamma_N \in W_{d_{pa_N}}$ and $ h_1 \in W_{d_{p_1}}, \cdots, h_N \in W_{d_{p_N}}$. Then, for $i=1,\ldots, d$, $ \gamma_1 \in W_{d_{pa_1} + i}, \cdots, \gamma_1 \in W_{d_{pa_N} + i}$ and $ h_1 \in W_{d_{p_1}}, \cdots, h_N \in W_{d_{p_N}} $ we have 
\begin{equation}
H^{(i)}_{\{ \mathcal{H}_N ,  \Gamma_N \}}=  
\sum_{\lambda_N \in W_1} \cdots \sum_{\lambda_1 \in W_1} H^{(i-1)}_{\{ \mathcal{H}_N , \{ \gamma_1- \lambda_1, \cdots, \gamma_N-\lambda_N \}\}}. \label{eq:H_i_multi} \vspace*{-0.05in}
\end{equation}
Finally, set $H_{\{ \mathcal{H}_N ,  \Gamma_N \}} = H^{(d_2)}_{\{ \mathcal{H}_N ,  \Gamma_N \}}$, where $ \gamma_1 \in W_{d_{pa_1}+d_2}, \cdots, \gamma_N \in W_{d_{pa_N}+d_2} $.

\subsection{The SDP Elements Associated with the Multi-simplex Version of Polya's Theorem}

To solve the LMI conditions in~\eqref{eq:LMI1_multisim} and~\eqref{eq:LMI2_multisim}, we express them in the form of a dual Semi-Definite Programming (SDP) problem with a block-diagonal structure that is suitable for parallel computation.  Define the element $C$ of the SDP formulation of Conditions~\eqref{eq:LMI1_multisim} and~\eqref{eq:LMI2_multisim} as
\begin{equation}
C := \text{diag}(C_1, \cdots C_L, C_{L+1}, \cdots C_{L+M}), 
\label{eq:C_multi} 
\end{equation}
where for given Polya's exponents $d_1$ and $d_2$,
\begin{equation}
L = \prod_{i=1}^{N} \dfrac{(d_{p_i}+d_1+l_i-1)!}{(d_{p_i}+d_1)!(l_i-1)!} \label{eq:L_multi} 
\end{equation}
is the number of monomials in $ \prod\limits_{i=1}^N \left(\sum\limits_{j=1}^{l_i} \alpha_{i,j} \right)^{d_1}  P(\alpha )$ and 
\begin{equation}
M = \prod_{i=1}^{N} \frac{(d_{p_i}+d_{a_i}+d_2+l_i - 1)!}{(d_{p_i}+d_{a_i}+d_2)!(l_i-1)!}  
\label{eq:M_multi}
\end{equation}
is the number of monomials in
\[
\prod_{i=1}^N \left(\sum_{j=1}^{l_i} \alpha_{i,j} \right)^{d_2} (A^T(\alpha)P(\alpha)+P(\alpha)A(\alpha)),
\]
and for $j=1, \cdots, L+M$, 
\begin{equation}
C_j :=
\begin{cases}
	\epsilon I_n \zeta^{(j)} , & \text{if } 1 \le j \le L \\
	0_n, &  \text{if }   L+1 \le  j  \le L+M,
\end{cases}  
\label{eq:Cj_multi} 
\end{equation}
where $\epsilon > 0$. In~\eqref{eq:Cj_multi}, we define $\zeta^{(j)} \in \mathbb{N}^{L}$ recursively as follows. First, let
\[
\zeta^{(0)} = \begin{bmatrix}
\dfrac{ (d_{p_{N}}+d_1)! }{ \prod\limits_{i=1}^{l_N} h_{(N,i,1)}! }, & \cdots, & \dfrac{ (d_{p_{N} }+d_1)!}{ \prod\limits_{i=1}^{l_N} h_{(N,{i,f(l_N,d_{P_N}+d_1))}}! }
\end{bmatrix}, 
\]
where we have denoted the exponent of the $i^{th}$ variable in the $j^{th}$ (according to lexicographical ordering) element of $W_{d_{P_N}}$ by $h_{(N,i,j)}$. Recall that
\[
f(l,g) :=\dfrac{(l+g-1)!}{g!(l-1)!}
\]
is the number of monomials in a polynomial of degree $g$ with $l$ variables. Then, for $k=1, \cdots, N$, define
\begin{equation*}
\zeta^{(k)} := \zeta^{(k-1)} \otimes
 \begin{bmatrix}
\dfrac{ (d_{p_{r(k)}}+d_1)!}{\prod\limits_{i=1}^{l_{r(k)}} h_{(r(k),i,1)}! }, \cdots, \dfrac{ (d_{p_{r(k)}}+d_1)!}{ \prod\limits_{i=1}^{l_{r(k)}} h_{(r(k),i,s(k))}!}
\end{bmatrix}, 
\end{equation*}
where $r(k):= N-k+1$ and $s(k):=f(l_{r(k)},d_{p_{r(k)}}+d_1)$. Finally, set $\zeta = \zeta^{(N)}$.

For $i=1, \cdots, K$, define the elements $B_i$ of the SDP as
\begin{equation}
B_i = \text{diag} \left(B_{i,1}, \cdots, B_{i,L}, B_{i,L+1}, \cdots, B_{i,L+M} \right), 
\label{eq:Ai_multi}
\end{equation}
where 
\begin{equation}
K = \frac{n(n+1)}{2} \prod_{i=1}^N \frac{(d_{p_i} + l_i-1)!}{d_{p_i}! (l_i-1)!}, \label{eq:K}
\end{equation}
is the total number of dual variables in the SDP problem (i.e., the total number of upper-triangular elements in all of the coefficients of $P(\alpha)$) and where
\begin{small}
\begin{equation}
 B_{i,j} \hspace{-0.025in} = \hspace{-0.05in}
\begin{cases}
\sum\limits_{h_N \in W_{d_{p_N}}} \hspace{-0.1in}  \cdots \hspace{-0.1in} \sum\limits_{h_1 \in W_{d_{p_1}}} \hspace{-0.1in} \beta_{\{\mathcal{H}_N, \Gamma_{N,j}\}} V_{\mathcal{H}_N}(e_i), & \hspace{-0.05in} \text{if } 1 \le j \le L   \\
 - \hspace{-0.2in} \sum\limits_{h_N \in W_{d_{p_N}}} \hspace{-0.1in} \cdots  \hspace{-0.1in} \sum\limits_{h_1 \in W_{d_{p_1}}} \hspace{-0.1in} H_{\{ \mathcal{H}_N , \Gamma_{N,j-L}\}}^T V_{\mathcal{H}_N}  (e_i) + V_{\mathcal{H}_N}(e_i) H_{ \{\mathcal{H}_N, \Gamma_{N,j-L} \}} & \hspace{-0.05in} \text{if }
 L+1 \le j \le L+M,
\end{cases} 
\label{eq:Aij}
\end{equation}
\end{small}
where recall from Section~\ref{sec:notation_multi-homog} that $\Gamma_{N,j}=\{ \gamma_{1_j}, \cdots, \gamma_{N_j} \}$, where $\gamma_{i_j}$ is the $j^{th}$ element of $W_{d_{p_i}+d_1}$ using lexicographical ordering, and 
\[
V_{ \mathcal{H}_N }(x)= \sum_{k=1}^{\tilde{N}} E_k \; x_{k+N(I_{\mathcal{H}_N}-1)},
\]
where $E_k$ is the canonical basis for $ \mathbb{S}^n $ defined in~\eqref{eq:Ej_basis}, $I_{\mathcal{H}_N}$ is the lexicographical index of monomial $\alpha_1^{h_1} \cdots \alpha_N^{h_N}$, and $\tilde{N}:=\frac{n(n+1)}{2}$.
Finally, we complete the definition of the SDP problem by setting
$
a=\vec{1} \in \mathbb{R}^K.
$
In the following section, we propose a parallel set-up algorithm to calculate the SDP elements defined in this section.

\subsection{A Parallel Algorithm for Setting-up the SDP}

In this section, we propose a parallel set-up algorithm for computing the SDP elements in~\eqref{eq:C_multi} and~\eqref{eq:Ai_multi}. An abridged description of the algorithm is presented in Algorithm 7, wherein we suppose the algorithm is executed on $N_c$ number of processors. A C++ parallel implementation of the algorithm is available at:\\ \url{www.sites.google.com/a/asu.edu/kamyar/Software}.

\begin{algorithm}
\begin{small}
\noindent \textbf{\textit{Inputs:}} \vspace{-0.1in} \\
$N:$ dimension of multi-simplex; $l_1,\cdots,l_N:$ dimensions of simplexes; $D_p,D_a:$ degree vectors of $P$ and $A$; coefficients of $A$; $\hat{d}_1,\hat{d}_2:$ Polya's exponents.

\noindent \textbf{\textit{Initialization:}} \vspace{-0.1in}  \\ 
\For{$i=1, \cdots, N_c$, processor $i$}{
 Set $d_1=d_2=0$ and $d_{pa}=d_p+d_a$.  \\ 
 Calculate the number of monomials in $P(\alpha)$, i.e., $L$ using~\eqref{eq:L_multi}.\\
 Calculate the number of monomials in $P(\alpha)A(\alpha)$, i.e., $M$ using~\eqref{eq:M_multi}.\\
 Calculate the per-processor number of monomials in $P(\alpha)$ and $P(\alpha)A(\alpha)$, i.e., \vspace{-0.15in} 
\begin{equation}
L'=\mathtt{floor}\left(L/N_c\right) \quad \text{and} \quad M'=\mathtt{floor}\left(M/N_c \right).   \vspace{-0.1in} 
\label{eq:Lp_multi}
\end{equation}
 \For{$\gamma_1, h_1 \in W_{d_{p_1}}, \cdots, \gamma_N,h_N \in W_{d_{p_N}}$}{
	 Calculate $\beta_{\{\mathcal{H}_N,\Gamma_N\}}$ using~\eqref{eq:beta_init_multi}.
 } \vspace{-0.1in}
 \For{$\gamma_1 \in W_{d_{pa_1}}, \cdots, \gamma_N \in W_{d_{pa_N}}$ and $h_1 \in W_{d_{p_1}}, \cdots, h_N \in W_{d_{p_N}}$}{ \vspace{0.05in}
	 Calculate $H_{\{\mathcal{H}_N,\Gamma_N\}}$ using~\eqref{eq:H_init_multi}. 
 } \vspace{-0.1in}
} \vspace{-0.1in}

\noindent \textbf{\textit{Polya's iterations:}} \vspace{-0.1in}  \\

\For{$i=1, \cdots, N_c$, processor $i$}{ \vspace{-0.1in}
	\For{$d_1=1,\cdots,\hat{d}_1$}{
		Set $d_p = d_p + 1$. Update $L$ and $L'$ according to~\eqref{eq:L_multi} and~\eqref{eq:Lp_multi}.\\		
		\For{$h_1 \in W_{d_{p_1}}, \cdots, h_N \in W_{d_{p_N}}$}{ \vspace{0.05in}
			Update $\beta_{\{\mathcal{H}_N,\Gamma_N\}}$ for $\gamma_{(i-1)L'+1} \in W_{d_{p_{(i-1)L'+1}}}, \cdots, \gamma_{iL'} \in W_{d_{p_{iL'}}}$ as in~\eqref{eq:beta_i_multi}. \vspace{-0.1in}
		}		
\vspace{-0.1in}
	} \vspace{-0.1in}
	\For{$d_2=1,\cdots,\hat{d}_2$}{
		Set $d_{pa} = d_{pa}+1$. Update $M$ and $M'$ according to~\eqref{eq:M_multi} and~\eqref{eq:Lp_multi}.\\
		\For{$h_1 \in W_{d_{p_1}}, \cdots, h_N \in W_{d_{p_N}}$}{ \vspace{0.05in}
		Update $H_{\{\mathcal{H}_N,\Gamma_N\}}$ for $\gamma_{(i-1)M'+1} \in W_{d_{{pa}_{(i-1)M'+1}}}, \cdots, \gamma_{iM'} \in W_{d_{{pa}_{iM'}}}$ using~\eqref{eq:H_i_multi}.
		} \vspace{-0.1in}
	} \vspace{-0.1in}
}
\end{small} 
\label{alg:setup_multisim}
\end{algorithm}

\begin{algorithm}
\noindent \textbf{\textit{Calculating the SDP elements:}} \\
\For{$i=1, \cdots, N_c$, processor $i$}{
	\For{$j=(i-1)L'+1, \cdots, iL', L+(i-1)M'+1, \cdots, iM'$}{
 	Calculate $C_j$	using~\eqref{eq:Cj_multi}.\\
	Calculate $B_j$ for using~\eqref{eq:Ai_multi}.
	}
}

\noindent \textbf{\textit{Outputs:}}\\
The SDP elements $C$ and $B_i$ for $i=1, \cdots, K$. \vspace{0.1in}
\caption{A parallel set-up algorithm for robust stability analysis over the multi-simplex}
\end{algorithm}

\section{Computational Complexity Analysis of the Set-up Algorithm}
\label{sec:compexity_setup_multi}

In this section, we discuss the performance of Algorithm 7 in terms of speed-up, computation cost, communication cost and memory requirement.

\subsection{Computational Cost of the Set-up Algorithm:}

The most computationally expensive part of the algorithm is calculation of the elements $B_{i,j}$ elements for $i=1,\cdots, K$ and $j=1, \cdots, L+M$. If the number of available processors is 
\[
N_c=L_0 := \prod_{i=1}^{N} \dfrac{(d_{p_i}+l_i-1)!}{(d_{p_i})!(l_i-1)!},
\]
then the number of operations per processor at each Polya's iteration of Algorithm 7 is 
\begin{equation}
\thicksim K \cdot L_0 \left( \mathtt{floor} \left( \frac{L}{N_c}\right)+ \mathtt{floor} \left( \frac{M}{N_c} \right)  \right) n^3 \thicksim n^5\prod_{i=1}^N l_i^{2d_{p_i}+d_{a_i}+d_2}, 
\label{eq:comp_multi}
\end{equation}
where recall that $d_{p_i}$ is the degree of $\alpha_i^{h_i}$ in polynomial $P(\alpha)$ (see~\eqref{eq:P_multi}), $d_{a_i}$ is the degree of the variable $\alpha_i^{h_i}$ in the polynomial $A(\alpha)$ (see~\eqref{eq:A_multi}), and $d_2$ is the Polya's exponent. Note that for the case of systems with uncertain parameters inside a simplex,~\eqref{eq:comp_multi} reduces to our results in Table~\ref{tab:setup_comm_complexity}.
The number of operations versus the dimension of hypercube $N$ is plotted in Figure~\ref{fig:comp} for different Polya's exponents $d:=d_1=d_2$. The figure shows that for the case of analysis over a hypercube, the number of operations grows exponentially with the dimension of the hypercube, whereas in analysis over a simplex, the number of operations grows polynomially. This is due to the fact that an $N$-dimensional hypercube is represented by the Cartesian product of $N$ two-dimensional simplices.

\begin{figure}[thpb]
   \centering
   \includegraphics[scale=0.4]{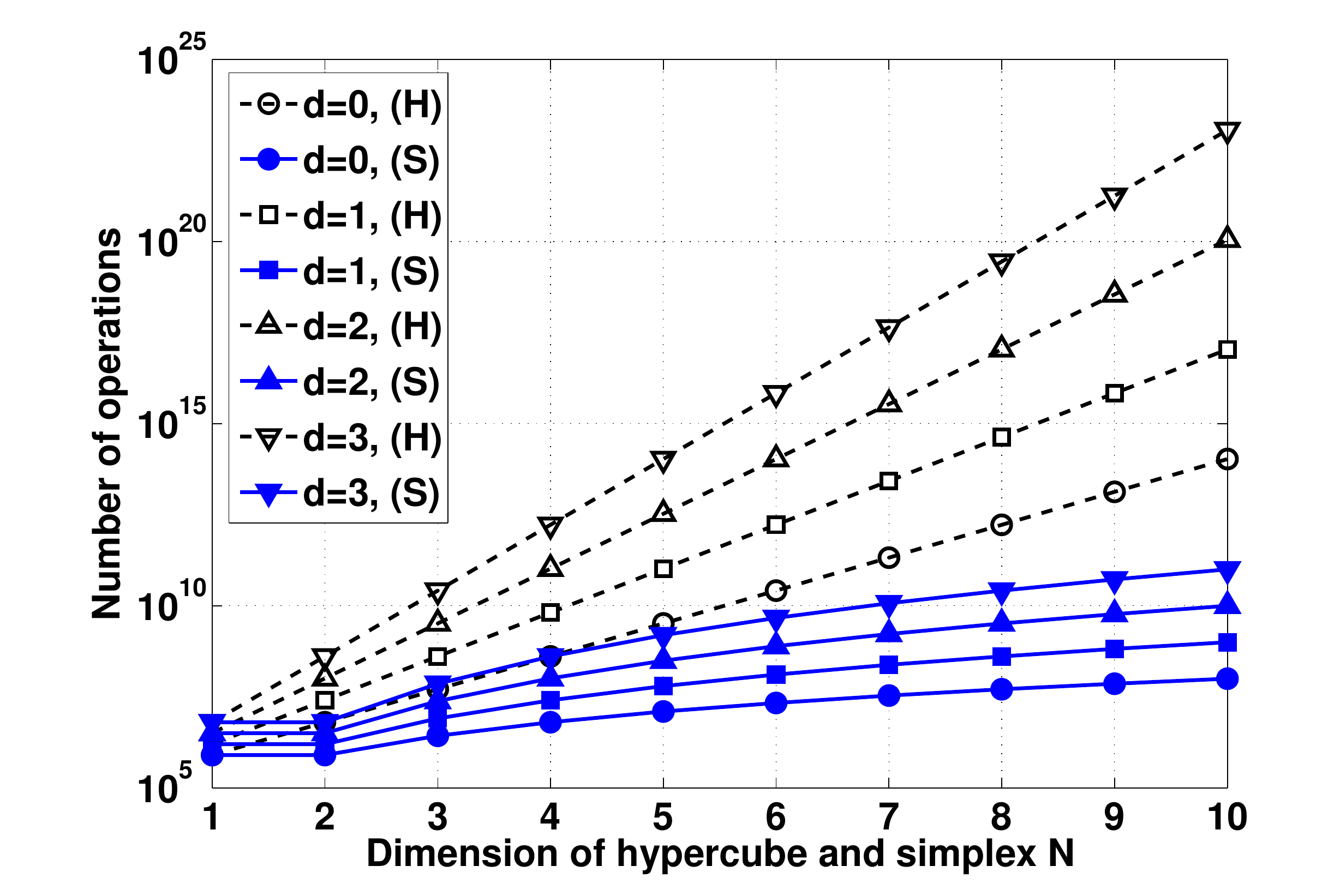} 
   \caption{Number of Operations vs. Dimension of the Hypercube, for Different Polya's Exponents $d_1=d_2=d$. (H): Hypercube and (S): Simplex.}
   \label{fig:comp} 
\end{figure}

\subsection{Communication Cost of the Set-up Algorithm:}

In the worst case scenario, where each processor sends all of its assigned coefficients $\{ H_{\{\mathcal{H}_N, \gamma_N \}} \}$ to other processors (a very rare situation), the communication cost per processor at each Polya's iteration is 
\begin{equation}
\thicksim L_0\left( \mathtt{floor} \left( \frac{L}{N_c} \right) + \mathtt{floor} \left( \frac{M}{N_c} \right)n^2 \right)   \thicksim { n^2} \prod_{i=1}^N l_i^{d_{pa_i}+d_2},
\label{eq:comm_multi}
\end{equation}
assuming the number of processors $N_c=L_0$. Therefore, in this case, by increasing the number of processors, the communication cost per processor decreases and the scalability of the algorithm improves. for the case where the uncertain parameters belong to a simplex,~\eqref{eq:comm_multi} reduces to our results in Table~\ref{tab:setup_comm_complexity}. Again, it can be shown that the communication cost increases exponentially with the dimension of the hypercube, whereas in analysis over a simplex, the communication cost increases polynomially.

\begin{figure}[thpb]
   \centering
   \hspace*{-0.5in}\includegraphics[scale=0.4]{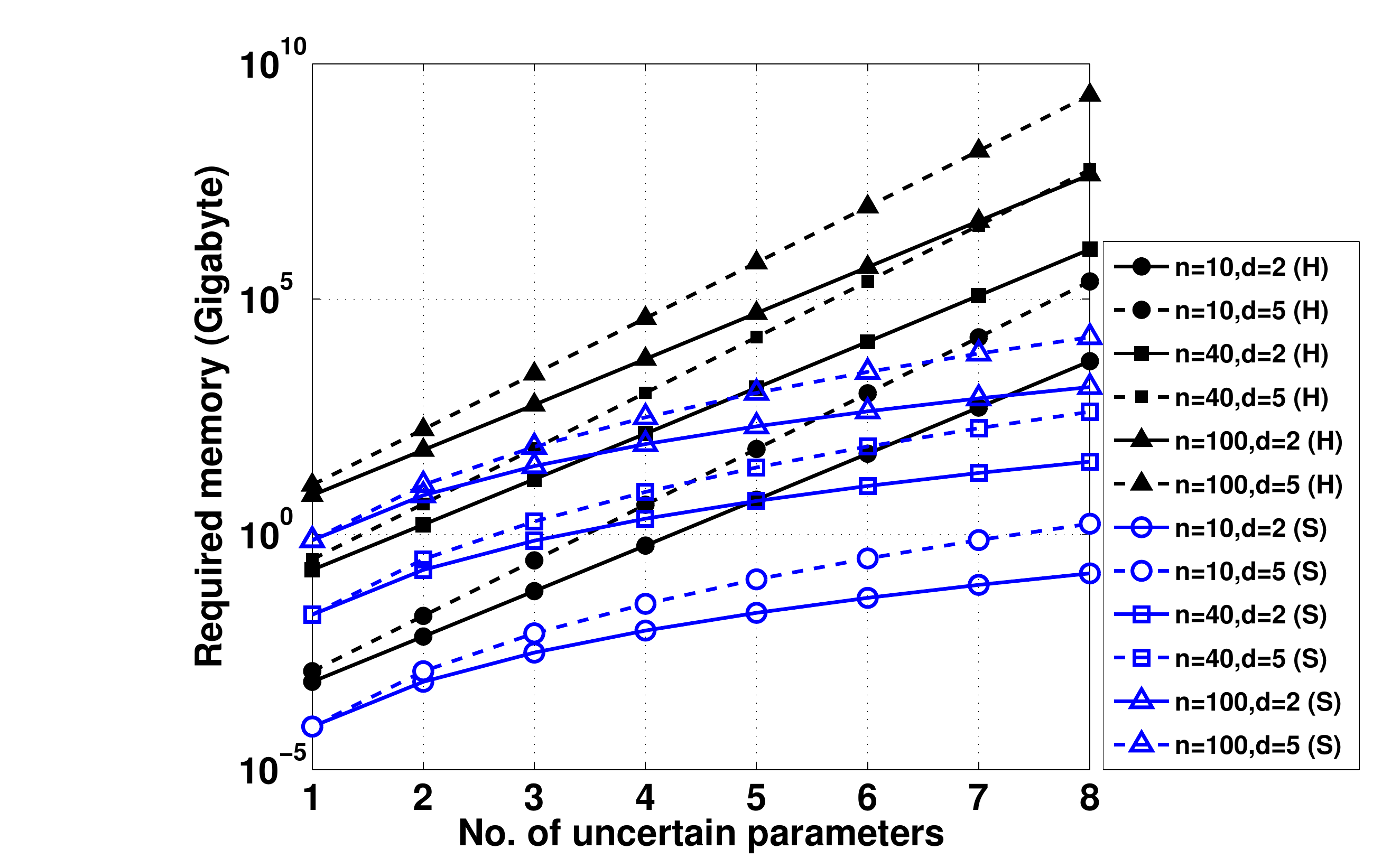} 
   \caption{Required Memory for the Calculation of SDP Elements vs. Number of Uncertain Parameters in Hypercube and Simplex, for Different State-space Dimensions and Polya's Exponents $d_1=d_2$. (H): Hypercube, (S): Simplex.} 
   \label{fig:memory}
\end{figure}

\subsection{Speed-up and Memory Requirement of the Set-up Algorithm:}

In the proposed set-up algorithm (Algorithm 7), calculation of the coefficients $\{ \beta \}$ and $\{ H \}$ is distributed among all of the available processors such that there exists no centralized computation. As a result, the algorithm can theoretically achieve ideal (linear) speed-up. In other words, the speed-up 
\[
\textit{\text{SP}}_N=\dfrac{N}{D+NS} = \dfrac{N}{1+0} = N,
\]
Where $D = 1$ is the ratio of the operations performed by all processors simultaneously to the total operations performed simultaneously and sequentially, and $S$ is the ratio of the operations performed sequentially to the total operations performed simultaneously and sequentially. 

In Figure~\ref{fig:memory}, we have shown the amount of memory required for storing the SDP elements versus the number of uncertain parameters in the unit hypercube and the unit simplex. The figure shows the required memory for  different dimensions of the state-space $n$ and Polya's exponents $d$. In all of the cases, we use $d_{p_i}=d_{a_i}=1$ for $i=1, \cdots, N$. The figure shows that for the case analysis over the hypercube, the required memory increases exponentially with the number of uncertain parameters, whereas for the case of the standard simplex the required memory grows polynomially. 

\pagebreak

\hspace{-0.325in} This is again because an $N$-dimensional hypercube is the Cartesian product of $N$ two-dimensional simplices, i.e., $\underbrace{\Delta^2 \times \cdots \times \Delta^2}_\text{N times}$.


\section{Testing and Validation}
\label{examples_multi}

In this section, we evaluate the scalability and accuracy of our algorithm through numerical examples. In example 1, we evaluate the speed-up of our algorithm through numerical experiments. In examples 2 and 3, we evaluate the conservativeness of our algorithm and compare it to other methods in the literature.

\subsection{Example 1: Evaluating Speed-up}
A parallel algorithm is scalable, if by using $N_c$ processors it can solve a problem $N_c$ times faster than solving the same problem using one processor. Thus, the speed-up of the ideal scalable algorithm is linear.
To test the scalability of our algorithm, we run the algorithm using two random uncertain systems with state-space dimensions $n=5$ and $n=10$. The tests were performed on a linux-based Karlin cluster computer at Illinois Institute of Technology. In all of the runs, $D_p=[2,2,2,2], D_a=[1,1,1,1]$ and $\alpha \in \Phi^4$. Figure~\ref{fig:speedup_multi} shows the computation time of the algorithm versus the number of processors, for two different state-space dimensions and two different number of Polya's iterations (Polya's exponents $d=d_1=d_2$). The linearity of the curves in all of the executions implies near-perfect scalability of the algorithm.

\begin{figure}[ht]
\centering
 \includegraphics[scale=0.38]{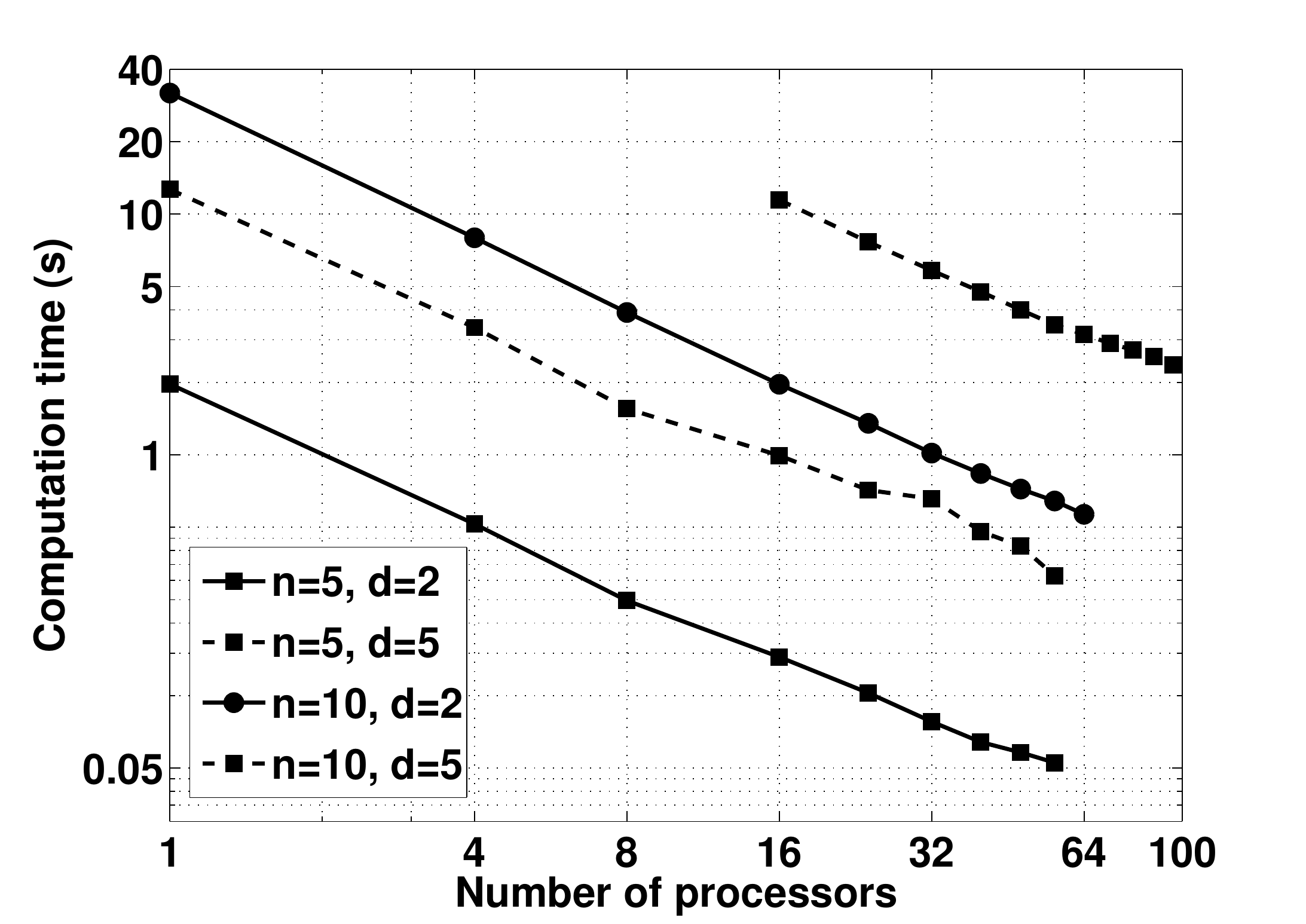} 
\caption{Execution Time of the Set-up Algorithm vs. Number of Processors, for Different State-space Dimensions $n$ and Polya's Exponents}
\label{fig:speedup_multi}
\end{figure}

\subsection{Example 2: Verifying Robust Stability over a Hypercube}
Consider the system $\dot{x}(t) = A(\alpha) x(t)$, where 
\[
A(\alpha)= A_0 + A_1 \alpha_1^2 + A_2 \alpha_1 \alpha_2 \alpha_3 + A_3 \alpha_1^2 \alpha_2 \alpha_3^2,  
\]
\[
\alpha_1 \in [-1,1], \, \alpha_2 \in [-0.5,0.5], \, \alpha_3 \in [-0.1,0.1],  
\]
where 
\renewcommand{\arraystretch}{0.75}
\begin{align*}
&A_0=\begin{bmatrix}
   -3.0  &     0 &  -1.7 &   3.0\\
   -0.2  & -2.9 & -1.7  & -2.60\\
    0.6  &  2.6 &  -5.8  & -2.60\\
   -0.7  &  2.9  & -3.3  & -2.10
\end{bmatrix} 
&&A_1=\begin{bmatrix}
    2.2 &  -5.4 &  -0.8  & -2.2\\
    4.4 &  1.4  & -3.0   & 0.8\\
   -2.4 &  -2.2 &   1.4  &  6.0\\
   -2.4 &  -4.4 &  -6.4  &  0.18
\end{bmatrix} \\
&A_2=\begin{bmatrix}
   -8.0 & -13.5 &  -0.5 &  -3.0\\
   18.0 &  -2.0  &  0.5 & -11.5\\
    5.5 & -10.0 &   3.5  &  9.0\\
   13.0 &   7.5 &   5.0 &  -4.0
\end{bmatrix}
&&A_3=\begin{bmatrix}
    3.0  &  7.5 &   2.5 &  -8.0\\
    1.0  &  0.5 &   1.0 &   1.5\\
   -0.5  & -1.0 &   1.0 &   6.0\\
   -2.5  & -6.0 &   8.5 &  14.25
\end{bmatrix}.    
\end{align*}
The problem is to investigate asymptotic stability of this system for all $\alpha$ in the given intervals using Algorithm 7 and our solver in Algorithm 6.
We first represented $A(\alpha)$ defined over the hypercube $[-1,1] \times [-0.5,0.5] \times [-0.1,0.1]$ by a multi-homogeneous polynomial $B(\beta,\eta)$ with $(\beta_i,\eta_i) \in \Delta^2$ and with the degree vector $D_b=[2,1,2]$. Then, in one Polya's iteration (i.e., $d_1=d_2=1$) our algorithm found the following Lyapunov function as a certificate for asymptotic stability of the system.
\begin{align*}
V(x,\beta,\eta)= x^T P(\beta,\eta)x  = x^T ( & \beta_1 (P_1 \beta_2 \beta_3 + P_2 \beta_2 \eta_3 +P_3 \eta_2 \beta_3 + P_4  \eta_2 \eta_3)  \\
 + & \eta_1 (P_5 \beta_2 \beta_3 + P_6 \beta_2 \eta_3 + P_7 \eta_2 \beta_3 + P_8  \eta_2 \eta_3 ) )x,
\end{align*}
where $\beta_1= 0.5 \alpha_1 +0.5, \beta_2=\alpha_2+0.5, \beta_3=5\alpha_3+0.5, \eta_1= 1- \beta_1, \eta_2 = 1- \beta_2, \eta_3 = 1- \beta_3$ and
\begin{align*}
&P_1=\begin{bmatrix}
   5.807  &     0.010 &  -0.187 &  -1.186\\
    0.010  &  5.042 & -0.369  &  0.227\\
    -0.187  & -0.369 &  8.227  & -1.824\\
   -1.186  &   0.227  & -1.824  &  8.127
\end{bmatrix} 
P_2=\begin{bmatrix}
    7.409 &  -0.803 &  1.804  & -1.594\\
    -0.803 &  6.016  &  0.042   & -0.538\\
   1.804 &   0.042 &   7.894  &  -1.118\\
   -1.594 &  -0.538 &  -1.118  &  8.590
\end{bmatrix} \\
&P_3=\begin{bmatrix}
    6.095 & -0.873 &   0.512 &  -1.125\\
   -0.873 &  5.934  &   -0.161 &  0.503\\
    0.512 &  -0.161 &   7.417  &  -0.538\\
   -1.125 &    0.503 &   -0.538 &  6.896
\end{bmatrix}
P_4=\begin{bmatrix}
    5.388  &  0.130 &   -0.363 &  -0.333\\
    0.130  &  5.044 &   -0.113 &   -0.117\\
   -0.363  & -0.113 &    6.156 &   -0.236\\
   -0.333  & -0.117 &   -0.236 &   5.653
\end{bmatrix}   \\
&P_5=\begin{bmatrix}
   7.410  &  -0.803 &  1.804 &  -1.594\\
   -0.803  &  6.016 &  0.042  &  -0.538\\
    1.804  &  0.042 &  7.894  & -1.118\\
   -1.594  &  -0.538  & -1.118  &  8.590
\end{bmatrix} 
P_6=\begin{bmatrix}
    5.807 &   0.010 &  -0.187  & -1.186\\
     0.010 &   5.042  & -0.369   &  0.227\\
   -0.187 &  -0.369 &    8.227  & -1.824\\
   -1.186 &   0.227 &  -1.824  &   8.127
\end{bmatrix} \\
&P_7=\begin{bmatrix}
    5.388 & 0.130 &   -0.363 &  -0.333\\
   0.130 &  5.044  & -0.113 & -0.117\\
   -0.363 & -0.113 &  6.156  &   -0.236\\
   -0.333 &  -0.117 &   -0.236 &  5.653
\end{bmatrix}
P_8=\begin{bmatrix}
    6.095  &   -0.873 &   0.512 &  -1.125\\
     -0.873  &  5.934 &  -0.161 &   0.503\\
   0.512  & -0.161 &   7.417 &   -0.538\\
    -1.125  &  0.503 &  -0.538 &   6.896
\end{bmatrix}    .
\end{align*}

\subsection{Example 2: Evaluating Accuracy}

In this example, we used our algorithm to find lower bounds on $r^*= \max \, r$ such that $\dot{x}(t) = A(\alpha) x(t)$ with
\[
A(\alpha) = A_0+\sum_{i=1}^4 A_i \alpha_i,
\]
\begin{small}
\begin{align*}
A_0 \hspace{-0.03in}  = \hspace{-0.03in}  \begin{bmatrix}
   -3.0  &  0   & -1.7 &   3.0\\
   -0.2  & -2.9 & -1.7  & -2.6\\
    0.6  &  2.6 & -5.8  & -2.6\\
   -0.7  &  2.9 & -3.3  & -2.4
\end{bmatrix},
A_1  \hspace{-0.03in}  =  \hspace{-0.03in} \begin{bmatrix}
    1.1 &  -2.7 &  -0.4  & -1.1\\
    2.2 &  0.7  &  -1.5   & 0.4\\
   -1.2 &  -1.1 &   0.7  &  3.0\\
   -1.2 &  -2.2 &  -3.2  &  -1.4
\end{bmatrix}, 
A_2  \hspace{-0.03in} = \hspace{-0.03in} \begin{bmatrix}
   1.6 & 2.7 &  0.1 &  0.6\\
   -3.6 &  0.4  &  -0.1 & 2.3\\
    -1.1 & 2 &  -0.7  &  -1.8\\
   -2.6 &  -1.5 &  -1.0 &  0.8
\end{bmatrix}, 
\end{align*}
\begin{align*}
A_3 =\begin{bmatrix}
    -0.6  & 1.5 &  0.5 & -1.6 \\
     0.2 & -0.1 & 0.2  &  0.3 \\
     -0.1 & -0.2  & -0.2  & 1.2  \\
    -0.5 & -1.2 & 1.7  & -0.1 
\end{bmatrix}, 
A_4=\begin{bmatrix}
    -0.4  & -0.1 &  -0.3 & 0.1 \\
     0.1 & 0.3 & 0.2  &  0.0 \\
     0.0 & 0.2  & -0.3  & 0.1  \\
     0.1 & -0.2 & -0.2  & 0.0 
\end{bmatrix}
\end{align*}
\end{small}
\hspace*{-0.09in} is asymptotically stable for all $\alpha \in \{ \alpha \in \mathbb{R}^4: \vert \alpha_i \vert \leq r \}$. In Table~\ref{tab:accuracy_multisim}, we have shown the computed lower bounds on $r^*$ for different degree vectors $D_p$ (degree vector of polynomial $P$ in Theorem~\ref{thm:Lyap_multisim}). In all of the cases, we set the Polya's exponents $d_1=d_2=0$. For comparison, we have also included the lower-bounds computed by the methods in~\cite{bliman2004convex} and~\cite{chesi_hypercube_2005} in Table~\ref{tab:accuracy_multisim}. \\

\renewcommand{\arraystretch}{1.1}
\renewcommand{\tabcolsep}{1.4pt} 
\begin{table}[h]
\caption{The Lower-bounds on $r^*$ Computed by Algorithm 7 Using Different Degree Vector $D_p$ and Using Methods in~\cite{bliman2004convex} and~\cite{chesi_hypercube_2005}.} 
\label{tab:accuracy_multisim}
\begin{center}
\begin{scriptsize}
\begin{tabular}{ c|c|c|c|c|c|c|c|c}
\cline{2-8}
& $D_p=$[0,0,0,0] & $D_p=$[0,1,0,1] & $D_p=$[1,0,1,0] & $D_p=$[1,1,1,1] &  $D_p=$[2,2,2,2] & \cite{bliman2004convex} & \cite{chesi_hypercube_2005} \\ 
\hline
\hspace*{-5.5pt} \vline Bound on $r^*$ & 0.494 & 0.508 & 0.615 & 0.731 & 0.840 & 0.4494 & 0.8739 \\ 
\hline 
\end{tabular} 
\end{scriptsize}
\end{center}
\end{table}


\chapter{PARALLEL ALGORITHMS FOR NONLINEAR STABILITY ANALYSIS}
\label{chp:Nonlinear}

\section{Background and Motivation}
\label{sec:intro_nonlin}

One approach to stability analysis of nonlinear systems is the search for a decreasing Lyapunov function.  For those systems with polynomial vector fields,~\cite{peet2009exponentially} has shown that searching for polynomial Lyapunov functions is necessary and sufficient for stability on any bounded set. However, searching for a polynomial Lyapunov function which proves local stability requires enforcing positivity on a neighborhood of the equilibrium. Unfortunately, while we do have necessary and sufficient conditions for positivity of a polynomial (e.g. Tarski-Seidenberg's algorithm in~\cite{tarski} and Artin's theorem in~\cite{artin}), it has been shown that the general problem of determining whether a polynomial is positive is NP-hard~(\cite{blum}).



Based on Artin's theorem, non-negativity of a polynomial is equivalent to existence of a representation in the form of sum of quotients of squared polynomials. If we leave off the quotient, the search for a Sum-of-Squares (SOS) is a common sufficient condition for positivity of a polynomial.
The advantage of the SOS approach is that verifying the existence of an SOS representation is a semidefinite programming problem. This approach was first articulated in~\cite{parillo_thesis}. SOS programming has been used extensively in stability analysis and control including stability analysis of nonlinear systems~(\cite{packard_ROA}), robust stability analysis of switched and hybrid systems~\cite{prajna_hybrid}, and stability analysis of time-delay systems~(\cite{papa2009analysis}). \\


The downside to the use of SOS (with Positivstellensatz multipliers) for stability analysis of nonlinear systems with many states is computational complexity. Specifically, this approach requires us to set up and solve large SDPs. As an example, applying SOS method to find a degree 8 Lyapunov function for a nonlinear system with 10 states requires at least 900 GB of memory and more than 116 days as computation time on a single-core 2.5 GHz processor. 
Although Polya's algorithm implies similar complexity to SOS, as we showed in Section~\ref{sec:SDPelements_TAC}, the SDPs associated with Polya's algorithm possess a block-diagonal structure. This allowed us to develop parallel algorithms (see Algorithms~\ref{alg:setup}, 6, and 7) for robust stability analysis of linear systems. 
Unfortunately, Polya's theorem cannot be used to represent polynomials which have zeros in the interior of the unit simplex (see~\cite{polya_corner} for an elementary proof of this). From the same reasoning as in~\cite{polya_corner} it follows that our multi-simplex version of Polya's theorem (See theorem~\ref{thm:polya_multi-simplex2}) cannot be used to represent polynomials which have zeros in the interior of a multi-simplex/hypercube. Our proposed solution to this problem is to reformulate the nonlinear stability problem using only strictly positive forms. Specifically, we consider Lyapunov functions of the form $V(x)=x^T P(x) x$, where $P$ is a strictly positive matrix-valued polynomial on the hypercube. This way, we can use our multi-simplex version of Polya's theorem to search for a polynomial $P(x)$ such that $P(x) > 0$ for all $x \in \Phi \setminus \{0\}$ and $ \langle \nabla x^T P(x) x, f(x) \rangle < 0 $ for all $x \in \Phi$ - hence proving asymptotic local stability of $\dot{x}(t) = f(x(t))$ for some $f \in \mathbb{R}[x]$.

Although Polya's algorithm has been generalized to positivity over simplices and hypercubes; as yet no further generalization to arbitrary convex polytopes exists. In order to perform analysis on more complicated geometries such as arbitrary convex polytopes, in this chapter, we look into Handelman's theorem (see Theorem~\ref{thm:Handelman}). Some preliminary work on the use of Handelman's theorem and interval evaluation for Lyapunov functions on the hypercube has been suggested in~\cite{sankaranarayanan2013lyapunov} and has also been applied to robust stability of positive linear systems in~\cite{briat}. One difficulty in using Handelman's theorem in stability analysis is that then theorem cannot be readily used to represent polynomials which have zeros in the interior of a given polytope. To see this, suppose a polynomial $g$ ($g$ is not identically zero) is zero at $x=a$, where $a$ is in the interior of a polytope 
\[
\Gamma^K:= \{ x \in \mathbb{R}^n: w_i^T x + u_i \geq 0, i=1, \cdots, K \}.
\]
Suppose there exist $b_\alpha \geq 0, \alpha \in \mathbb{N}^K$ such that for some $d \in \mathbb{N}$,
\[
g(x) = \sum _{\substack{\alpha \in \mathbb{N}^K : \Vert \alpha_i \Vert_1 \leq d}} b_\alpha (w_1^T x+u_1)^{\alpha_1} \cdots (w_K^T x+u_K)^{\alpha_K}.
\]
Then, 
\[
g(a) = \sum _{\substack{\alpha \in \mathbb{N}^K : \Vert \alpha_i \Vert_1 \leq d}} b_\alpha (w_1^T a+u_1)^{\alpha_1} \cdots (w_K^T a+u_K)^{\alpha_K} = 0.
\]
 From the assumption $a \in \text{int}(\Gamma^K)$ it follows that $w_i^Ta+u_i > 0$ for $i=1, \cdots,K$. Hence $b_\alpha < 0$ for at least one $\alpha \in \{ \alpha \in \mathbb{N}^K : \Vert \alpha \Vert_1 \leq d \} $. This contradicts with the assumption that all $b_\alpha \geq 0$. Based on the this reasoning, one cannot readily use Handelman's theorem to search for a polynomial $V$ such that $V(x) > 0$ for all $x \in \Gamma^K \setminus \{ 0 \}$ and $V(0) = 0$.

\subsection{Our Contributions}

In this chapter, we consider a new approach to the use of Handelman's theorem for computing regions of attraction of stable equilibria by constructing piecewise-polynomial Lyapunov functions over arbitrary convex polytopes. Specifically, we decompose a given convex polytope into a set of convex sub-polytopes that share a common vertex at the origin. Then, on each sub-polytope, we use Handelman's conditions to define linear programming constraints. Additional constraints are then proposed which ensure continuity of the Lyapunov function over the entire polytope. We then show the resulting algorithm has polynomial complexity in the number of states and compare this complexity with algorithms based on SOS and Polya's theorem. Finally, we evaluate the accuracy of our algorithm by numerically approximating the domain of attraction of two nonlinear dynamical systems.

\section{Definitions and Notation}
\label{Sec:Notation}

In this section, we present/review notations and definitions of convex polytopes, facets of polytopes, decompositions and Handelman bases. \vspace{0.05in}

\begin{mydef} (Convex Polytope)
\label{df:polytope1}
Given the set of vertices $P := \{ p_i \in \mathbb{R}^n, i=1, \cdots, K \}$, define the \textbf{convex polytope} $\Gamma_P$ as 
\[
\Gamma_P := \{ x \in \mathbb{R}^n: x = \sum_{i=1}^K \mu_i p_i : \mu_i \in [0,1]
\text{ and } \sum_{i=1}^K \mu_i = 1 \}.
\]
\end{mydef}
\noindent Every convex polytope can be represented as
\[
\Gamma^K := \{ x \in \mathbb{R}^n: w_i^T x+u_i \geq 0, i=1, \cdots, K\},
\]
for some $w_i \in \mathbb{R}^n, u_i \in \mathbb{R}, i=1, \cdots, K$. Throughout the chapter, every polytope that we use contains the origin. Moreover, for brevity, we will drop the superscript $K$ in $\Gamma^K$. \vspace{0.05in}

\begin{mydef}
Given a bounded polytope of the form $\Gamma := \{ x \in \mathbb{R}^n: w_i^T x + u_i \geq 0, i=1, \cdots, K \}$, we call
\begin{align*}
\zeta^i(\Gamma):= \left\lbrace x \in \mathbb{R}^n: w_i^Tx+u_i = 0 \text{ and } w_j^Tx+u_j \geq 0 \quad \text{ for } j \in \{1, \cdots,K \} \right\rbrace
\end{align*}
the \textbf{$i-$th facet} of $\Gamma$ if $\zeta^i(\Gamma) \neq \emptyset$.
\end{mydef}

\vspace{0.05in}

\begin{mydef}($D-$decomposition)
\label{df:decomposition}
Given a bounded polytope of the form $\Gamma:= \{ x \in \mathbb{R}^n: w_i^T x + u_i \geq 0, i=1, \cdots, K \}$, we call $D_{\Gamma}:=\{D_i\}_{i=1,\cdots,L}$ a $D-$\textbf{decomposition} of $\Gamma$ if
\begin{equation*}
D_i := \left\lbrace x \in \mathbb{R}^n: h^T_{i,j} x + g_{i,j} \geq 0, j=1, \cdots,m_i \right\rbrace
\label{eq:decomposition}
\end{equation*}
$\text{ for some } h_{i,j} \in \mathbb{R}^n, \, g_{i,j} \in \mathbb{R}$, such that $\cup_{i=1}^L D_i = \Gamma,\, \cap_{i=1}^L D_i = \{0\}$ and $\text{int}(D_i) \cap \text{int}(D_j) = \emptyset$.
\end{mydef} \vspace{0.05in}

\noindent In Figure~\ref{fig:subpolytopes}, we have illustrated a $D$-decomposition of a two-dimensional polytope.

\begin{figure}[t]
\centering
\includegraphics[scale=0.18]{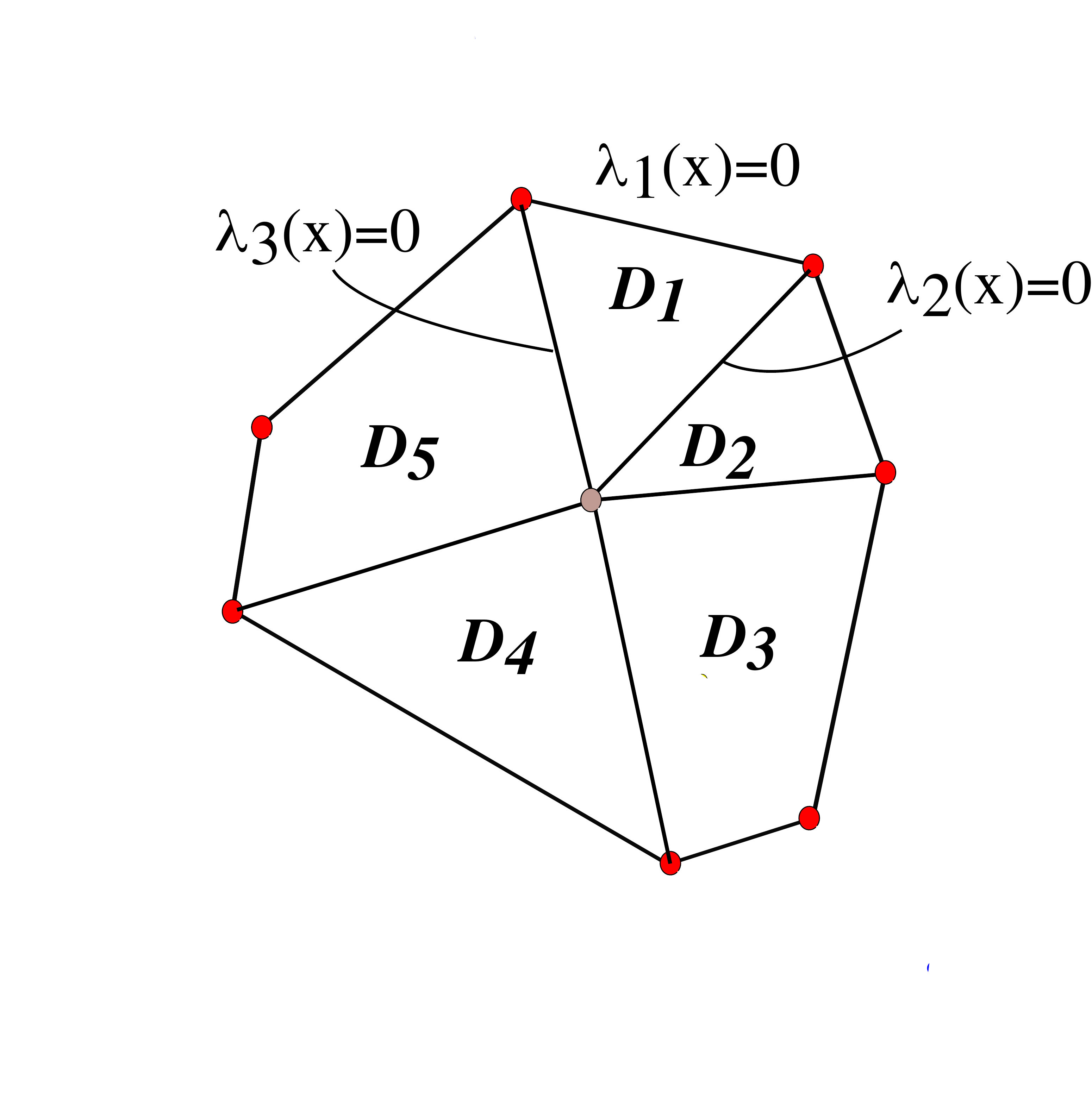} \vspace{-0.25in}
\caption{An Illustration of a D-decomposition of a 2D Polytope. $\lambda_i(x) := h_{i,j}^T x + g_{i,j}$ for $j=1, \cdots,m_i$.}
\label{fig:subpolytopes}
\end{figure}

\begin{mydef} (The Handelman basis associated with a polytope)
\label{df:Handelman_basis}
Given a polytope of the form
\[\Gamma:=\left\lbrace x\in\mathbb{R}^n: w_i^Tx+u_i \geq 0,\;i = 1,\cdots,K \right\rbrace,\]
we define the set of \textbf{Handelman bases}, indexed by
\begin{equation}\label{eq:E_d}
\alpha\in E_{d,K}:=\left\lbrace  \alpha\in\mathbb{N}^K:|\alpha|_1\leq d \right\rbrace \end{equation} 
as 
\[
\Theta_d(\Gamma):=\left\lbrace \rho_\alpha(x): \rho_\alpha(x) = \prod_{i=1}^K(w_i^Tx+u_i)^{\alpha_i},\;\alpha\in E_{d,K} \right\rbrace.
\]

\end{mydef}

\begin{mydef}(Restriction of a polynomial to a facet)\label{df:rstr_poly}
Given a polytope of the form $\Gamma:=\{x\in\mathbb{R}^n:w_i^Tx+u_i,\;i=1,\cdots, K\}$, and a polynomial $P(x)$ of the form
\[
P(x) = \sum_{\alpha\in E_{d,K}}b_{\alpha}\prod_{i=1}^K(w_i^Tx+u_i)^{\alpha_i},
\]
define the \textbf{restriction of $P(x)$ to the $k$-th facet of} $\Gamma$ as the function
\[
P|_{k}(x) := \sum_{\substack{\alpha\in E_d:\alpha_k=0}}b_{\alpha}\prod_{i=1}^K(w_i^Tx+u_i)^{\alpha_i}. \vspace{-0.07in}
\]
\end{mydef} \vspace{0.05in}

\noindent We will use the maps defined below in future sections.
\begin{mydef}
\label{df:maps}
 Given $w_i, h_{i,j} \in \mathbb{R}^n$ and $u_i, g_{i,j} \in \mathbb{R}$, let $\Gamma$ be a convex polytope as defined in Definition~\ref{df:polytope1} with $D-$decomposition $D_{\Gamma}:=\{D_i\}_{i=1,\cdots,L}$ as defined in Definition~\ref{df:decomposition}, and let $\lambda^{(k)},\, k=1, \cdots,B$ be the elements of $E_{d,n}$, as defined in \eqref{eq:E_d}, for some $d,n,\in \mathbb{N}$.  For any $\lambda^{(k)} \in E_{d,n}$, let $p_{\{\lambda^{(k)},\alpha,i\}}$ be the coefficient of $b_{i,\alpha} x^{\lambda^{(k)}}$ in 
\begin{equation}
P_i(x) := \sum_{\alpha \in E_{d,m_i}} b_{i,\alpha} \prod_{j=1}^{m_i} (h^T_{i,j}x+g_{i,j})^{\alpha_j}. 
\label{eq:V_i} \vspace{0.1in}
\end{equation}
Let $N_i$ be the cardinality of $E_{d,m_i}$, and denote by $b_i \in\mathbb{R}^{N_i}$ the vector of all coefficients $b_{i,\alpha}$. \vspace{0.1in}

\noindent Define $F_{i}: \mathbb{R}^{N_i} \times \mathbb{N} \rightarrow \mathbb{R}^{B}$ as 
\begin{equation}\label{eq:Fdef}
F_{i}(b_{i},d) := \left[
 \sum_{\alpha \in E_{d,m_i}}  p_{\{\lambda^{(1)},\alpha,i\}} b_{i,\alpha} , \; \cdots \; ,  \sum_{\alpha \in E_{d,m_i}}  p_{\{\lambda^{(
B)},\alpha,i\}} b_{i,\alpha} \right]^T
\end{equation}
for  $i=1, \cdots,L$. In other words, $F_i(b_i,d)$ is the vector of the coefficients of $P_i(x)$ after expansion. \vspace{0.1in}

\noindent Define $H_{i}: \mathbb{R}^{N_i} \times \mathbb{N} \rightarrow \mathbb{R}^{Q} $ as 
\begin{equation}\label{eq:Hdef}
H_{i}(b_i,d) := \left[
 \sum_{\alpha \in E_{d,m_i}}  p_{\{\delta^{(1)},\alpha,i\}} b_{i,\alpha } \; , \; \cdots \; ,  \sum_{\alpha \in E_{d,m_i}}  p_{\{\delta^{(
Q)},\alpha,i\}} b_{i,\alpha} \right]^T
\end{equation}
for $i = 1, \cdots,L$, where we have denoted the elements of $\{ \delta \in \mathbb{N}^n: \delta = 2e_j \text{ for } j=1, \cdots,n \}$ by $\delta^{(k)}, k=1, \cdots, Q$, where $e_j$ are the canonical basis for $\mathbb{N}^n$. In other words, $H_i(b_i,d)$ is the vector of coefficients of square terms of $P_i(x)$ after expansion. \vspace{0.1in}

\noindent Define $J_{i}: \mathbb{R}^{N_i} \times \mathbb{N}\times \{1,\cdots,m_i\} \rightarrow \mathbb{R}^{B} $ as 
\begin{equation}\label{eq:Jref}
J_{i}(b_{i},d,k) := \left[
 \sum_{\substack{\alpha \in E_{d,m_i}\\\alpha_k=0 }}  p_{\{\lambda^{(1)},\alpha,i\}} b_{i,\alpha} \; \cdots \; ,  \sum_{\substack{\alpha \in E_{d,m_i}\\\alpha_k=0} }  p_{\{\lambda^{(
B)},\alpha,i\}} b_{i,\alpha} \right]^T
\end{equation}
for $i = 1, \cdots,L$. In other words, $J_i(b_i,d,k)$ is the vector of coefficients of $P_i|_k(x)$ after expansion. \vspace{0.1in}

\noindent Given a polynomial vector field $f(x)$ of degree $d_f$, define $G_{i}: \mathbb{R}^{N_i} \times \mathbb{N} \rightarrow \mathbb{R}^{Z} $ as 
\begin{equation}\label{eq:Gdef} 
G_{i}(b_{i},d) := \left[
 \sum_{\alpha \in E_{d,m_i}}  s_{\{\eta^{(1)},\alpha,i\}} b_{i,\alpha} \; , \; \cdots \; ,  \sum_{\alpha \in E_{d,m_i}}  s_{\{\eta^{(
P)},\alpha,i\}} b_{i,\alpha} \right]^T
\end{equation}
for $i=1, \cdots,L$,  and where we have denoted the elements of $E_{d+d_f-1,n}$ by $\eta^{(k)}$, $k=1,\cdots,Z$. For any $\eta^{(k)} \in E_{d+d_f-1,n}$, we define $s_{\{ \eta^{(k)}, \alpha, i \}}$ as the coefficient of $b_{i,\alpha} x^{\eta^{(k)}}$ in $\langle \nabla P_i(x),f(x) \rangle$, where $P_i(x)$ is defined in~\eqref{eq:V_i}. In other words, $G_{i}(b_{i},d)$ is the vector of coefficients of $\langle \nabla P_i(x), f(x) \rangle$. \vspace{0.1in}

\noindent Define $R_i(b_{i},d): \mathbb{R}^{N_i} \times \mathbb{N} \rightarrow \mathbb{R}^C$ as
\begin{equation}\label{eq:Rdef}
R_i(b_{i},d):=
\left[ b_{i,\beta^{(1)}} \; , \; \cdots \; , \; b_{i,\beta^{(C)}} \right]^T,
\end{equation}
for $i = 1, \cdots,L$, where we have denoted the elements of
\[
S_{d,m_i}:=\{ \beta \in E_{d,m_i} :  \beta_j = 0 \text{ for } j \in \{ j \in \mathbb{N}: g_{i,j} = 0 \} \}
\]
 by $\beta^{(k)}$, $k=1,\cdots,C$. Consider $P_i$  in the Handelman basis $\Theta_d(\Gamma)$. Then, $R_i(b_i,d)$ is the vector of coefficients of monomials of $P_i$ which are nonzero at the origin.\\
 It can be shown that the maps $F_i,\;H_i,\;J_i,\;G_i\text{ and }R_i$ are affine in $b_i$.
\end{mydef}

\begin{mydef}(Upper Dini Derivative)
\label{df:Dini} Let $f:\mathbb{R}^n\rightarrow \mathbb{R}^n$ be a continuous map. Then, define the upper Dini derivative of a function $V:\mathbb{R}^n\rightarrow\mathbb{R}$ in the direction $f(x)$
as
\[D^+(V(x),f(x)) = \limsup_{h\rightarrow 0^+}\frac{V(x+hf(x))-V(x)}{h}.\]
\end{mydef}
It can be shown that for a continuously differentiable $V(x)$,
\[D^+(V(x),f(x)) = \langle \nabla V(x),f(x)\rangle.
\]

\section{Statement of the Stability Problem}
\label{sec:Background_Handelman}

We address the problem of local stability of nonlinear systems of the form 
\begin{equation}
\dot{x}(t) = f(x(t)),
\label{eq:system_nonlin}
\end{equation}
about the zero equilibrium, where $f:\mathbb{R}^n \rightarrow \mathbb{R}^n$. We use the following Lyapunov stability condition.

\begin{mythm}
\label{thm:lyap_nonlin}
For any $\Omega \subset \mathbb{R}^n$ with $0 \in \Omega$, suppose there exists
a continuous function $V: \mathbb{R}^n \rightarrow \mathbb{R} $ and continuous positive definite functions $W_1, W_2, W_3$,
\begin{align*}
& W_1(x) \leq V(x) \leq W_2 (x) \text{ for } x \in  \Omega  \text{ and} \\
&  D^+(V(x),f(x)) \leq -W_3(x)  \text{ for } x \in \Omega,
\end{align*}
then System~\eqref{eq:system_nonlin} is asymptotically stable on $\{ x: \{ y: V(y) \leq V(x) \} \subset \Omega \}$.
\end{mythm}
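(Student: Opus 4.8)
The plan is to carry out a standard Lyapunov argument, with the only non-routine ingredient being the handling of the upper Dini derivative $D^+$ in place of the usual gradient condition. Throughout, write $\mathcal{R} := \{x : \{y : V(y) \le V(x)\} \subset \Omega\}$ for the claimed region of asymptotic stability, and for $c > 0$ let $\Omega_c := \{x : V(x) \le c\}$ be the corresponding sublevel set of $V$. Since $f$ is a polynomial (hence locally Lipschitz), every initial condition yields a unique maximal solution $x(\cdot)$ of System~\eqref{eq:system_nonlin}; I will show that for $x(0) \in \mathcal{R}$ this solution exists for all $t \ge 0$, stays in $\Omega_{V(x(0))}$, and converges to the origin.

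\textbf{Step 1 (preliminary properties of $V$).} First I would record that $V(0) = 0$: positive definiteness of $W_1$ and $W_2$ gives $0 = W_1(0) \le V(0) \le W_2(0) = 0$, and combined with positive definiteness of $W_1$ this yields $V(x) > 0$ for all $x \in \Omega \setminus \{0\}$. Next, using continuity of $V$, the bound $V \le W_2$, and the fact that $\Omega$ contains a ball around the origin, I would show that for all sufficiently small $c > 0$ the connected component of $\Omega_c$ containing $0$ is a compact neighbourhood of the origin contained in $\Omega$; in particular $\mathcal{R}$ is itself a neighbourhood of the origin.

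\textbf{Step 2 (monotonicity of $V$ along trajectories).} The heart of the argument is the elementary real-analysis lemma that if $g : [0,T) \to \mathbb{R}$ is continuous and its upper right Dini derivative satisfies $D^+ g(t) \le 0$ for every $t$, then $g$ is nonincreasing, together with the companion fact $g(t) - g(0) \le \int_0^t D^+ g(s)\, ds$. Applying this to $g(t) := V(x(t))$ along a solution, and using the chain-rule-type inequality $D^+ g(t) \le D^+(V(x(t)), f(x(t)))$ (which is in fact an equality, $=\langle \nabla V(x(t)), f(x(t))\rangle$, in the smooth case of interest here), the hypothesis $D^+(V(x),f(x)) \le -W_3(x) \le 0$ shows that $t \mapsto V(x(t))$ is nonincreasing as long as $x(t) \in \Omega$. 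Hence the origin-component of $\Omega_c$ is forward invariant, and with its compactness from Step~1 this guarantees that solutions started in $\mathcal{R}$ are defined for all $t \ge 0$ and never leave $\Omega$.

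\textbf{Step 3 (stability and attractivity).} Lyapunov stability then follows the textbook pattern: given $\varepsilon > 0$ small enough that $\overline{B_\varepsilon} \subset \Omega$, set $c_\varepsilon := \min_{\|x\| = \varepsilon} W_1(x) > 0$, so the origin-component of $\Omega_{c_\varepsilon}$ lies inside $B_\varepsilon$, and use continuity of $V$ at $0$ with $V(0)=0$ to pick $\delta$ with $\|x_0\| < \delta \Rightarrow V(x_0) < c_\varepsilon$; forward invariance keeps $x(t)$ in $B_\varepsilon$. For attractivity, fix $x_0 \in \mathcal{R}$; then $V(x(t))$ is nonincreasing and bounded below by $0$, hence converges to some $c^\ast \ge 0$. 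If $c^\ast > 0$ the trajectory stays in the compact set $K := \{x \in \Omega_{V(x_0)} : V(x) \ge c^\ast\}$, which excludes the origin, so $\mu := \min_{x \in K} W_3(x) > 0$, and integrating the Dini inequality gives $V(x(t)) \le V(x_0) - \mu t \to -\infty$, contradicting $V \ge 0$ on $\Omega$. Therefore $c^\ast = 0$, i.e. $V(x(t)) \to 0$, whence $W_1(x(t)) \to 0$ with $W_1$ positive definite and $\{x(t)\}$ bounded forces $x(t) \to 0$. This gives asymptotic stability on $\mathcal{R}$.

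The main obstacle — the only place real care is needed — is Step~2: passing from the pointwise hypothesis on $D^+(V(x),f(x))$ to monotonicity of $t \mapsto V(x(t))$. For a merely continuous $V$ the chain-rule inequality $D^+(V\circ x)(t) \le D^+(V(x(t)),f(x(t)))$ is not automatic, so I would either (i) invoke the standard comparison results for Dini derivatives under a local Lipschitz assumption on $V$, or (ii) simply observe that in every application in this chapter $V$ is a polynomial, so $D^+(V(x),f(x)) = \langle\nabla V(x),f(x)\rangle$ and $\tfrac{d}{dt}V(x(t))$ exists and equals it, reducing Step~2 to the monotonicity of a $C^1$ function with nonpositive derivative. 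Everything else is routine bookkeeping with sublevel sets and positive-definite functions.
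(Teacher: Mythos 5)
The paper does not actually prove Theorem~\ref{thm:lyap_nonlin}; it is stated as a known Lyapunov-type result (the text immediately afterwards only remarks that Dini derivatives are used because the constructed $V$ is piecewise polynomial), so there is no in-paper argument to compare against. Your proof is the standard one and is essentially sound: $V(0)=0$ and positivity of $V$ on $\Omega\setminus\{0\}$ from the $W_1,W_2$ sandwich, monotonicity of $t\mapsto V(x(t))$ from the Dini inequality, forward invariance and compactness of the sublevel sets to get global forward existence and Lyapunov stability, and the usual contradiction $V(x(t))\le V(x_0)-\mu t$ to rule out $V(x(t))\to c^*>0$. You have also correctly isolated the only genuinely delicate point: for a merely continuous $V$, the pointwise bound on $D^+(V(x),f(x))$ (a Dini derivative along the ray $x+hf(x)$, as defined in Definition~\ref{df:Dini}) does not by itself control $V$ along the curved trajectory, and one needs $V$ locally Lipschitz to get $\limsup_{h\to 0^+}h^{-1}\bigl(V(x(t+h))-V(x(t))\bigr)\le D^+(V(x(t)),f(x(t)))$ via $\|x(t+h)-x(t)-hf(x(t))\|=o(h)$. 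That is exactly the right fix here, since the paper's $V$ is continuous and piecewise polynomial on a polytopal decomposition, hence locally Lipschitz; note that your fallback (ii) as literally stated does not apply, because $V$ is piecewise polynomial rather than polynomial and $\nabla V$ need not exist on the interfaces $\zeta^k(D_i)$, so the Lipschitz route (i) is the one that is actually needed.

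Two smaller points worth tightening if this were written out in full. First, the theorem as stated only assumes $0\in\Omega$; your argument uses that $\Omega$ contains a neighbourhood of the origin and that the sublevel sets $\{y:V(y)\le V(x_0)\}\subset\Omega$ are compact. Neither is literally guaranteed by the hypotheses, but both hold in the paper's setting ($\Omega$ a bounded polytope containing the origin in its interior), and without them the conclusion is either vacuous or genuinely false, so these should be stated as standing assumptions rather than derived. Second, in Step~3 the compactness you invoke for $K$ is that of the full sublevel set at level $V(x_0)$ for an arbitrary $x_0\in\mathcal{R}$, not the "small $c$" compactness established in Step~1; it follows from the sublevel set being a closed subset of the bounded set $\Omega$, and it is cleaner to say so directly.
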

In this paper, we construct piecewise-polynomial Lyapunov functions which may not have classical derivatives. As such, we use Dini derivatives which are known to exist for piecewise-polynomial functions. \vspace{0.05in}

\textit{\textbf{Problem statement}:}\label{pb:prob1}
Given the vertices $p_i \in \mathbb{R}^n, i=1, \cdots, K$, we would like to find 
the largest positive $s$ such that there exists a polynomial $V(x)$ where $V(x)$ satisfies the conditions of Theorem~\ref{thm:lyap_nonlin} on the convex polytope
\[
\left\lbrace x \in \mathbb{R}^n : x = \sum_{i=1}^K \mu_i p_i: \mu_i \in [0,s] \text{ and } \sum_{i=1}^K \mu_i = s \right\rbrace.
\]


Given a convex polytope, the following result~(\cite{handelman_1988}) parameterizes the set of polynomials which are positive on that polytope using the positive orthant.

\begin{mythm} (Handelman's Theorem)
\label{thm:Handelman} Given $w_i \in \mathbb{R}^n, u_i \in \mathbb{R}, i=1,\cdots,K$, let $\Gamma$ be a convex polytope as defined in definition~\ref{df:polytope1}. If polynomial $P(x) > 0$ for all $x \in \Gamma$, then there exist $b_\alpha \geq 0$, $\alpha \in \mathbb{N}^K$ such that for some $d \in \mathbb{N}$, \vspace{-0.05in}
\[
P(x) := \sum_{\alpha \in E_{d,K}} b_{\alpha} \prod_{ji=1}^{K} (w^T_{i}x+u_{i})^{\alpha_i}.
\]
\end{mythm}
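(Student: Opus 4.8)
The plan is to derive Handelman's theorem from the simplex version of Polya's theorem (Theorem~\ref{thm:polya}), exploiting the fact that the facet forms $\ell_i(x):=w_i^Tx+u_i$ of a bounded polytope are positively dependent. Since $\Gamma$ is the convex hull of finitely many points (Definition~\ref{df:polytope1}) it is bounded, so the gradients $\{w_i\}$ positively span $\mathbb{R}^n$ — otherwise $\Gamma$ would contain a ray. Hence there are $\lambda_i>0$ with $\sum_i\lambda_i w_i=0$, and $m_i(x):=\lambda_i\ell_i(x)$ satisfy $\sum_{i=1}^K m_i(x)\equiv c$ for a constant $c$, with $c>0$ because $\Gamma$ has nonempty interior. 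After rescaling assume $\sum_i m_i\equiv 1$. Then the affine map $\pi(x):=(m_1(x),\dots,m_K(x))$ sends $\mathbb{R}^n$ into the hyperplane $\{y\in\mathbb{R}^K:\sum_i y_i=1\}$, maps $\Gamma$ into the standard unit simplex $\Delta^K\subset\mathbb{R}^K$, and — since the $w_i$ span — is injective with an affine left inverse $\psi$ defined on $A:=\pi(\mathbb{R}^n)$, an $n$-dimensional affine subspace of $\{\sum_i y_i=1\}$.

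Next I would lift $P$ to the $y$-variables by setting $\tilde P(y):=P(\psi(y))$, a polynomial on $\mathbb{R}^K$ with $\tilde P(\pi(x))=P(x)$ for all $x$. The difficulty is that $\tilde P$ need only be positive on the slice $\pi(\Gamma)=A\cap\Delta^K$, not on all of $\Delta^K$, so Polya's theorem does not yet apply. To repair this, choose affine forms $q_1,\dots,q_r$ cutting out $A$ inside $\{\sum_i y_i=1\}$, so that $\sum_j q_j(y)^2$ vanishes on $\Delta^K$ precisely on $A\cap\Delta^K$. By compactness of $\Delta^K$ and positivity of $\tilde P$ on the compact set $A\cap\Delta^K$, there is $\mu>0$ for which $\hat P(y):=\tilde P(y)+\mu\sum_j q_j(y)^2$ is strictly positive on all of $\Delta^K$, while still $\hat P|_A=\tilde P|_A$. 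Homogenizing $\hat P$ with respect to $(y_1,\dots,y_K)$ — replacing each monomial $y^\gamma$ of degree below $d:=\deg\hat P$ by $y^\gamma(\sum_i y_i)^{d-|\gamma|}$ — gives a homogeneous $\bar P$ of degree $d$ that agrees with $\hat P$ on $\{\sum_i y_i=1\}$, hence $\bar P(y)>0$ for every $y$ in the standard simplex.

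Finally I would apply Polya's theorem to $\bar P$: there is $e\ge 0$ with $(\sum_i y_i)^e\bar P(y)=\sum_{|\gamma|=d+e}c_\gamma\,y^\gamma$ and all $c_\gamma>0$. Substituting $y_i=m_i(x)$, and using $\sum_i m_i(x)\equiv 1$ together with $m(x)\in A$ (so $q_j(m(x))=0$ and therefore $\bar P(m(x))=\hat P(m(x))=\tilde P(m(x))=P(x)$), yields
\[
P(x)=\sum_{|\gamma|=d+e}c_\gamma\prod_{i=1}^K m_i(x)^{\gamma_i}=\sum_{|\gamma|=d+e}\Big(c_\gamma\prod_{i=1}^K\lambda_i^{\gamma_i}\Big)\prod_{i=1}^K(w_i^Tx+u_i)^{\gamma_i},
\]
which is exactly the asserted representation, with $b_\gamma:=c_\gamma\prod_i\lambda_i^{\gamma_i}\ge 0$ and $d+e$ in the role of the Handelman degree. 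The main obstacle is the middle step: manufacturing a single polynomial $\hat P$ that is simultaneously strictly positive on the full simplex $\Delta^K$ and equal to the pullback of $P$ along the proper subspace $A$, since Polya's theorem is useless on $A\cap\Delta^K$ alone. The squared-distance correction $\mu\sum_j q_j^2$ is what bridges the gap, and checking that a finite $\mu$ suffices is precisely where boundedness (compactness) of $\Gamma$ is essential.
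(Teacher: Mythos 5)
The paper itself does not prove this statement: both here and in Chapter~2 it is quoted as a known result with a citation to Handelman's 1988 paper, whose original argument runs through representation theory of partially ordered commutative rings and bears no resemblance to what you wrote. Your proposal is therefore a genuinely different (and essentially self-contained) route: a reduction of Handelman's theorem to the simplex version of Polya's theorem, which the paper does take as given. I checked the argument and it is correct. Boundedness of $\Gamma$ does force the $w_i$ to positively span $\mathbb{R}^n$ (otherwise $\Gamma$ contains a ray), and from $-w_j\in\mathrm{cone}\{w_i\}$ for every $j$ one gets strictly positive $\lambda_i$ with $\sum_i\lambda_i w_i=0$, so the partition $\sum_i m_i\equiv c$ exists; the injectivity of $\pi$ follows since the $w_i$ span; the identity $\pi(\Gamma)=A\cap\Delta^K$ holds because $\lambda_i>0$ lets you recover the sign of each $w_i^Tx+u_i$ from $m_i(x)$; and the compactness argument for choosing $\mu$ (minimum of $\sum_j q_j^2$ over the compact set $\{y\in\Delta^K:\tilde P(y)\le 0\}$, which is disjoint from $A\cap\Delta^K$) is sound. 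After homogenization and Polya, every exponent vector has total degree exactly $d+e$, which is a special case of the asserted $\|\alpha\|_1\le d$ form, and the coefficients $c_\gamma\prod_i\lambda_i^{\gamma_i}$ are indeed nonnegative. The one hypothesis you use that the theorem statement does not explicitly grant is that $\Gamma$ has nonempty interior (equivalently, that some point of $\Gamma$ makes all facet forms strictly positive), which is what guarantees $c>0$ and lets you rescale; this is the standard setting for Handelman's theorem and you flag it, but if you want the proof to match the statement verbatim you should either add that hypothesis or dispose of the degenerate case separately. What your approach buys over the paper's citation is a proof whose only external input is Polya's theorem, at the cost of a possibly enormous degree $d+e$ inherited from the Polya exponent of the perturbed polynomial $\hat P$ rather than of $P$ itself.
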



Given a D-decomposition $D_\Gamma := \{ D_i \}_{i=1, \cdots,L}$ of the form
\[
D_i := \left\lbrace x \in \mathbb{R}^n: h_{i,j}^T x + g_{i,j} \geq 0, j=1, \cdots, m_i \right\rbrace
\]
of some polytope $\Gamma$, we parameterize a cone of piecewise-polynomial Lyapunov functions which are positive on $\Gamma$ as
\[
V(x) = V_i(x) := \sum_{\alpha \in E_{d,m_i}} b_{i,\alpha} \prod_{j=1}^{m_i} (h^T_{i,j}x+g_{i,j})^{\alpha_j} \;\; \text{ for } x \in D_i \text{ and } i=1,\cdots,L.
\]
We will use a similar parameterization of piecewise-polynomials which are negative on $\Gamma$ in order to enforce negativity of the derivative of the Lyapunov function. We will also use linear equality constraints to enforce continuity of the Lyapunov function.

\section{Expressing the Stability Problem as a Linear Program}
\label{sec:setup}
We first present some lemmas necessary for the proof of our main result. The following lemma provides a sufficient condition for a polynomial represented in the Handelman basis to vanish at the origin ($V(0)=0$).

\begin{mylem}
\label{Zero Origin}
 Let $D_{\Gamma} := \{ D_i \}_{i=1,\cdots,L}$ be a D-decomposition of a convex polytope $\Gamma$, where
\[
D_i := \left\lbrace x \in \mathbb{R}^n: h_{i,j}^T x + g_{i,j} \geq 0, j=1, \cdots, m_i \right\rbrace.
\]
For each $i\in \{1\cdots,L\}$, let 
\[
P_i(x) := \sum_{\alpha \in E_{d,m_i}} b_{i,\alpha} \prod_{j=1}^{m_i} (h^T_{i,j}x+g_{i,j})^{\alpha_j}, 
\]
 $N_i$ be the cardinality $E_{d,m_i}$ as defined in \eqref{eq:E_d}, and let $b_i\in\mathbb{R}^{N_i}$ be the vector of the coefficients $b_{i,\alpha}$ . Consider $R_i:\mathbb{R}^{N_i}\times \mathbb{N}\rightarrow \mathbb{R}^C$ as defined in \eqref{eq:Rdef}. If $R_i(b_i,d)=\mathbf{0}$, then $P_i(x) = 0$ for all $i\in \{1\cdots,L\}$.
\end{mylem}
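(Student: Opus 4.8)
\textbf{Proof proposal for Lemma~\ref{Zero Origin}.}

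The plan is to evaluate $P_i$ at the origin and show that the hypothesis $R_i(b_i,d)=\mathbf{0}$ kills every term that could possibly survive. First I would substitute $x=0$ into the Handelman-basis representation
\[
P_i(0) = \sum_{\alpha \in E_{d,m_i}} b_{i,\alpha} \prod_{j=1}^{m_i} g_{i,j}^{\alpha_j}.
\]
The key observation is that a product $\prod_{j=1}^{m_i} g_{i,j}^{\alpha_j}$ is nonzero precisely when $\alpha_j = 0$ for every index $j$ with $g_{i,j}=0$; otherwise the factor $g_{i,j}^{\alpha_j}$ with $\alpha_j\geq 1$ forces the whole product to vanish. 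Thus the only monomials $\rho_\alpha$ in $\Theta_d(D_i)$ that are nonzero at the origin are exactly those indexed by $\alpha \in S_{d,m_i} := \{\alpha \in E_{d,m_i} : \alpha_j = 0 \text{ for } j \in \{j : g_{i,j}=0\}\}$, which (recalling Definition~\ref{df:maps}) is precisely the index set whose coefficients $R_i(b_i,d)$ collects.

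Consequently $P_i(0) = \sum_{\alpha \in S_{d,m_i}} b_{i,\alpha} \prod_{j=1}^{m_i} g_{i,j}^{\alpha_j}$, a finite linear combination of the entries of the vector $R_i(b_i,d)$. If $R_i(b_i,d)=\mathbf{0}$, then every $b_{i,\beta^{(k)}}$ appearing in this sum is zero, hence $P_i(0)=0$. Since $0 \in D_i$ for every $i$ by the definition of a $D$-decomposition (all sub-polytopes share the vertex at the origin, $\cap_{i=1}^L D_i = \{0\}$), this establishes $P_i(0)=0$ for all $i \in \{1,\cdots,L\}$, which is the claimed conclusion. (I read the statement ``$P_i(x)=0$'' in the lemma as shorthand for ``$P_i$ evaluated at the origin is zero,'' consistent with the stated purpose ``to vanish at the origin,'' and would phrase the written proof that way.)

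I do not anticipate a genuine obstacle here; the lemma is essentially a bookkeeping statement about which Handelman monomials are nonzero at the origin. The one point requiring care is making the combinatorial claim precise: I would state and briefly justify that $\prod_j g_{i,j}^{\alpha_j} \neq 0 \iff \alpha_j=0$ whenever $g_{i,j}=0$, using the convention $0^0=1$ for the factors with $\alpha_j=0$ and $g_{i,j}=0$. After that, matching this index set to the definition of $S_{d,m_i}$ and hence to $R_i(b_i,d)$ is immediate, and the conclusion follows by linearity. The only modeling assumption I rely on from earlier in the excerpt is that each $D_i$ in a $D$-decomposition contains the origin, which is built into Definition~\ref{df:decomposition}.
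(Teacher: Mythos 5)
Your proof is correct and follows essentially the same route as the paper's: both arguments split the Handelman expansion according to membership in $S_{d,m_i}$, observe that every term indexed outside $S_{d,m_i}$ contains a factor $(h_{i,j}^Tx+g_{i,j})^{\alpha_j}$ with $g_{i,j}=0$ and $\alpha_j>0$ and hence vanishes at the origin, and then use $R_i(b_i,d)=\mathbf{0}$ to kill the remaining coefficients. Your reading of the conclusion as $P_i(0)=0$ matches what the paper actually proves, and your explicit remark about the $0^0=1$ convention is a small but welcome clarification over the paper's version.
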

\begin{proof}
We can write
\[
P_i(x) = \sum_{\alpha \in E_{d,m_i}\backslash S_{d,m_i}}  b_{i,\alpha}\prod_{j=1}^{m_i} (h_{i,j}^Tx+g_{i,j})^{\alpha_i} + \sum_{\alpha \in S_{d,m_i}}   b_{i,\alpha} \prod_{j=1}^{m_i} (h_{i,j}^Tx+g_{i,j})^{\alpha_i} ,
\]
where
\[
S_{d,m_i} :=\{ \alpha \in E_{d,m_i} :  \alpha_j = 0 \text{ for } j \in \{ j \in \mathbb{N}: g_{i,j} = 0 \} \}.
\]
By the definitions of $E_{d,m_i}$ and $S_{d,m_i}$, we know that for each $\alpha \in \ E_{d,m_i}\backslash S_{d,m_i}$ for $i\in\{1,\cdots,L\}$, there exists at least one $j\in \{1,\cdots,m_i\}$ such that $g_{i,j} = 0$ and $\alpha_k>0$. Thus, at $x=0$,
\[
\sum_{\alpha \in E_{d,m_i}\backslash S_{d,m_i}}  b_{i,\alpha}\prod_{j=1}^{m_i} (h_{i,j}^Tx+g_{i,j})^{\alpha_i} = 0\;\;\;\text{for all }i\in\{1,\cdots,L\}.
\]
Recall the definition of the map $R_i$ from \eqref{eq:Rdef}. Since $R_i(b_i,d)=\mathbf{0}$ for each $i\in\{1,\cdots,L\}$, it follows from that $b_{i,\alpha} = 0$ for each  $\alpha \in S_{d,m_i}$ and $i\in\{1,\cdots,L\}$. Thus,
\[
\sum_{\alpha \in S_{d,m_i}}  b_{i,\alpha} \prod_{j=1}^{m_i} (h_{i,j}^Tx+g_{i,j})^{\alpha_i} = 0\;\;\;\text{for all }i\in\{1,\cdots,L\}.
\]
Thus, $P_i(0) = 0$ for all $i\in\{1,\cdots,L\}$.
\end{proof}

This Lemma provides a condition which ensures that a piecewise-polynomial function on a D-decomposition is continuous.
\begin{mylem}
\label{Continuity Lemma}
Let $D_{\Gamma} := \{ D_i \}_{i=1,\cdots,L}$ be a D-decomposition of a polytope $\Gamma$, where
\[
D_i := \{ x \in \mathbb{R}^n: h_{i,j}^T x + g_{i,j} \geq 0, j=1, \cdots, m_i \}.
\]
For each $i\in \{1\cdots,L\}$, let
\[P_i(x) := \sum_{\alpha \in E_{d,m_i}} b_{i,\alpha} \prod_{j=1}^{m_i} (h^T_{i,j}x+g_{i,j})^{\alpha_j},\]
 $N_i$ be the cardinality of $E_{d,m_i}$ as defined in \eqref{eq:E_d}, and let $b_i\in \mathbb{R}^{N_i}$ be the vector of the coefficients $b_{i,\alpha}$. Given $i,j \in \{1, \cdots,L \},i \neq j$, let 
\begin{align}
&\Lambda_{i,j} (D_\Gamma):= \nonumber \\ 
&\left\{ k,l \in \mathbb{N} : k \in \{ 1, \cdots, m_i \}, l \in \{ 1, \cdots, m_j \}: \zeta^k(D_i) \neq \emptyset\text{ and } \zeta^k(D_i) = \zeta^l (D_j) \right\}.
\label{eq:lambdaij}
\end{align}
Consider $J_{i}:\mathbb{R}^{N_i}\times \mathbb{N}\times\{1\cdots,m_i\}\rightarrow \mathbb{R}^B$ as defined in \eqref{eq:Jref}. If
\[
J_{i}(b_i,d,k) = J_{j}(b_j,d,l)
\]
for all $i,j \in \{1, \cdots,L \},\;i\neq j$ and $k,l\in\Lambda_{i,j}(D_\Gamma)$, then the piecewise-polynomial function
\[
P(x)=P_i(x),\quad \text{ for } x\in D_i,\;i=1,\cdots,L   
\]
is continuous for all $x\in \Gamma$.
\end{mylem}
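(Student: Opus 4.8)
\emph{Proof sketch.} The plan is to reduce continuity of the piecewise-polynomial function $P$ to the elementary pasting lemma for finite covers by closed sets. Since $D_\Gamma=\{D_i\}_{i=1,\dots,L}$ covers $\Gamma$, each $D_i$ is closed, and each branch $P_i$ is a genuine polynomial (hence continuous on all of $\mathbb{R}^n$, in particular on $D_i$), it suffices to show that any two branches agree on the overlap of their cells, i.e. $P_i(x)=P_j(x)$ for every $x\in D_i\cap D_j$ and every $i\neq j$. Once this is established, the pasting lemma immediately yields continuity of $P$ on $\Gamma=\bigcup_i D_i$.

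Next I would locate the overlaps. Because $\mathrm{int}(D_i)\cap\mathrm{int}(D_j)=\emptyset$ for $i\neq j$ and both sets are convex polytopes, $D_i\cap D_j$ is contained in a common supporting hyperplane of $D_i$ and $D_j$; when this intersection is $(n-1)$-dimensional it is a facet of each, so there are indices $k\in\{1,\dots,m_i\}$ and $l\in\{1,\dots,m_j\}$ with $\zeta^k(D_i)=\zeta^l(D_j)\neq\emptyset$, that is, $(k,l)\in\Lambda_{i,j}(D_\Gamma)$. Lower-dimensional overlaps (edges, vertices) lie in the closure of such coinciding facets, so it is enough to prove $P_i(x)=P_j(x)$ for all $x$ in the common facet $\zeta^k(D_i)$ whenever $(k,l)\in\Lambda_{i,j}(D_\Gamma)$.

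Now fix such a pair $(k,l)$. On the facet $\zeta^k(D_i)$ we have $h_{i,k}^Tx+g_{i,k}=0$, so every Handelman term $b_{i,\alpha}\prod_{j}(h_{i,j}^Tx+g_{i,j})^{\alpha_j}$ of $P_i$ with $\alpha_k\geq 1$ vanishes there; hence $P_i(x)=P_i|_k(x)$ for all $x\in\zeta^k(D_i)$, and symmetrically $P_j(x)=P_j|_l(x)$ for all $x\in\zeta^l(D_j)=\zeta^k(D_i)$. By the definition of the map $J_i$ in \eqref{eq:Jref}, the vector $J_i(b_i,d,k)$ is precisely the coefficient vector of $P_i|_k$ in the monomial basis $\{x^{\lambda^{(1)}},\dots,x^{\lambda^{(B)}}\}$, and likewise $J_j(b_j,d,l)$ for $P_j|_l$. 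The hypothesis $J_i(b_i,d,k)=J_j(b_j,d,l)$ therefore forces $P_i|_k$ and $P_j|_l$ to be the same polynomial, so they agree at every point of $\mathbb{R}^n$, in particular on $\zeta^k(D_i)$. Combining with the previous step, $P_i(x)=P_j(x)$ on $D_i\cap D_j$, which is exactly what the pasting lemma requires.

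I expect the only real obstacle to be the geometric bookkeeping in the second step: one must argue carefully that every full-dimensional overlap $D_i\cap D_j$ is recorded by $\Lambda_{i,j}(D_\Gamma)$ as an \emph{exact} facet coincidence $\zeta^k(D_i)=\zeta^l(D_j)$ rather than, say, a facet of one cell strictly containing a facet of the other, and that the non-full-dimensional overlaps reduce to these by a closure argument. The polynomial-algebra content — that restricting a Handelman representation to a facet amounts to discarding the terms with $\alpha_k\geq 1$, and that equality of monomial coefficient vectors implies equality of polynomials — is routine once Definitions~\ref{df:rstr_poly} and~\ref{df:maps} are unwound.
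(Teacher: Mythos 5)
Your argument is correct and is essentially the proof the paper gives: $J_i(b_i,d,k)$ is the coefficient vector of the restriction $P_i|_k$, so equality of the $J$'s forces $P_i|_k = P_j|_l$ as polynomials, these restrictions coincide with $P_i$ and $P_j$ on the shared facet $\zeta^k(D_i)=\zeta^l(D_j)$, and agreement on overlaps plus continuity of each polynomial branch yields continuity of the pasted function. The geometric bookkeeping you flag as the only real obstacle --- that $D_i\cap D_j$ is exhausted by the facet coincidences recorded in $\Lambda_{i,j}(D_\Gamma)$ --- is in fact passed over silently in the paper's proof as well, which simply asserts $P_i=P_j$ on all of $D_i\cap D_j$ after establishing it on the common facets.
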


\begin{proof}
 From \eqref{eq:Jref},  $J_{i}(b_i,d,k)$ is the vector of coefficients of $P_i |_k(x)$ after expansion. Therefore, if 
\[
 J_{i}(b_i,d,k) = J_{j}(b_j,d,l) \; \text{ for all } \; i,j\in \{1,\cdots,L\},\;i\neq j
 \]
and $(k,l)\in\Lambda_{i,j}(D_\Gamma)$, then
\begin{equation}
P_i |_k(x) = P_j |_l(x)\text{ for all }i,j\in \{1,\cdots,L\},\;i\neq j\text{ and }(k,l)\in\Lambda_{i,j}(D_\Gamma).
\label{eq:PiPj}
\end{equation}
On the other hand, from definition \ref{df:rstr_poly}, it follows that for any $i\in\{1,\cdots ,L\}$ and $k\in\{1,\cdots,m_i\}$,  \begin{align}\label{eq:restr_equal}
P_i|_k(x) = P_i(x)\text{ for all }x\in\zeta^k(D_i). 
\end{align}
Furthermore, from the definition of $\Lambda_{i,j}(D_\Gamma)$, we know that
 \begin{equation}\label{eq:interface}
\zeta^k(D_i)= \zeta^l(D_j)\subset D_i\cap D_j\end{equation} for any $i,j\in\{1\cdots,L\},\;i\neq j$ and any $(k,l)\in\Lambda_{i,j}(D_\Gamma)$.
 Thus, from \eqref{eq:PiPj}, \eqref{eq:restr_equal} and \eqref{eq:interface}, it follows that for any $i,j\in\{1,\cdots,L\},\;i\neq j$, we have $P_i(x)=P_j(x)$ for all $x\in D_i\cap D_j$ . Since for each $i\in\{1,\cdots,L\}$, $P_i(x)$ is continuous on $D_i$ and for any $i,j\in\{1\cdots,L\},\;i\neq j$, $P_i(x) = P_j(x)$ for all $x\in D_i\cap D_j$, we conclude that the piecewise polynomial function
 \[P(x) = P_i(x)\quad x\in D_i,i=1,\cdots,L\]
 is continuous for all $x\in \Gamma$.
\end{proof}

\begin{mythm}(Main Result)
\label{thm:main_theorem}
Let $d_f$ be the degree of the polynomial vector field $f(x)$ of System~\eqref{eq:system_nonlin}. Given $w_i,\;h_{i,j}\in\mathbb{R}^n$ and $u_i,\;g_{i,j}\in\mathbb{R}$, define the polytope
\[
\Gamma := \{ x \in \mathbb{R}^n: w_i^T x+u_i \geq 0, i=1, \cdots, K\},
\]
with D-decomposition $D_{\Gamma} := \{ D_i \}_{i=1,\cdots,L}$, where
\[
D_i := \{ x \in \mathbb{R}^n: h_{i,j}^T x + g_{i,j} \geq 0, j=1, \cdots, m_i \}.
\]
 Let $N_i$ be the cardinality of $E_{d,m_i}$, as defined in \eqref{eq:E_d} and let $M_i$ be the cardinality of $E_{d+d_f-1,m_i}$. Consider the maps $R_i$, $H_i$, $F_i$, $G_i$, and $J_i$ as defined in definition \ref{df:maps}, and $\Lambda_{i,j}(D_\Gamma)$ as defined in \eqref{eq:lambdaij} for $i,j\in\{1,\cdots,L\}$. If there exists $d\in\mathbb{N}$ such that $\max \gamma$ in the linear program (LP),
\begin{align}
&\max_{\gamma \in \mathbb{R} , b_i \in \mathbb{R}^{N_i}, c_i \in \mathbb{R}^{M_i}} \;\;\; \gamma  \nonumber  \\
\nonumber&  \text{subject to } \;\;
\\& b_{i} \geq \mathbf{0}   &&\hspace{-.3cm} \text{ for } i=1, \cdots, L \nonumber \\
&  c_{i} \leq \mathbf{0}           && \hspace{-.3cm}\text{ for } i=1, \cdots, L \nonumber  \\
&  R_i(b_{i},d) = \mathbf{0}       && \hspace{-.3cm}\text{ for } i=1, \cdots, L        \nonumber \\
&  H_i(b_i,d) \geq \mathbf{1}      && \hspace{-.3cm} \text{ for } i=1, \cdots, L \nonumber \\
&  H_i(c_i,d+d_f-1) \leq -\gamma \cdot \mathbf{1}  && \hspace{-.3cm}\text{ for } i=1, \cdots, L \nonumber \\
& G_i(b_i,d) = F_i(c_i,d+d_f-1)   &&\hspace{-.3cm}\text{ for } i=1, \cdots, L \nonumber \\
&  J_{i}(b_i,d,k) = J_{j}(b_j,d,l)  && \hspace{-.3cm} \text{ for } i,j=1, \cdots, L \text{ and } k,l \in \Lambda_{i,j}(D_\Gamma)
\label{eq:LP}
\end{align}
is positive, then the origin is an asymptotically stable equilibrium for System~\ref{eq:system_nonlin}. Furthermore,
\begin{equation*}
V(x) = V_i(x) = \sum_{\alpha\in E_{d,m_i}} b_{i,\alpha} \prod_{j=1}^{m_i} (h^T_{i,j}x+g_{i,j})^{\alpha_j} \text{ for } x \in D_i, \, i=1, \cdots,L 
\end{equation*}
with $b_{i,\alpha}$ as the elements of $b_i$, is a piecewise polynomial Lyapunov function proving stability of System~\eqref{eq:system_nonlin}.
\end{mythm}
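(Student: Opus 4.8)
The plan is to verify that the LP feasibility conditions in~\eqref{eq:LP} translate exactly into the hypotheses of the Lyapunov stability criterion in Theorem~\ref{thm:lyap_nonlin}, applied on the polytope $\Gamma$ with the $D$-decomposition $D_\Gamma$. Concretely, I would fix a $d \in \mathbb{N}$ for which $\max\gamma > 0$ in~\eqref{eq:LP}, extract the optimal $b_i$ and $c_i$, and build the candidate
\[
V(x) = V_i(x) = \sum_{\alpha\in E_{d,m_i}} b_{i,\alpha}\prod_{j=1}^{m_i}(h_{i,j}^Tx+g_{i,j})^{\alpha_j} \quad\text{for } x\in D_i,
\]
and, analogously, the piecewise-polynomial $W(x) = W_i(x)$ on the $c_i$-coefficients. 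The proof then proceeds by checking four things: (i) $V$ is well-defined and continuous on $\Gamma$; (ii) $V(0)=0$; (iii) $V$ is positive away from the origin and lower/upper bounded by positive-definite functions on a sublevel set; and (iv) the Dini derivative of $V$ along $f$ is negative definite on $\Gamma$.

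For (i), the constraint $J_i(b_i,d,k) = J_j(b_j,d,l)$ for all $(k,l)\in\Lambda_{i,j}(D_\Gamma)$ is precisely the hypothesis of Lemma~\ref{Continuity Lemma}, so $V$ is continuous on $\Gamma$; I would simply cite that lemma. For (ii), the constraint $R_i(b_i,d)=\mathbf{0}$ is the hypothesis of Lemma~\ref{Zero Origin}, giving $V_i(0)=0$ for every $i$; since $0\in D_i$ (the decomposition shares the vertex at the origin by Definition~\ref{df:decomposition}), we get $V(0)=0$. For (iii): since $b_i\ge\mathbf{0}$, each $V_i$ is a nonnegative combination of Handelman basis monomials, hence $V_i(x)\ge 0$ on $D_i$; the constraint $H_i(b_i,d)\ge\mathbf{1}$ forces the coefficients of the pure square terms $x_j^2$ (after expansion into the canonical monomial basis, which is what $H_i$ extracts via the exponents $\delta^{(k)}=2e_j$) to be $\ge 1$, so that $V_i(x) \ge \sum_{j=1}^n x_j^2 = \|x\|^2$ on $D_i$ — this provides the positive-definite lower bound $W_1(x)=\|x\|^2$. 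An upper bound $W_2$ is obtained because $V$ is a continuous function vanishing at $0$ on the compact set $\Gamma$, so it is dominated by a positive-definite function there (e.g.\ by compactness one can take $W_2(x)=c\|x\|^\kappa$ for suitable $c,\kappa$, or simply invoke that a continuous function with $V(0)=0$ on a compact neighborhood of $0$ admits such a bound). For (iv): the constraint $G_i(b_i,d) = F_i(c_i,d+d_f-1)$ equates the canonical-monomial coefficient vector of $\langle\nabla V_i,f\rangle$ (degree $d+d_f-1$) with that of $W_i$ expanded in its Handelman basis; since two polynomials with equal coefficient vectors in the canonical basis are identical, $\langle\nabla V_i(x),f(x)\rangle = W_i(x)$ on $D_i$. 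Because $c_i\le\mathbf{0}$, $W_i$ is a nonpositive combination of the (nonnegative on $D_i$) Handelman basis elements, so $W_i(x)\le 0$; and $H_i(c_i,d+d_f-1)\le -\gamma\cdot\mathbf{1}$ with $\gamma>0$ forces the pure-square coefficients of $W_i$ to be $\le -\gamma$, yielding $\langle\nabla V_i(x),f(x)\rangle = W_i(x) \le -\gamma\|x\|^2$ on $D_i$. By Definition~\ref{df:Dini} and the fact that $V_i$ is smooth on $D_i$, the upper Dini derivative $D^+(V(x),f(x))$ coincides with $\langle\nabla V_i(x),f(x)\rangle$ on the interior of each $D_i$; on the shared facets one checks that the Dini derivative is bounded above by the max of the two one-sided polynomial derivatives, each $\le -\gamma\|x\|^2$, so the bound $D^+(V(x),f(x))\le -W_3(x)$ holds on all of $\Gamma$ with $W_3(x)=\gamma\|x\|^2$. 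Applying Theorem~\ref{thm:lyap_nonlin} with $\Omega = \Gamma$ then yields asymptotic stability on the largest sublevel set of $V$ contained in $\Gamma$, and the displayed $V$ is the claimed piecewise-polynomial Lyapunov function.

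The main obstacle I anticipate is step (iv) at the \emph{interfaces} between sub-polytopes: $V$ is only piecewise-polynomial, so $\langle\nabla V,f\rangle$ is not classically defined on the facets $\zeta^k(D_i)=\zeta^l(D_j)$, and one must argue carefully with the upper Dini derivative that the negativity established in the interiors propagates to the shared boundaries. The clean way is to note that for $x$ on such an interface, $x + hf(x)$ lies in $D_i\cup D_j$ for small $h>0$, so $D^+(V(x),f(x)) = \limsup_{h\to 0^+}\frac{V(x+hf(x))-V(x)}{h}$ is at most the maximum of the corresponding one-sided directional derivatives of $V_i$ and $V_j$ — both of which equal the respective polynomial gradients $\langle\nabla V_i(x),f(x)\rangle$ and $\langle\nabla V_j(x),f(x)\rangle$ (using continuity of $V$ across the interface, guaranteed by (i)), and both are $\le -\gamma\|x\|^2$. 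A secondary subtlety worth spelling out is the exact bookkeeping that $H_i$ really extracts the coefficients of $x_1^2,\dots,x_n^2$ after full expansion into the canonical basis, and that equating $G_i$ with $F_i$ is equivalent to the polynomial identity $\langle\nabla V_i,f\rangle \equiv W_i$ — these follow from the affineness and definitions in Definition~\ref{df:maps} but should be stated explicitly so the reduction to Theorem~\ref{thm:lyap_nonlin} is airtight.
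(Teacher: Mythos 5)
Your proposal follows essentially the same route as the paper's proof: build $V$ from the $b_i$ and the auxiliary piecewise polynomial from the $c_i$, then check continuity via Lemma~\ref{Continuity Lemma}, vanishing at the origin via Lemma~\ref{Zero Origin}, the lower bound $V_i(x)\geq x^Tx$ from $b_i\geq\mathbf{0}$ and $H_i(b_i,d)\geq\mathbf{1}$, and the decrease condition from $c_i\leq\mathbf{0}$, $H_i(c_i,d+d_f-1)\leq-\gamma\cdot\mathbf{1}$ and $G_i(b_i,d)=F_i(c_i,d+d_f-1)$, before invoking Theorem~\ref{thm:lyap_nonlin}. Your additional discussion of the upper Dini derivative on the shared facets and of the upper bound $W_2$ is a welcome tightening of details the paper's proof passes over, but it does not change the argument's structure.
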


\begin{proof}
 Let us choose 
\begin{equation*}
V(x) = V_i(x) = \hspace{-0.05in} \sum_{\alpha\in E_{d,m_i}} \hspace{-0.05in} b_{i,\alpha} \prod_{j=1}^{m_i} (h^T_{i,j}x+g_{i,j})^{\alpha_j} \text{ for } x \in D_i, \, i=1, \cdots,L
\end{equation*}
In order to show that $V(x)$ is a Lyapunov function for system~\ref{eq:system_nonlin}, we need to prove the following:
\begin{enumerate}
\item  $V_i(x)\geq x^Tx$ for all $x\in D_i,\;i=1,\cdots,L$,
\item  $D^+(V_i(x),f(x))\leq -\gamma \,x^Tx$ for all $x\in D_i,\;i=1,\cdots,L$ and for some $\gamma>0$,
\item $V(0) = 0$,
\item $V(x)$ is continuous on $\Gamma$.
\end{enumerate}
Then, by Theorem~\ref{thm:lyap_nonlin}, it follows that System~\eqref{eq:system_nonlin} is asymptotically stable at the origin. Now, let us prove items (1)-(4).
For some $d\in\mathbb{N}$, suppose $\gamma>0,\;b_i\text{ and }c_i\text{ for }i=1,\cdots,L$ is a solution to linear program \eqref{eq:LP}. \vspace{0.1in}

\noindent\textbf{Item 1.} First, we show that $V_i(x)\geq x^Tx$ for all $x\in D_i,\;i=1,\cdots,L$. From the definition of the D-decomposition in the theorem statement, $h_{i,j}^Tx+g_{i,j}\geq 0$,  for all $x\in D_i$, $j=1,\cdots,m_i$. Furthermore, $b_i\geq \mathbf{0}$. Thus,
\begin{equation}\label{eq:Vipos}
V_i(x) := \sum_{\alpha\in E_{d,m_i}} b_{i,\alpha} \prod_{j=1}^{m_i} (h^T_{i,j}x+g_{i,j})^{\alpha_j}\geq 0
\end{equation}
 for all $x\in D_i\backslash,\;i=1,\cdots,L$. From \eqref{eq:Hdef}, $H_i(b_i,d)\geq \mathbf{1}$ for each $i=1,\cdots,L$ implies that all the coefficients of the expansion of $x^Tx$ in $V_i(x)$ are greater than $1$ for $i=1,\cdots,L$. This, together with \eqref{eq:Vipos}, prove that $V_i(x)\geq x^Tx$ for all $x\in D_i,\;i=1,\cdots,L$. \vspace{0.1in}

\noindent\textbf{Item 2.} Next, we show that $D^+(V_i(x),f(x))\leq -\gamma x^Tx $ for all $x\in D_i,\;i=1,\cdots,L$. For $i=1,\cdots,L$, let us refer the elements of $c_i$ as $c_{i,\beta}$, where $\beta\in E_{d+d_f-1,m_i}$ . From \eqref{eq:LP}, $c_{i}\leq \mathbf{0}$ for $i=1,\cdots,L$. Furthermore, since  $h_{i,j}^Tx+g_{i,j}\geq 0$ for all $x\in D_i$, it follows that 
\begin{equation}\label{eq:Zineg}
Z_i(x) = \sum_{\beta \in E_{d+d_f-1}} c_{\beta,i} \prod_{j=1}^{m_i}  (h_{i,j}^Tx+g_{i,j})^{\beta_j}\leq 0 
\end{equation}
 for all $x\in D_i,\;i=1,\cdots,L$. From \eqref{eq:Hdef}, $H_i(c_i,d+d_f-1)\leq -\gamma\cdot\mathbf{1}$ for $i=1,\cdots,L$ implies that all the coefficients of the expansion of $x^Tx$ in $Z_i(x)$ are less than $-\gamma$ for $i=1,\cdots,L$. This, together with \eqref{eq:Zineg}, prove that $Z_i(x)\leq -\gamma x^Tx$ for all $x\in D_i$, for $i=1,\cdots,L$. Lastly, by the definitions of the maps $G_i$ and $F_i$ in \eqref{eq:Gdef} and \eqref{eq:Fdef}, if $G_i(b_i,d) = F_i(c_i,d+d_f-1)$, then 
 \[
 \langle \nabla V_i(x),f(x)\rangle = Z_i(x)\leq -\gamma x^Tx \; \text{ for all } x\in D_i \; \text{ and } \; i\in\{1\cdots,L\}.
 \]
 Since $D^+(V_i(x),f(x)) = \langle\nabla V_i(x),f(x)\rangle \text{ for all } x\in D_i$, it follows that 
\[
D^+(V_i(x),f(x))\leq -\gamma x^Tx  \text{ for all } x\in D_i,\;i\in\{1\cdots,L\}.  \vspace{0.1in}
\]

\noindent\textbf{Item 3.} Now, we show that $V(0) = 0$. By Lemma~\ref{Zero Origin}, $R_i(b_{i},d) = \mathbf{0}$ implies $V_i(0)=0$ for each $i\in\{1,\cdots,L\}$. \vspace{0.1in}

\noindent\textbf{Item 4.} Finally, we show that $V(x)$ is continuous for $x\in \Gamma$. By Lemma~\ref{Continuity Lemma}, $J_{i}(b_i,d,k) = J_{j}(b_j,d,l)$ for all $i,j\in\{1,\cdots,L\}$, $k,l\in\Lambda_{i,j}(D_\Gamma)$ implies that $V(x)$ is continuous for all $x\in\Gamma$.
\end{proof}
Using Theorem~\ref{thm:main_theorem}, we define Algorithm~\ref{alg:Handelman_LP} to search for piecewise-polynomial Lyapunov functions to verify local stability of system \eqref{eq:system_nonlin} on convex polytopes. We have provided a Matlab implementation for Algorithm~\ref{alg:Handelman_LP} at: \url{www.sites.google.com/a/asu.edu/kamyar/Software}.

\begin{algorithm}
\textbf{\textit{Inputs:}}\\
\hspace{0.1in} Vertices of the polytope: $p_i$ for $i=1, \cdots,K$.\\
\hspace{0.1in} $h_{i,j}$ and $g_{i,j}$ for $i=1,\cdots,K$ and $j=1,\cdots,m_i$.\\
\hspace{0.1in} Coefficients and degree of the polynomial vector field in~\eqref{eq:system_nonlin}.\\
\hspace{0.1in} Maximum degree of the Lyapunov function: $d_{max}$ \vspace{0.1in}

\While{ $d < d_{\text{max}}$}{	
  	\eIf{the LP defined in \eqref{eq:LP} is feasible}{		
  		Break the while loop. \\		
 	}{
 	 	Set $d=d+1$\.\	
 	}
} \vspace{0.1in}

\textbf{\textit{Outputs:}}\\
\hspace{0.1in} If the LP in \eqref{eq:LP} is feasible, then the output is the coefficients $b_{i,\alpha}$
of the\\
\hspace{0.1in} Lyapunov function
\[
V(x)=V_i(x) = \sum_{\alpha\in E_{d,m_i}} b_{i,\alpha} \prod_{j=1}^{m_i} (h^T_{i,j}x+g_{i,j})^{\alpha_j} \text{ for } x \in D_i,\;i=1, \cdots,L
\]
\caption{Search for piecewise polynomial Lyapunov functions using Handelman's theorem}
\label{alg:Handelman_LP}
\end{algorithm}

\section{Computational Complexity Analysis}
\label{sec:complexity_handelman}
In this section, we analyze and compare the complexity of the LP in \eqref{eq:LP} with the complexity of the SDPs associated with Polya's algorithm in~\cite{kamyar_CDC2013} and an SOS approach using Positivstellensatz multipliers. For simplicity, we consider Lyapunov functions defined on a hypercube centered at the origin. Note that we make frequent use of the formula 
\[
N_{vars} := \sum_{i=0}^d \dfrac{(i+K-1)!}{i!(K-1)!},
\]
which gives the number of basis functions in $\Theta_d(\Gamma)$ for a convex polytope $\Gamma$ with $K$ facets.

\subsection{Complexity of the LP Associated with Handelman's Representation}
Consider the following assumption on our $D-$decomposition.
\begin{myass}
We perform the analysis on an $n-$dimensional hypercube, centered at the origin. The hypercube is decomposed into $L=2n$ sub-polytopes such that the $i$-th sub-polytope has $m=2n-1$ facets. Figure~\ref{fig:polytope_complexity} shows the $1-$, $2-$ and $3-$dimensional decomposed hypercube.
\label{assum_decompose}
\end{myass}

\begin{figure}
\centering
\includegraphics[scale=2.4]{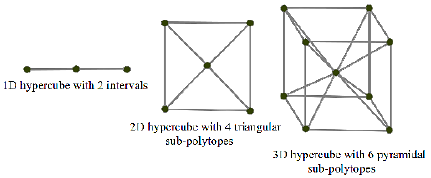}
\caption{Decomposition of the Hypercube in $1-$,$2-$ and $3-$Dimensions}
\label{fig:polytope_complexity}
\end{figure}

Let $n$ be the number of states in System~\eqref{eq:system_nonlin}. Let $d_f$ be the degree of the polynomial vector field in System~\eqref{eq:system_nonlin}. Suppose we use Algorithm 1 to search for a Lyapunov function of degree $d_V$. Then, the number of decision variables in the LP is 
\begin{align}
N^H_{vars}   = L \left(  \sum_{d=0}^{d_V} \dfrac{(d+m-1)!}{d!(m-1)!} + \sum_{d=0}^{d_V+d_f-1}  \dfrac{(d+m-1)!}{d!(m-1)!} -(d_V+1)  \right)
\label{eq:Nvar_H_general}
\end{align}
where the first term is the number of $b_{i,\alpha}$ coefficients, the second term is the number of $c_{i,\beta}$ coefficients and the third term is the dimension of $R_i(b_i,d)$ in \eqref{eq:LP}. By substituting for $L$ and $m$ in~\eqref{eq:Nvar_H_general}, from Assumption~\ref{assum_decompose} we have 
\[
N^H_{vars}=2n  \left( \sum_{d=0}^{d_V} \dfrac{(d+2n-2)!}{d!(2n-2)!} +  \sum_{d=0}^{d_V+d_f-1}  \dfrac{(d+2n-2)!}{d!(2n-2)!}-d_V-1  \right) .
\]
Then, for large number of states, i.e., large $n$,
\[
N^H_{vars} \sim 2n \left( (2n-2)^{d_V} + (2n-2)^{d_V+d_f-1} \right) \sim n^{d_V+d_f}.
\]
Meanwhile, the number of constraints in the LP is
\begin{equation}
\label{eq:Ncons_H_general}
N^H_{cons} = N^H_{vars} + L \left( \sum_{d=0}^{d_V} \dfrac{(d+n-1)!}{d!(n-1)!} + \sum_{d=0}^{d_V+d_f-1} \dfrac{(d+n-1)!}{d!(n-1)!} \right),
\end{equation}
where the first term is the total number of inequality constraints associated with the positivity of $b_{i}$ and negativity of $c_{i}$, the second term is the number of equality constraints on the coefficients of the Lyapunov function required to ensure continuity ($J_{i}(b_i,d,k) = J_j(b_j,d,l)$ in the LP \eqref{eq:LP}) and the third term is the number of equality constraints associated with negativity of the Lie derivative of the Lyapunov function ($G_i(b_i,d) = F_i(c_i,d+d_f-1)$ in the LP~\eqref{eq:LP}).
By substituting for $L$ in~\eqref{eq:Ncons_H_general}, from Assumption~\ref{assum_decompose} for large $n$ we get
\[
N^H_{cons} \sim n^{d_V+d_f} + 2n(n^{d_V}+n^{d_V+d_f-1}) \sim n^{d_V+d_f}.
\]
The complexity of an LP using interior-point algorithms is approximately $O(N_{vars}^2 N_{cons})$ (\cite{boyd2004convex}). Therefore, the computational cost of solving the LP~\eqref{eq:LP} is
\[
\sim n^{3(d_V+d_f)}.
\]

\subsection{Complexity of the SDP Associated with Polya's Algorithm}
Recall our approach in Section~\ref{sec:setup_multi} for applying Polya's algorithm to analyze stability over hypercubes. In~\cite{kamyar_CDC2013}, we used the same approach to construct Lyapunov functions for nonlinear ODEs with polynomial vector fields. In particular, this approach uses semi-definite programming to search for the coefficients of a matrix-valued polynomial $P(x)$ which defines a Lyapunov function as $V(x) = x^T P(x) x$. Using a similar complexity analysis as in~\ref{sec:compexity_setup_multi}, we determine that the number of decision variables in the associated SDP is
\[
N^P_{vars} = \dfrac{n(n+1)}{2} \sum_{d=0}^{d_V-2}\dfrac{(d+n-1)!}{d! (n-1)!}.
\]
The number of constraints in the SDP is
\[
N^P_{cons} =  \dfrac{n(n+1)}{2} \left( (d_V+e-1)^n + (d_V+d_f+e-2)^n \right),
\]
where here we have denoted Polya's exponent by $e$. Then, for large $n$, 
$
N^P_{vars} \sim n^{d_V}$ and $N^P_{cons} \sim (d_V+d_f+e-2)^n.
$
Since solving an SDP with an interior-point algorithm typically requires $ O(N_{cons}^3+N_{var}^3 N_{cons} + N_{var}^2 N_{cons}^2 )$ operations~(\cite{boyd2004convex}), the computational cost of solving the SDP associated with Polya's algorithm is estimated as
\[
\sim (d_V+d_f+e-2)^{3n}.
\]

\subsection{Complexity of the SDP Associated with SOS Algorithm}
To find a Lyapunov function for~\eqref{eq:system_nonlin} over the polytope 
\[
\Gamma= \left\lbrace x \in \mathbb{R}^n : w_i^Tx+u_i \geq 0, i\in \{1, \cdots, K \} \right\rbrace
\]
using the SOS approach with Positivstellensatz multipliers~\cite{stengle}, we search for a polynomial $V(x)$ and SOS polynomials $s_i(x)$ and $t_i(x)$ such that for any $\epsilon > 0$ 
\begin{align*}
V(x)- \epsilon x^Tx - \sum_{i=1}^K s_i(x) (w_i^Tx+u_i) \;\; \text{ is SOS}& \\
-  \langle \nabla V(x),f(x) \rangle - \epsilon x^T x -\sum_{i=1}^K t_i(x) (w_i^Tx+u_i) \;\; \text{ is SOS}&.
\end{align*}
Suppose we choose the degree of the $s_i(x)$ to be $d_V-2$ and the degree of the $t_i(x)$ to be $d_V+d_f-2$. Then, it can be shown that the total number of decision variables in the SDP associated with the SOS approach is
\begin{small}
\begin{equation}
N^S_{vars} = \dfrac{N_1(N_1+1)}{2} + K \dfrac{N_2(N_2+1)}{2} + K \dfrac{N_3(N_3+1)}{2},
\label{eq:NSvar}
\end{equation}
\end{small}
where $N_1$ is the number of monomials in a polynomial of degree $d_V/2$ , $N_2$ is the number of monomials in a polynomial of degree $(d_V-2)/2$ and $N_3$ is the number of monomials in a polynomial of degree $(d_V+d_f-2)/2$ calculated as 
\begin{small}
\[
N_1=  \sum_{d=1}^{d_V/2} \dfrac{(d+n-1)!}{(d)!(n-1)!}, \quad
N_2=  \sum_{d=0}^{(d_V-2)/2} \dfrac{(d+n-1)!}{(d)!(n-1)!} \quad \text{and} \quad
N_3= \sum_{d=0}^{(d_V+d_f-2)/2} \dfrac{(d+n-1)!}{(d)!(n-1)!}.
\]
\end{small}
The first terms in~\eqref{eq:NSvar} is the number of scalar decision variables associated with the polynomial $V(x)$. The second and third terms are the number of scalar variables in the polynomials $s_i$ and $t_i$, respectively. The number of constraints in the SDP is
\begin{equation}
\label{eq:NScons}
N^S_{cons} = N_1 + K \, N_2 + K \, N_3 + N_4,
\end{equation}
where
\[
N_4 = \sum_{d=0}^{(d_V+d_f)/2} \dfrac{(d+n-1)!}{(d)!(n-1)!}.
\]
The first term in~\eqref{eq:NScons} is the number of constraints associated with  positivity of $V(x)$, the second and third terms are the number of constraints associated with positivity of the polynomials $s_i$ and $t_i$, respectively. The fourth term is the number of constraints associated with negativity of the Lie derivative. By substituting $K=2n$ (For the case of a hypercube), for large $n$ we have
$
N^S_{vars} \sim N_3^2 \sim n^{d_V+d_f-1} \text{ and}
$
\[
\quad N^S_{cons} \sim KN_3+N_4 \sim n \, N_3+N_4 \sim n^{ 0.5(d_V+d_f)}.
\]
Finally, using an interior-point algorithm with complexity $O(N_{cons}^3+N_{var}^3 N_{cons} + N_{var}^2 N_{cons}^2 )$ to solve the SDP associated the SOS algorithm requires
$
\sim n^{3.5(d_V+d_f)-3}
$
operations. As an additional comparison, we also consider the SOS algorithm for global stability analysis, which does not use Positivstellensatz multipliers. For a large number of states, we have
$
N^S_{vars} \sim n^{ 0.5d_V}  \quad \text{and} \quad N^S_{cons} \sim n^{ 0.5(d_V+d_f)}.
$
In this case, the complexity of the SDP is \vspace{-0.05in}
\[
\sim n^{1.5(d_V+d_f)}+ n^{2d_V+d_f}. \vspace{-0.05in}
\]

\pagebreak

\subsection{Comparison of the Complexities}
We draw the following conclusions from our complexity analysis.

\begin{enumerate}
\item For large number of states, the complexity of the LP defined in~\eqref{eq:LP} and the SDP associated with SOS are both \textbf{polynomial} in the number of states, whereas the complexity of the SDP associated with Polya's algorithm grows \textbf{exponentially} in the number of states. For a large number of states and large degree of the Lyapunov polynomial, the LP has the least computational complexity.

\item The complexity of the LP defined in~\eqref{eq:LP} scales linearly with the number of sub-polytopes $L$.

\item In Figure~\ref{fig:complexity_comparison}, we show the number of decision variables and constraints for the LP and SDPs using different degrees of the Lyapunov function and different degrees of the vector field. The figure shows that in general, the SDP associated with Polya's algorithm has the least number of variables and the greatest number of constraints, whereas the SDP associated with SOS has the greatest number of variables and the least number of constraints. \vspace{0.5in}
\end{enumerate}

\begin{figure}[ht]
\hspace{-0.2in}
\includegraphics[scale=0.42]{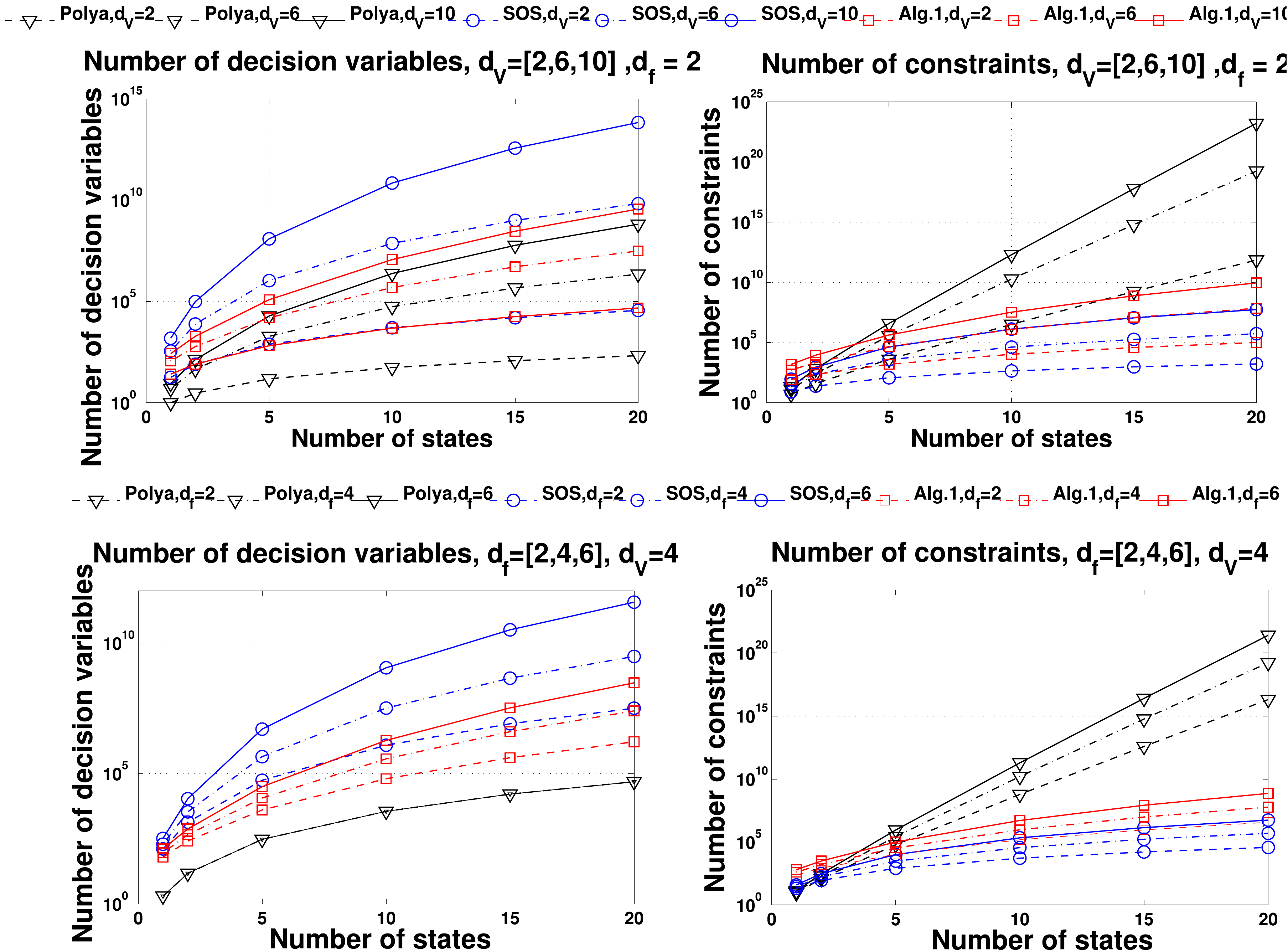}
\caption{Number of Decision Variables and Constraints of the Optimization Problems Associated with Algorithm 1, Polya's Algorithm and SOS Algorithm for Different Degrees of the Lyapunov Function and the Vector Field $f(x)$}
\label{fig:complexity_comparison} 
\end{figure}

\section{Numerical Results}

In this section, we first use our algorithm to construct a Lyapunov function for a nonlinear system. we then assess the accuracy of our algorithm in estimating the region of attraction of the equilibrium point using different types of convex polytopes.\\

\noindent \textbf{Numerical Example 1:}\\
 Consider the following nonlinear system~(\cite{chesi_2005}).
\begin{align*}
& \dot{x}_1 = x_2, \\
& \dot{x}_2 = -2x_1-x_2+x_1 x_2^2-x_1^5+x_1 x_2^4+x_2^5.
\end{align*}
Using the polytope
\begin{align}
\Gamma = \{  x_1,x_2 \in \mathbb{R}^2: \, &1.428 x_1 + x_2 - 0.625 \geq 0, -1.428x_1+x_2+0.625 \geq 0, \nonumber \\
              & 1.428 x_1 + x_2 + 0.625 \geq 0, -1.428x_1+x_2 - 0.625 \geq0 \},
\label{eq:polytope0}
\end{align}
and $D-$decomposition
\begin{align*}
&D_1:=\{x_1,x_2 \in \mathbb{R}^2: -x_1 \geq 0, x_2 \geq 0, -1.428x_1+x_2 - 0.625 \geq 0 \} \\
&D_2:=\{x_1,x_2 \in \mathbb{R}^2: x_1 \geq 0, x_2 \geq 0, 1.428 x_1 + x_2 + 0.625 \geq 0 \}\\
&D_3:=\{x_1,x_2 \in \mathbb{R}^2: x_1 \geq 0, -x_2 \geq 0, -1.428x_1+x_2+0.625 \geq 0  \}\\
&D_4:=\{x_1,x_2 \in \mathbb{R}^2: -x_1 \geq 0, -x_2 \geq 0, 1.428 x_1 + x_2 + 0.625 \geq 0  \},
\end{align*}
we set-up the LP in~\eqref{eq:LP} with $d=4$. The solution to the LP certified asymptotic stability of the origin and yielded the following piecewise polynomial Lyapunov function. Figure~\ref{fig:handelman_nonlin0} shows the largest level set of $V(x)$ inscribed in the polytope $\Gamma$. 
\begin{figure}[htbp]
\centering
\includegraphics[scale=0.48]{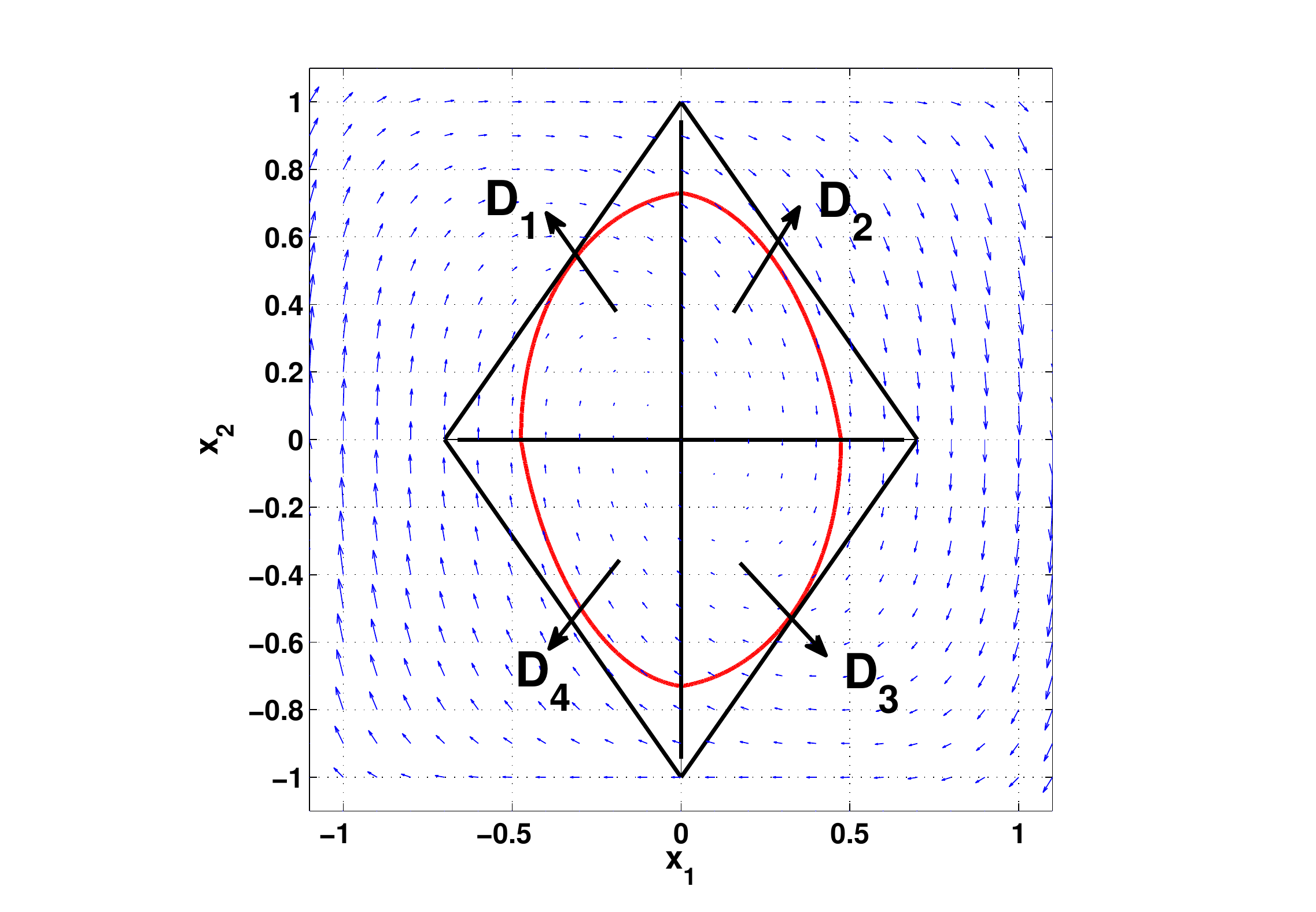}
\caption{The Largest Level-set of Lyapunov Function~\eqref{eq:handelman_PW_Lyap} Inscribed in Polytope~\eqref{eq:polytope0}}
\label{fig:handelman_nonlin0}
\end{figure}
\begin{align}
V(x) = 
\begin{cases} 
0.543 x_1^2  + 0.233 x_2^2 + 0.018 x_2^3 - 0.074 x_1 x_2^2  - 0.31 x_1^3 \\
  + 0.004 x_2^4 - 0.013 x_1 x_2^3 + 0.015 x_1^2 x_2^2 + 0.315 x_1^4  &\text{if } x \in D_1 \vspace*{0.1in} \\
0.543 x_1^2+ 0.329 x_1 x_2 + 0.233 x_2^2 + 0.018 x_2^3 +0.031 x_1 x_2^2 \\
 + 0.086 x_1^2 x_2 +  0.3 x_1^3 + 0.004 x_2^4 +  0.009 x_1 x_2^3 + 0.015 x_1^2 x_2^2\\
  + 0.008 x_1^3 x_2 + 0.315 x_1^4  &\text{if } x \in D_2  \vspace*{0.1in} \\
0.0543 x_1^2 + 0.0233 x_2^2  -0.0018 x_2^3 + 0.0074 x_1 x_2^2 + 0.03 x_1^3 \\
 + 0.004 x_2^4 -0.013 x_1 x_2^3 + 0.015 x_1^2 x_2^2 + 0.315 x_1^4 &\text{if } x\in D_3  \vspace*{0.1in} \\ 
0.543 x_1^2+ 0.329 x_1 x_2 + 0.233 x_2^2 - 0.018 x_2^3 - 0.031 x_1 x_2^2 \\
 - 0.086 x_1^2 x_2 - 0.3 x_1^3 + 0.004 x_2^4 +  0.009 x_1 x_2^3 + 0.015 x_1^2 x_2^2 \\
  +  0.008 x_1^3 x_2 + 0.315 x_1^4 &\text{if } x \in D_4  
  \label{eq:handelman_PW_Lyap}
\end{cases}\\
\end{align}

\noindent \textbf{Numerical Example 2:}

In this example, we test the accuracy of our algorithm in approximating the region of attraction of a locally-stable nonlinear system known as the reverse-time Van Der Pol oscillator. The system is defined as 
\begin{equation}
\label{eq:vanderpol}
\dot{x}_1 = -x_2, \; \dot{x}_2 =  x_1 + x_2 (x_1^2 -1).
\end{equation}
We considered the following convex polytopes:
\begin{enumerate}
\item Parallelogram $\Gamma_{P_s}$,
$P_s:=\{sp_i\}_{i=1,\cdots,4}$, where
\[
 p_1= [-1.31,0.18],\quad
p_2=[0.56,1.92],\quad
p_3=[-0.56,-1.92],\quad
p_4=[1.31,-0.18]
\]
\item Square $\Gamma_{Q_s}$, $Q_s:=\{sq_i\}_{i=1,\cdots,4}$, where
\[
q_1= [-1,1],\quad
q_2=[1,1],\quad
q_3=[1,-1],\quad
q_4=[-1,-1] 
\]
\item Diamond $\Gamma_{R_s}$, $R_s:=\{sr_i\}_{i=1,\cdots,4}$, where
\[
r_1=[-1.41,0], \quad
r_2=[0,1.41],\quad
r_3=[1.41,0],\quad
r_4=[0,-1.41]
\]
\end{enumerate}
where $s\in\mathbb{R}_+$ is a scaling factor. We decompose the parallelogram and the diamond into 4 triangles and decompose the square into 4 squares. We solved the following optimization problem for Lyapunov functions of degree $d=2,4,6,8$:
{\small
\begin{align*}
& \qquad \max_{s \in \mathbb{R}^+} \;\;\;\; s \\
& \text{subject to} \quad \text{max }\gamma\text{ in LP \eqref{eq:LP} is positive, where}  \\
& \Gamma=\Gamma_{P_s} := \{ x \in \mathbb{R}^2 : x = \sum_{i=1}^4 \mu_i sp_i: \mu_i \geq 0 \text{ and } \sum_{i=1}^K \mu_i = 1\}.
\end{align*}
}
To solve this problem, we use a bisection search on $s$ in an outer-loop and an LP solver in the inner loop. Figure~\ref{fig:quad_level_sets} illustrates the largest $\Gamma_{P_s}$, i.e.
\[
\Gamma_{P_{s^*}} := \{ x \in \mathbb{R}^n : x = \sum_{i=1}^4 \mu_is^* p_i: \mu_i\geq 0 \text{ and } \sum_{i=1}^4 \mu_i = 1 \}
\]
and the largest level-set of $V_i(x)$ inscribed in $\Gamma_{P_{s^*}}$, for different degrees of $V_i(x)$.
Similarly, we solved the same optimization problem replacing $\Gamma_{P_s}$ with the square $\Gamma_{Q_s}$ and diamond $\Gamma_{R_s}$. In all cases, increasing $d$ resulted in a larger maximum inscribed sub-level set of $V(x)$ (see Figure~\ref{fig:diam_level_sets}). We obtained the best results using the parallelogram $\Gamma_{P_s}$ which achieved the scaling factor $s^*=1.639$.
The maximum scaling factor for $\Gamma_{Q_s}$ was $s^*=1.800$ and the maximum scaling factor for $\Gamma_{R_s}$ was $s^*=1.666$. 

\begin{figure}[htbp]
\centering
\includegraphics[scale=0.36]{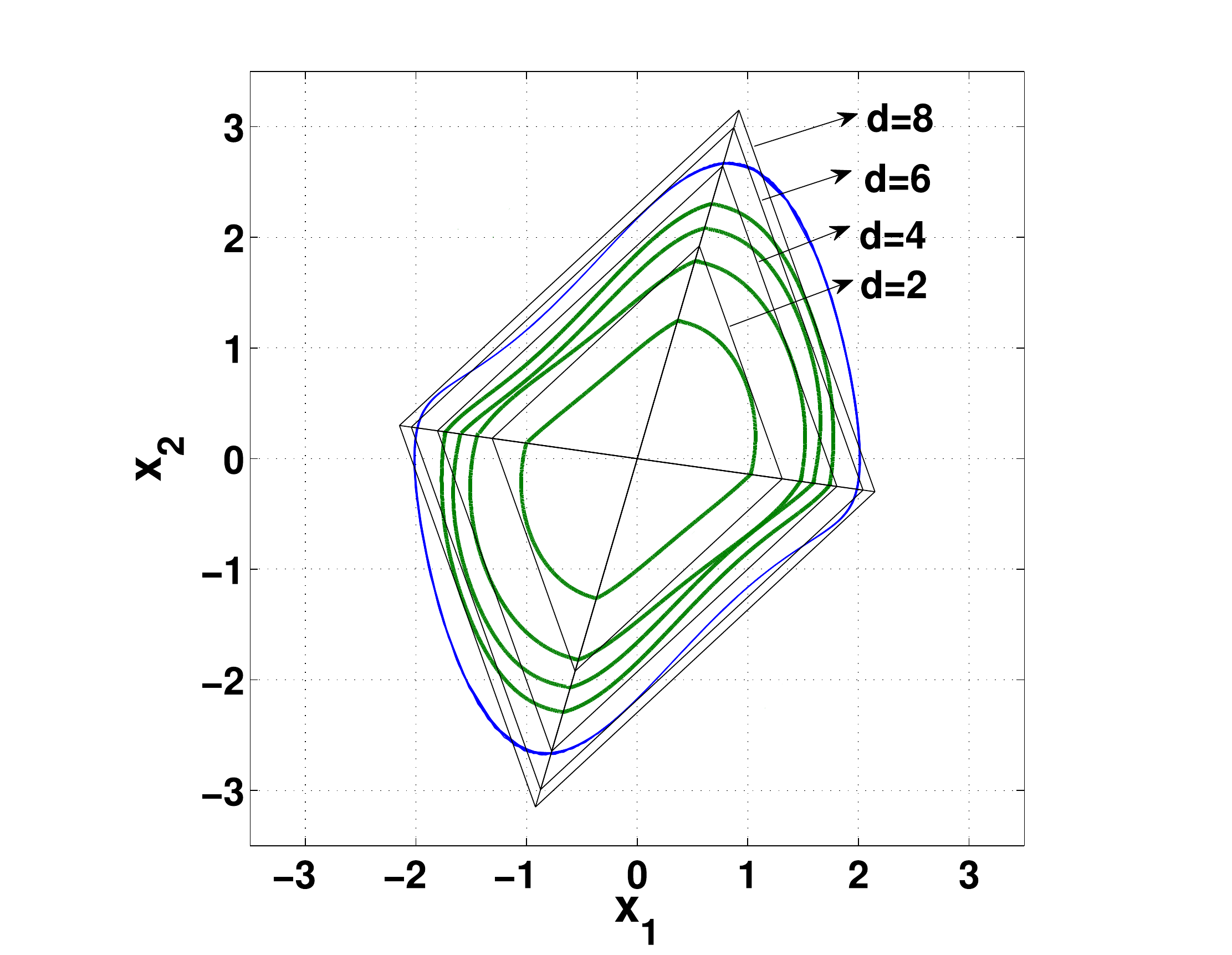} 
\caption{Largest Level-sets of Lyapunov Functions of Different Degrees and Their Associated Parallelograms}\vspace{-0.1in}
\label{fig:quad_level_sets}  
\centering
\begin{subfigure}[Square polytopes]
{\includegraphics[width=0.52\columnwidth ]{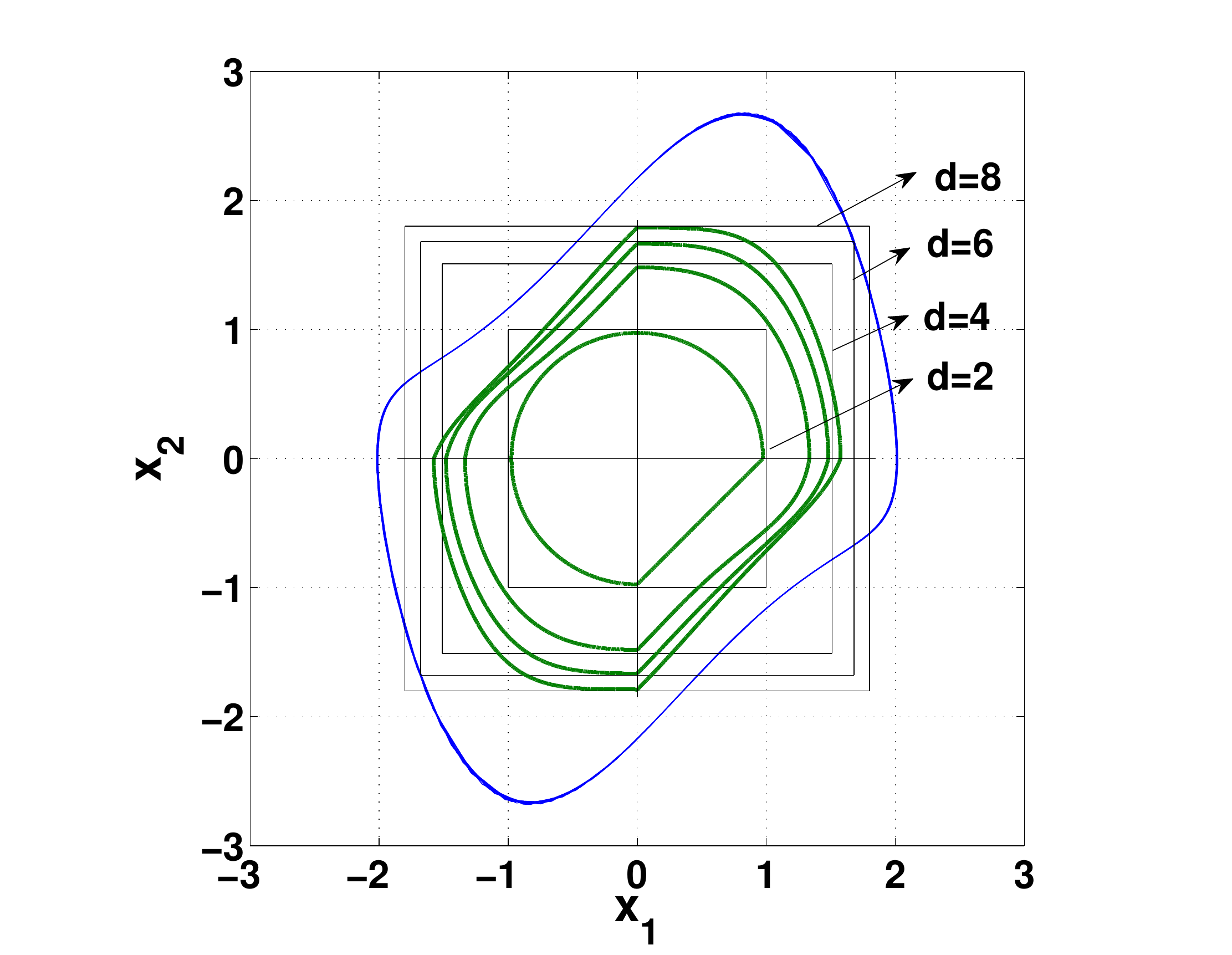}}
\end{subfigure}
~\hspace{-1.25cm}
\begin{subfigure}[{Diamond polytopes}]
{\includegraphics[width=0.52\columnwidth]{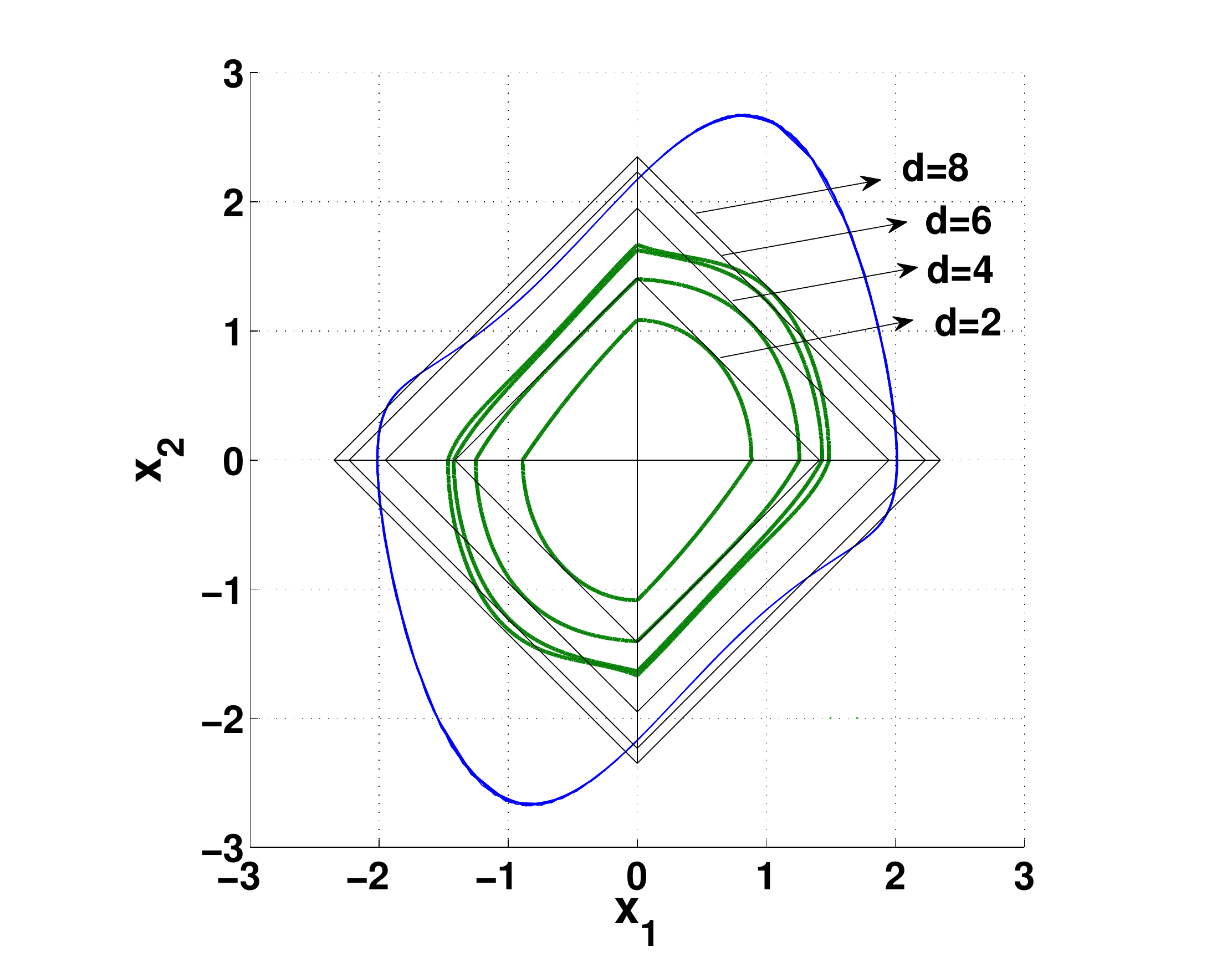}}
\end{subfigure}
 \caption{Largest Level-sets of Lyapunov Functions of Different Degrees and Their Associated Polytopes}
\label{fig:diam_level_sets}  
\end{figure}


\chapter{OPTIMIZATION OF SMART GRID OPERATION: OPTIMAL UTILITY PRICING AND DEMAND RESPONSE}
\label{chp:DP}

\section{Background and Motivation}
\label{motivation}

Reliable and efficient production and transmission of electricity are essential to the progress of modern industrial societies. Engineers have strived for years to operate power generating systems in a way to achieve the following objectives: 1) Reliability: maintaining an uninterrupted balance between the generated power and demand; 2) Minimizing the cost of generation and transmission of electricity; 3) Reducing the adverse effects of the system on the environment by increasingly the use of renewable sources such as solar energy. Unfortunately, the first two objectives are in conflict: increasing reliability (often by increasing the maximum capacity of generation) results in higher costs. Moreover, the dependence of reliability of power networks and costs on integration of renewables is not yet well-understood.

One concern of electric utilities is that rapid increase in distributed solar generation may change customers' consumption pattern in ways that current generating units cannot accommodate for these changes. One example of such a change is shown in Figure~\ref{fig:C7:summer_winter} (\cite{APS_solar_trend}). In this figure, we have compared the daily net demand profile of Arizona's customers in 2014 with its projection in 2029. Because of the misalignment between the solar generation peak (at noon) and the demand peak (at 6 PM), as the solar penetration increases, the resulting demand profile will reshape to a \textit{double-peak} curve (see Figure~\ref{fig:C7:summer_winter}).
To respond to such variability in the demand profile, utilities will be required to re-structure their generating capacity by installing generating units which possess a shorter start-up time and higher generation ramp rates. 
Moreover, as solar generation by users increases, the total energy provided by the utility will decrease - implying a reduction in revenue for utility companies which charge users based on their total energy consumption. 
This type of change in the demand can indeed already be seen in a report by the US Energy Information Administration (EIA) as a significant increase in the ratio of the annual demand peak to annual average demand (see Figuew~\ref{fig:C7:peak_trend}). Because utilities must pay to build and maintain generating capacity as determined by peak demand, the increasing use of solar will thus result in a decrease in revenue, yet no decrease in this form of cost. Ultimately, utilities might have a significant fraction of solar users with negative energy consumption (kWh) during the day and positive consumption during the evening and morning. Due to net metering, such users might pay nothing for electricity while contributing substantially to the costs of building and maintaining generating capacity. \\


\begin{figure}
\centering
\includegraphics[scale=0.4]{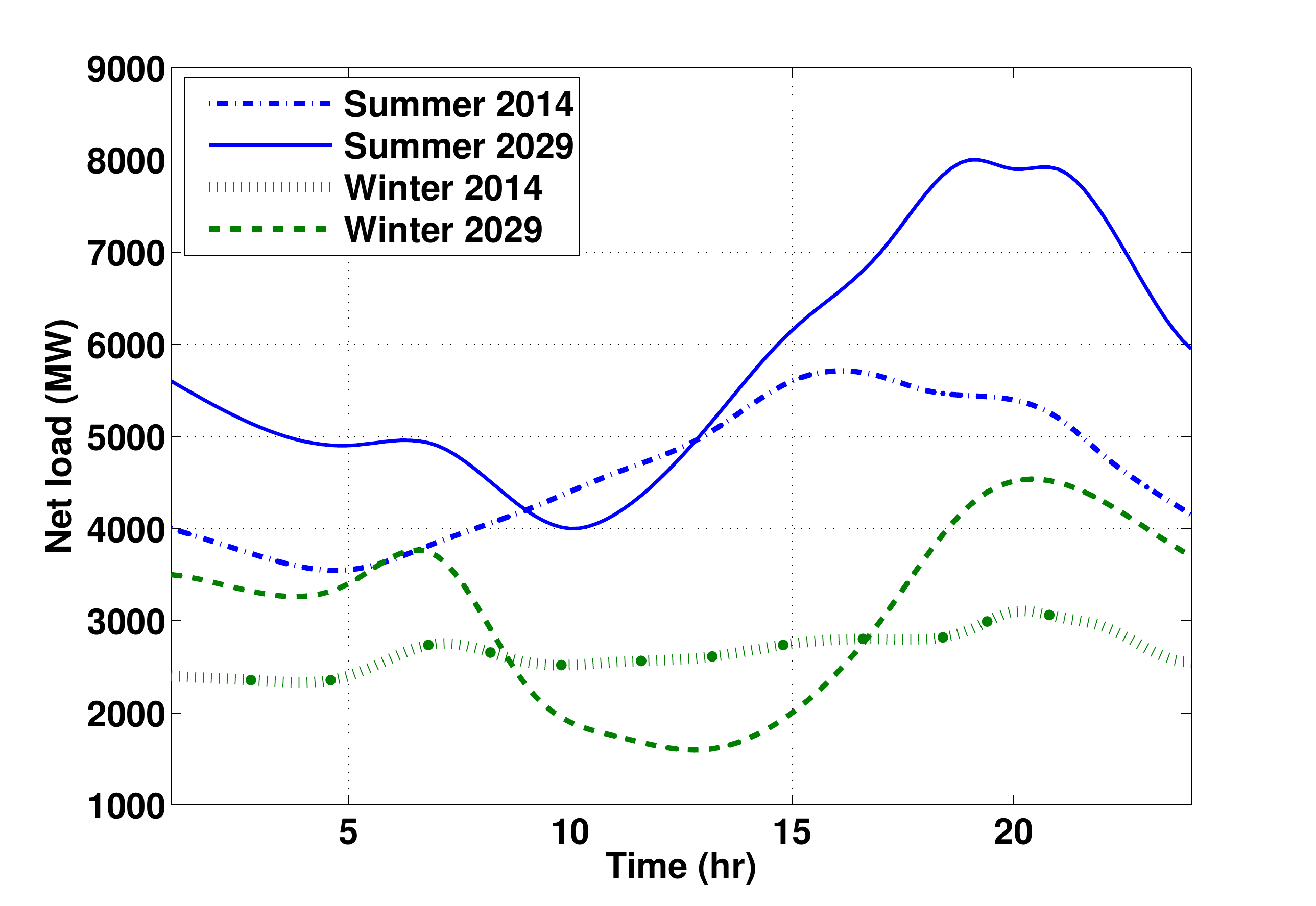}
\caption{Effect of Solar Power on Demand: Net Loads for Typical Summer and Winter Days in Arizona in 2014 and for 2029 (Projected), from~\cite{APS_solar_trend}}
\label{fig:C7:summer_winter}
\end{figure}

\begin{figure}
\centering
\includegraphics[scale=0.4]{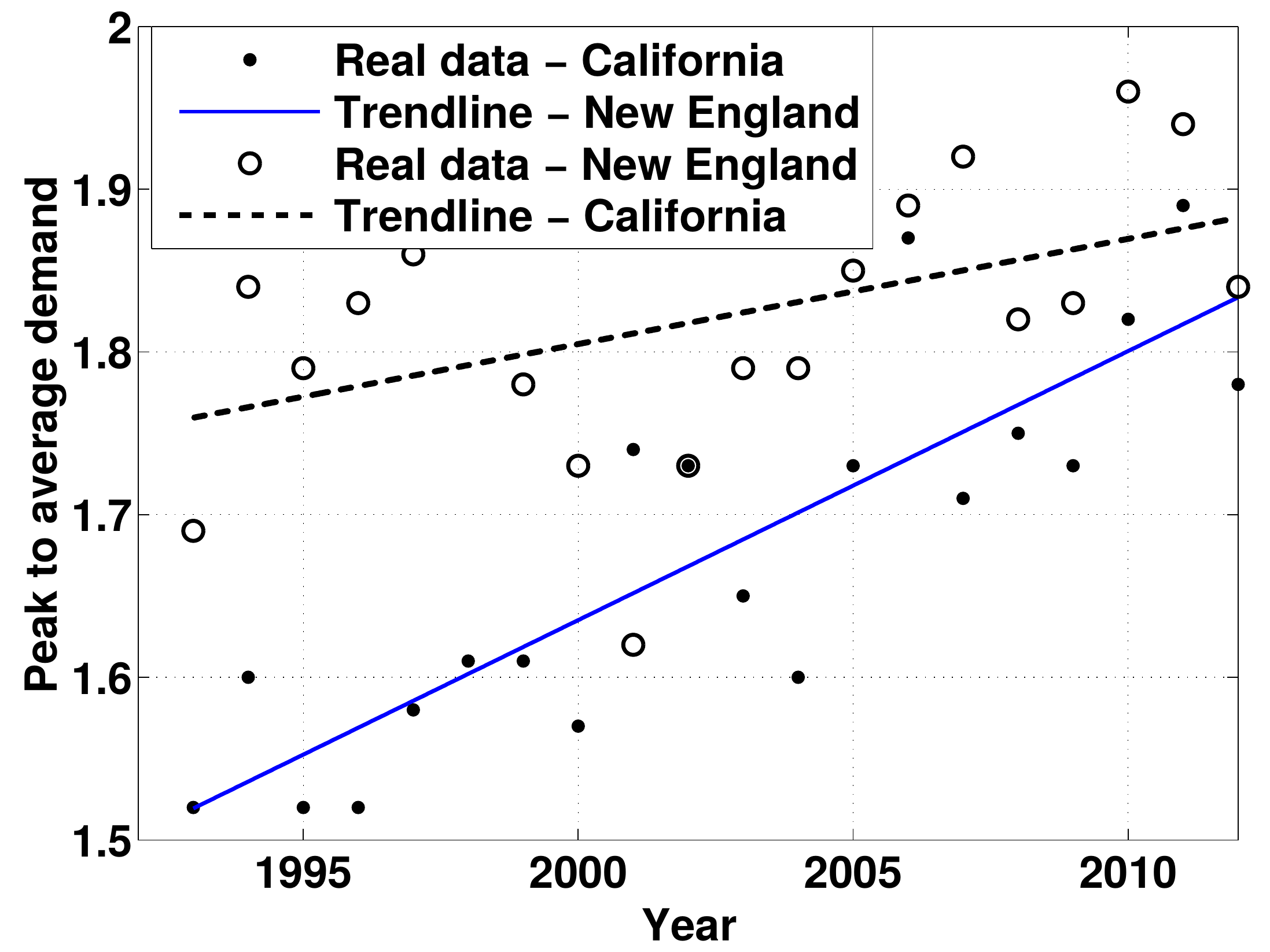}
\caption{Peak to Average Demand of Electricity and Its Trend-line in California
and New England from 1993 to 2012, Data Adopted from~\cite{peak_to_average}}
\label{fig:C7:peak_trend}
\end{figure}

Recently, there has been extensive research on how to exploit \textit{smart grid} features such as smart metering, energy storage, thermostat programming and variable/dynamic pricing in order to reduce peak demands and cost of generation, enhance monitoring and security of networks, and prevent unintended events such as cascade failures and blackouts. Smart metering enables two-way communications between consumers and utilities. It provides utilities with real-time data of consumption - hence enables them to directly control the load and/or apply prices as a function of consumption. Naturally, utilities have been studying this problem for some time and with the widespread adoption of smart-metering (95\% in Arizona), have begun to implement various pricing strategies at scale. Examples of this include on-peak, off-peak and super-peak pricing - rate plans wherein the energy price (\$/kWh) depends on the time of day. By charging more during peak hours, utilities encourage conservation or deferred consumption during hours of peak demand. Quite recently, some utilities have introduced demand charges for residential customers~(\cite{SRP_plan},\cite{APS_plan}). These charges are not based on energy consumption, but rather the maximum \textit{rate of consumption} (\$/kW) over a billing period. While such charges more accurately reflect the cost of generation for the utilities, in practice the effects of such charges on consumption are not well-understood - meaning that the magnitude of the demand charge must be set in an ad-hoc manner (typically proportional to marginal cost of adding generating capacity).


An alternative approach to reducing peaks in demand is to use energy storage. In this scenario, batteries, pumping and retained heat are used during periods of low demand to create reservoirs of energy which can then be tapped during periods of high demand - thus reducing the need to increase maximum generating capacity. Indeed, the optimal usage of energy storage in a smart-grid environment with dynamic pricing has been recently studied in, for example,~\cite{storage_benefit}. See~\cite{Topcu} for optimal distributed load scheduling in the presence of network capacity constraints. However, to date the high marginal costs of storage infrastructure relative to incentives/marginal cost of additional generating capacity have limited the widespread use of energy storage by consumers/utilities~(\cite{battery_usage}). As a cost-free alternative to direct energy storage, it has been demonstrated experimentally by~\cite{experiment1},~\cite{experiment2}, and in-silico by~\cite{simulation1} and~\cite{simulation2} that the interior structure of buildings and appliances can be exploited as a \textit{passive} thermal energy storage system to reduce the peak-load of HVAC. A typical strategy - known as \textit{pre-cooling} - is to artificially cool the interior thermal mass (e.g., walls and floor) during periods of low demand. Then, during periods of high demand, heat absorption by these cool interior structures supplements or replaces electricity which would otherwise be consumed by the HVAC. Quantitative assessments of the effect of pre-cooling on demand peak and electricity bills can be found in~\cite{Braun_2006} and~\cite{sun2013peak}. Furthermore, there is an extensive literature on thermostat programming for HVAC systems for on-peak/off-peak pricing~(\cite{lu2005global2, arguello1999nonlinear}) as well as \textit{real-time} pricing~(\cite{old_2010,henze2004evaluation,chen2001real}) using Model Predictive Control (MPC).~\cite{kintner1995optimal} consider optimal thermostat programming with passive thermal energy storage and on-peak/off-peak rates.~\cite{Braun_2006} use the concept of \textit{deep} and \textit{shallow} mass to create a simplified analogue circuit model of the thermal dynamics of the structure. By using this model and certain assumptions on the gains of the circuit elements,~\cite{Braun_2006} derive an analytical optimal temperature set-point for the demand limiting period which minimizes the demand peak. This scenario would be equivalent to minimizing the demand charge while ignoring on-peak or off-peak rates. Finally,~\cite{henze2004evaluation} use the heat equation to model the thermal energy storage in the walls and apply MPC to minimize monthly electricity bill in the presence of on-peak and off-peak charges.


\subsection{Our Contributions}

In this chapter, we design a computational framework to achieve the three objectives of a modern power network: reliability, cost minimization and integration of renewables to promote sustainability. This framework relies on smart metering, thermal-mass energy storage, distributed solar generation and   on-peak, off-peak and demand pricing. This framework consists of two nested optimization problems: 1) Optimal thermostat programming (\textit{user-level problem}); 2) Optimal utility pricing (\textit{utility-level problem}).
In the first problem, we consider optimal HVAC usage for a consumer with fixed on-peak, off-peak and demand charges and model passive thermal energy storage using the heat equation. We address both solar and non-solar consumers. For a given range of acceptable temperatures and using typical data for exterior temperature, we pose the optimal thermostat programming problem as a constrained optimization problem and present a Dynamic Programming (DP) algorithm which is guaranteed to converge to the solution. This yields the temperature set-points which minimize the monthly electricity bill for the consumer. For the benefit of the consumers who do not have access to continuously adjustable thermostats, we also develop thermostat programming solutions which include only four programming periods, where each period has a constant interior temperature.

After solving the thermostat programming problem, we use this solution as a model of user behaviour in order to quantify the consumer response to changes in on-peak rates, off-peak rates, and demand charges. Then in the second optimization problem, we apply a descent algorithm to this model in order to determine the prices which minimize the cost-of-generation for the utility. Through several case studies, we show that the optimal prices are NOT necessarily proportional to the marginal costs of generation - meaning that current pricing strategies may be inefficient. Furthermore, we show that in a network of solar and non-solar customers who use our optimal thermostat, the influence of solar generated power on the electricity bills of non-solar customers is NOT significant. Finally, we conclude that although the policy of calculating the demand charge based on the peak consumption over a full-day (rather than the on-peak hours) can substantially reduce the demand peak, it may not reduce optimal cost of production. Our study differs from existing literature (in particular~\cite{Braun_2006},~\cite{Braun_complex_storage},~\cite{henze2004evaluation} and~\cite{kintner1995optimal}) in that it: 1) Considers demand charges (demand charges are far more effective at reducing demand peaks than dynamic pricing) 2) Uses a PDE model for thermal storage (yields a more accurate model of thermal storage) 3) Uses a regulated model for the utility (although unregulated utility models are popular, the fact is that most US utilities remain regulated) 4) Considers the effect of solar generation on the electricity prices and cost of production.

\section{Problem Statement: User-level and Utility Level Problems}

In this section, we first define a model of the thermodynamics which govern heating and cooling of the interior structures of a building. We then use this model to pose the user-level (optimal thermostat programming) problem in Sections~\ref{sec:user} and~\ref{sec:user2} as minimization of a monthly electricity bill (with on/peak, off-peak and demand charges) subject to constraints on the interior temperature of the building. Finally, we use this map of on-peak, off-peak and demand prices to electricity consumption to define the utility-level problem in Section~\ref{sec:utility} as minimizing the cost of generation, transmission and distribution of electricity.

\subsection{A Model for the Building Thermodynamics}
\label{sec:thermal_mass}

In 1822, J. Fourier proposed a PDE to model the dynamics of temperature and energy in a solid mass. Now known as the classical one-dimensional unsteady heat conduction equation, this PDE can be applied to an interior wall as 
\begin{equation}
\dfrac{\partial T(t,x)}{\partial t} = \alpha \dfrac{\partial^2 T(t,x)}{\partial x^2},
\label{eq:PDE_conduction}
\end{equation}
where $T: \mathbb{R}^+ \times [0,L_{in}] \rightarrow \mathbb{R}$ represents the temperature distribution in the interior walls/floor with nominal width $L_{in}$,
and where $\alpha = \frac{k_{in}}{\rho C_p}$ is the coefficient of thermal diffusivity. Here $k_{in}$ is the coefficient of thermal conductivity, $\rho$ is the density and $C_p$ is the specific heat capacity. The wall is coupled to the interior air temperature using Dirichlet boundary conditions, i.e., $T(t,0) = T(t,L_{in}) = u(t) \;\; \text{for all } t \in \mathbb{R}^+$, where $u(t)$ represents the interior temperature which we assume can be controlled instantaneously by the thermostat. In the Fourier model, the heat/energy flux through the surface of the interior walls is modelled as
\begin{equation}
q_{in}(T(t,x)) := 2C_{in} \dfrac{\partial T}{\partial x} (t,0),
\end{equation}
where $C_{in}=k_{in}A_{in}$ is the thermal capacitance of the interior walls and $A_{in}$ is the nominal area of the interior walls.
We assume that all energy storage occurs in the interior walls and surfaces and that energy transport through exterior walls can be modelled using a steady-state version of the heat equation. This implies that the heat flux $q_{loss}$ through the exterior walls is the linear sink
\begin{equation}
q_{loss}(t,u(t)) := \dfrac{T_{e}(t)-u(t)}{R_{e}},
\label{eq:qloss}
\end{equation}
where $T_e(t)$ is the outside temperature and $R_{e} = L_{e}/(k_{e}A_{e})$ is the thermal resistance of the exterior walls, where $L_{e}$
is the nominal width of exterior walls, $k_{e}$ is the coefficient of thermal conductivity and $A_{e}$ is the nominal area of the exterior walls.  By conservation of energy, the power required from the HVAC to maintain the interior air temperature is
\begin{equation}
q(t,u(t),T(t,x)) = q_{loss}(u(t),T_e(t)) + q_{in}(T(x,t)).
\label{eq:q}
\end{equation}
See Figure~\ref{fig:building} for an illustration of the model.

Eqn.~\eqref{eq:PDE_conduction} is a PDE. For optimization purposes, we discretize~\eqref{eq:PDE_conduction} in space, using $T(t)\in \mathbb{R}^M$ to replace $T(t,x)\in \mathbb{R}$, where $T_i(t)$ denotes $T(t,i\,\Delta x)$, where $\Delta x := \frac{L_{in}}{M+1}$.
Then
\begin{equation}
\dot{T}(t) = A \, T(t)+ B \, u(t),
\label{eq:linearized}
\end{equation}
\[
\renewcommand{\arraystretch}{0.6}
\text{where }
A = \dfrac{\alpha}{\Delta x^2}
\begin{bmatrix}
-2 & 1 & 0 & 0  \\
1 & \ddots & \ddots & 0  \\
0 & \ddots & \ddots & 1  \\
0 & 0 & 1 & -2
\end{bmatrix}, \;
B= \dfrac{\alpha}{\Delta x^2}
\begin{bmatrix}
1 \\
0 \\
\vdots \\
0 \\
1
\end{bmatrix}\in \mathbb{R}^M.
\]
We then discretize in time, using
$
\dot{T}(t) \approx
(T(t+\Delta t) - T(t))/\Delta t 
$
to rewrite Equation~\eqref{eq:linearized} as a difference equation. 
\begin{equation}
T^{k+1}  = 
\begin{bmatrix}
T^{k+1}_{1} \\
\vdots \\
T^{k+1}_{M}
\end{bmatrix} = 
 f(T^{k},u_{k}) = \begin{bmatrix}
f_1(T^{k},u_{k}) \\
\vdots \\
f_M(T^{k},u_{k})
\end{bmatrix}  
 =  (I + A \, \Delta \, t )T^{k} + B \, \Delta t \, u_{k}
\label{eq:discrete_dyn}
\end{equation}
for $k=0, \cdots, N_f-1$, where $T^k=T(k\, \Delta t)$ and $u_k=u(k \, \Delta t)$. \vspace{-0.1in}
\begin{figure}[t]
\centering
\includegraphics[scale=0.45]{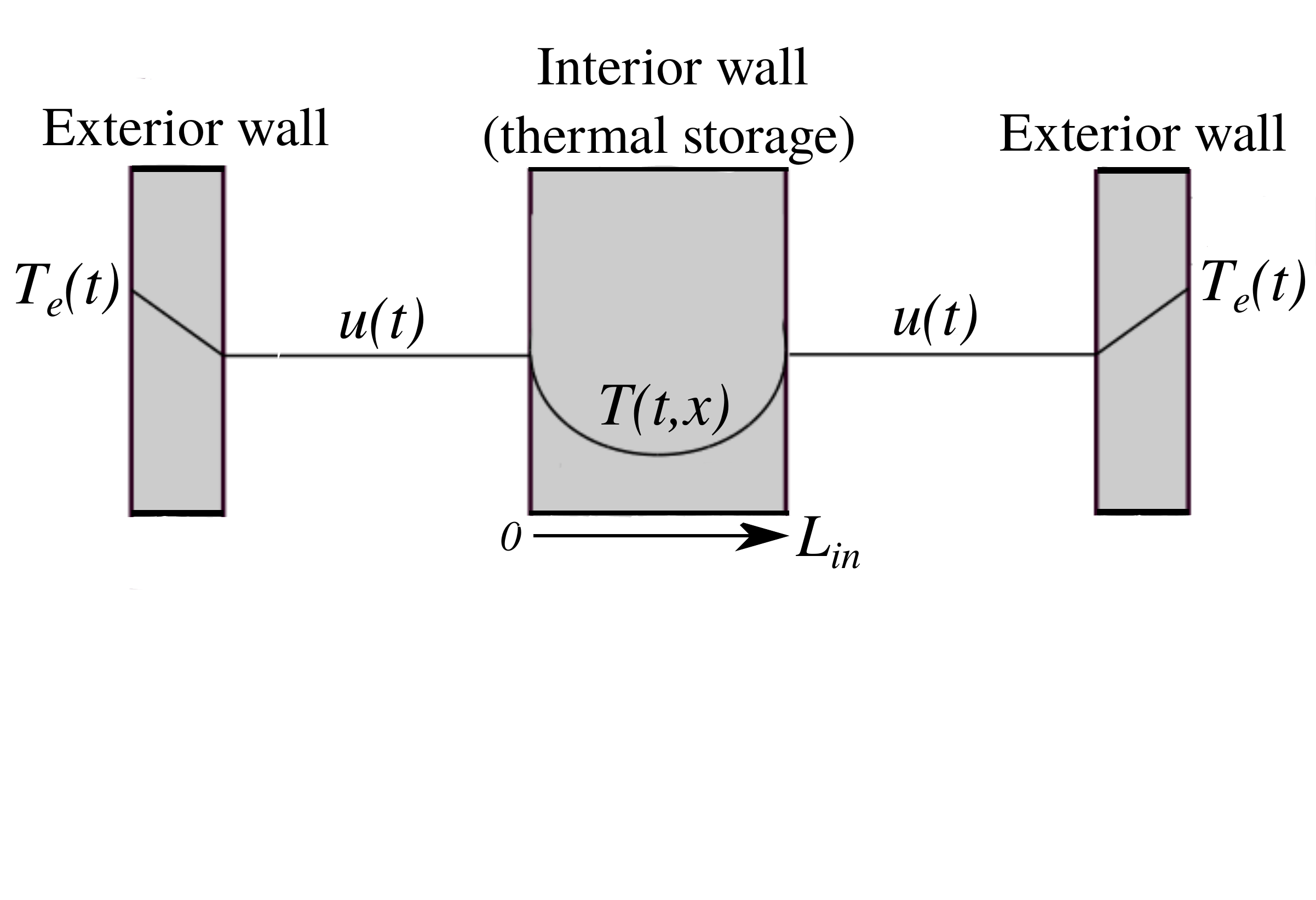} \vspace{-1.05in}
\caption{A Schematic View of Our Thermal Mass Model}
\label{fig:building}
\end{figure}

\subsection{Calibrating the Thermodynamics Model}
\label{sec:validation}

To find empirical values for the parameters $\alpha, C_{in}, R_e$ and $L_{in}$ in the thermodynamic model in Section~\ref{sec:thermal_mass}, we collected data from a 4600 sq ft residential building in Scottsdale, Arizona. The building was equipped with a 5 ton two-stage and three 2.5 ton single-stage RHEEM/RUUD heat pumps, 4-set point thermostats, and 5-min data metering for energy consumption and interior and exterior temperature. In this experiment, we applied two different thermostat programming sequences for two non-consecutive summer days. On the first day, we applied a pre-cooling strategy which lowers the interior temperature to 23.9$^\circ C$ during the off-peak hours and allows the temperature to increase to 27.8$^\circ C$ during the on-peak hours, i.e., 12:00 PM to 7:00 PM. In the second day, we applied the same pre-cooling strategy except that the temperature is again lowered to 23.9$^\circ C$ between 2:00 PM and 4:00 PM. We then used Matlab's least squares optimization algorithm to optimize the parameters such that the root-mean-squared error between the measured power consumption and the simulated power consumption during the entire two days is minimized. The result was the following values for the parameters: $L_{in} = 0.4 (m)$, $\alpha = 8.3 \times 10^{-7} (m^2/s)$, $R_e = 0.0015 (K/W)$ and $C_{in} = 45 (Wm/K)$. In Figure~\ref{fig:validation}, we have compared the resulting simulated and measured power consumption for the entire two days.\vspace{-0.1in}

\begin{figure}[t]%
 \vspace{-0.2in}
\begin{center}
\subfigure[Power Consumption Corresponding to a Pre-cooling Strategy for the Interior Temperature Setting]{\includegraphics[scale=0.313]{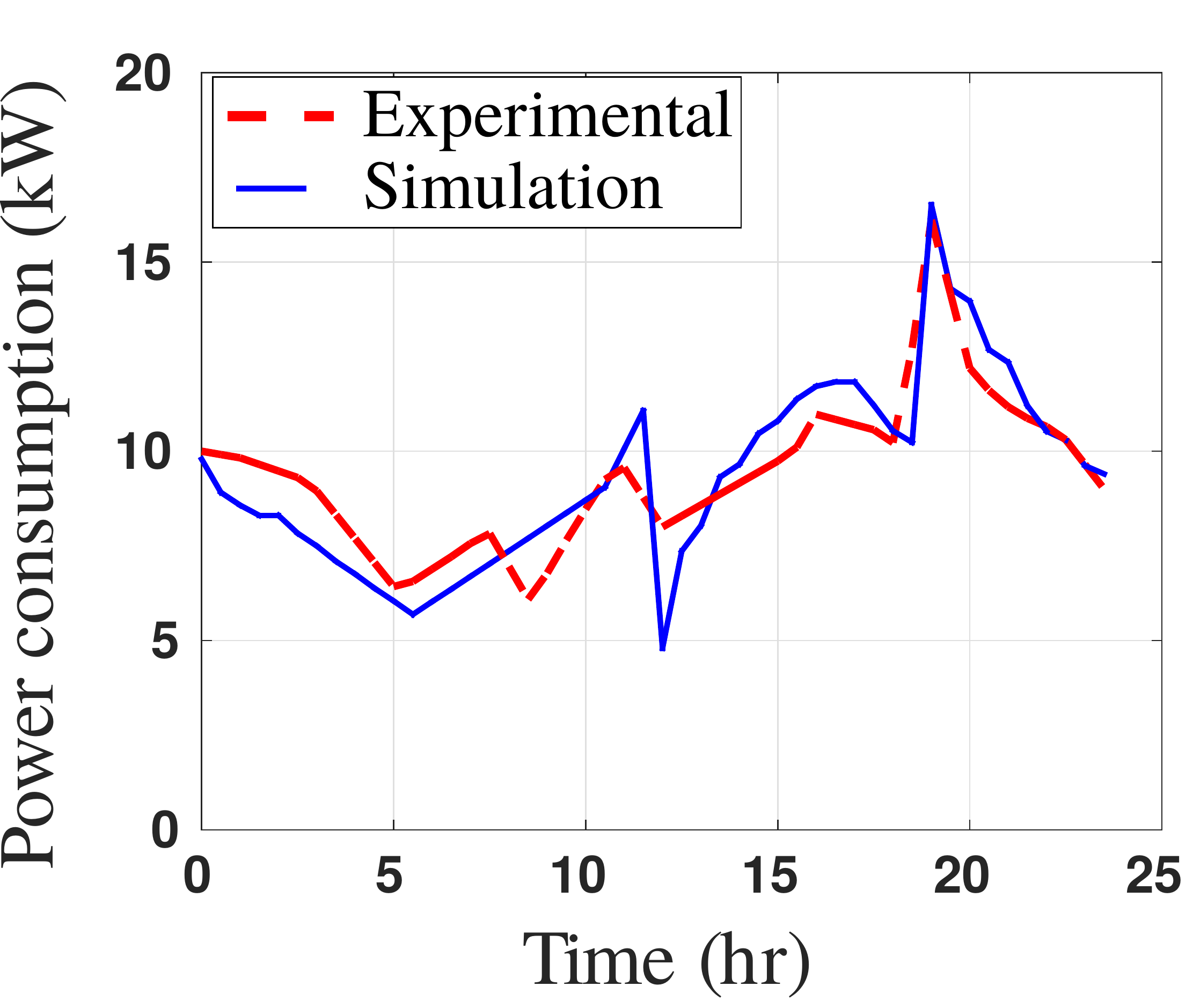}
}
\hspace{0.15in}
\subfigure[Power Consumption Corresponding to a Pre-cooling Strategy with Additional Cooling from 14:00-16:00]{\hspace{-0.12in}\includegraphics[scale=0.31]{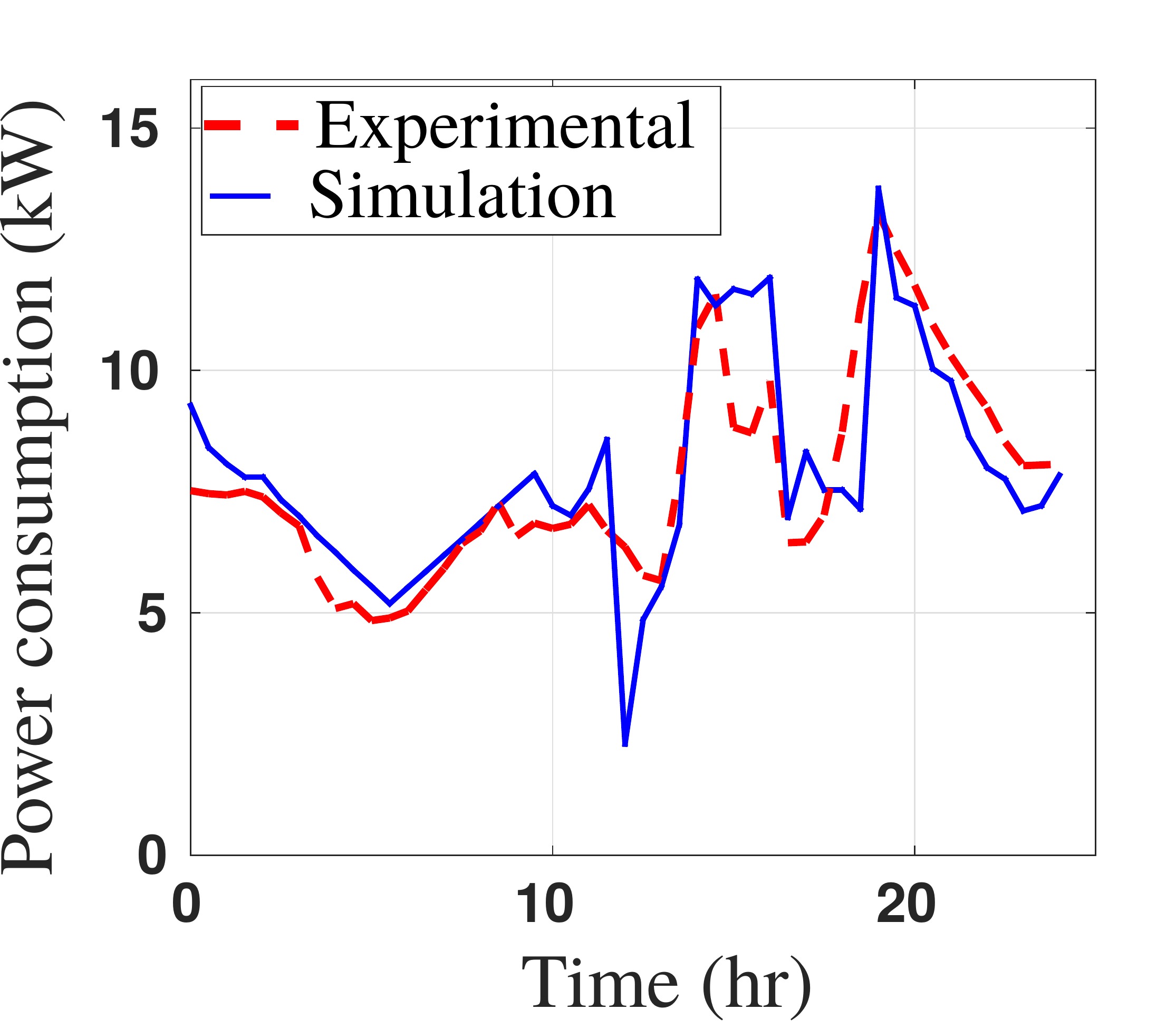}
}
\end{center} \vspace{-0.15in}
\caption{Simulated and Measured Power Consumptions}
\label{fig:validation} \vspace{-0.2in}
\end{figure}

\subsection{User-level Problem I: Optimal Thermostat Programming}
\label{sec:user}
In this section, we define the problem of optimal thermostat programming. We first divide each day into three periods: off-peak hours from 12 AM to $t_{\text{on}}$ with electricity price $p_{\text{off}} \, (\$/kWh)$; on-peak hours beginning at $t_{\text{on}}$ and ending at $t_{\text{off}} > t_{\text{on}}$ with electricity price $p_{\text{on}}  \,(\$/kWh)$; and off-peak hours from $t_{\text{off}}$ to 12 AM with electricity price $p_{\text{off}} \, (\$/kWh)$. In addition to the on-peak and off-peak charges, we consider a monthly charge which is proportional to the maximum rate of consumption during the peak hours. The proportionality constant is called the \textit{demand price} $p_d \, (\$/kW)$. Given $p:=[p_{\text{on}}, p_{\text{off}},p_d]$, the total cost of consumption (daily electricity bill) is divided as
\begin{equation}
J_t(\mathbf{u} ,T_1, p) = J_e(\mathbf{u} ,T_1,p_{\text{on}},p_{\text{off}}) + J_d(\mathbf{u} ,T_1, p_d), 
\label{eq:Jy}
\end{equation}
where $J_e$ is the energy cost, $J_d$ is the demand cost and \vspace{-0.05in}
\[
\mathbf{u} := [u_0, \cdots, u_{N_f-1}] \in \mathbb{R}^{N_f} \vspace{-0.05in}
\]
is the vector temperature settings. The energy cost is 
\begin{equation}
J_e(\mathbf{u}, T_1,p_{\text{on}},p_{\text{off}})  = \Big( p_{\text{off}} \sum_{k \in S_{\text{off}}} g(k,u_{k},T_1^k)  + p_{\text{on}}  \sum_{k \in S_{\text{on}}} g(k,u_{k},T_1^k) \Big) \Delta t,
\label{eq:Je}
\end{equation}
where $k \in S_{\text{on}}$ if $k \, \Delta t \in [t_{\text{on}},t_{\text{off}}]$
and $k\in S_{\text{off}}$ otherwise. That is, $S_{\text{on}}$ and $S_{\text{off}}$ correspond to the set of on-peak and off-peak sampling times, respectively. The function $g$ is a discretized version of $q$ (Eqn.~\eqref{eq:q}):
\begin{equation}
g(k,u_k, T_1^k) :=
 \dfrac{T^k_{e} - u_k}{R_e} + 2 \, C_{in} \dfrac{T_1^k - u_k}{\Delta x}.
 \label{eq:gk}
\end{equation}
i.e., $g$ the power consumed by the HVAC at time step $k$, where $T_e^k$ denotes the external temperature at time-step $k$. If demand charges are calculated monthly, the demand cost, $J_d$, for a single day can be considered as
\begin{equation}
J_d(\mathbf{u}, T_1, p_d) := \dfrac{p_d}{30} \max_{k \in S_{\text{on}}} g(k,u_{k},T_1^k). 
\label{eq:Jd}
\end{equation}

We now define the optimal thermostat programming (or user-level) problem as minimization of the total cost of consumption, $J_t$, as defined in~\eqref{eq:Jy}, subject to the building thermodynamics in~\eqref{eq:discrete_dyn} and interior temperature constraints:
\begin{align}
& J^\star(p) = \min_{u_{k}, \gamma \in \mathbb{R}, T^k \in \mathbb{R}^M} J_{e}(\mathbf{u}, T_1,p_{\text{on}},p_{\text{off}}) + \frac{p_d}{30} \, \gamma   \nonumber \\
& \text{subject to} \nonumber  \;\, g(k, u_{k}, T_1^k) \leq \gamma \hspace{0.92in} \text{ for } k \in S_{\text{on}} \nonumber \\
& \hspace{0.58in} T^{k+1} = f(T^k,u_k) \hspace{0.83in} \text{ for } k \in S_{\text{on}} \cup S_{\text{off}} \nonumber \\
& \hspace{0.58in}  T_{\min} \leq u_{k} \leq T_{\max} \hspace{0.85in}   \text{ for } k \in S_{\text{on}} \cup S_{\text{off}}  \nonumber \\
&  \hspace{0.58in} T^{0} = [T_{\text{init}}(\Delta x), \cdots, T_{\text{init}}(M \, \Delta x)]^T,
\label{eq:user_discrete}
\end{align}
where $T_{\min},T_{\max}$ are the acceptable bounds on interior temperature. 
Note that this optimization problem depends implicitly on the external temperature through the time-varying function $g$.

\subsection{User-level Problem II: 4-Setpoint Thermostat Program}
\label{sec:user2}
Most of the commercially available programmable thermostats only include four programming periods per-day, each period possessing a constant temperature. In this section, we account for this contraint. First, we partition the day into programming periods: $P_i := [t_{i-1},t_i], \, i=1, \cdots,4$ such that 
 \[
\bigcup_{i=1}^4 P_i = [0,24], \quad t_{i-1} \leq t_i, \quad t_0 = 0 \;\text{ and }\; t_4=24. \vspace{-0.05in}
\]
We call $t_0, \cdots, t_4$  \textit{switching} times. Similar to the previous model, $u_i \in [T_{\text{min}}, T_{\text{max}}]$ denotes the temperature setting corresponding to the programming period $P_i$.

To simplify the mathematical formulation of our problem, we introduce some additional notation. Define the set $S_i$ by $k\in S_i$ if $k \Delta t \in P_i$. Denote $L_i := \max_{k \in S_i}k$. For clarity, we have depicted $L_i$ in Figure~\ref{fig:4_setpoint}. Moreover, for each $P_i$, we define $\overline{\Delta t}_i$ as the period between the last time-step of $P_i$ and the end of $P_i$, i.e.,
$
\overline{\Delta t}_i := t_i - L_i \, \Delta t.
$
See Figure~\ref{fig:4_setpoint} for an illustration of $\overline{\Delta t}_i$.
In this framework, the daily consumption charge is 
\[
I_t(\mathbf{u}, T_1,p) =  I_e(\mathbf{u}, T_1,p_{\text{on}},p_{\text{off}}) + I_d(\mathbf{u}, T_1,p_d)
\]
where $I_e$ is the energy cost
\begin{equation}
I_e(\mathbf{u}, T_1,p_{\text{on}},p_{\text{off}}) = 
 \sum_{i=1}^{4} \left( \sum_{k \in S_i} \left(r(k) g(k,u_i,T_1^k) \Delta t \right) + r(L_{i}) g(k,u_i,T_1^{L_{i}}) \overline{\Delta t}_{i} \right),
\label{eq:4_setpoint_bill}
\end{equation}
and $I_d$ is the demand cost
$
I_d(\mathbf{u}, T_1,p_d) = \max_{k \in S_{\text{on}}}  g(k,u,T_1^k),
$
where $L_0 = 0$ and $r$ is defined as
\[
r(k) :=
\begin{cases}
p_{\text{on}} & t_{\text{off}} \leq k \, \Delta t  < t_{\text{on}} \\
p_{\text{off}} &\text{otherwise}.
\end{cases}
\]
Assuming that the demand cost for a single day is $ \frac{p_d}{30} \max_{k \in S_{\text{on}}}  g(k,u,T_1^k)$, we define the 4-setpoint thermostat programming problem as 
\begin{align}
& \hspace{-5mm}\min_{ \substack{ u_1, \cdots,u_4 \in \mathbb{R} \\ t_1,t_2,t_3,\gamma \in \mathbb{R}, \, T^k \in \mathbb{R}^M }} \; I_e(\mathbf{u}, T_1,p_{\text{on}},p_{\text{off}}) + \dfrac{p_d}{30} \gamma \qquad \text{subject to} \nonumber \\
& g(k, u_i, T_1^k) \leq \gamma \hspace{0.61in} \text{ for } k \in S_{\text{on}},\; i\in\{1,2,3,4\}  \nonumber \\
&  T^{k+1} = f(T^{k},u_i) \hspace{0.5in} \text{ for } k \in S_i \text{ and } i\in\{1,2,3,4\}    \nonumber \\
&   T_{\min} \leq u_i \leq T_{\max} \hspace{0.52in}   \text{ for } i\in\{1,2,3,4\}  \nonumber \\
& 0 \leq t_{i-1} \leq t_i \leq 24 \hspace{0.45in}   \text{ for } i\in\{1,2,3,4\} \nonumber\\
&  T^{0} = [T_{\text{init}}(\Delta x), \cdots, T_{\text{init}}(M \, \Delta x)]^T,
\label{eq:user_4setpoint}
\end{align}
where $t_0=0$ and $t_4=24$.

\begin{figure}[t]
\centering
 \includegraphics[scale=0.4]{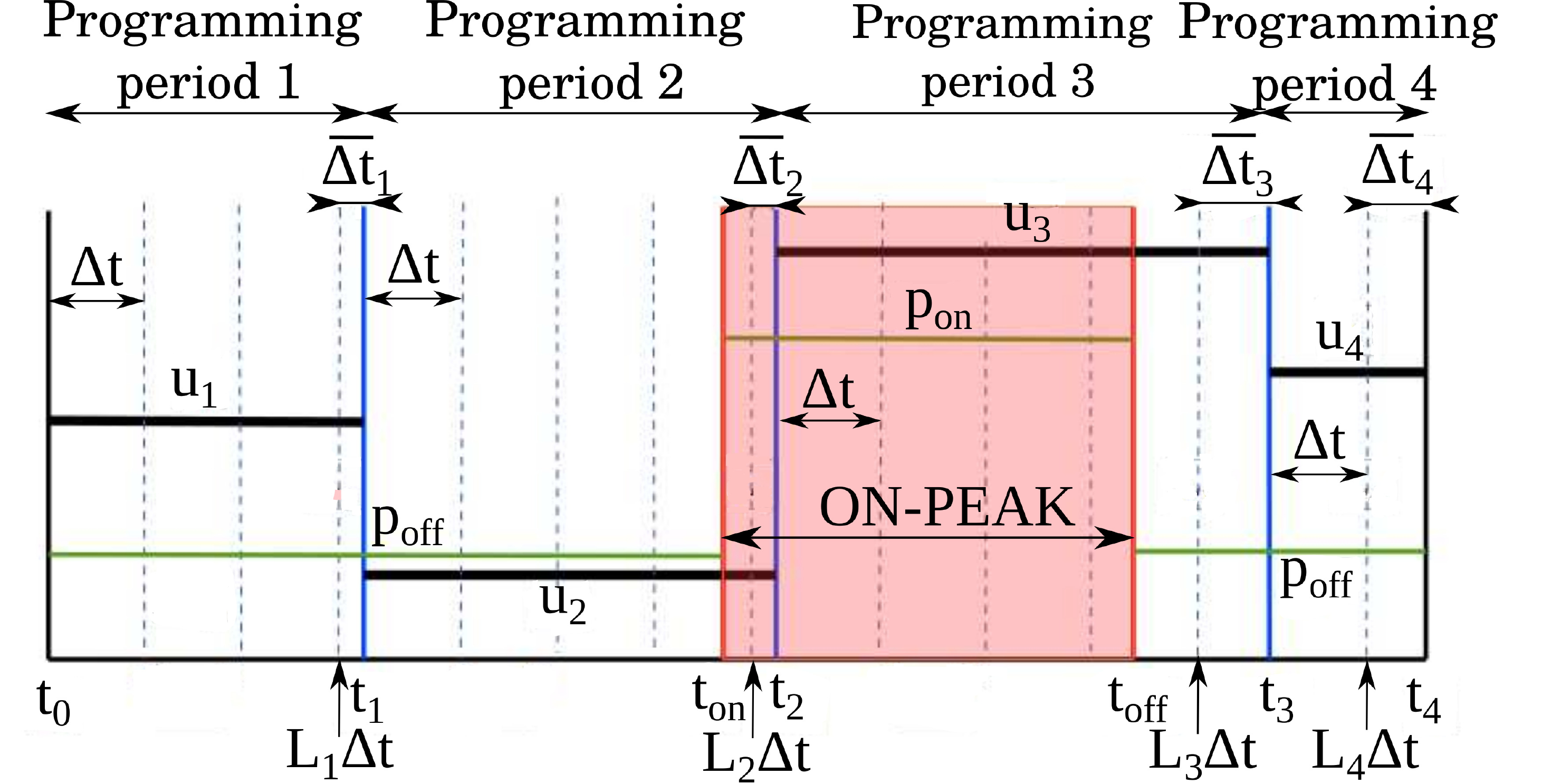}\vspace{0.1in}
\caption{An Illustration for the Programming Periods of the 4-Setpoint Thermostat Problem, Switching Times $t_i$, Pricing Function $r$, $L_i$ and $\overline{\Delta t}_i$.}
\label{fig:4_setpoint} 
\end{figure}

\subsection{Utility-level Optimization Problem}
\label{sec:utility}
Regulated utilities must meet expected load while maintaining a balance between revenue and costs. Therefore, we define the utility-level optimization problem as minimization of the total cost of generation, transmission and distribution of electricity such that generation is equal to consumption, and the total cost is a fixed percentage of the revenue of the utility company. Note that in this dissertation, we focus on vertically integrated utility companies - meaning that the company provides all aspects of electric services including generation, transmission, distribution, metering and billing services as a single firm. Let $s(t)$ be the amount of electricity produced as a function of time and let $\mathbf{s}:= [s_0, \cdots, s_{N_f-1}]$, where $s_k=s(k \, \Delta t)$.
The vector $\mathbf{s}$ is determined by the electricity consumed by the users, which we model as a small number of user groups which are lumped according to different building models, temperature limits, and solar generating capacity so that aggregate user group $i$ has $N_i$ members.
Next, we define $u_k^{\star,i}(p)$ to be the minimizing user temperature setting for user $i$ at time $k$ with prices $p$ and $T_j^{i,\star,k}(p)$ to be the minimizing interior wall temperatures for aggregate user $i$ at time $k$ and discretization point $j$ for prices $p$, where minimization is with respect to the user-level problem defined in~\eqref{eq:user_discrete}. Then the model of electricity consumption by the rational user $i$ at time step $k$ for prices $p$ is given by $g(k,u^{\star,i}_{k}(p),T_{1}^{\star,k,i}(p))$. Thus the constraint that production equals consumption at all time implies
\begin{equation}
s_k = \sum_i N_i g(k,u^{\star,i}_{k}(p),T_{1}^{i,\star,k}(p)) \; \text{for all } k=0, \cdots, N_f-1.  \vspace{-0.05in}
\label{eq:sk}
\end{equation}
Now, since utility's revenue equals the amount paid by the users, the model for revenue from rational user $i$ becomes $J_t(u^{\star,i}(p) ,T^{i,\star}_{1}(p), p)$, where $J_t$ is defined in~\eqref{eq:Jy}. We may now define the utility-level optimization problem as minimization of the total cost subject to equality of generation and consumption and proportionality of revenue and total costs.
\begin{align}
&  \min_{ p_{\text{on}},p_{\text{off}},p_d \in \mathbb{R}} \quad  C(\mathbf{s})    \nonumber\\
& \text{subject to } \quad s_k = \sum_i N_i \, g(k,u^{\star,i}_{k}(p),T_{1}^{i,\star,k}(p)) \quad k=0,\cdots, N_f-1  \nonumber\\
& \qquad\;\, \hspace{0.5in} C(\mathbf{s}) = \lambda \sum_i N_i \, J_t(u^{\star,i}(p) ,T^{i,\star}_1(p), p), 
\label{eq:utility_problem}
\end{align}
where $\lambda \leq 1$ is usually determined by the company's assets, accumulated depreciation and allowed rate of return. We refer to the minimizers $p^\star_{\text{on}},p^\star_{\text{off}},p^\star_d$ which solve Problem~\eqref{eq:utility_problem} as \textit{optimal electricity prices}.

\paragraph{Model of total cost, $C(s)$, to utility company} The algorithm defined in the following section was chosen so that only a black-box model of utility costs is required. However, for the case studies included in Section~\ref{sec:numerical}, we use two models of utility costs based on ongoing discussions and collaboration with Arizona's utility company SRP. In the first model, we consider a linear representation of both fuel and capacity costs.
\begin{equation}
C(\mathbf{s}) := a \hspace{-0.1in} \sum_{k \in S_{\text{on}} \cup S_{\text{off}}} \hspace{-0.1in} s_k \, \Delta t + b\max_{k\in S_{\text{on}}} s_k,
\label{eq:gen_cost}
\end{equation}
where $a  \, (\$/kWh)$ is the marginal cost of producing the next $kWh$ of energy and $b \, (\$/kW)$
is the marginal cost of installing and maintaining the next $kW$ of capacity. Estimated values of the coefficients $a$ and $b$ for SRP can be found in~\cite{SRP} as $a=0.0814 \$/kWh$ and $b=59.76 \$/kW$. According to~\cite{SRP}, these marginal costs include fuel, building, operation and maintenance of facilities, transmission and distribution costs. The advantage of this model is that the solution to the utility optimization problem does not depend on the number of users, but rather the fraction of users in each group.

Our second model for utility costs includes a quadratic term to represent fuel costs. The quadratic term reflects the increasing fuel costs associated with the required use of older, less-efficient generators when demand increases.
\begin{equation}
C(\mathbf{s}) := \tau \Big( \sum_{k \in S_{\text{on}} \cup S_{\text{off}}} \hspace{-0.1in} s_k \, \Delta t \Big)^2 + \nu \hspace{-0.1in} \sum_{k \in S_{\text{on}} \cup S_{\text{off}}} \hspace{-0.1in} s_k \, \Delta t + b\max_{k\in S_{\text{on}}} s_k
\label{eq:gen_costQuad}
\end{equation}
This model was calibrated using artificially modified fuel, operation and maintenance \\

\pagebreak

\hspace{-0.33in} data provided by SRP, yielding estimated coefficients  $\tau=$0.00401 \$/(MWh)$^2$ and $\nu=$4.54351 \$/(MWh).

\section{Solving User- and Utility-level Problems by Dynamic Programming}

First, we solve the optimal thermostat programming problem using a variant of dynamic programming. This yields consumption as a function of prices $p_{\text{on}},p_{\text{off}},p_d$. Next, we embed this implicit function in the Nelder-Mead simplex algorithm in order to find prices which minimize the production cost in the utility-level optimization problem as formulated in~\eqref{eq:utility_problem}. We start the user-level problem by fixing the variable $\gamma \in \mathbb{R}^+$ and defining a \textit{cost-to-go} function, $V_k$. At the final time $N_f \, \Delta t=24$, we have
\begin{equation}
V_{N_f}(x) := \dfrac{p_d}{30} \gamma.
\label{eq:V_Nf}
\end{equation}
Here for simplicity, we use $x=T \in \mathbb{R}^M$ to represent the discretized temperature distribution in the wall. We define the dilated vector of prices by $p_{j}=p_{\text{off}}$ if $j\in S_{\text{off}}$ and $p_{j}=p_{\text{on}}$ otherwise. Then, we construct the cost-to-go function inductively as
\begin{align}
\hspace{-0.1in} V_{j-1} (x) := \hspace{-0.05in}  \min_{u \in W_{\gamma,j-1}(x)}  \hspace{-0.05in}  \left( p_{j-1} \, g(j-1, u,x_1) \Delta t + V_j(f(x,u)) \right)
\label{eq:V0_on}
\end{align}
 for $j=1, \cdots,N_f$, where $W_{\gamma,j}(x)$ is the set of allowable inputs (interior air temperatures) at time $j$ and state $x$:
\begin{align*}
W_{\gamma,j}(x) := 
\begin{cases} \{u \in \mathbb{R}: T_{\min} \leq u \leq T_{\max}, g(j,u,x_1) \leq \gamma\}, & \hspace{-0.1in} j \in S_{\text{on}}\\
\{u \in \mathbb{R}: T_{\min} \leq u \leq T_{\max} \}, & \hspace{-0.1in} j \in S_{\text{off}}.
\end{cases}
\end{align*}
Now we present the main result. 
\pagebreak
\begin{mythm} 
Given $\gamma \in \mathbb{R}^+$, suppose that $V_i$ satisfies~\eqref{eq:V_Nf} and~\eqref{eq:V0_on}. Then $V_0(T^0)=J^*$, where
\begin{align}
& J^*(p) = \min_{u_{k}, T^k \in \mathbb{R}^M} J_{e}(\mathbf{u}, T_1,p_{\text{on}},p_{\text{off}}) + \frac{p_d}{30} \, \gamma   \nonumber \\
& \text{subject to }  \;\; g(k, u_{k}, T_1^k) \leq \gamma && \text{ for } k \in S_{\text{on}} \nonumber \\
& \hspace{0.78in} T^{k+1} = f(T^k,u_k) && \text{ for } k \in S_{\text{on}} \cup S_{\text{off}} \nonumber \\
& \hspace{0.78in}  T_{\min} \leq u_{k} \leq T_{\max} && \text{ for } k \in S_{\text{on}} \cup S_{\text{off}}  \nonumber \\
&  \hspace{0.78in} T^{0} = [T_{\text{init}}(\Delta x), \cdots, T_{\text{init}}(M \, \Delta x)]^T.
\label{eq:user_discrete2}
\end{align}
\vskip-0.3in
\label{thm:DP}
\end{mythm}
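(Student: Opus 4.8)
The plan is to prove this by the standard backward induction argument of dynamic programming, adapted to our discretized setting with a frozen demand variable $\gamma$. The essential claim is that for each $j \in \{0,1,\dots,N_f\}$ and each reachable state $x \in \mathbb{R}^M$, the cost-to-go $V_j(x)$ defined by the recursion~\eqref{eq:V_Nf}--\eqref{eq:V0_on} equals the optimal value of the ``tail'' problem that starts at time step $j$ in state $x$; specializing to $j=0$ and $x = T^0$ then yields $V_0(T^0) = J^*(p)$. So first I would state precisely the family of tail subproblems: for fixed $\gamma$, define
\[
J_j^*(x) := \min_{\substack{u_j,\dots,u_{N_f-1}\\ T^j = x}} \; \sum_{k=j}^{N_f-1} p_k \, g(k,u_k,T_1^k)\,\Delta t + \frac{p_d}{30}\gamma
\]
subject to $T^{k+1}=f(T^k,u_k)$ for $k \ge j$, the box constraints $T_{\min}\le u_k \le T_{\max}$, and the demand constraints $g(k,u_k,T_1^k)\le \gamma$ for $k \in S_{\mathrm{on}}$ with $k \ge j$. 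Here $p_k = p_{\mathrm{off}}$ if $k \in S_{\mathrm{off}}$ and $p_k = p_{\mathrm{on}}$ if $k \in S_{\mathrm{on}}$, matching the dilated price vector in the theorem. The goal is $J^*_0(T^0) = V_0(T^0)$, and since the objective in~\eqref{eq:user_discrete2} is exactly $J_e(\mathbf{u},T_1,p_{\mathrm{on}},p_{\mathrm{off}}) + \frac{p_d}{30}\gamma$ with $J_e$ as in~\eqref{eq:Je}, the identity $J^*_0 = J^*$ is immediate from definitions.

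The induction runs on $j$ from $N_f$ down to $0$. \emph{Base case:} at $j = N_f$ the tail sum is empty, so $J^*_{N_f}(x) = \frac{p_d}{30}\gamma = V_{N_f}(x)$ by~\eqref{eq:V_Nf}. \emph{Inductive step:} assume $V_j \equiv J^*_j$. Take any state $x$ and any feasible control sequence $(u_{j-1},\dots,u_{N_f-1})$ for the tail problem at $j-1$; feasibility of $u_{j-1}$ means $u_{j-1} \in W_{\gamma,j-1}(x)$ (the constraint set $W_{\gamma,j-1}$ encodes exactly the box bound plus, on on-peak steps, the demand bound $g(j-1,u,x_1)\le\gamma$), and the remaining tail $(u_j,\dots,u_{N_f-1})$ is feasible for the tail problem at $j$ starting from $f(x,u_{j-1})$. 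The objective splits additively as $p_{j-1}\,g(j-1,u_{j-1},x_1)\,\Delta t$ plus the time-$j$ tail objective. Taking the infimum over the remaining controls first gives, for each fixed $u_{j-1}\in W_{\gamma,j-1}(x)$, the value $p_{j-1}\,g(j-1,u_{j-1},x_1)\,\Delta t + J^*_j(f(x,u_{j-1}))$; then minimizing over $u_{j-1}\in W_{\gamma,j-1}(x)$ and applying the inductive hypothesis $J^*_j = V_j$ reproduces exactly the right-hand side of~\eqref{eq:V0_on}, i.e. $J^*_{j-1}(x) = V_{j-1}(x)$. This is the interchange-of-minima / principle-of-optimality argument, and I would spell out the two inequalities ($\le$ by plugging an optimal tail into the recursion, $\ge$ by extracting a feasible full sequence from any near-optimal choice in the recursion) to make the decomposition rigorous.

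A few technical points deserve care. First, one must confirm that the tail constraint sets are nonempty so that the minima are attained and no $+\infty$ issues arise: the box $[T_{\min},T_{\max}]$ is compact and nonempty, $g$ is continuous (indeed affine) in $u$, and for the argument to go through we need $\gamma$ large enough that $W_{\gamma,j}(x)\ne\emptyset$ along the relevant trajectories — this is the implicit standing assumption that the outer problem~\eqref{eq:user_discrete} is feasible for the chosen $\gamma$, and I would note it explicitly rather than prove it. Second, attainment of the minima: each $V_j$ is a minimum of a continuous function over a compact set (composition of the continuous dynamics $f$, the continuous stage cost, and — by downward induction — a continuous or at least lower semicontinuous $V_j$), so the $\min$ in~\eqref{eq:V0_on} is justified; a brief remark on (lower semi)continuity of $V_j$ propagating through the recursion suffices. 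Third, the demand constraints only bind on $S_{\mathrm{on}}$, which is already baked into the definition of $W_{\gamma,j}$, so no separate bookkeeping of the $\gamma$-constraint is needed in the additive split. The main obstacle, such as it is, is not conceptual depth but bookkeeping precision: making sure the price indexing ($p_{j-1}$ vs. the on/off classification of step $j-1$), the index ranges ($S_{\mathrm{on}}\cup S_{\mathrm{off}} = \{0,\dots,N_f-1\}$), and the terminal term $\frac{p_d}{30}\gamma$ line up so that the telescoped recursion at $j=0$ reproduces~\eqref{eq:user_discrete2} verbatim. Once the subproblem family and the two-sided inequality in the inductive step are written out cleanly, the result follows.
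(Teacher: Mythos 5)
Your proposal is correct and follows essentially the same route as the paper: backward induction on the time index with the principle of optimality, identifying the recursion \eqref{eq:V0_on} with the optimal value of the tail subproblem at each stage (the paper packages the tail costs as explicit functions $Q_j$ of the control sequence and minimizes over admissible control laws, but this is the same decomposition). Your added remarks on feasibility of $W_{\gamma,j}$ and attainment of the minima are points the paper glosses over, and your uniform treatment via the dilated price vector avoids the paper's case split over on-peak/off-peak index ranges, but neither changes the substance of the argument.
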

To prove Theorem~\ref{thm:DP}, we require the following definitions.  
\begin{mydef}
Given $p_{\text{off}}, p_{\text{on}}, p_d, \gamma \in \mathbb{R}^+$, $N_f \in \mathbb{N}^+$, and $t_{\text{off}},t_{\text{on}},\Delta t \in \mathbb{R}^+$ such that $\frac{t_{\text{on}}}{\Delta t}, \frac{t_{\text{off}}}{\Delta t} \in \mathbb{N}$, define the cost-to-go functions
\[
Q_j: \mathbb{R}^{N_f-j} \times \mathbb{R}^{N_f-j+1} \times \mathbb{R}^+ \times \mathbb{R}^+ \rightarrow \mathbb{R} \; \text{ for } j=0, \cdots, N_f \text{ as} \vspace{-0.1in}
\]
\begin{align}
& Q_j(x, y, p_{\text{on}}, p_{\text{off}}) := \nonumber \\
&\begin{cases}
    \!\begin{aligned}
       & \hspace{-0.03in} \left( p_{\text{off}} \hspace{-0.2in} \sum\limits_{\substack{k \in S_{\text{off}} \\ k \notin \{ 0, \cdots, j-1\}}} \hspace{-0.2in} g(k,x_{k},y_{k}) + p_{\text{on}} \sum\limits_{k \in S_{\text{on}}} \hspace{-0.05in} g(k,x_{k},y_{k}) \right) \Delta t + \sum_{i=1}^3 \Gamma_i & 
    \end{aligned}  & \hspace{-0.1in} \text{ if } 0 \leq j < N_{\text{on}}  \\
\Gamma_1 =  \left(  p_{\text{on}} \hspace{-0.2in} \sum\limits_{\substack{k \in S_{\text{on}} \\ k \notin \{ N_{\text{on}},\cdots,j-1\} }} \hspace{-0.28in} g(k,x_{k},y_{k}) + p_{\text{off}} \hspace{-0.27in} \sum\limits_{\substack{k \in S_{\text{off}} \\ k \notin \{0,\cdots,N_{\text{on}}-1 \}}} \hspace{-0.28in} g(k,x_{k},y_{k}) \hspace{-0.03in} \right) \Delta t + \Gamma_2 + \Gamma_3 & \hspace{-0.1in} \text{ if } N_{\text{on}} \leq j < N_{\text{off}} \\
\Gamma_2 =   p_{\text{off}} \hspace{-0.2in}  \sum\limits_{\substack{ k \in \{j,\cdots, N_f-1 \}}} \hspace{-0.2in} g(k,x_{k},y_{k})  \Delta t + \Gamma_3  & \hspace{-0.1in} \text{ if } N_{\text{off}} \leq j < N_f \\
\Gamma_3 = \dfrac{p_d}{30} \gamma  & \hspace{-0.1in} \text{ if } j = N_f,
\end{cases}
\label{eq:cost-to-go} 
\end{align}
where $g$ is defined as in~\eqref{eq:gk}, and $N_{\text{on}} := \frac{t_{\text{on}}}{\Delta t}$ and $N_{\text{off}} := \frac{t_{\text{off}}}{\Delta t}$ are the time-steps corresponding to start and end of the on-peak hours.
\label{def:cost-to-go}
\end{mydef}
Note that from~\eqref{eq:Je}, it is clear that $Q_0 = J_e + \dfrac{p_d}{30} \gamma$.

\begin{mydef}
Given $\gamma, T_{\min}, T_{\max} \in \mathbb{R}$ and $N_f,M \in \mathbb{N}^+$, define the set
\begin{align}
U_j(x) := \{ &(u_j,  \cdots,  u_{N_f-1}) \in \mathbb{R}^{N_f-j} : \nonumber \\
&  g(k,u_{k}, T_1^{k}) \leq \gamma \; &&\text{ for all }  k \in S_{\text{on}}, \nonumber \\
&  T^j = x \text{ and } T^{k+1} = f(T^{k},u_{k}) && \text{ for all } k \in \{ j, \cdots, N_f-1\}, \nonumber \\
&  T_{\min} \leq u_{k} \leq T_{\max}  && \text{ for all }  k \in S_{\text{on}} \cup S_{\text{off}} \}
\label{eq:Uj}
\end{align}
for any $x \in \mathbb{R}^M$ and for every $j \in \{ 0, \cdots, N_f-1 \}$, where $f$ and $g$ are defined as in~\eqref{eq:discrete_dyn} and~\eqref{eq:gk}.  \vspace{0.1in}
\label{def:Uj}
\end{mydef} 

\begin{mydef}
Given $N_f, M \in \mathbb{N}^+$, $j \in \{ 0, \cdots, N_f-1 \}$, let
\[
\overline{\mu}_{j} :=[ \mu_j, \cdots, \mu_{N_f-1} ]
\]
where $ \mu_k : \mathbb{R}^{M} \rightarrow \mathbb{R} \text{ for } k=j, \cdots, N_f-1.
$
Consider $U_j$ as defined in~\eqref{eq:Uj} and $f$ as defined in~\eqref{eq:discrete_dyn}. If
\[
\overline{\mu}_j(w) := [\mu_j(w), \mu_{j+1}(T^{j+1}) \cdots, \mu_{N_f-1}(T^{N_f-1})] \in U_j(T^j)
\]
for any $w \in \mathbb{R}^M$, where 
\[
T^{k+1} = f(T^k,\mu_k(T^k)), T^j=w \;\; \text{for } k=j ,\cdots, N_f-2,
\]
then we call $\overline{\mu}_{j}$  an \textit{admissible control law} for the system
\[T^{k+1} = f(T^k,\mu_k(T^k)), \; k=j ,\cdots, N_f-1
\]
for any $w \in \mathbb{R}^M$.
\label{def:maps}
\end{mydef}

We now present a proof for Theorem~\ref{thm:DP}.

\begin{proof}
Since the cost-to-go function $Q_0 = J_e + \dfrac{p_d}{30} \gamma$, if we show that 
\begin{equation}
\min_{ \overline{\mu}_j(T^j) \in U_j(T^j)} \; Q_j(\overline{\mu}_{j}(T^j),T_1, p_{\text{on}}, p_{\text{off}}) = V_{j}(T^j) \vspace{-0.05in}
\label{eq:proof1}
\end{equation}
for $j=0, \cdots, N_f$ and for any $T^j \in \mathbb{R}^{M}$, where
\[
T_1:=[T^j,f(T^j,\mu_j(T^j)), \cdots, f(T^{N_f-1},\mu_{N_f-1}(T^{N_f-1}))],
\] 
then it will follow that $J^* = V_0(T^0)$. For brevity, we denote $\overline{\mu}_j(T^j)$ by $\overline{\mu}_j$, $U_j(T^j)$ by $U_j$ and we drop the last two arguments of $Q_j$.
To show~\eqref{eq:proof1}, we use induction as follows. 

\noindent \textit{\underline{Basis step}}: If $j=N_f$, then from~\eqref{eq:V_Nf} and~\eqref{eq:cost-to-go} we have $V_{N_f}(T^{N_f}) = \frac{p_d}{30} \gamma$.\vspace*{0.05in}

\noindent \textit{\underline{Induction hypothesis}}: Suppose 
\[
\min_{\overline{\mu}_k \in U_k} \; Q_k(\overline{\mu}_{k}, T_1) = V_{k}(T^k) \vspace{-0.065in}
\]
for some $k \in \{0, \cdots, N_f \}$ and for any $T^k \in \mathbb{R}^{M}$. Then, we need to prove that 
\begin{equation}
\min_{ \overline{\mu}_{k-1} \in U_{k-1}} \; Q_{k-1}(\overline{\mu}_{k-1},T_1) = V_{k-1}(T^{k-1}) \vspace{-0.05in}
\label{eq:proof4}
\end{equation}
for any $T^k \in \mathbb{R}^{M}$. Here, we only prove~\eqref{eq:proof4} for the case which $N_{\text{off}} < k \leq N_f-1$.
The proofs for the cases $0 \leq k \leq N_{\text{on}}$ and $N_{\text{on}} < k \leq N_{\text{off}}$ follow the same exact logic.

Assume that $N_{\text{off}} < k \leq N_f-1$. Then, from Definition~\ref{def:cost-to-go} 
\begin{align}
& \min_{ \overline{\mu}_{k-1} \in U_{k-1}} \;  Q_{k-1}(\overline{\mu}_{k-1},T_1) \nonumber  \\
=&   \min_{\mu_{k-1}, \cdots, \mu_{N_f-1} \in R} \; p_{\text{off}} \left( \sum\limits_{j=k-1}^{N_f-2}  g\left(j,\mu_j,T_1^{j}\right) \right) \Delta t \nonumber  \\
=&  \min_{\mu_{k-1}, \cdots, \mu_{N_f-1} \in R}  p_{\text{off}} \left( g\left(k-1, \mu_{k-1},T_1^{k-1}\right) 
 +  \sum\limits_{j=k}^{N_{f}-2}  g\left(j, \mu_j ,T_1^{j}\right)\right) \Delta t,
\label{eq:proof5}
\end{align}
\hspace{-0.18in} where $R:= \{ x \in \mathbb{R}: T_{\min} \leq x  \leq T_{\max} \}$.
From the principle of optimality~(\cite{bellman}) it follows that 
\begin{align}
&\min_{\mu_{k-1}, \cdots, \mu_{N_f-1} \in R}  p_{\text{off}} \left(  g\left( k-1,\mu_{k-1},T_1^{k-1} \right) + \sum\limits_{j=k}^{N_f-1}   g\left(j, \mu_j ,T_1^{j}\right)  \right) \Delta t \nonumber \\
=&\min_{\mu_{k-1} \in R} \; \left( p_{\text{off}} \, g\left(k-1,\mu_{k-1},T_1^{k-1}\right)  \Delta t + \min_{\mu_{k}, \cdots, \mu_{N_f-1} \in R} \;  p_{\text{off}} \sum\limits_{j=k}^{N_f-1}  g\left( j,\mu_{j},T_1^{j}\right) \right) \Delta t,
 \label{eq:proof6}
\end{align}
By combining~\eqref{eq:proof5} and~\eqref{eq:proof6} we have
\begin{align}
& \min_{\overline{\mu}_{k-1} \in U_{k-1}} \; Q_{k-1}(\overline{\mu}_{k-1}, T_1) = \min_{\mu_{k-1} \in R} \; \left( p_{\text{off}} \, g\left(k-1,\mu_{k-1}),T_1^{k-1}\right)  \Delta t \right. \nonumber  \\
 &   \hspace{1.6in} \left. + \min_{\mu_{k}, \cdots, \mu_{N_f-1} \in R} \;  p_{\text{off}} \sum\limits_{j=k}^{N_f-1}  g\left(j, \mu_{j},T_1^{j}\right) \right) \Delta t.
 \label{eq:proof7}
\end{align}
From Definition~\ref{def:cost-to-go}, we can write
\begin{align}
\min_{\mu_{k}, \cdots, \mu_{N_f-1}} p_{\text{off}} \, \Delta t \sum\limits_{j=k}^{N_f-1}  g\left(j,\mu_{j},T_1^{j}\right)
=  \min_{\overline{\mu}_{k} \in U_k}   Q_{k}(\overline{\mu}_{k},T_1).
\label{eq:proof2}
\end{align}
Then, by combining~\eqref{eq:proof7} and~\eqref{eq:proof2} and using the induction hypothesis it follows that
\begin{align*}
 \min_{\overline{\mu}_{k-1} \in U_{k-1}}  \; Q_{k-1}(\overline{\mu}_{k-1},T_1) & = \min_{\mu_{k-1} \in R} \left( p_{\text{off}} \, g\left(k-1,\mu_{k-1},T_1^{k-1}\right) \Delta t  +  \min_{\overline{\mu}_{k} \in U_k} Q_{k}(\overline{\mu}_{k},T_1) \right) \\
& = \min_{\mu_{k-1} \in R} \; \left( p_{\text{off}} \, g\left(k-1,\mu_{k-1},T_1^{k-1}\right) \Delta t + V_{k}(T^k) \right)
\end{align*}
for any $T^k \in \mathbb{R}^{M}$.
By substituting for $T^k$ from~\eqref{eq:discrete_dyn} and using the definition of $V$ in~\eqref{eq:V0_on} we have
\begin{align*}
\min_{\overline{\mu}_{k-1} \in U_{k-1}} Q_{k-1} ( & \overline{\mu}_{k-1}, T_1)   = 
\min_{\mu_{k-1} \in R} \left( p_{\text{off}} \, g\left(k-1, \mu_{k-1},T_1^{k-1}\right) \Delta t \right. \\
&  \hspace{1.5in} \left. + V_{k}\left(f \left(T^{k-1},\mu_{k-1}\left(T^{k-1} \right) \right) \right) \right) = V_{k-1} \left(T^{k-1} \right)
\end{align*}
for any $T^{k-1} \in \mathbb{R}^M$.
By using the same logic it can be shown that
\[
\min_{\overline{\mu}_{k-1} \in U_{k-1}} Q_{k-1}(\overline{\mu}_{k-1},T_1) = V_{k-1}(T^{k-1})
\]
for any $k \in \{0, \cdots, N_{\text{off}}-1 \}$ and for any $T^{k-1} \in \mathbb{R}^{M}$.
Therefore, by induction,~\eqref{eq:proof1} is true. Thus, $J^* = V_0(T^0)$.
\end{proof}

The optimal temperature set-points for Problem~\eqref{eq:user_discrete2} can be found as the sequence of minimizing arguments in the value function~\eqref{eq:V0_on}. However, this is not a solution to the original user-level optimization problem in~\eqref{eq:user_discrete}, as the solution only applies for a fixed consumption bound, $\gamma$. However, as this consumption bound is scalar, we may apply a bisection on $\gamma$ to solve the original optimization problem as formulated in~\eqref{eq:user_discrete}. Details are presented in Algorithm~\ref{alg:thermostat}. The computational complexity of this algorithm is proportional to $N_f \cdot n_s^M \cdot  n_u$,
where $N_f$ is the number of discretization points in time, $M$ is the state-space dimension of the discretized system in~\eqref{eq:discrete_dyn}, $n_s$ is the number of possible discrete values for each state, $T$ and $n_u$ is the number of possible discrete values for the control input (interior air temperature). In all of the case studies in Section~\ref{sec:numerical}, we use $N_f=73, M=3, n_s=n_u=13$. The execution time of our Matlab implementation of Algorithm~\ref{alg:thermostat} for solving the three-day user-level problem on a Core i7 processor with 8 GB of RAM was less than 4.5 minutes.

Finding a solution to the 4-Setpoint thermostat programming problem~\eqref{eq:user_4setpoint} is significantly more difficult due to the presence of the switching times $t_1$, $t_2$, $t_3$ as decision variables. However, for this specific problem, a simple approach is to use Algorithm~\ref{alg:thermostat} as an inner loop for fixed $t_i$ and then use a Monte Carlo search over $t_i$. For fixed $t_i$, our Matlab implementation for Algorithm~\ref{alg:thermostat} solves the 4-Setpoint thermostat programming problem in less than 17 seconds on a Core i7 processor with 8 GB of RAM. Our experiments on the same machine show that the total execution time for a Monte Carlo search over 300 valid (i.e., $t_i \leq t_{i+1}$) random combinations of $t_1$, $t_2$, $t_3$ is less than 1.41 hours.

To solve the utility-level problem in~\eqref{eq:utility_problem}, we used Algorithm~\ref{alg:thermostat} as an inner loop for the Nelder-Mead simplex algorithm~(\cite{olsson_1975}). The Nelder-Mead simplex algorithm is a heuristic optimization algorithm which is typically applied to problems where the derivatives of the objective function and/or constraint functions are unknown. Each iteration is defined by a reflection step and possibly a contraction or expansion step. The reflection begins by evaluation of the inner loop (Algorithm~\ref{alg:thermostat}) at each of 4 vertices of a polytope. The polytope is then reflected about the hyperplane defined by the vertices with the best three objective values. The polytope is then either dilated or contracted depending on the objective value of the new vertex.
In all of our case studies in Section~\ref{sec:numerical}, this hybrid algorithm achieved an error convergence of $<10^{-4}$ in less than 15 iterations. Using a Core i7 machine with 8 GB of RAM, the execution time of the hybrid algorithm for solving the utility-level problem was less than 2.25 hours. \vspace{-0.075in}

\begin{algorithm}
\begin{footnotesize}
\textbf{Inputs:}
$p_{\text{on}}$, $p_{\text{off}}$, $p_{\text{d}}$, $T_e$, $t_{\text{on}}, t_{\text{off}}$, $R_e$, $C_{in}$, $T_{\text{init}}$, $\Delta t$, $\Delta x$, $T_{\min}, T_{\max}$, maximum number of bisection iterations $b_{\max}$, lower bound $\gamma_l$ and upper bound $\gamma_u$ for bisection search.

\textbf{Main loop:}\\
Set $k=0$. \\
\While{ $k < b_{\max} $}{
	Set $\gamma = \frac{\gamma_u+\gamma_l}{2}$. 	\\	
  	\eIf{ $V_0$ in~\eqref{eq:V0_on} exists }{		
  		Calculate $u_0, \cdots, u_{N_f-1}$ as the minimizers of the RHS of~\eqref{eq:V0_on} using a policy iteration technique. 	\\
  		Set $\gamma_u = \gamma$. Set $u_i^{\star} = u_i$ for $i=0, \cdots, N_{f-1}$.	 \\		
 	}
 	{
	 	 Set $\gamma_l = \gamma$. \\
 	}					
	Set $k = k+1$. \\
} 

\textbf{Outputs:}
Optimal interior temperature setting: $u_0^{\star}, \cdots, u_{N_f-1}^{\star}$. 
\end{footnotesize}
\caption{A bisection/dynamic programming algorithm for optimal thermostat programming} 
\label{alg:thermostat}
\end{algorithm}

\begin{algorithm}
\DontPrintSemicolon
\begin{footnotesize}
\textbf{Inputs:}
$a$, $b$, reflection parameter $\theta$, expansion parameter $\kappa$, contraction parameter $\zeta$, reduction parameter $\tau$, initial prices $p^{0}_{d_i}, p^{0}_{\text{on}_i}$ such that $p^{0}_{d_i}+ p^{0}_{\text{on}_i} < 1$ for $i=1, \cdots,4$, stopping threshold $\epsilon$ and inputs to Algorithm~\ref{alg:thermostat}.
\vspace{0.05in}

\textbf{Initialization:} \vspace{-0.1in} \\
Set $p_{d_i} = p^{0}_{d_i}, \, p_{\text{on}_i} = p^{0}_{\text{on}_i}, \, p_{\text{off}_i} = 1 - p_{\text{on}_i} - p_{d_i}$, $p_{\text{old}_i} = [10^{12},10^{12},10^{12}]$ for $i=1,\cdots,4$. \\

\textbf{Main loop:} \vspace{-0.1in} \\ 
\While{ $ \sum_{i=1}^4 \Vert p_{\text{old}_i} - p_i  \Vert_2^2  > \epsilon $}{	

	\For{ $i=1, \cdots, 4$}{
		Calculate opt. temp. setting $u^{\star}_{0,i}, \cdots, u^{\star}_{N_f-1,i}$ associated with prices $p_{i}$ using Alg.~\ref{alg:thermostat}.\\ 
		Calculate the cost $C_i$ associated with $u^{\star}_{0,i}, \cdots, u^{\star}_{N_f-1,i}$ using~\eqref{eq:gen_cost} and~\eqref{eq:sk}.	
	}
	Re-index the costs and their associated prices and temperature settings such that $ C_1 \leq C_2 \leq C_3 \leq C_4$. Set $p^{\star} = p_{1}$. \\ 
	Calculate the centroid of all the prices as $
	\bar{p} = \Big[\sum_{i=1}^4 p_{\text{off}_i}, \sum_{i=1}^4 p_{\text{on}_i}, \sum_{i=1}^4 p_{d_i}\Big]$.
	Calculate reflected prices as $p_{r} = \bar{p}+\theta(\bar{p}+p_{4})$.\\ 
	Calculate optimal temperature setting $u^{\star}_{0,r}, \cdots, u^{\star}_{N_f-1,r}$ and the cost $C_r$ associated with the reflected prices using Algorithm~\ref{alg:thermostat}.\\ \vspace{0.025in}

	\uIf{$C_1 \leq C_r < C_3$}{ \vspace{0.025in}
		Set $p_{\text{old}_4} = p_{4}$ and $p_{4} = p_{r}$. Go to the beginning of the loop. \vspace{0.05in} \\
		
	}
	\uElseIf{$C_r < C_1$}{
		Calculate expanded prices as $p_{e} = \bar{p}+\kappa(\bar{p}+p_{4})$. \\ \vspace{0.035in}
	Calculate optimal temperature setting $u^{\star}_{0,e}, \cdots, u^{\star}_{N_f-1,e}$ and the cost $C_e$ associated with expanded prices using Algorithm~\ref{alg:thermostat}.\\  \vspace{0.05in}
	\lIf{$C_e < C_r$}{
		Set $p_{\text{old}_4} = p_{4}$ and $p_{4} = p_{e}$. Set $p^{\star} = p_{e}$.       
	}\lElse
	{
		Set $p_{\text{old}_4} = p_{4}$ and $p_{4} = p_{r}$. Set $p^{\star} = p_{r}$.
	}
	Go back to While loop.
	}
	\Else{
		Calculate contraction prices $p_{c} = \bar{p}+\xi(\bar{p}+p_{4})$.
	Calculate optimal temp. setting $u^{\star}_{0,c}, \cdots, u^{\star}_{N_f-1,c}$ \& the cost $C_c$ associated with expanded prices using Algorithm~\ref{alg:thermostat}.\\  \vspace{0.025in}
	\lIf{$C_c < C_4$}{
		Set $p_{\text{old}_4} = p_{4}$ and $p_{4} = p_{c}$. 
		Go back to While loop.}
	\lElse{Set $p_{\text{old}_i} = p_i$ for $i=2,3,4$.
		Update prices as $p_i = p_1 + \tau (p_i - p_1)$ for $i=2,3,4$.
	}\vspace{-0.1in}
	}\vspace{-0.1in}
}\vspace{-0.2in}
\textbf{Outputs:} Optimal electricity prices $p^{\star}$.
\end{footnotesize}
\caption{An algorithm for computing optimal electricity prices}
\label{alg:utility}
\end{algorithm}

\section{Policy Implications and Analysis}
\label{sec:numerical}

In this section, we use Algorithms~\ref{alg:thermostat} and~\ref{alg:utility} in three case studies to assess the effects of passive thermal storage, solar power and various cooling strategies on utility prices, peak demand and cost to the utility company.

In Case I, we compare our optimal thermostat program with other HVAC programming strategies and analyze the resulting peak demands and electricity bills for a set of electricity prices.

In Case II, we apply the Nelder-Mead simplex and Algorithm~\ref{alg:thermostat} to the user-level problem in~\eqref{eq:user_discrete} and the utility-level problem in~\eqref{eq:utility_problem} to compute optimal electricity prices and optimal cost of production.

In Case III, we first define an optimal thermostat program for solar users. Then, we examine the effect of solar power generation on the electricity prices of non-solar users by solving a two-user single-utility optimization problem.
We ran all cases for three consecutive days prorated from a one month billing cycle with the time-step $\Delta t = 1 \; hr$, spatial-step $\Delta x = 0.1 \; m$ and with the building parameters in Table~\ref{tab:params}. These parameters were determined using the model calibration procedure described in Section~\ref{sec:validation}.
We used an external temperature profile measured for three typical summer days in Phoenix, Arizona (see Figure~\ref{fig:Te}). For each day, the on-peak period starts at 12 PM and ends at 7 PM. We used min and max interior temperatures as $T_{\min} = 22^{\circ}C$ and $T_{\max} = 28^{\circ}C$.

\begin{figure}[t]
\centering
\includegraphics[scale=0.42]{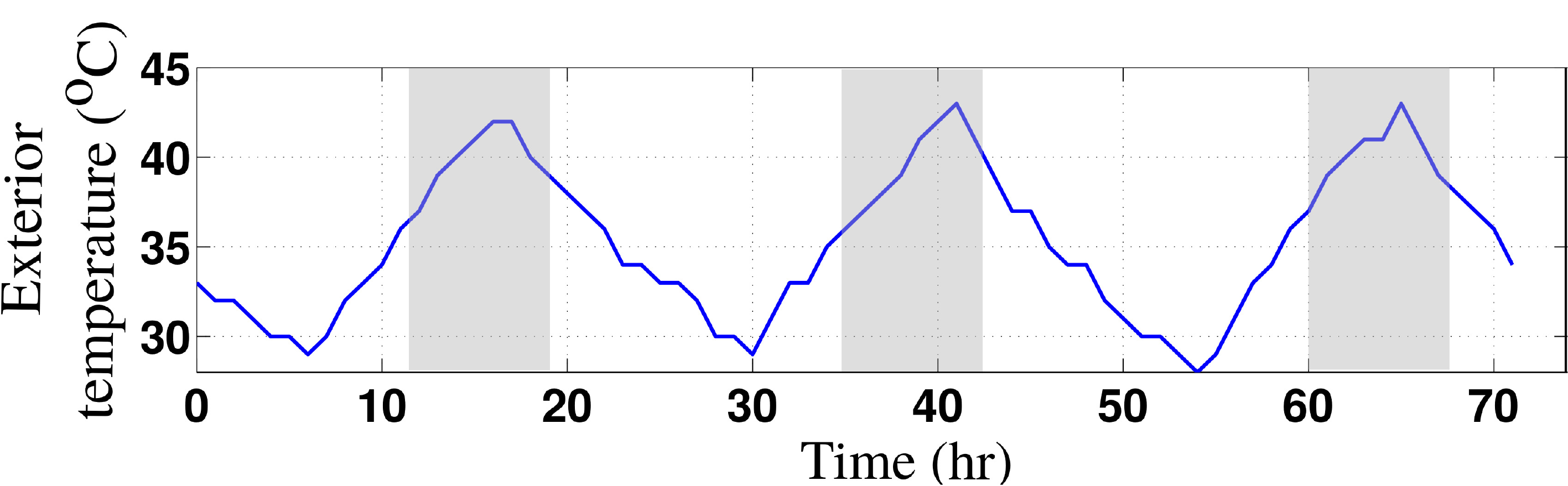} 
\caption{External Temperature of Three Typical Summer Days in Phoenix, Arizona. Shaded Areas Correspond to On-peak Hours.}
\label{fig:Te} 
\end{figure}

\begin{table}[t]
\renewcommand{\arraystretch}{0.8}
\renewcommand{\tabcolsep}{2pt}
\caption{Building's Parameters as Determined in Section~\ref{sec:thermal_mass}} \vspace{0.1in}
\centering
\begin{tabular}{|c|c|c|c|c|}
\hline
$L_{in} (m)$ & $\alpha (m^2/s)$ & $R_e (K/W)$ & $C_{in} (Wm/K)$ & $\Delta x (m)$ \\
\hline
0.4 & $8.3 \times 10^{-7}$ & 0.0015 & 45 & 0.1 \\
\hline
\end{tabular}
\label{tab:params} 
\end{table}

\begin{table}[t]
\renewcommand{\arraystretch}{0.8}
\renewcommand{\tabcolsep}{2pt}
\caption{On-peak, Off-peak \& Demand Prices of Arizona Utility APS}
\vspace{0.1in} 
\centering
\begin{tabular}{|c|c|c|c|}
\hline
 & On-peak $(\frac{\$}{kWh})$ & Off-peak $(\frac{\$}{kWh})$ & Demand $(\frac{\$}{kWh})$  \\
\hline
APS & 0.089  & 0.044  & 13.50  \\
\hline
\end{tabular}
\label{tab:rates} 
\end{table}

\subsection{Effect of Electricity Prices on Peak Demand and Production Costs}
\label{sec:case1}
In this case, we first applied Algorithm~\ref{alg:thermostat} to the optimal and 4-Setpoint thermostat programming problems (See \eqref{eq:user_discrete} and~\eqref{eq:user_4setpoint}) for a non-solar customer using the electricity prices determined by APS in Table~\ref{tab:rates}~(\cite{APS_plan}). The resulting electricity bills are given in Table~\ref{tab:APS} as the total cost paid for three days prorated from a one month billing cycle with the external temperature profile shown in Figure~\ref{fig:Te}.
Prorated in this case means that for a 30-day month, the bill is one-tenth of the monthly bill based on repetition of the three-day cycle ten times. Practically, what this means is that the period in Problems~\eqref{eq:user_discrete} and~\eqref{eq:user_4setpoint} is tripled while the demand charge in Problems~\eqref{eq:user_discrete} and~\eqref{eq:user_4setpoint} uses a demand price $\frac{1}{10} \, p_d=1.35 \frac{\$}{kW}$. For comparison, we have solved Problem~\eqref{eq:user_discrete} using the general-purpose optimization solver GPOPS~(\cite{gpops}). We have also compared our results with a naive strategy of setting the temperature to $T_{\max}$ (constant) and a pre-cooling strategy with the temperature setting: $u= 25^\circ$C from 12 AM to 8 AM; $u= T_{\min} = 22^\circ$C from 8 AM to 12 PM; $u=T_{\max}=28^\circ$C from 12 PM to 8 PM; $u=25^\circ$C from 8 PM to 12 AM. As can be seen from Table~\ref{tab:APS}, our algorithm offers significant improvement over heuristic approaches. The power consumption and the temperature setting as a function of time for each strategy can be found in Figure~\ref{fig:APS}. For convenience, the on-peak and off-peak intervals are indicated on the figure. 

To examine the impact of changes in electricity prices on peak demand, we next chose several different prices corresponding to high, medium and low penalties for peak demand. Again, in each case, our algorithms (optimal and 4-setpoint) are compared to GPOPS and the same pre-cooling strategy. The results are summarized in Table~\ref{tab:scen1}. Note that for brevity, in this section, we refer to the total cost of generation, transmission and distribution as simply production cost. For each price, the smallest computed production cost and associated demand peak are listed in bold. The power consumption and the temperature settings as a function of time for the optimal and 4-Setpoint strategies can be found in Figures~\ref{fig:scen1} and~\ref{fig:scen1.1}. For the optimal strategy, notice that by increasing the demand penalty, relative to the low-penalty case, the peak consumption is reduced by 14\% and 23\% in the medium and high penalty cases respectively. Furthermore, notice that by using the optimal strategy and the high demand-limiting prices, we have reduced the demand peak by 29\% with respect to the constant strategy in Table~\ref{tab:APS}.
Of course, a moderate reduction in peak demand at the expense of large additional production costs may not be desirable. Indeed, the question of optimal distribution of electricity prices for minimizing the production cost is discussed in Case II.

\begin{table}[t]
\renewcommand{\arraystretch}{0.7}
\renewcommand{\tabcolsep}{2pt}
\caption{CASE I: Electricity Bills (or Three Days) and Demand Peaks for 
Different Strategies. Electricity Prices Are from APS.} \vspace{0.1in}
\centering
\begin{tabular}{|c|c|c|}
\hline
Temperature setting  & Electricity bill ($\$$)   & Demand peak ($kW$) \\
\hline
\hline
Optimal (Theorem~\ref{thm:DP}) & \textbf{36.58} & 9.222  \\
\hline
GPOPS~(\cite{gpops})  & 37.03 & 9.155  \\
\hline
4-Setpoint (Theorem~\ref{thm:DP})  & 37.71 & 9.401  \\
\hline
Pre-cooling  & 39.23 &  \textbf{8.803}  \\
\hline
Constant  & 39.42 & 10.462   \\
\hline
\end{tabular} 
\label{tab:APS}
\end{table}

\begin{table}[h] 
\renewcommand{\arraystretch}{0.8}
\renewcommand{\tabcolsep}{3.5pt}
\caption{CASE I: Costs of Production (for Three Days) and Demand Peaks for Various Prices and Strategies. Prices Are Non-regulated and SRP's Coefficients of Utility Cost Are: $\tau=$0.00401 \$/(MWh)$^2$, $\nu=$4.54351 \$/(MWh)} \vspace{0.1in}
\centering
\begin{tabular}{|c|c|c|c|c|c|}
\hline
& \multicolumn{1}{c|}{Prices $[p_{\text{off}}, p_{\text{on}}, p_d]$} & \multicolumn{1}{c|}{Demand-limiting} & \multicolumn{1}{c|}{Production cost} & \multicolumn{1}{c|}{Demand peak} \\
\hline
 &    $[0.007,0.010,13.616]$    &  high    & \textbf{46.78}$\$$ (0.086$\frac{\$}{kWh}$)   & \textbf{7.4132} $kW$ \\
\cline{2-5}
\parbox[t]{3mm}{\multirow{-2}{*}{\rotatebox[origin=]{90}{$ \hspace{-0.2in} \text{Optimal}$}}} &    $[0.015,0.045,13.573]$    &  medium  & \textbf{51.56}$\$$ (0.116$\frac{\$}{kWh}$) & \textbf{8.2898} $kW$ \\
\cline{2-5}
 &    $[0.065,0.095,13.473]$    &  low     & \textbf{59.42}$\$$ (0.168$\frac{\$}{kWh}$)   &   9.6749 $kW$ \\
\hline
\end{tabular}

\begin{tabular}{|c|c|c|c|c|c|}
\hline
 & Prices  $[p_{\text{off}}, p_{\text{on}}, p_d]$  & Demand-limiting   & Production cost  & Demand peak   \\
\hline
\cline{2-5}
 &    $[0.007,0.010,13.616]$    &  high    & 53.47$\$$ (0.114$\frac{\$}{kWh}$)   &   8.5914 $kW$ \\
\cline{2-5}
\parbox[t]{3mm}{\multirow{-2}{*}{\rotatebox[origin=]{90}{$ \hspace{-0.15in} \text{4-Setpoint}$}}} &    $[0.015,0.045,13.573]$    &  medium  & 55.19$\$$ (0.130$\frac{\$}{kWh}$)   &  8.910   $kW$ \\
\cline{2-5}
 &    $[0.065,0.095,13.473]$    &  low     & 61.24$\$$ (0.169$\frac{\$}{kWh}$)   &  9.974  $kW$ \\
\hline
\end{tabular}

\begin{tabular}{|c|c|c|c|c|c|}
\hline
 & Prices  $[p_{\text{off}}, p_{\text{on}}, p_d]$  & Demand-limiting   & Production cost  & Demand peak  \\
\hline
 &     $[0.007,0.010,13.616]$    &  high    & 49.53$\$$  (0.109$\frac{\$}{kWh}$)  & 7.9440 $kW$\\
\cline{2-5}
\parbox[t]{3mm}{\multirow{-2}{*}{\rotatebox[origin=]{90}{$ \hspace{-0.2in} \text{GPOPS}$}}}  &    $[0.015,0.045,13.573]$    &  medium  & 56.48$\$$  (0.142$\frac{\$}{kWh}$)  &  9.1486 $kW$\\
\cline{2-5}
 &     $[0.065,0.095,13.473]$    &  low     & 59.19$\$$  (0.159$\frac{\$}{kWh}$)   & 9.6221 $kW$\\
\hline
\end{tabular}

\begin{tabular}{|c|c|c|c|c|c|}
\hline
 &  Prices  $[p_{\text{off}}, p_{\text{on}}, p_d]$  & Demand-limiting   & Production cost  & Demand peak  \\
\hline
 &     $[0.007,0.010,13.616]$    &  high    & 54.75$\$$  (0.116$\frac{\$}{kWh}$)    &  8.8031 $kW$ \\
\cline{2-5}
\parbox[t]{3mm}{\multirow{-2}{*}{\rotatebox[origin=]{90}{$ \hspace{-0.15in} \text{Precooling}$}}} &     $[0.015,0.045,13.573]$    &  medium  & 54.75$\$$  (0.116$\frac{\$}{kWh}$)   & 8.8031 $kW$\\
\cline{2-5}
 &     $[0.065,0.095,13.473]$    &  low     & \textbf{54.75}$\$$  (0.116$\frac{\$}{kWh}$)  & \textbf{8.8031} $kW$\\
\hline
\end{tabular}
\label{tab:scen1} 
\end{table}

\clearpage

\begin{figure}[t]
\vspace{0.2in}
\hspace{-0.6in} \includegraphics[scale=0.52]{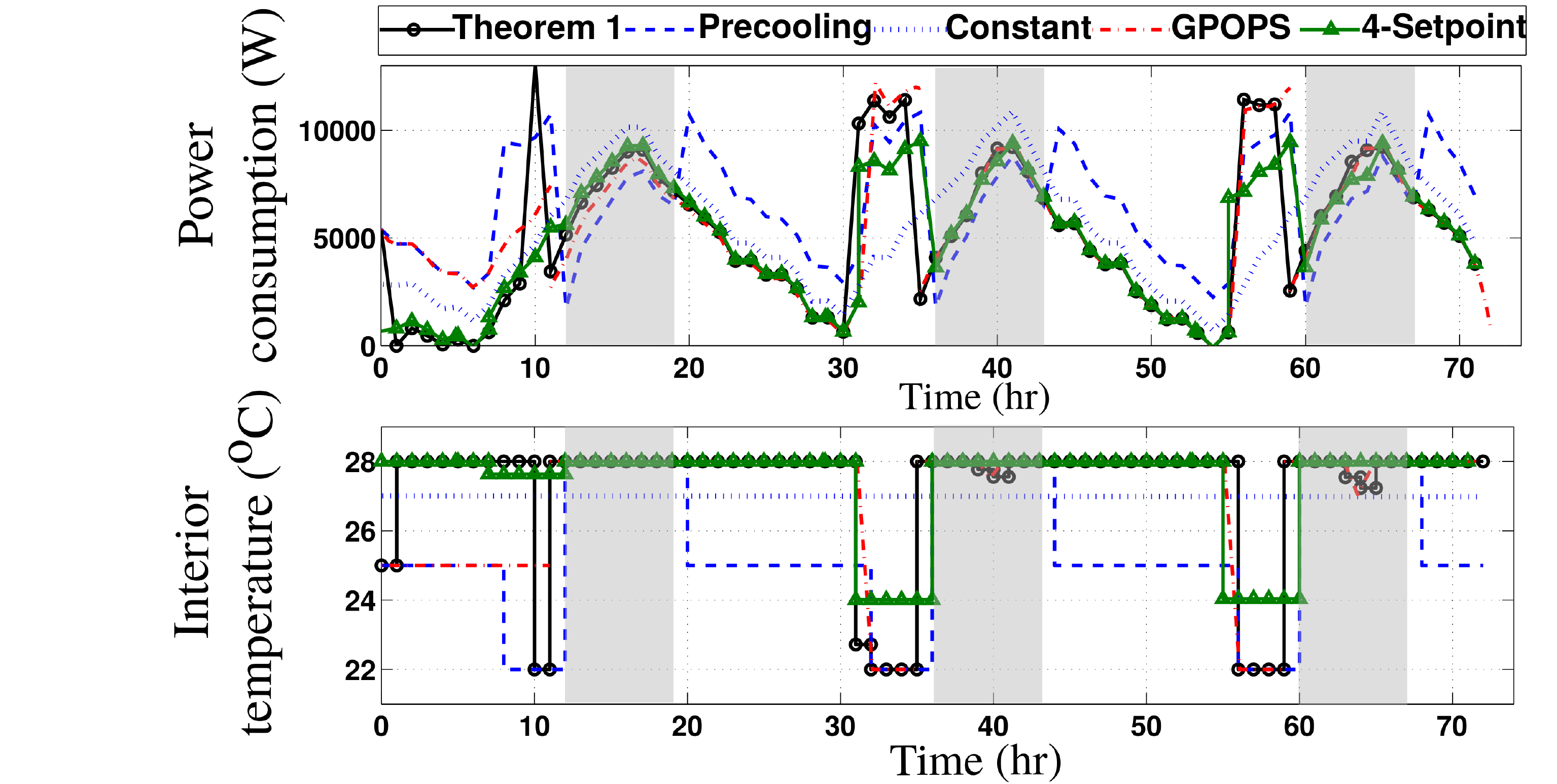} 
\caption{CASE I: Power Consumption and Temperature Settings for Various Programming Strategies Using APS's Rates.} 
\label{fig:APS}
\end{figure}

\begin{figure}[t]
\hspace{0.1in} \includegraphics[scale=0.49]{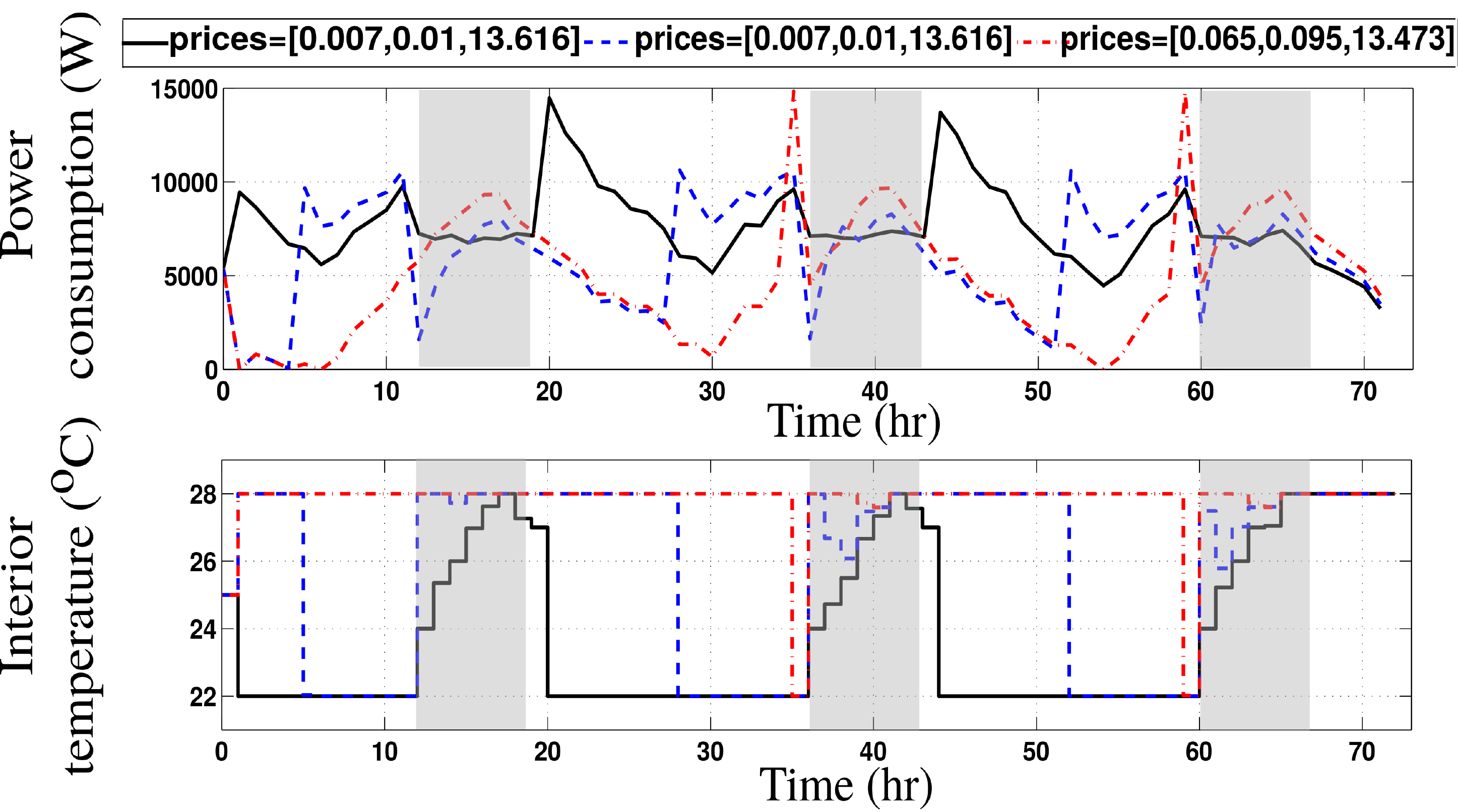} \vspace{0.1in}
\caption{CASE I: Power Consumption and Optimal Temperature Settings for High, Medium and Low Demand Penalties. Shaded Areas Correspond to On-peak Hours.}
\label{fig:scen1} 
\end{figure}

\clearpage

\begin{figure}[t]
\hspace{0.2in} \includegraphics[scale=0.46]{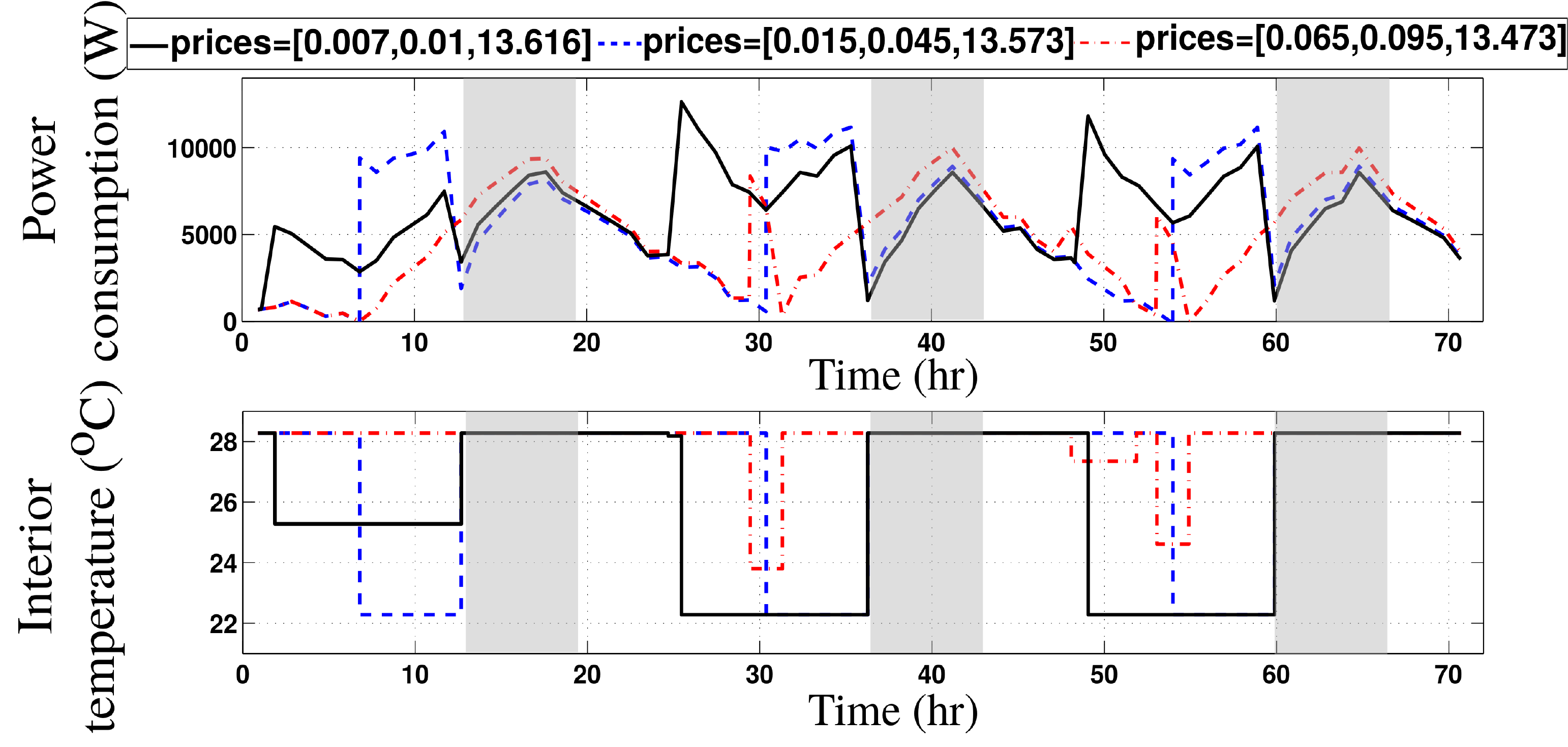}
\caption{CASE I: Power Consumption and Temperature Settings for High, Medium and Low Demand Penalties Using 4-Setpoint Thermostat Programming.} 
\label{fig:scen1.1} \vspace{0.2in}
\end{figure}

\subsection{Optimal Thermostat Programming with Optimal Electricity Prices}
\label{sec:case2}
In this case, we consider the quadratic model of fuel cost defined in Section~\eqref{eq:gen_costQuad}. A typical pricing strategy for SRP and other utilities is to set prices proportional to marginal production costs. SRP estimates the mean marginal fuel cost at $a=0.0814 \$/kWh$ (See~\eqref{eq:gen_cost}). Linearizing our quadratic model of fuel cost and equating to this estimate of the marginal cost yields an estimate of the mean load. Dividing this mean load by the aggregate user defined in Case I yields an estimate of the mean number of users of this class at $N=24,405$.

To compare the marginal pricing strategy with the optimal pricing strategy, we use this mean number of users in the utility optimization problem under the assumption that the building parameters in Section~\ref{sec:validation} represent a single aggregate rational user. The resulting optimal prices, associated production cost, and associated peak demand are listed in Table~\ref{tab:scen2}. For comparison, we also included in Table~\ref{tab:scen2} the production cost and demand peak for the same rational user subject to prices based solely on the marginal costs, where these prices are scaled so that revenue equals costs. In other words, we solved~\eqref{eq:utility_problem} for $\lambda = 1$, meaning that the regulated utility does not make any profit from generation, transmission and distribution.


From Table~\ref{tab:scen2}, optimal pricing results in a slight reduction (\$82,000) in production costs. The discrepancy between optimal prices and marginal costs may be surprising given that both the user and utility are trying to minimize the cost of electricity. However, there are several reasons for this difference. The first and most obvious reason is that the price structure for the user and the cost structure for the utility are not perfectly aligned. In the first place, the utility has a quadratic in consumption model for costs, where the user has a linear model. The second misalignment is that the capacity cost for the utility is calculated as a maximum over 24 hours and the demand charge for the user is calculated only during peak hours.
An additional reason that marginal costs will not always be optimal prices is nonlinearity of the cost function and heterogeneity of the users. To see this, suppose that cost function exactly equaled the price function for each user. The problem in this case is that the sum of the individual bills is NOT equal to the total production cost. This can be seen in the demand charge, where $\sup_x f(x) +\sup_x g(x) \neq \sup_x (f(x)+g(x))$.\\

\begin{table}[h]
\renewcommand{\arraystretch}{0.9}
\renewcommand{\tabcolsep}{3.5pt}
\caption{CASE II: Production Costs (for Three Days) and Demand Peaks Associated with Regulated Optimal Electricity Prices (Calculated by Algorithm~\ref{alg:utility}) and SRP's Electricity Prices. SRP's Marginal Costs: $a=0.0814  \frac{\$}{kWh}, b=59.76 \frac{\$}{kW}$} \vspace{0.1in}
\centering
\begin{tabular}{|c|c|c|c|}
\hline
Strategy &  $[p_{\text{off}} (\frac{\$}{kWh}), p_{\text{on}} (\frac{\$}{kWh}), p_d (\frac{\$}{kW})]$ & Production cost  &  Demand peak \\
 \hline
 Optimal & $[0.0564,0.0667, 51.1859]$  &  \textbf{1,595,309} $\$$   &  195.607 $MW$  \\   
 \hline
SRP & $[0.0668,0.0668,49.0018]$  &  1,677,516 $\$$   &  211.79 $MW$ \\ 
\hline
\end{tabular}
\label{tab:scen2}
\end{table}

\subsection{Optimal Thermostat Programming for Solar Customers - Impact of Distributed Solar Generation on Non-solar Customers}
\label{sec:case3}
We now evaluate the impact of solar power on the bills of non-solar users in a regulated electricity market. We consider a network consisting of a utility company and two aggregate users - one solar and one non-solar. For the non-solar user, we define optimal thermostat programming as in~\eqref{eq:user_discrete}. For the solar user, the optimal thermostat programming problem is as defined in~\eqref{eq:user_discrete}, where we have now redefined the consumption function as
\begin{equation}
g(k,u_k,T_1^k) :=  \dfrac{T^k_{e} - u_k}{R_e} + 2 \, C_{in} \dfrac{T_1^k - u_k}{\Delta x} - Q_k,
\label{eq:g_s}
\end{equation}
where $Q_k$ is the power supplied locally by solar panels. We assume that solar penetration is 50\%, so that both aggregate users contribute equally to revenue and costs to the utility. For $Q_k$, we used data generated on June 4-7 from a typical 13kW south-facing rooftop PV array in Scottsdale, AZ. We applied Algorithm~\ref{alg:utility} separately to each user, while considering~\eqref{eq:gen_cost} as the utility cost model. The results are presented in Table~\ref{tab:solar}. For comparison, we have also included optimal prices, prorated electricity bills over three days and demand peaks of both users. From Table~\ref{tab:solar} we observe that the difference between the electricity bill of a non-solar user in a single-user network and the  bill of a non-solar user in a two-user network (solar and non-solar) is $<$ 2\%. This increase in bill for the solar user is $<$ 8\%. The utility-generated power, solar-generated power and optimal temperature settings are shown in Figure~\ref{fig:solar}. 

\clearpage

\begin{figure}[t] 
\hspace{0.4in} \includegraphics[scale=0.52]{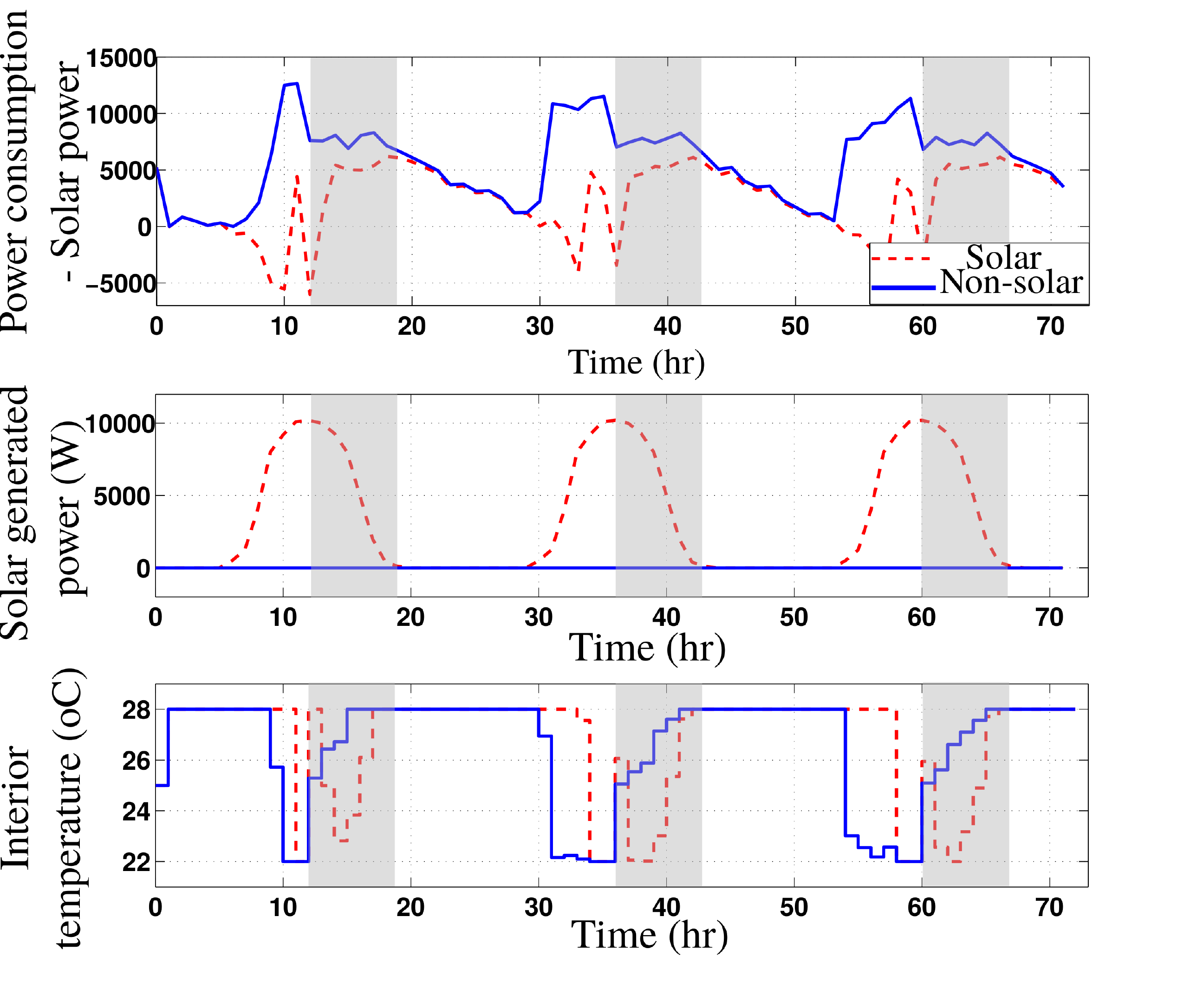} 
\caption{CASE III: Power Consumption, Solar Generated Power and Optimal Temperature Settings for the Non-solar and Solar Users.} 
\label{fig:solar}
\end{figure}

\begin{table}[h]
\renewcommand{\tabcolsep}{2pt}
\caption{CASE III: Optimal Electricity Prices, Bills (for Three Days) and Demand Peaks for Various Customers.
Marginal osts from SRP: $a=0.0814  \frac{\$}{kWh}, b=59.76 \frac{\$}{kW}$} \vspace{0.1in}
\centering
\begin{tabular}{|c|c|c|c|}
\hline
Customers &  $[p^\star_{\text{off}} (\frac{\$}{kWh}), p^\star_{\text{on}} (\frac{\$}{kWh}), p^\star_d (\frac{\$}{kW})]$  &  Elect. Bill &  Demand peak \\
\hline
\hline
Solar \&     &    \multirow{2}{*}{$[0.089,0.115,51.988]$}    & $\$$ 50.052   & 6.1947  $kW$ \\
 Non-solar &        &  $\$$  84.717   & 8.6787 $kW$ \\
\hline
Single Non-solar  &    $[0.081,0.108,54.004]$    & $\$$ 83.333  & 8.3008 $kW$ \\
\hline
Single Solar &    $[0.088,0.118,58.556]$      & $\$$ 54.311   &   6.1916 $kW$ \\
\hline
\end{tabular}
\label{tab:solar}
\end{table}

\chapter{SUMMARY, CONCLUSIONS AND FUTURE DIRECTIONS OF OUR RESEARCH}
\label{sec:conclusion}

\section{Summary and Conclusions}
Thanks to the development of converse Lyapunov theory, a broad class of problems in stability analysis and control can be formulated as optimization of polynomials. In this dissertation, we focus on design and implementation of parallel algorithms for optimization of polynomials. In Chapter~\ref{chp:introduction}, we provide a brief overview of the existing state-of-the-art algorithms for solving polynomial optimization and optimization of polynomials problems. As our contribution, we chose to design our optimization algorithms based on two well-known results in algebraic geometry, known as Polya's theorem and Handelman's theorem. Our motivation behind this choice is that these theorems define structured parameterizations\footnote{A detailed discussion on the structure of these parameterizations can be found in
Section~\ref{sec:SDPelements_TAC}} for positive polynomials - a property that our parallel algorithms exploit to achieve near-linear speed-up and scalability.

In Chapter~\ref{chp:background}, we discuss how variants of Polya's theorem, Handelman's theorem and the Positivstellensatz results can be applied to optimization of polynomials defined over various compact sets, e.g., simplex, hypercube, convex polytopes and semi-algebraic sets. We show that applying these theorems to an optimization of polynomials problem yields convex optimization problems in the form of LPs and/or SDPs. By solving these LPs and SDPs, one can find asymptotic solutions to the optimization of polynomials problems (as defined in~\eqref{eq:optimization_of_polynomials2}). Subsequently, by combining \\

\pagebreak

\hspace{-0.3in}  this method with a branch-and-bound algorithm, one can find solutions to polynomial optimization problems (as we define in~\eqref{eq:polynomial_optimization2}).

In Chapter~\ref{chp:convex_optim}, we briefly review Newton-based descent algorithms for constrained optimization of convex functions. In particular, we discuss a state-of-the-art primal-dual interior-point path-following algorithm~(\cite{helmberg2005interior}) for solving semi-definite programs. We later decentralize the computation of the search directions of this algorithm to design a parallel SDP solver in Chapter~\ref{chp:linear}.

In Chapter~\ref{chp:linear}, we propose a parallel-computing approach to stability analysis of large-scale linear systems of the form $\dot x(t) = A(\alpha)x(t)$, where $A$ is a real-valued polynomial, $\alpha \in \Delta^l \subset \mathbb{R}^l$ and $x \in \mathbb{R}^n$. This approach is based on mapping the structure of the SDPs associated with Polya's theorem to a parallel computing environment. We first design a parallel set-up algorithm with no centralized computation to construct the SDPs associated with Polya's theorem. We then show that by choosing a block-diagonal starting point for the SDP algorithm in~\cite{helmberg2005interior}, the primal and dual search directions will preserve their block-diagonal structure at every iteration. By exploiting this property, we decentralize the computation of the search directions - the most computationally expensive step of an SDP algorithm. The result is a parallel algorithm which under certain conditions, can solve the NP-hard problem of robust stability of linear systems at the same per-core computational cost as solving the Lyapunov inequality for a linear system with no parametric uncertainty. Theoretical and experimental results verify near-linear speed-up and scalability of our algorithm for up to 200 processors. In particular, our numerical tests on cluster computers show that our MPI/C++ implementation of the SDP algorithm outperforms the existing state-of-the-art SDP solvers such as SDPARA~(\cite{sdpara}) in terms of speed-up. Moreover, our experimental tests on a mid-size (9-node) Linux-based cluster computer demonstrate the ability of our algorithm in performing robust stability analysis of systems with 100+ states and several uncertain parameters. A comprehensive complexity analysis of both set-up and solver algorithms can be found in Sections~\ref{sec:comp_analysis_setupTAC} and~\ref{sec:comp_analysis_SDP}.

In Chapter~\ref{chp:multisim}, we further extend our analysis to consider linear systems with uncertain parameters inside hypercubes. We propose an extended version of Polya's theorem for positivity over a multi-simplex (Cartesian product of standard simplicies). We claim that every polynomial defined over a hypercube has an equivalent homogeneous representation over the multi-simplex. Therefore, our the multi-simplex version of Polya's theorem can be used to verify positivity over hypercubes. In the next step, we generalize our parallel set-up algorithm from Chapter~\ref{chp:linear} to construct the SDPs associated with our multi-simplex version of Polya's theorem. Our complexity analysis shows that for sufficiently large number of available processors, at each Polya's iteration, the per processor computation and communication cost of the algorithm scales polynomially with the number of states and uncertain parameters. Through numerical experiments on a large cluster computer, we show that the algorithm can achieve a near-perfect speed-up.

In Chapter~\ref{chp:Nonlinear}, we extend our approach to consider optimization of polynomials defined over more complicated geometries such as  convex polytopes. Specifically, we apply Handelman's theorem to construct piecewise polynomial Lyapunov functions for nonlinear dynamical systems defined by polynomial vector fields. Unfortunately, neither Polya's theorem nor Handelman's theorem can readily certify non-negativity of polynomials which have zeros in the interior of a simplex/polytope. Our proposed solution to this problem is to decompose the domain of analysis (in this case a polytope) into several convex sub-polytopes with a common vertex at the equilibrium. Then, by using Handelman's theorem, we derive a new set of affine feasibility conditions - solvable by linear programming - on each sub-polytope. Any solution to this feasibility problem yields a piecewise polynomial Lyapunov function on the entire polytope. In a computational complexity analysis, we show that
for large number of states and large degrees of the Lyapunov function, the complexity of the proposed feasibility problem is less than the complexity of certain semi-definite programs associated with alternative methods based on Sum-of-Squares and Polya's theorem.

Finally, in chapter~\ref{chp:DP}, we address a real-world optimization problem  in energy planning and smart grid control. We consider the coupled problems of optimal control of HVAC systems for residential customers and optimal pricing of electricity by utility companies. Our framework consists of multiple users (solar and non-solar customers) and a single regulated utility company. The utility company sets prices for the users, who pay for both total energy consumed (\$/kWh, including peak and off-peak rates) and the peak rate of consumption in a month (a demand charge) (\$/kW). The cost of electricity for the utility company is based on a combination of capacity costs (\$/kW) and fuel costs (\$/kWh). On the demand side, the users minimize the amount paid for electricity while staying within a pre-defined temperature range. The users have access to energy storage in the form of thermal capacitance of interior structures. Meanwhile, the utility sets prices designed to minimize the total cost of generation, transmission and distribution of electricity. To solve the user-level problem, we use a variant of dynamic programming. To solve the utility-level problem, we use the Nelder-Mead simplex algorithm coupled with our dynamic programming code - yielding optimal on-peak, off-peak and demand prices. We then apply our algorithms to a variety of scenarios in which show that: 1) Thermal storage and optimal thermostat programming can reduce electricity bills using current rates from utilities Arizona Public Service (APS) and Salt River Project (SRP). 2) Our optimal pricing can reduce the total cost to the utility companies. 3) In the presence of demand charges, \\

\pagebreak
\hspace{-0.3in} the impact of distributed solar generation on the electricity bills of the non-solar users is not significant ($\mathbf{< 2\%}$).

\section{Future Directions of Our Research}

In the following sections, we discuss how the proposed algorithms in this dissertation can be extended to solve three well-known problems in controls: 1) Robust stability analysis of nonlinear systems; 2) Synthesis of parameter-varying $\mathcal{H}_{\infty}$-optimal controller; 3) Computing value functions in approximate dynamic programming problems.

\subsection{A Parallel Algorithm for Nonlinear Stability Analysis Using Polya's Theorem}  

Consider the problem of local stability analysis of a nonlinear system of the form
\begin{equation}
\dot{x}(t) = A(x,\alpha) x(t),
\label{eq:sys_nonlin2}
\end{equation}
where $A: \mathbb{R}^n \times \mathbb{R}^m \rightarrow \mathbb{R}^{n \times n}$ is a matrix-valued polynomial and $A(0,0) \neq 0$. From converse Lyapunov theory, this problem can be expressed as a search for a  polynomial $V: \mathbb{R}^n \times \mathbb{R}^m \rightarrow \mathbb{R}$ which satisfies the Lyapunov inequalities
\[
W_1(x,\alpha) \leq V(x,\alpha) \leq W_2(x,\alpha)
\]
\[
\langle \nabla_x V,f \rangle \leq -W_3(x,\alpha)
\]
for all $x, \alpha \in \Omega \subset \mathbb{R}$, where $0 \in \Omega$.
However, as we discussed in Section~\ref{sec:intro_nonlin}, Polya's theorem (simplex and multi-simplex versions) cannot certify positivity of polynomials which have zeros in the interior of the unit- and/or multi-simplex. Moreover, if $F(x)$ in~\eqref{eq:fz_multi} has a zero in the interior of $\Phi^n$, then any multi-homogeneous polynomial $P(x,y)$ that satisfies~\eqref{eq:fz_multi} has a zero in the interior of the multi-simplex $\Delta^2 \times \cdots \times \Delta^2$ - hence cannot be parameterized by Polya's theorem. One way to enforce the condition $V(\textbf{0},\textbf{0}) = 0$ is to search for coefficients of a matrix-valued polynomial $P$ which defines a Lyapunov function of the form $V(x,\alpha)=x^T P(x,\alpha) x$. It can be shown that $V(x,\alpha)=x^T P(x,\alpha) x$ is a Lyapunov function for System~\eqref{eq:sys_nonlin2} if and only if $\gamma^*$ in the following optimization of polynomials problem is positive.
\begin{align}
&\gamma^* =  \max_{\gamma \in \mathbb{R,\alpha}, P \in \mathbb{R}[x,\alpha]} \;\; \gamma \nonumber \\
& \text{subject to}
\renewcommand\arraystretch{0.8} 
\begin{bmatrix}
P(x,\alpha) & 0 \\ 
0 & -Q(x,\alpha)
\end{bmatrix} - \gamma I \geq 0  \;\; \text{ for all } x \in \Phi^n, \, \alpha \in \Phi^m,
\label{eq:OOP_nonlin2}
\end{align}
where 
\begin{align*}
 Q(x,\alpha)=
 A^T(x,\alpha) P(x,\alpha) + & P(x,\alpha) A(x,\alpha) \\
 & + \frac{1}{2} \left( A^T(x,\alpha) 
 \renewcommand\arraystretch{0.8}
 \begin{bmatrix}
x^T \frac{\partial P(x,\alpha)}{\partial x_1} \\ 
\vdots \\ 
x^T \frac{\partial P(x,\alpha)}{\partial x_n}
\end{bmatrix}
+
 \begin{bmatrix}
x^T \frac{\partial P(x,\alpha)}{\partial x_1} \\ 
\vdots \\ 
x^T \frac{\partial P(x,\alpha)}{\partial x_n}
\end{bmatrix}^T A(x,\alpha)  \right).
\end{align*}
As we discussed in Section~\ref{sec:optim_hypercube}, by applying bisection search on $\gamma$ and using the multi-simplex version of Polya's theorem (Theorem~\ref{thm:polya_multi-simplex2}) as a test for feasibility of Constraint~\eqref{eq:OOP_nonlin2}, we can compute lower bounds on $\gamma^*$. Suppose there exists a matrix-valued multi-homogeneous  polynomial $S$ of degree vector $d_s \in \mathbb{N}^n$ ($d_{s}=[d_{s_1}, \cdots,d_{s_n},d_{s_{n+1}}, \cdots, d_{s_{n+m}}]$, where for $i\in \{1,\cdots,n\},\, d_{s_i}$ is the degree of $y_i$ and for $i\in \{n+1,\cdots,m\},\, d_{s_i}$ is the degree of $\beta_i$) such that
\begin{align*}
& \{ P(x,\alpha)\in \mathbb{S}^n: x \in \Phi^n, \, \alpha \in \Phi^m \} = \\
& \{ S(y,z,\beta,\eta) \in \mathbb{S}^n: (y_i,z_i),(\beta_j,\eta_j) \in \Delta^2, i=1,\cdots,n, \text{and } j = 1, \cdots, m \}.
\end{align*}
Likewise, suppose there exist matrix-valued multi-homogeneous polynomials $B$ and $C$ of degree vectors $d_b \in \mathbb{N}^n$ and $d_c=d_s \in \mathbb{N}^n$ such that
\begin{align*}
& \{ A(x,\alpha) \in \mathbb{R}^{n \times n}: x \in \Phi^n \} = \\
& \{ B(y,z, \beta, \eta) \in \mathbb{R}^{n \times n}: (y_i,z_i),(\beta_j,\eta_j) \in \Delta^2, i=1,\cdots,n, \, \text{and } j=1, \cdots, m \} 
\end{align*}
and
\begin{align*}
&\left\lbrace
 \renewcommand\arraystretch{0.8}
  \begin{bmatrix}
 \frac{\partial P(x,\alpha)}{\partial x_1} x, \cdots , \frac{\partial P(x,\alpha)}{\partial x_n}  x
\end{bmatrix} \in \mathbb{R}^{n \times n}: x \in \Phi^n, \, \text{and } \alpha \in \Phi^m \right\rbrace = \\ & \left\lbrace C(y,z, \beta, \eta) \in \mathbb{R}^{n \times n}: (y_i,z_i),(\beta_j,\eta_j) \in \Delta^2, i=1,\cdots,n, \, \text{and } j=1, \cdots, m \right\rbrace.
\end{align*}
Given $\gamma \in \mathbb{R}$, it follows from Theorem~\ref{thm:polya_multi-simplex2} that the inequality condition in~\eqref{eq:OOP_nonlin2} holds for all $\alpha \in \Phi^l$ if there exist $e \geq 0$ such that 
\begin{equation}
\left( \prod_{i=1}^n (y_i+z_i)^e \cdot \prod_{j=1}^m (\beta_j+\eta_j)^e \right) \left( S(y,z,\beta,\eta) - \gamma I \left( \prod_{i=1}^n (y_i+z_i)^{d_{p_i}} \cdot \prod_{j=1}^m (\beta_i+\eta_i)^{\hat{d}_{p_i}} \right) \right)
\label{eq:product1}
\end{equation}
and
\begin{align}
& \left( \prod_{i=1}^n (y_i+z_i)^e \cdot \prod_{j=1}^m (\beta_j+\eta_j)^e  \right) \left( B^T(y,z,\beta,\eta) S(y,z,\beta,\eta) + S(y,z,\beta,\eta) B(y,z,\beta,\eta) \right. \nonumber \\
 & \hspace{1.68in}  \left. + \frac{1}{2} \left(   B^T(y,z,\beta,\eta) C^T(y,z,\beta,\eta) +
C(y,z,\beta,\eta) B(y,z,\beta,\eta)  \right) \right. \nonumber \\
& \hspace{2.6in} \left. - \gamma I  \left( \prod_{i=1}^n (y_i+z_i)^{d_{q_i}} \cdot \prod_{j=1}^m (\beta_i+\eta_i)^{\hat{d}_{q_i}} \right)  \right)
\label{eq:product2}
\end{align}
have all positive coefficients, where $d_{p_i}$ and $\hat{d}_{p_i}$ are the degrees of $x_i$ and $\alpha_i$ in $P(x,\alpha)$, and $d_{q_i}$ and $\hat{d}_{q_i}$ are the degrees of $x_i$ and $\alpha_i$ in $Q(x,\alpha)$. Now, let $S,B$ and $C$ be of  the following forms. 
\begin{equation}
S(y,z,\beta,\eta) = \sum_{ \substack{ h,g \in \mathbb{N}^{n+m} \\ h+g=d_s }} S_{h,g} y_1^{h_1} z_1^{g_1} \cdots y_n^{h_n} z_n^{g_n}\beta_1^{h_{n+1}}\eta_1^{g_{n+1}} \cdots \beta_{m}^{h_{n+m}}\eta_m^{g_{n+m}}
\label{eq:S_polya_nonlin}
\end{equation}
\begin{equation}
B(y,z,\beta,\eta) = \sum_{ \substack{ h,g \in \mathbb{N}^{n+m} \\ h+g=d_b }} B_{h,g} y_1^{h_1} z_1^{g_1} \cdots y_n^{h_n} z_n^{g_n}\beta_1^{h_{n+1}}\eta_1^{g_{n+1}} \cdots \beta_{m}^{h_{n+m}}\eta_m^{g_{n+m}}
\label{eq:B_polya_nonlin}
\end{equation}
\begin{equation}
C(y,z,\beta,\eta) = \sum_{ \substack{ h,g \in \mathbb{N}^{n+m} \\ h+g=d_c }} C_{h,g} y_1^{h_1} z_1^{g_1} \cdots y_n^{h_n} z_n^{g_n}\beta_1^{h_{n+1}}\eta_1^{g_{n+1}} \cdots \beta_{m}^{h_{n+m}}\eta_m^{g_{n+m}}
\label{eq:C_polya_nonlin}
\end{equation}
Note that the coefficients $C_{h,g}$ can be written as linear combinations of $S_{h,g}$. For brevity we have denoted $C_{h,g}(S_{h,g})$ as $C_{h,g}$. By combining~\eqref{eq:S_polya_nonlin},~\eqref{eq:B_polya_nonlin} and~\eqref{eq:C_polya_nonlin} with~\eqref{eq:product1} and~\eqref{eq:product2} it follows that for a given $\gamma \in \mathbb{R}$, the inequality condition in~\eqref{eq:OOP_nonlin2} holds for all $\alpha \in \Phi^n$ if there exist some $e \geq 0$ such that
\begin{align}
& \sum_{\substack{ h,g \in \mathbb{N}^{n+m} \\ h+g=d_s }} f_{\{q,r\}, \{h,g \}} S_{h,g} > 0 \;\; \text{ for all } \; q,r \in \mathbb{N}^{n+m}: q+r=d_s+2 \, e \cdot \mathbf{1}_{n+m}
\label{eq:polya_nonlin_conditions1}  \\
& \text{ and} \nonumber \\
& \sum_{\substack{ h,g \in \mathbb{N}^{n+m} \\ h+g=d_s }} \hspace*{-0.09in}  M_{\{u,v\},\{h,g\}}^T S_{h,g} + S_{h,g} M_{\{u,v\},\{h,g\}}  +  N_{\{u,v\},\{h,g\}}^T C^T_{h,g} + C_{h,g} N_{\{u,v\},\{h,g\}} < 0
\label{eq:polya_nonlin_conditions2}
\end{align}
for all  $u,v \in \mathbb{N}^{n+m}: u+v=d_s+d_b+2 \, e \cdot \mathbf{1}_{n+m}$,
where we define $f_{\{ q,r \},\{h,g\}}$ to be the coefficient of
\[
S_{h,g} \, y_1^{q_1}z_1^{r_1} \cdots y_n^{q_n}z_n^{r_n} \beta_1^{q_{n+1}}\eta_1^{r_{n+1}} \cdots \beta_m^{q_{n+m}}\eta_m^{r_{n+m}}
\]
after substituting~\eqref{eq:S_polya_nonlin} into~\eqref{eq:product1}. Likewise, we define $M_{\{u,v\},\{h,g\}}$ to be the coefficient of
\[
S_{h,g} \, y_1^{u_1}z_1^{v_1} \cdots y_n^{u_n}z_n^{v_n} \beta_1^{u_{n+1}}\eta_1^{v_{n+1}} \cdots \beta_m^{u_{n+m}}\eta_m^{v_{n+m}}
\]
and $N_{\{u,v\},\{h,g\}}$ to be the coefficient of
\[
C_{h,g} \, y_1^{u_1}z_1^{v_1} \cdots y_n^{u_n}z_n^{v_n} \beta_1^{u_{n+1}}\eta_1^{v_{n+1}} \cdots \beta_m^{u_{n+m}}\eta_m^{v_{n+m}}
\] 
after substituting~\eqref{eq:B_polya_nonlin} and~\eqref{eq:C_polya_nonlin} into~\eqref{eq:product2}. For any $\gamma \in \mathbb{R}$, if there exist $e \geq 0$ and $\{ S_{h,g} \}$ such that Conditions~\eqref{eq:polya_nonlin_conditions1} and~\eqref{eq:polya_nonlin_conditions2} hold, then $\gamma$ is a lower bound for $\gamma^*$ as defined in~\eqref{eq:OOP_nonlin2}. Furthermore, if $\gamma$ is positive, then origin is an asymptotically stable equilibrium for System~\eqref{eq:sys_nonlin2}.
Fortunately, Conditions~\eqref{eq:polya_nonlin_conditions1} and~\eqref{eq:polya_nonlin_conditions2} form an SDP with a block-diagonal structure - hence an algorithm similar to Algorithm 7 can be developed to set-up the SDP in parallel. Furthermore, our parallel SDP solver in Section~\ref{sec:SDPSOLVER} can be used to efficiently solve the SDP.

\subsection{Parallel Computation for Parameter-varying $\mathcal{H}_{\infty}$-optimal Control Synthesis}  
  
\hspace*{-0.25in} Algorithm~\ref{alg:setup} can be generalized to consider a more general class of feasibility problems,~i.e.,
\[
\sum_{i=1}^N \left( A_i(\alpha)X(\alpha)B_i(\alpha) + B^T_i(\alpha)X(\alpha)A^T_i(\alpha) + R_i(\alpha) \right) < -\gamma I
\quad \text{for all } \;\alpha \in \Delta^l,
\]
where $A_i, B_i$ and $R_i$ are polynomials. Formulations such as this can be used to solve a wide variety of problem in systems analysis and control such as $\mathcal{H}_{\infty}$-optimal control synthesis for systems with parametric uncertainty. To see this, consider a plant $G$ with the state-space formulation
\[
\dot{x}(t) = A(\alpha) x(t) + 
\renewcommand\arraystretch{0.8}
\begin{bmatrix}
B_1(\alpha) & B_2(\alpha)
\end{bmatrix}
\begin{bmatrix}
\omega(t) \nonumber \\
u(t)
\end{bmatrix},
\]
\begin{equation}
\renewcommand\arraystretch{0.8}
\begin{bmatrix}
z(t) \\
y(t)
\end{bmatrix} =
\begin{bmatrix}
C_1(\alpha) \\
C_2(\alpha)
\end{bmatrix} x(t) +
\begin{bmatrix}
D_{11}(\alpha) & D_{12}(\alpha) \\
D_{21}(\alpha) & 0
\end{bmatrix}
\begin{bmatrix}
\omega(t) \\
u(t)
\end{bmatrix},
\label{eq:sys_G}
\end{equation}
where $\alpha \in Q \subset \mathbb{R}^l$, $x(t) \in \mathbb{R}^n$, $u(t) \in \mathbb{R}^m$ is the control input, $\omega(t) \in \mathbb{R}^p$ is the external input and $z(t) \in \mathbb{R}^q$ is the external output. Suppose $(A(\alpha),B_2(\alpha))$ is stabilizable and $(C_2(\alpha),A(\alpha))$ is detectable for all $\alpha \in Q$.
 According to~\cite{Gahinet_1994_LMI_hinf} there exists a state feedback gain $K(\alpha) \in \mathbb{R}^{m \times n}$ such that
\[
\| S (G,K(\alpha))  \|_{\mathcal{H}_{\infty}} \leq \gamma, \; \text{for all} \; \alpha \in Q,
\]
if and only if there exist $P(\alpha) > 0$ and $R(\alpha) \in \mathbb{R}^{m \times n}$ such that $K(\alpha)=R(\alpha)P^{-1}(\alpha)$ and
\begin{equation}
\renewcommand\arraystretch{0.8}
\begin{bmatrix}
\begin{bmatrix}
A(\alpha) \hspace*{-0.05in} &  B_2(\alpha)
\end{bmatrix} \hspace*{-0.05in} \begin{bmatrix}
P(\alpha) \\
R(\alpha)
\end{bmatrix}  \hspace*{-0.05in} + \hspace*{-0.05in} \begin{bmatrix}
P(\alpha) \hspace*{-0.05in} & R^T(\alpha)
\end{bmatrix} \hspace*{-0.05in} \begin{bmatrix}
A^T(\alpha) \\
B_2^T(\alpha)
\end{bmatrix} & \star & \star \\
B_1^T(\alpha) & -\gamma I & \star \\
\begin{bmatrix}
C_1(\alpha) & D_{12}(\alpha)
\end{bmatrix}\begin{bmatrix}
P(\alpha) \\
R(\alpha)
\end{bmatrix} & D_{11}(\alpha) & -\gamma I
\end{bmatrix} < 0,
\end{equation}
for all $\alpha \in Q$, where $\gamma > 0$ and $S(G,K(\alpha))$ is the map from the external input $\omega$ to the external output $z$ of the closed loop system with a static full state feedback controller. The symbol $\star$ denotes the symmetric blocks in the matrix inequality. 
To find a solution to the robust $H_\infty$-optimal static state-feedback controller problem with optimal feedback gain $K(\alpha)=P(\alpha)R^{-1}(\alpha)$, one can solve the following optimization of polynomials problem.
\begin{align}
&\gamma^* = \min_{P,R \in \mathbb{R}[\alpha],\gamma \in \mathbb{R}} \;\; \gamma \nonumber \\
& \text{subject to } \nonumber \\
& \hspace*{-0.07in} \renewcommand\arraystretch{0.7}
\begin{bmatrix}
-P(\alpha) & \star  & \hspace*{-0.1in} \star  & \star \\
0 & \hspace*{-0.05in}
\begin{bmatrix}
 A(\alpha) \hspace*{-0.09in} &  B_2(\alpha)
\end{bmatrix} \hspace*{-0.05in} \begin{bmatrix}
P(\alpha) \\
R(\alpha)
\end{bmatrix}  \hspace*{-0.05in} + \hspace*{-0.05in} \begin{bmatrix}
P(\alpha) \hspace*{-0.05in} & R^T(\alpha)
\end{bmatrix} \hspace*{-0.05in} \begin{bmatrix}
A^T(\alpha) \\
B_2^T(\alpha)
\end{bmatrix} & \hspace*{-0.1in} \star &  \star\\
0 & 
B_1^T(\alpha) & \hspace*{-0.1in} 0 & \star \\
0 & 
\begin{bmatrix}
C_1(\alpha) & D_{12}(\alpha)
\end{bmatrix}
\begin{bmatrix}
P(\alpha) \\
R(\alpha)
\end{bmatrix} & \hspace*{-0.1in} D_{11}(\alpha) & 0
\end{bmatrix} 
& \hspace*{-1.1in} -\gamma 
\renewcommand\arraystretch{0.7}
\begin{bmatrix}
0 & 0 & 0 & 0 \\ 
0 & 0 & 0 & 0 \\ 
0 & 0 & I & 0 \\ 
0 & 0 & 0 & I
\end{bmatrix} < 0 \nonumber \\
& \hspace*{4.5in} \text{ for all }  \alpha \in Q.
\label{eq:hinf_ineq}
\end{align}
In Problem~\eqref{eq:hinf_ineq}, if $Q = \Delta^l$ as defined in~\eqref{eq:simplex}, then we can apply Polya's theorem as described in  Section~\ref{sec:optim_simplex} to find a $\gamma \leq \gamma^*$ and $P$ and $R$ which satisfy the inequality in~\eqref{eq:hinf_ineq}. Suppose $P, A, B_1, B_2, C_1, D_{11}$ and $D_{12}$ are homogeneous polynomials (otherwise use the procedure in Section~\ref{sec:notation_simplex} to homogenize them). Let
\[
F(P(\alpha), R(\alpha)) :=
\renewcommand\arraystretch{0.8}
 \begin{bmatrix}
-P(\alpha) & \star  & \hspace*{-0.1in} \star  & \star \\
0 & \hspace*{-0.05in}
\begin{bmatrix}
 A(\alpha) \hspace*{-0.09in} &  B_2(\alpha)
\end{bmatrix} \hspace*{-0.05in}
\renewcommand\arraystretch{0.8}
 \begin{bmatrix}
P(\alpha) \\
R(\alpha)
\end{bmatrix}  \hspace*{-0.05in} + \hspace*{-0.05in} \begin{bmatrix}
P(\alpha) \hspace*{-0.05in} & R^T(\alpha)
\end{bmatrix} \hspace*{-0.05in} \begin{bmatrix}
A^T(\alpha) \\
B_2^T(\alpha)
\end{bmatrix} & \hspace*{-0.1in} \star &  \star\\
0 & 
B_1^T(\alpha) & \hspace*{-0.1in} 0 & \star \\
0 & 
\begin{bmatrix}
C_1(\alpha) & D_{12}(\alpha)
\end{bmatrix}
\begin{bmatrix}
P(\alpha) \\
R(\alpha)
\end{bmatrix} & \hspace*{-0.1in} D_{11}(\alpha) & 0
\end{bmatrix},
\]
\hspace*{-0.08in} and denote the degree of $F$ by $d_f$. Given $\gamma \in \mathbb{R}$, the inequality in~\eqref{eq:hinf_ineq} holds if there exist $e \geq 0$ such that all of the coefficients of the polynomial
\begin{equation}
\left( \sum_{i=1}^l \alpha_i \right)^e \left( F(P(\alpha), R(\alpha)) - \gamma 
\renewcommand\arraystretch{0.8}
\begin{bmatrix}
0 & 0 & 0 & 0 \\ 
0 & 0 & 0 & 0 \\ 
0 & 0 & I & 0 \\ 
0 & 0 & 0 & I
\end{bmatrix} \left( \sum_{i=1}^l \alpha_i \right)^{d_f} \right)
\label{eq:polya_hinf}
\end{equation}
are negative-definite. Let $P$ and $R$ be of the forms
\begin{equation}
P(\alpha) = \sum_{h \in W_{d_p}} P_h \alpha_1^{h_1} \cdots \alpha_l^{h_l}, P_h \in \mathbb{S}^n
\label{eq:PR1}
\end{equation}
and 
\begin{equation}
 R(\alpha) = \sum_{h \in W_{d_r}} R_h \alpha_1^{h_1} \cdots \alpha_l^{h_l}, R_h \in \mathbb{R}^{n \times n},
\label{eq:PR2}
\end{equation}
where $W_{d_p}$ and $W_{d_r}$ are the exponent sets defined in~\eqref{eq:W_d}. By combining~\eqref{eq:PR1} and~\eqref{eq:PR2} with~\eqref{eq:polya_hinf} it follows from Polya's theorem that for a given $\gamma$, the inequality in~\eqref{eq:hinf_ineq} holds, if there exist $e \geq 0$ such that
\begin{equation}
\sum_{h \in W_{d_p}}\left( M_{h,q}^T P_h+P_h M_{h,q} \right) + \sum_{h \in W_{d_r}}\left( N_{h,q}^T R_h^T + R_h N_{h,q} \right) < 0 \;\; \text{ for all }  q \in W_{d_f+e},
\label{eq:LMI_hinf}
\end{equation}
where we define $M_{h,q} \in \mathbb{R}^{n \times n}$ as the coefficient of $P_h \alpha_1^{q_1} \cdots \alpha_l^{q_l}$ after substituting~\eqref{eq:PR1} and~\eqref{eq:PR2} into~\eqref{eq:polya_hinf}. Likewise, $N_{h,q} \in \mathbb{R}^{n \times n}$ is the coefficient of $R_h \alpha_1^{q_1} \cdots \alpha_l^{q_l}$ after substituting~\eqref{eq:PR1} and~\eqref{eq:PR2} into~\eqref{eq:polya_hinf}. For given $\gamma > 0$, if there exist $e \geq 0$ such that LMI~\eqref{eq:LMI_hinf} has a solution, say $P_h, h\in W_{d_p}$ and $R_g, g\in W_{d_r}$, then 
\[
K(\alpha)= \left( \sum_{h \in W_{d_p}}  P_h \alpha_1^{h_1} \cdots \alpha_l^{h_l} \right) \left( \sum_{g \in W_{d_r}}  R_g \alpha_1^{g_1} \cdots \alpha_l^{g_l} \right)^{-1}
\]
is a feedback law of an $H_{\infty}$-suboptimal static state-feedback controller 
for System~\eqref{eq:sys_G}. By performing bisection search on $\gamma$ and solving~\eqref{eq:LMI_hinf} for each $\gamma$ of the bisection, one may find an $H_{\infty}$-optimal controller for System~\eqref{eq:sys_G}.

\subsection{Parallel Computation of Value Functions for Approximate Dynamic Programming}

Consider the discrete-time optimal control problem
\begin{align}
& J^* := \min_{u_k \in U}\;\; \sum_{k=0}^{\infty} \beta^k g(x_k,u_k) \nonumber \\
& \; \text{subject to } \; x_{k+1} = f(x_k, u_k)   && \text{for } k=1,2,3, \cdots \nonumber\\
& \hspace{0.75in} \; x_0 = z, \; x_k \in X  && \text{for } k=1,2,3, \cdots,
\label{eq:J}
\end{align}
where $f: \mathbb{R}^n \times \mathbb{R}^m \rightarrow \mathbb{R}^n$ and  $g: \mathbb{R}^n \times \mathbb{R}^m \rightarrow \mathbb{R}^n$ are given polynomials, $\beta \in (0, 1]$ is a discount factor, $U \subset \mathbb{R}^m, X \subset \mathbb{R}^n$, and $z \in \mathbb{R}^n$ is a given initial condition for the dynamical system. It is well-known that dynamic programming approach~(\cite{bertsekas1995dynamic}) provides sufficient conditions for existence of a solution to the optimal control problem in~\eqref{eq:J}. The key idea underlying dynamic programming is that optimization over-time can often be considered as optimization in stages. In such framework, optimal control is any decision which minimizes the sum of: 1. cost of transition from current stage $k$ to the next stage $k+1$; and 2. cost of all stages subsequent to $k+1$, incurred by the decision made at stage $k$. This is referred to as the
principle of optimality and was first formulated by Bellman~(\cite{bellman1965dynamic}) as
\begin{equation}
J^* = V^*(z) = ( \mathcal{P} V^*)(z) := \inf_{v \in U} \{ g(z,v) + \beta \, V^*(f(z,v)) \} \quad \text{ for all } z \in X.
\label{eq:Bellman}
\end{equation}
The unique solution to Bellman's equation is called the value function - can be thought of as the minimum cost-to-go from the current state. Existence of the value functions is a sufficient condition for existence of an optimal control. In fact, an optimal policy $\mu^*: X \rightarrow U$ can be expressed in terms of the value function $V^*$:
\[
\mu^*(z) = \argmin_{u \in U} \{ g(z,u) + \beta \, V^*(f(z,u)) \}
\]
for any $x_0 \in X$. Thus, Bellman's equation solves the optimal control problem by providing a closed-loop feedback law for \textit{every} initial condition. 

It is shown that the Bellman's operator $\mathcal{P}$ defined in~\eqref{eq:Bellman} possesses the following two properties:
\begin{enumerate}
\item Iteratively applying of Bellman's operator $\mathcal{P}$ on any function $h:X \rightarrow \mathbb{R}$ results in a pointwise convergence to a value function, i.e.,
\begin{equation}
V^*(x) = \lim_{k \rightarrow \infty} (\mathcal{P}^k h )(x) \quad \text{for all } \; x \in X.
\label{eq:V_convergence}
\end{equation}

\item Bellman's operator is \textit{monotonic}: If $V$ satisfies the Bellman's inequality $V(x) \leq (\mathcal{P}V)(x)$ for all $x \in X$, then $V(x) \leq (\mathcal{P}^k V)(x)$ for all $x \in X$ and for any $k \geq 1$.
\end{enumerate}
From these two properties one can conclude that
\[
V \leq \mathcal{P}^k V \; \text{ for some } k \geq 1 \; \Rightarrow \; V \leq V^*.
\]
Unfortunately, for $k>1$, the constraint $V \leq \mathcal{P}^k V$ is non-convex in the coefficients of polynomial $V$. A sufficient condition for $V \leq \mathcal{P}^k V$ is to search for polynomials $V$ and $W_i, \, i = 1, \cdots, k$ such that
\[
V \leq \mathcal{P}W_1, \; W_1 \leq \mathcal{P}W_2, \; \cdots \;, W_{k-1} \leq \mathcal{P}V.
\]
Note that all of these constraints are convex in the coefficients of $V$ and $W_i$.
Let $V$ and $W_i$ be polynomials of forms
\[
V(x)= \sum_{\alpha \in I(d_V)} V_\alpha x^\alpha \text{ and } W_i(x)= \sum_{ \alpha \in I(d_{W_i})} W_{i,\alpha} x^\alpha,
\]
where $I(d) := \{ \alpha \in \mathbb{N}^n : \Vert \alpha \Vert_1 \leq d \}$.
Then, any polynomial $V$ which solves the convex optimization problem 
\begin{align}
&  J_k := \max_{V_\alpha,W_{i,\alpha}} \;\; \sum_{\alpha \in I(d_V)}  V_\alpha z^\alpha \nonumber \\
& \text{subject to}\quad  \sum_{\alpha \in I(d_V)}  V_\alpha x^\alpha \leq \mathcal{P} \sum_{\alpha \in I(d_{W_1})} W_{1,\alpha} x^\alpha && \text{for all } x \in X \nonumber \\
& \hspace{0.82in} \sum_{\alpha \in I(d_{W_i})} W_{i,\alpha} x^\alpha \leq \mathcal{P} \sum_{\alpha \in I(d_{W_{i+1}})} W_{i+1,\alpha} x^\alpha && \text{for all } x \in X  \text{ and } i=1,\cdots, k-2 \nonumber \\
& \hspace{0.82in} \sum_{\alpha \in I(d_{W_{k-1}})} W_{k-1,\alpha} x^\alpha \leq \mathcal{P}\sum_{\alpha \in I(d_{V})}  V_\alpha x^\alpha && \text{for all } x \in X
\label{eq:optim1}
\end{align}
for any initial condition $z \in X$ and some $k \geq 1$, is an under-estimator for the value function $V^*$. Moreover, from monotonicity of $\mathcal{P}$ it follows that
\[
J_1 \leq J_2 \leq \cdots \leq J_k \leq \cdots \leq V^*(z).
\]
 In other words, by increasing $k$, the lower bound $J_k$ defined in~\eqref{eq:optim1} can only improve or remain constant. By substituting for $\mathcal{P}$ in~\eqref{eq:optim1} from~\eqref{eq:Bellman}, removing the infimum and enforcing the constraints of Problem~\ref{eq:optim1} for all control inputs $u \in U$, we get the following optimization of polynomials problem.
\begin{align}
& \max_{V_\alpha,W_{i,\alpha}} \;\; \sum_{\alpha \in I(d_V)}  V_\alpha z^\alpha \nonumber \\
&  \text{subject to}  \sum_{\alpha \in I(d_V)}  V_\alpha x^\alpha \leq g(x,u) + \beta \sum_{\alpha \in I(d_{W_1})} \left( W_{1,\alpha} f(x,u)^{\alpha} \right) && \hspace{-0.05in} \text{for } x \in X, u \in U \nonumber \\
& \hspace{0.6in} \sum_{\alpha \in I(d_{W_i})} W_{i,\alpha} x^\alpha \leq g(x,u) + \beta \hspace{-0.15in} \sum_{\alpha \in I(d_{W_{i+1}})} \hspace{-0.15in} \left( W_{i+1,\alpha} f(x,u)^{\alpha} \right) && \hspace{-0.05in} \text{for } x \in X, u \in U, i \in \Theta \nonumber \\
& \hspace{0.55in} \sum_{\alpha \in I(d_{W_{k-1}})} W_{k-1,\alpha} x^\alpha \leq g(x,u) + \beta \sum_{\alpha \in I(d_V)} \left( V_{\alpha} f(x,u)^{\alpha} \right) && \hspace{-0.05in} \text{for } x \in X, u \in U,
\label{eq:optim2}
\end{align}
where for brevity, we have denoted $f_1(x,u)^{\alpha_1} \cdots f_n(x,u)^{\alpha_n}$ by $f(x,u)^{\alpha}$ and we have defined $\Theta := \{ 1, \cdots, k-2 \}$.

Problem~\eqref{eq:optim2} has some interesting computational properties. Since all of the constraints in this problem have the same structure, if we choose the same degree for $V$ and $W_i$, it is then sufficient to set-up only one of the constraints in order to set-up the entire Problem~\eqref{eq:optim2}. If $X$ and $U$ are simplicies, Algorithm~\ref{alg:setup} can be used to perform Polya's iterations on the constraints of Problem~\eqref{eq:optim2}. The result is a linear program whose solution yields an under-estimator for the value function $V^*$ defined in~\eqref{eq:Bellman}. Likewise, if $X$ and $U$ are hypercubes (or polytopes), then Algorithm 7 (or Algorithm~\ref{alg:Handelman_polyoptim}) can be used to perform Polya's iterations (or Handelman's iterations) on the constraints of Problem~\eqref{eq:optim2}. Another interesting property of Problem~\eqref{eq:optim2} is that increasing the accuracy of the under-estimations (by increasing $k$) amounts to a \textit{linear} growth in the number of decision variables and number of constraints.

\clearpage

{\singlespace
\addcontentsline{toc}{part}{REFERENCES}
\bibliographystyle{asudis}
\bibliography{dis}}
\clearpage
\renewcommand{\chaptername}{APPENDIX}
\addtocontents{toc}{APPENDIX \par}
\addcontentsline{toc}{part}{BIOGRAPHICAL}
\include{vita}
\end{document}